\numberwithin{equation}{section}
\tikzset{negated/.style={
        decoration={markings,
            mark= at position 0.5 with {
                \node[transform shape] (tempnode) {$\backslash$};
            }
        },
        postaction={decorate}
    }
}
\newtheorem{defi}{Definition}[section]
\newtheorem{lem}[defi]{Lemma}
\newtheorem{theo}[defi]{Theorem}
\newtheorem{cor}[defi]{Corollary}
\newtheorem{pro}[defi]{Proposition}
\newtheorem{eje}[defi]{Example}
\newtheorem{rem}[defi]{Remark}
\DeclareMathOperator{\divop}{div}
\DeclareMathOperator{\supp}{supp}
\DeclareMathOperator{\diam}{diam}
\DeclareMathOperator{\dist}{dist}
\DeclareMathOperator{\BL}{BL}
\DeclareMathOperator{\Lip}{Lip}
\DeclareMathOperator{\RR}{\mathbb{R}}
\DeclareMathOperator{\Ls}{Ls}
\newcommand{\framework}{\boldsymbol{\mathcal{F}}}
\title[Fibered gradient flows and optimal transport]{Heterogeneous gradient flows in the topology of~fibered optimal transport}
\author{Jan Peszek}
\address{Institute of Applied Mathematics and Mechanics, University of Warsaw, ul. Banacha 2, 02-097 Warszawa, Poland}
\email{j.peszek@mimuw.edu.pl}
\author[David Poyato]{David Poyato}
\address{Departamento de Matem\'atica Aplicada and Research Unit ``Modeling Nature'' (MNat), Facultad de Ciencias, Universidad de Granada, 18071 Granada, Spain}
\email{davidpoyato@ugr.es}
\begin{document}

\date{\today}

\subjclass[2020]{28A33, 35A15, 35B40, 49K20, 70G75} 
\keywords{Gradient flows, Fibered Wasserstein space, Kuramoto model, Singular Cucker-Smale model, Collective dynamics, Heterogeneous interactions}

\begin{abstract}
We introduce an optimal transport topology on the space of probability measures over a fiber bundle, which penalizes the transport cost from one fiber to another. For simplicity, we illustrate our construction in the Euclidean case $\mathbb{R}^d\times \mathbb{R}^d$, where we penalize the quadratic cost in the second component. Optimal transport becomes then  constrained to happen along fixed fibers. Despite the degeneracy of the infinitely-valued and discontinuous cost, we prove that the space of probability measures $(\mathcal{P}_{2,\nu}(\mathbb{R}^{2d}),W_{2,\nu})$ with fixed marginal $\nu\in \mathcal{P}(\mathbb{R}^d)$ in the second component becomes a  Polish space under the fibered transport distance, which enjoys a weak Riemannian structure reminiscent of the one proposed by F. Otto for the classical quadratic Wasserstein space. Three fundamental  issues are addressed: 1) We develop an abstract theory of gradient flows with respect to the new topology; 2) We show applications that identify a novel fibered gradient flow structure on a large class of evolution PDEs with heterogeneities; 3) We exploit our method to derive long-time behavior and global-in-time mean-field limits in a multidimensional Cucker-Smale-type alignment model with weakly singular coupling.
\end{abstract}

\maketitle

\section{Introduction}\label{sec:intro}
It is well known that gradient flows in the space of probability measures ${\mathcal P}(\RR^d)$ equipped with the quadratic Wasserstein metric $W_2$ are closely related to continuity equations. Indeed, as discovered by {\sc F. Otto} \cite{O-01}, the quadratic Wasserstein space $(\mathcal{P}_2(\mathbb{R}^d),W_2)$ is a formal infinite-dimensional Riemannian manifold. Therefore, under suitable assumptions, any gradient flow of a functional ${\mathcal E}:\mathcal{P}_2(\mathbb{R}^d)\longrightarrow (-\infty,+\infty]$ must solve the continuity equation
 \begin{align*}
 \begin{aligned}
 &\partial_t\mu + {\rm div}(\boldsymbol{u}\mu) = 0, & & t\geq 0,\,x\in \mathbb{R}^d,\\
 &-\boldsymbol{u}_t\in\partial_{W_2} {\mathcal E}[\mu_t], & & \mbox{a.e.}\ t\geq 0,
 \end{aligned}
 \end{align*}
in distributional sense, where $\mu_t=\mu_t(x)\in \mathcal{P}_2(\mathbb{R}^d)$, $\boldsymbol{u}_t=\boldsymbol{u}_t(x)\in L^2_{\mu_t}(\mathbb{R}^d,\mathbb{R}^d)$, and $\partial_{W_2}{\mathcal E}[\mu_t]$ is the Fr\' echet subdifferential of ${\mathcal E}$ with respect to such a Riemannian structure, see \cite{AGS-08,G-08-thesis}. 

If we further restrict to regular enough functionals, then the Fr\' echet subdifferential simply reduces to a single element, that is the Fr\'echet gradient. Indeed, for smooth functionals entirely defined over absolutely continuous measures $\mu\in \mathcal{P}_{2,ac}(\mathbb{R}^d)$, we have
$$\partial_{W_2}\mathcal{E}[\mu]=\left\{\nabla \frac{\delta \mathcal{E}}{\delta \rho}\right\},$$
where  $\mu=\rho\,dx$, $\rho\in L^1_+(\mathbb{R}^d)$, and $\frac{\delta \mathcal{E}}{\delta \rho}$ is the usual Euler first variation of $\mathcal{E}$ with respect to $\rho$ (when it exists), see \cite[Section 10.4]{AGS-08}. Then, we can formally  simplify the above abstract gradient flows to the following class of PDEs
 $$\partial_t\rho-\divop\left(\rho\,\nabla \frac{\delta \mathcal{E}}{\delta\rho}\right)=0,\quad t\geq 0,\,x\in \mathbb{R}^d.$$
 
As special cases, we find energy functionals with three distinguished components representing the internal energy $U=U(\rho)$, the external potential energy $V=V(x)$, and the pairwise interaction potential energy $W=W(x)$. Specifically, $\mathcal{E}$ is defined by
\begin{equation}\label{E-intro-energies}
\mathcal{E}[\mu]=\int_{\mathbb{R}^d} U(\rho(x))\,dx+\int_{\mathbb{R}^d} V(x)\,d\mu(x)+\frac{1}{2}\iint_{\mathbb{R}^d\times \mathbb{R}^d}W(x-x')\,d\mu(x)\,d\mu(x'),
\end{equation}
if $\mu=\rho\,dx\in \mathcal{P}_{2,ac}(\mathbb{R}^d)$, and $\mathcal{E}[\mu]=+\infty$ otherwise. These have been studied extensively over the last decades as they allowed setting the basis for the rigorous gradient flow formulation of a wide range of (nonlinear) integro-differential equations ranging from the porous medium equation, to the aggregation-diffusion equation, see \cite{CDFLS-11,JKO-98,O-99,O-01} and references therein.  
 
The underlying objective of this paper is to present a novel optimal transport topology in the space of probability measures ${\mathcal P}(\RR^d)$ that naturally introduces heterogeneity to gradient flows. Specifically, consider $d=d_1+d_2$ so that our space $\RR^d=\mathbb{R}^{d_1}\times \mathbb{R}^{d_2}$ can be regarded as a fiber bundle (the trivial one) with base space parametrized by $\omega\in \mathbb{R}^{d_2}$ and fiber space parametrized by $x\in \mathbb{R}^{d_1}$. Let us now set $\mu_1,\mu_2\in \mathcal {P}(\RR^{d_1+d_2})$, with a common marginal $\nu\in{\mathcal P}(\RR^{d_2})$ with respect to $\omega$. In a nutshell, our optimal transport topology shall be built as to penalize the transport from $\mu_1$ to $\mu_2$ whenever it happens across different fibers $\omega\in \mathbb{R}^{d_2}$. Namely, if $\omega_1,\omega_2\in \mathbb{R}^{d_2}$ represent two different fibers $\omega_1\neq \omega_2$, then mass of $\mu_1$ at the fiber $\omega_1$ requires an infinite cost to be moved to mass of $\mu_2$ at the fiber $\omega_2$. This penalization then constrains the optimal transport to take place in a fibered way along $\omega = \mbox{const}$, which in particular preserves the common marginal $\nu$. More specifically, we define a natural metric $W_{2,\nu}$ over the space $\mathcal{P}_\nu(\mathbb{R}^{d_1+d_2})$ of probability measures with fixed marginal $\nu$, which encodes the above penalization mechanism, and we shall refer to it as the {\it fibered Wasserstein distance}.
 
We show that $(\mathcal{P}_{2,\nu}(\mathbb{R}^{d_1}\times \mathbb{R}^{d_2}),W_{2,\nu})$ admits a Riemannian structure, and we develop a variant of the classical theory of gradient flows of functionals $\mathcal{E}:\mathcal{P}_{2,\nu}(\mathbb{R}^{d_1+d_2})\rightarrow (-\infty,+\infty]$ with respect to $W_{2,\nu}$. As we prove later, this construction leads to ``heterogeneous gradient flows'' with parameters controlled through $\nu$, which obey the continuity equation 
 \begin{align}\label{kurak}
 \begin{aligned}
 &\partial_t\mu + \divop_x(\boldsymbol{u}\mu) = 0, & & t\geq 0,\,(x,\omega)\in \mathbb{R}^{d_1}\times\mathbb{R}^{d_2},\\
 &-\boldsymbol{u}_t\in\partial_{W_{2,\nu}} {\mathcal E}[\mu_t], & & \mbox{a.e.}\ t\geq 0,
  \end{aligned}
 \end{align}
  where $\mu_t=\mu_t(x,\omega)\in \mathcal{P}_{2,\nu}(\mathbb{R}^{d_1+d_2})$, $\boldsymbol{u}_t=\boldsymbol{u}_t(x,\omega)\in L^2_{\mu_t}(\mathbb{R}^{d_1+d_2},\mathbb{R}^{d_1})$, and $\partial_{W_{2,\nu}} {\mathcal E}$ is a novel fibered variant of the Fr\'echet subdifferential that is introduced rigorously in the sequel. Similarly to the classical setting, for regular enough energy functionals entirely defined in the class of absolutely continuous measures $\mu\in \mathcal{P}_{2,\nu,ac}(\mathbb{R}^{d_1+d_2})$ ({\it i.e.}, measures admitting a disintegration $\mu(x,\omega)=\rho(x,\omega)\,dx\otimes \nu(\omega)$), equation \eqref{kurak} takes the equivalent form
\begin{equation}\label{kurak-variation}
\partial_t\rho-\divop_x\left(\rho\,\nabla_x\frac{\delta\mathcal{E}}{\delta \rho}\right)=0,\quad t\geq 0,\,(x,\omega)\in \mathbb{R}^{d_1+d_2}.
\end{equation}
We emphasize that, while \eqref{kurak} readily implies \eqref{kurak-variation} for regular enough functionals (so that, in particular, $\partial_{W_{2,\nu}}\mathcal{E}[\mu]=\{\nabla_x\frac{\delta\mathcal{E}}{\delta\rho}\}$), the converse is true only when restricted to the above class of absolutely continuous measures with fixed marginal $\nu$ (see examples in Remark \ref{R-examples-from-intro}). Note that the divergence in \eqref{kurak} has intentionally been taken only with respect to $x\in\RR^{d_1}$. Roughly speaking, it originates in the above idea that mass transportation is constrained to happen along the fibers of $\RR^{d_1+d_2}$ with $\omega=\mbox{const}$. However, the velocity $\boldsymbol{u}_t=\boldsymbol{u}_t(x,\omega)$ can actually depend on $\omega$ in a convoluted way, which is encoded in $\partial_{W_{2,\nu}} {\mathcal E}$. This means that the velocity could couple the effects of different (or all) fibers so that \eqref{kurak} cannot be treated simply as a family of independent continuity equations parametrized by $\omega$, see Remark \ref{ohhnaivety}.
 
 The goal of the paper is threefold.
First, we develop an abstract theory of gradient flows with respect to the fibered optimal transport topology, for which a Riemannian structure is identified in the spirit of Otto calculus. Second, we investigate and showcase possible applications of fibered gradient flows to various situations in the literature. Third, we provide new and meaningful insight into a special system (multidimensional Cucker-Smale-type alignment dynamics with weakly singular coupling) by using the language of fibered gradient flows.

Other variants of topologies based on fibered optimal transport have been investigated in the past. In the literature of optimal transport, {\sc N. Gigli} used one in \cite{G-08-thesis} as an auxiliary tool in the construction of the geometric tangent space to the quadratic Wasserstein space $({\mathcal P}_2(\RR^d),W_2)$, see also \cite{G-11} and \cite{AGS-08}. In the literature of collective dynamics, it was used in \cite{CCHKK-14,CZ-21,K-22} to prove contractivity respectively for the Kuramoto-Sakaguchi equation (with restricted initial data and large coupling strength), the 1D kinetic Cucker-Smale model with weakly singular couplings, and a variant of the multidimensional Cucker-Smale model with a Hessian communication weight. Recently, it has also been proposed in \cite{K-21-1,K-21-2} with applications to the existence and uniqueness of stationary measures of iterated function systems. The exact formulation of the metric we chose to use was established in \cite{P-19-arxiv} followed by \cite{MP-22} in the study of large-time behavior of the Kuramoto-Sakaguchi equation (with generic initial data and large coupling strength). The main contribution of the present paper is not in the introduction of the topology but in the robust study of its applicability in gradient flows and their applications.

 A rigorous presentation of the main goals of the paper requires an in-depth setup of preliminary information, which we provide later. In the subsequent sections we present a brief overview of the main results without getting into the technical details. The remainder of the paper is organised as follows. Section \ref{sec:prelim} is dedicated to a review of classical optimal transport theory. Section \ref{sec:fibered-wasserstein} is the main part of the paper wherein we introduce the fibered Wasserstein distance and meticulously construct fibered gradient flows. In Section \ref{sec:exist} we present  concrete examples of fibered gradient flows associated with three special energy functionals reminiscent of the classical ones in \eqref{E-intro-energies}. We also illustrate a variety of examples arising in the recent literature of collective dynamics ({\it cf.} Section \ref{maingoal2}), where this theory could provide a satisfying functional framework. Section \ref{apple} is dedicated to applications to the Cucker-Smale-type alignment model. Finally, Appendices \ref{Appendix-metric-valued-L2}, \ref{Appendix-slope-subdifferential} and \ref{Appendix-convexity-W} contain some auxiliary results and proofs to alleviate the reading of the main sections.
 
\begin{rem}\label{R-intro-choice-dimensions}
For simplicity of the presentation, we restrict to the case where the dimension of the base space $\RR^{d_2}$ and fiber space $\RR^{d_1}$ agree, {\it i.e.}, $d_1=d_2=d$. This results in a sufficient setting to tackle some PDEs arising in collective dynamics like the Kuramoto-type reformulation of the Cucker-Smale alignment dynamics that we address in this paper. However, our theory stays valid for $d_1\neq d_2$ or even more general base spaces and fiber spaces in the fiber bundle which are not necessarily Euclidean, see examples in Section \ref{maingoal2} below.
\end{rem}
 
 \subsection{Main goal 1: abstract theory of fibered gradient flows}\label{maingoal1}

The primary goal of the paper is to build an abstract theory of fibered gradient flows. To this end we follow the key steps of the classical theory of gradient flows associated with $\lambda$-convex functionals on metric spaces presented by {\sc L. Ambrosio}, {\sc N. Gigli} and {\sc G. Savar\' e} \cite{AGS-08}, see also the textbooks by {\sc C. Villani} \cite{V-09} and {\sc F. Santambrogio} \cite{S-15}. We begin by proving the basic properties of the fibered optimal transport such as stability of optimality, general differentiability of the fibered Wasserstein distance and the relation between different types of convergence. Then we establish a fibered analog of the weak Riemannian structure for the space $({\mathcal P}_2(\RR^d),W_2)$ due to {\sc F. Otto}, followed by fibered subdifferential calculus and three notions of fibered gradient flows, which become equivalent under suitable $\lambda$-convexity assumptions. Finally, we prove existence of fibered gradient flows, which follows directly from the classical theory of solutions to the Evolutionary Variational Inequality (EVI) valid on any Polish space. Our efforts towards this first goal culminates in the following formal meta-theorem.

\medskip

\begin{mdframed}
{\bf Theorem A.} Consider any functional $\mathcal{E}:\mathcal{P}_{2,\nu}(\mathbb{R}^d)\longrightarrow (-\infty,+\infty]$ for any $\nu\in \mathcal{P}(\mathbb{R}^d)$ satisfying suitable assumptions, which include the $\lambda$-convexity of ${\mathcal E}$ (see framework ${\mathcal F}$ in Definition \ref{D-framework-F} below). Then, there exists a unique, locally Lipschitz and exponentially stable gradient flow $t\mapsto\mu_t\in \mathcal{P}_{2,\nu}(\mathbb{R}^{2d})$ of ${\mathcal E}$ starting at any initial datum $\mu_0\in D(\mathcal{E})$, which in particular satisfies Equation \eqref{kurak} in the sense of distributions. A rigorous formulation of Theorem A can be found in Section \ref{sec:sub_calc_grad_flows}, Theorem \ref{t-A-rig}.
\end{mdframed}

\medskip

It is worthwhile to clarify that,  whilst widely assumed in the literature, the $\lambda$-convex framework that we employ is not necessary for the existence of gradient flows. However, it is essential for the uniqueness and the equivalence of the various notions of gradient flows, and thus we decided to use such a stronger framework for the sake of consistency, {\it cf.} Remark \ref{R-why-lambda-convexity}.

\begin{rem}\label{ohhnaivety}
 There could be a seemingly easy approach to this first goal. Namely, we may be tempted to reduce any problem on $(\mathcal{P}_{2,\nu}(\mathbb{R}^{2d}),W_{2,\nu})$ into a separate family of similar problems on the classical space $(\mathcal{P}_2(\mathbb{R}^d),W_2)$ for each $\omega$-fiber. Then, we may solve each of them separately via the classical theory and integrate with respect to $\omega$. This idea is further supported by the fact that any curve $t\mapsto \mu_t=\mu_t(x,\omega)$ solving \eqref{kurak} can be disintegrated with respect to the fixed marginal $\nu$, thus producing a $\omega$-indexed family of continuity equations
$$\partial_t \mu^\omega  + \divop_x(\boldsymbol{u}_t(\cdot,\omega) \mu^\omega) = 0,\quad t\geq 0,\ x\in \mathbb{R}^d,$$
for $\nu$-a.e. $\omega\in \mathbb{R}^d$, where $\mu_t(x,\omega)=\mu_t^\omega(x)\otimes \nu(\omega)$ (see Disintegration Theorem \ref{dis}). Whilst useful in many questions, this approach does not work in a few critical problems. One of the main factors contributing to the failure of such a strategy is that we cannot ensure that $\boldsymbol{u}_t(\cdot,\omega)$ depends only on $\mu^\omega$. This issue is by design, since otherwise all fibers amounting to $\mu$ in \eqref{kurak} would decouple. In all the examples presented in the forthcoming sections $\boldsymbol{u}$ actually depends on all fibers of the solution $\mu$.  On the technical level, another crucial factor is that narrow convergence is not stable under disintegration. Specifically, if $\mu_n$ have fixed marginal $\nu$ and converge narrowly to $\mu$, then we cannot ensure that $\mu_n^\omega$ also converge narrowly to $\mu^\omega$ (see Remark \ref{R-narrow-vs-fibers}). Similar problems appear throughout the paper and justify our efforts to carefully walk through most of the classical theory presented in \cite{AGS-08}.
 \end{rem}

 \subsection{Main goal 2: Applications to first-order models}\label{maingoal2}
 
The second goal of the paper is to explore possible applications of fibered gradient flows. Generally speaking, fibered gradient flows add heterogeneity to otherwise homogeneous classical gradient flows. Thus, they are useful as a tool to describe any phenomena that has a gradient flow structure, but is heterogeneous in nature, {\it e.g.} it involves multiple co-existing forces that influence separate parts of the system or it involves multiple species interacting with each other.

\subsubsection{Fibered gradient flows with additive heterogeneity}

$\,$
\medskip

\noindent $(i)$ {\bf The Kuramoto-Sakaguchi equation}. The Kuramoto model was originally introduced by {\sc Y. Kuramoto} in \cite{K-75} as a coupled system of ODEs describing a simple synchronization mechanism of a finite ensemble of $N\in \mathbb{N}$ coupled oscillators with phases $\theta_i\in \mathbb{R}$ and natural frequencies $\omega_i\in \mathbb{R}$, which obey the following coupled system of ODEs
$$\dot{\theta}_i=\omega_i+\frac{K}{N}\sum_{j=1}^N\sin(\theta_j-\theta_i),\qquad i=1,\ldots,N.$$
When $N\rightarrow \infty$, as mean-field limit we obtain the kinetic Kuramoto model (or Kuramoto-Sakaguchi equation), see \cite{L-05}. It takes the form of the following Vlasov-type PDE for the probability distribution $\mu=\mu_t(\theta,\omega)$ of oscillators
\begin{align}\label{E-Kuramoto-Sakaguchi}
\begin{aligned}
&\partial_t \mu+\partial_\theta\left(\boldsymbol{u}[\mu]\mu\right)=0,\qquad t\geq 0,\quad (\theta,\omega)\in \mathbb{R}^2,\\
&\boldsymbol{u}[\mu](t,\theta,\omega):=\omega+K\int_{\mathbb{R}^2}\sin(\theta'-\theta)\,d\mu_t(\theta',\omega').
\end{aligned}
\end{align}
It is clear that the dynamics in \eqref{E-Kuramoto-Sakaguchi} preserves the second marginal $\nu=\pi_{\omega\#}\mu_t$. In addition, $t\mapsto \mu_t$ is the fibered gradient flow of the energy functional
$$\mathcal{E}[\mu]:=-\int_{\mathbb{R}^2}\theta\,\omega\, d\mu(\theta,\omega)-\frac{K}{2}\iint_{\mathbb{R}^2\times \mathbb{R}^2}\cos(\theta-\theta')\,d\mu(\theta,\omega)\,d\mu(\theta',\omega'),$$
with $\mu\in \mathcal{P}_{2,\nu}(\mathbb{R}^2)$. Namely, writing $\mu(\theta,\omega)=\rho(\theta,\omega)\,d\theta\otimes \nu(\omega)$ we have $\boldsymbol{u}[\mu]=-\partial_\theta\frac{\delta\mathcal{E}}{\delta \rho}$, see Remark \ref{R-examples-from-intro}.

This justifies the approach in \cite{MP-22}, where an entropy method was implemented by the second author to derive the long-time asymptotics of \eqref{E-Kuramoto-Sakaguchi} in the large-coupling strength regime $K\gg 1$. Whilst the present fibered gradient-flow reformulation was not available at that moment, the authors found novel Talagrand and generalized log-Sobolev inequalities involving the fibered Wasserstein distance $W_{2,\nu}$, the above entropy functional $\mathcal{E}$ and the usual entropy-entropy dissipation relation for gradient flows, namely
$$\frac{d}{dt}\mathcal{E}[\mu_t]=-\mathcal{D}[\mu_t],\qquad \mathcal{D}[\mu_t]=\int_{\mathbb{R}^2}\vert \boldsymbol{u}[\mu_t](\theta,\omega)\vert^2\,d\mu_t(\theta,\omega).$$
We remark that $\mathcal{E}$ is however not convex, so that the entropy production argument in \cite{MP-22} is still essential under the new glasses to quantify the long-time behavior.

\medskip

\noindent $(ii)$ {\bf The kinetic Lohe matrix model}. In the above example, we have restricted to an Euclidean setting and, in addition, the dimension of the fiber and base space spaces agree. However, this is not totally necessary as anticipated in Remark \ref{R-intro-choice-dimensions}. To illustrate it, we consider the Lohe matrix model, which was proposed by {\sc M. A. Lohe} \cite{L-10} as a high-dimensional generalization of the Kuramoto model for the dynamics of a finite network of quantum coupled oscillators
$$
{\rm i}\,\dot{U}_i\,U_i^\dagger=H_i+\frac{{\rm i}\,K}{2N}\sum_{j=1}^N (U_j U_i^\dagger-U_i\,U_j^\dagger),\quad i=1,\ldots,N.
$$
States are unitary matrices $U_i\in \boldsymbol{U}(d)$ evolving under Hermitian Hamiltonians $H_i$, {\it i.e.}, $H_i^\dagger=H_i$ for the Hermitian conjugate $\dagger$. Arguing as in \cite{GH-19,HKR-18}, lifting the system to the universal covering $\mathbb{R}\times \boldsymbol{SU}(d)$ and assuming the simpler case where $H_i=-\omega_i I_d$ with $\omega_i\in \mathbb{R}$ we infer that the mean-field limit as $N\rightarrow\infty$ obeys the following Vlasov-type PDE for $\mu=\mu_t(\theta,V,\omega)$
\begin{align}\label{E-kinetic-Lohe}
\begin{aligned}
&\partial_t \mu+\partial_\theta(\boldsymbol{u}_\theta[\mu]\,\mu)+\divop_V\left(\boldsymbol{u}_V[\mu]\mu\right)=0,\qquad  t\geq 0,\quad (\theta,V,\omega)\in \mathbb{R}\times \boldsymbol{SU}(d)\times \mathbb{R},\\
&\boldsymbol{u}_\theta[\mu](t,\theta,V,\omega):=\omega+K\int_{\mathbb{R}\times \boldsymbol{SU}(d)\times \mathbb{R}} {\rm Im}\, {\rm tr}\left(V'\,V^\dagger e^{i(\theta'-\theta)}\right)\,d\mu_t(\theta',V',\omega'),\\
&\boldsymbol{u}_V[\mu](t,\theta,V,\omega):=\bigg[\frac{K}{2}\int_{\mathbb{R}\times \boldsymbol{SU}(d)\times \mathbb{R}} \bigg(V'\,V^\dagger e^{i(\theta'-\theta)}-V\,{V'}^\dagger e^{i(\theta-\theta')}\\
&\hspace{5.2cm}-\frac{1}{d}{\rm tr}(V'\,V^\dagger e^{i(\theta'-\theta)}-V\,{V'}^\dagger e^{i(\theta-\theta')})I_d\bigg)\,d\mu_t(\theta',V',\omega')\bigg]V.
\end{aligned}
\end{align}
The usual covering map is given by $(\theta,V)\in \mathbb{R}\times \boldsymbol{SU}(d)\mapsto U=e^{i\theta} V\in \boldsymbol{U}(d)$. Here $\boldsymbol{SU}(d)$ is the Lie group of the special unitary matrices, whose Lie algebra $\mathfrak{su}(d)$ are the traceless skew-Hermitian matrices. For convenience, the tangent space $\mathbb{R}\times \mathfrak{su}(d)$ at the unity $(1,I_d)$ is endowed with the usual inner product in $\mathbb{R}\times \mathbb{C}^{d\times d}$ with the $\RR$ component multiplied by $d$. Again, $\nu=\pi_{\omega\#}\mu_t$ is conserved, and $t\mapsto \mu_t$ is identified as the fibered gradient flow of
\begin{align*}
\mathcal{E}[\mu]:=&-d\int_{\mathbb{R}\times \boldsymbol{SU}(d)\times \mathbb{R}} \theta\,\omega\,d\mu(\theta',V',\omega')\\
&-\frac{K}{2}\iint_{(\mathbb{R}\times \boldsymbol{SU}(d)\times \mathbb{R})^2} {\rm tr}(e^{i(\theta-\theta')}V\,{V'}^\dagger)\,d\mu(\theta,V,\omega)\,d\mu(\theta',V',\omega'),
\end{align*}
with $\mu\in \mathcal{P}_{2,\nu}(\mathbb{R}\times \boldsymbol{SU}(d)\times \mathbb{R})$. Indeed, if $\mu(\theta,V,\omega)=\rho(\theta,V,\omega)\,d\theta\,dV\otimes \nu(\omega)$ (being $dV$ the Haar measure of $\boldsymbol{SU}(d)$), then $\boldsymbol{u}_\theta[\mu]=-\partial_\theta\frac{\delta\mathcal{E}}{\delta \rho}$ and $\boldsymbol{u}_V[\mu]=-\nabla_V\frac{\delta\mathcal{E}}{\delta \rho}$, see Remark \ref{R-examples-from-intro}.

\subsubsection{Fibered gradient flows with multiplicative heterogeneity}

$\,$

\medskip

\noindent $(i)$ {\bf Non-exchangeable systems with heterogeneous weights}. Non-exchangeable multi-agent systems are ubiquitous in nature. Whilst they can occur under various forms, a sufficiently simple setting can be described by a system of coupled SDEs
$$d X_i=-\sum_{j=1}^N \alpha_{ij}\,\nabla W(X_i-X_j)\,dt+\sqrt{2\sigma}\,dW_i,\qquad i=1,\ldots,N.$$
It describes the stochastic motion of an ensemble of $N$ particles with positions $X_i\in \mathbb{R}^d$ interacting according to a conservative force $F=-\nabla W\in (W^{1,1}\cap W^{1,\infty})(\mathbb{R}^d)$, heterogeneous couplings modulated by weights $\alpha_{ij}$, and under the influence of independent Brownian noise $dW_i$. Recently, the rigorous mean-field limit as $N\rightarrow \infty$ has been derived under \cite{CM-19,JPS-21-arxiv,KX-21-arxiv}, leading to the following Vlasov-McKean-type PDE for the distribution of agents $\mu=\mu_t(x,\omega)$
\begin{align}\label{E-kinetic-nonexchangeable}
\begin{aligned}
&\partial_t\mu +\divop_x(\boldsymbol{u}[\mu]\,\mu)=0,\qquad t\geq 0,\quad x\in \mathbb{R}^d,\quad \omega\in [0,1],\\
&\boldsymbol{u}[\mu](t,x,\omega):=-\int_{\mathbb{R}^d}\int_0^1\alpha(\omega,\omega')\,\nabla W(x-x')\,d\mu_t(x',\omega')-\sigma\,\nabla_x\log\rho_t(x,\omega),
\end{aligned}
\end{align}
where $\mu_t(x,\omega)=\rho_t(x,\omega)\,dx\otimes \nu(\omega)$. Here, $\omega\in [0,1]$ list the various types of agents and the function $\alpha=\alpha(\omega,\omega')$ corresponds to a continuous version of weights $\alpha_{ij}$. From the above literature, we have $\nu=d\omega_{\lfloor [0,1]}$ and $\alpha\in L^\infty([0,1]^2)$ for dense graphs \cite{CM-19}, whilst $\nu\in \mathcal{P}([0,\ 1])$ and $\alpha\in L^\infty_\xi([0,1],L^1_{\xi'}([0,1],\nu))\cap L^\infty_{\xi'}([0,1],L^1_{\xi}([0,1],\nu))$ for sparse ones \cite{JPS-21-arxiv}. Interestingly, when both $W$ and $\alpha$ are symmetric we can represent $t\mapsto \mu_t$ as the fibered gradient flow of
$$\mathcal{E}[\mu]:=\frac{1}{2}\iint_{(\mathbb{R}^d\times [0,1])^2}\alpha(\omega,\omega')\,W(x-x')\,d\mu(x,\omega)\,d\mu(x',\omega')+\sigma\int_{\mathbb{R}^d\times [0,1]}\log\rho(x,\omega)\,d\mu(x,\omega),$$
with $\mu=\rho(x,\omega)\,dx\otimes \nu(\omega) \in\mathcal{P}_{2,\nu}(\mathbb{R}^d\times [0,1])$ and $\mathcal{E}[\mu]=+\infty$ otherwise. Indeed, we have $\boldsymbol{u}[\mu]=-\nabla_x \frac{\delta\mathcal{E}}{\delta\rho}$, see Remark \ref{R-examples-from-intro}. In this case, the symmetry $\alpha(\omega,\omega')=\alpha(\omega',\omega)$ of weights  and the fact that $F=-\nabla W$ is conservative are crucial.

\medskip

\noindent $(ii)$ {\bf Multi-species Patlak-Keller-Segel model}. The practical role of the fibers $\omega\in [0,1]$ in \eqref{E-kinetic-nonexchangeable} is to list agents according to their type. Hence, the above PDE may be interpreted as a multi-species model, where $\omega$ parametrizes the various species of the system, and $\alpha=\alpha(\xi,\xi')$ describes the weights of interactions between distinct species. As special case we could set the Newtonian potential $W=W_d$ in dimension $d$, {\it i.e.}, 
$$
W_d(x)=\left\{
\begin{array}{ll}
-\frac{1}{2\pi}\log|x|, & d=2,\\
\frac{\Gamma(\frac{d}{2}+1)}{d(d-2)\pi^{d/2}} \frac{1}{|x|^{d-2}}, & d\geq 3.
\end{array}
\right.
$$
Then, we have for $\mu_t(x,\omega)=\rho_t(x,\omega)\,dx\otimes\nu(\omega)$
\begin{align}\label{E-multispecies-PKS}
\begin{aligned}
 &\partial_t \rho+\divop_x(\rho\,\nabla c)=\sigma\,\Delta_x\rho, \qquad t\geq 0,\quad x\in \mathbb{R}^d,\quad \omega\in [0,1],\\
&-\Delta_x c_t(x,\omega)=\int_0^1\alpha(\omega,\omega')\,\rho_t(x,\omega')\,d\nu(\omega').
\end{aligned}
\end{align}
This is an infinite multi-species Patlak-Keller-Segel model, where $\rho=\rho_t(x,\omega)$ is the density of bacteria and $c=c_t(x,\omega)$ correspond to the density of self-generated chemical of type $\omega$. Here, $\alpha=\alpha(\omega,\omega')$ represents the chemical generation coefficient ruling the relative impact of the bacteria of type $\omega'$ on the generation of chemical of type $\omega$, and $\nu\in \mathcal{P}([0,1])$ is a probability measure governing the distribution of the multiple type of species. If we restrict to atomic $\nu(\omega)=\frac{1}{M}\sum_{k=1}^M\delta_{k/M}(\omega)$, only finitely many species exist namely $\rho^k_t(x)=\rho_t(x,k/M)$ and $\alpha_{kl}=\alpha(k/M,l/M)$ with $k,l=1,\ldots,M$. This reduces the above infinite multi-species model \eqref{E-multispecies-PKS} to the finite multi-species model introduced in \cite{HT-21}, where the energy reads
$$\mathcal{E}(t):=\frac{1}{2M^2}\sum_{k,l=1}^M\alpha_ {kl}\iint_{\mathbb{R}^d\times \mathbb{R}^d}W_d(x-x')\,\rho^k_t(x)\rho^l_t(x')\,dx\,dx'+ \frac{\sigma}{M}\sum_{k=1}^M\int_{\mathbb{R}^d}\rho^k_t(x)\log\rho^k_t(x)\,dx.$$
This free energy was proposed in the above literature to derive the long-time dynamics of the system in dimension $d=2$. We also refer to \cite{BRYZ-21,CDEFS-20,CHS-18,HPS-21-arxiv} for other recent multi-species models in the literature where our methods could be applied.

\subsection{Main goal 3: Applications to singular second-order alignment models}\label{maingoal3}

The final goal of the paper is to focus on a special example of fibered gradient flows \eqref{kurak}, where the energy functional has the following Kuramoto-type shape
\begin{align}\label{W}
\begin{aligned}
&{\mathcal E}_W[\mu]:= - \int_{\RR^{2d}} \omega\cdot x\,d\mu(x,\omega)+K\iint_{\RR^{2d}\times \RR^{2d}} W(x-x')\,d\mu(x,\omega)\, d\mu(x',\omega'),\\
&W(x) = \frac{1}{2-\alpha}\frac{1}{1-\alpha}|x|^{2-\alpha},\quad \nabla W(x) = \frac{1}{1-\alpha}\phi(|x|)x,\quad \phi(|x|)=\frac{1}{|x|^\alpha},
\end{aligned}
\end{align}
with $\alpha \in (0,1)$. We remark that such an energy is reminiscent of the above one for the Kuramoto-Sakaguchi equation \eqref{E-Kuramoto-Sakaguchi}, but it has been set in any arbitrary dimension $d\in \mathbb{N}$, with a less regular (but convex) interaction potential $W\in C^{1,1-\alpha}(\RR^d)$. First, we prove that $\partial_{W_{2,\nu}}\mathcal{E}_W[\mu_t] =\{\boldsymbol{u}[\mu_t]\}$, which reduces \eqref{kurak} to the Kuramoto-type equation
\begin{align}\label{kurakk}
\begin{aligned}
&\partial_t\mu + \divop_x(\boldsymbol{u}[\mu]\mu) = 0,\qquad t\geq 0,\quad (x,\omega)\in \RR^{2d},\\
&\boldsymbol{u}[\mu_t](t,x,\omega):=\omega-K\int_{\RR^{2d}}\nabla W(x-x')\,d\mu_t(x',\omega').
\end{aligned}
\end{align}

Equation \eqref{kurakk} became relevant recently owing to its connection to the singular 1D Cucker-Smale equation. Roughly speaking, the 1D variant of \eqref{kurakk} can be recovered from the 1D Cucker-Smale equation by a simple change of variables. This observation was used in many previous works \cite{CZ-21,HKP-19,ZZ-20} providing a new insight into large-time behavior and well-posedness. It is noteworthy that in the above literature the authors usually treat the weak formulations for \eqref{kurakk} as a new relaxed formulation for the 1D Cucker-Smale equation. That is not the case in our recent work \cite{PP-22-2-arxiv}, wherein we deal precisely with the multidimensional version of \eqref{kurakk} and provide separate definitions of weak solutions of \eqref{kurakk} and of a certain multidimensional second-order alignment model. In \cite{PP-22-2-arxiv} we establish sufficient condition for equivalence between such weak formulations. In particular we are able to transfer the information on uniqueness of solutions to \eqref{kurak} to the uniqueness of solutions to \eqref{kurakk} and to weak solutions of our second-order alignment model. In particular we prove well-posedness for the weakly singular 1D Cucker-Smale kinetic equation with singularity in \eqref{W} of order $\alpha\in(0,\frac{2}{3})$ and conditionally of order $\alpha\in(0,1)$, expanding on the regime assumed in \cite{P-14,P-15,CCMP-17} ({\it i.e.} $\alpha\in(0,\frac{1}{2})$). For more information on the singular Cucker-Smale model we refer to the survey \cite{MMPZ-19}.

To summarize, our efforts in this area amount to the following meta-theorem.

\medskip

\begin{mdframed}
{\bf Theorem B.} Problem \eqref{kurakk} is well posed and Theorem A applies to the above energy functional $\mathcal{E}_W$ in \eqref{W}, so that the solutions of \eqref{kurakk} agree with the associated fibered gradient flows \eqref{kurak}. Furthermore, \eqref{kurakk} admits a global-in-time contractivity/stability estimate in appropriate fibered-type distances, which ensures that any solution converges to an equilibrium and is globally-in-time recovered by a mean-field limit. A rigorous formulation of Theorem B can be found in Section \ref{apple}, Theorem \ref{t-B-rig}.
\end{mdframed}

\medskip

In light of Theorem A, the only unclear part of Theorem B is related to the contractivity/stability estimates, which is the main issue resolved in Section \ref{apple}. It is worthwhile to compare our contribution to previous works in this direction, {\it cf.} \cite{CCHKK-14, MP-22, P-19-arxiv, CZ-21}. 

On the one hand, in \cite{CCHKK-14,MP-22} the authors provided a contractivity estimate for the Kuramoto--Sakaguchi equation \eqref{E-Kuramoto-Sakaguchi}. This form of contraction was crucial to quantify the long-time dynamics in the large coupling strength regime, departing either from compactly supported initial data on a sufficiently small interval \cite{CCHKK-14}, or from generic initial data \cite{MP-22}. On the other hand, in \cite{P-19-arxiv} the second author derived a Dobrushin-type stability estimate for the Kuramoto--Sakaguchi equation with singular, one-sided Lipschitz kernels. Finally the contribution in \cite{CZ-21} deals exactly with a global-in-time contractivity estimate for the 1D version of \eqref{kurakk} by exploiting the uniform-in-time stability of the associated particle system and a useful reformulation of the system in $1D$. We expand this result to the multidimensional case, while keeping the argumentation purely kinetic.

\section{Preliminaries }\label{sec:prelim}

\subsection{Classical optimal transport}\label{subsec:classical-optimal-transport}
In this section we review a selection of basics from optimal transport theory that can be found for example in \cite{AGS-08,S-15,V-09}. We start by recalling some tools from measure theory. 

\begin{defi}[Probability measures]
The space of probability measures on $\RR^d$ is denoted by $\mathcal{P}(\RR^d)$. Furthermore, we define the subspace of probability measures with finite second order moment as
$$\mathcal{P}_2(\RR^d):=\left\{\mu \in\mathcal{P}(\RR^d):\,\int_{\RR^d}\vert x\vert^2\,d\mu(x)<\infty\right\}.$$
\end{defi}

The natural topology for probability measures in $\RR^d$ is the so called \textit{narrow topology}, which is complete, separable and metrizable, and sequentially characterized as follows.

\begin{defi}[Narrow convergence]\label{D-narrow-convergence}
Consider any $\mu$ and any sequence $\{\mu_n\}_{n\in \mathbb{N}}$ in $\mathcal{P}(\mathbb{R}^d)$. We say that $\mu_n$ converges narrowly to $\mu$ in $\mathcal{P}(\mathbb{R}^d)$ if
$$\lim_{n\rightarrow \infty}\int_{\mathbb{R}^d}\varphi \,d\mu_n=\int_{\mathbb{R}^d}\varphi\,d\mu,$$
for every $\varphi \in C_b(\mathbb{R}^d)$.
\end{defi}
Except otherwise stated, $\mathcal{P}(\mathbb{R}^d)$ will be endowed with the narrow topology. It is a Polish space, {\it i.e.}, a complete, separable and metrizable topological space, {\it cf.} \cite[Theorem 11.3.3 and Corollary 11.5.5]{D-02} and \cite[Theorem 6.8]{B-99}. In fact, several different distances can be defined to metrize the narrow topology. We recall two of them that we shall use along the paper.

\begin{defi}[L\'evy-Prokhorov and bounded-Lipschitz distances]\label{D-Levy-Prokhorov-bounded-Lipschitz-distances}
Consider $\mu_1,\mu_2\in \mathcal{P}(\mathbb{R}^d)$. We respectively define their L\'evy-Prokhorov and bounded-Lipschitz distances by
\begin{align}
d_{LP}(\mu_1,\mu_2)&:=\inf\left\{\varepsilon>0:\,\mu_1(B)\leq \mu_2(B_\varepsilon)+\varepsilon,\,\mbox{for all Borel set }B\subseteq\mathbb{R}^d\right\},\label{E-Levy-Prohorov-distance}\\
d_{\BL}(\mu_1,\mu_2)&:=\sup\left\{\left\vert \int_{\mathbb{R}^d}\varphi\,d(\mu_1-\mu_2)\right\vert:\,\varphi\in \BL_1(\mathbb{R}^d)\right\}.\label{E-bounded-Lipschitz-distance}
\end{align}
Here, $B_\varepsilon:=\{x\in \mathbb{R}^d:\,\dist(x,B)\leq \varepsilon\}$ and $\BL_1(\mathbb{R}^d)$ consists of the bounded-Lipschitz functions $\varphi\in \BL(\mathbb{R}^d)$ such that $\Vert \varphi\Vert_{\BL(\mathbb{R}^d)}\leq 1$, for the bounded-Lipschitz norm
\begin{equation}\label{E-bounded-Lipschitz-norm}
\Vert \varphi\Vert_{\BL(\mathbb{R}^d)}=\max\{\Vert \varphi\Vert_{L^\infty(\mathbb{R}^d)},[\varphi]_{\Lip}\}.
\end{equation}
\end{defi}

A usual characterization of narrow convergence is the classical Portmanteau's theorem, see for instance \cite[Theorem 2.1]{B-99}.

\begin{theo}[Portmanteau]\label{T-Portmanteau}
Consider $\mu \in \mathcal{P}(\mathbb{R}^d)$ and any sequence $\{\mu_n\}_{n\in \mathbb{N}}\subseteq \mathcal{P}(\mathbb{R}^d)$. Then, the following statements are equivalent:
\begin{enumerate}[label=(\roman*)]
\item $\mu_n\rightarrow \mu$ narrowly in $\mathcal{P}(\mathbb{R}^d)$.
\item For any $\varphi\in \BL(\mathbb{R}^d)$, we have
$$\lim_{n\rightarrow\infty}\int_{\mathbb{R}^d}\varphi\,d\mu_n=\int_{\mathbb{R}^d}\varphi\,d\mu.$$
\item $\mu(U)\leq \liminf_{n\rightarrow \infty}\mu_n(U)$, for every open set $U\subseteq \mathbb{R}^d$.
\item $\limsup_{n\rightarrow \infty}\mu_n(C)\leq \mu(C)$, for every closed set $C\subseteq \mathbb{R}^d$.
\end{enumerate}
\end{theo}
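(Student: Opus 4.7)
The plan is to establish the chain of implications $(i) \Rightarrow (ii) \Rightarrow (iii) \Leftrightarrow (iv) \Rightarrow (i)$, which is the classical cyclic scheme for Portmanteau's theorem. The implication $(i) \Rightarrow (ii)$ is immediate from the inclusion $\BL(\mathbb{R}^d) \subseteq C_b(\mathbb{R}^d)$, and the equivalence $(iii) \Leftrightarrow (iv)$ follows by complementation: for a closed set $C$ set $U = \mathbb{R}^d\setminus C$, write $\mu_n(C) = 1 - \mu_n(U)$ and $\mu(C) = 1 - \mu(U)$, and use that $\limsup(-a_n) = -\liminf a_n$. So the real content lies in the two remaining implications.

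For $(ii) \Rightarrow (iii)$, the plan is to approximate the indicator of an open set $U$ from below by explicit bounded-Lipschitz cutoffs. I would use
\begin{equation*}
\varphi_k(x) := \min\{k\,\dist(x,\mathbb{R}^d\setminus U),\,1\}, \qquad k\in \mathbb{N},
\end{equation*}
which are bounded by $1$, $k$-Lipschitz (hence in $\BL_1$ up to normalization), and satisfy $\varphi_k \nearrow \mathbf{1}_U$ pointwise as $k\to\infty$, since $\dist(x,\mathbb{R}^d\setminus U) > 0$ exactly when $x\in U$. Because $\varphi_k \leq \mathbf{1}_U$, condition $(ii)$ yields $\int \varphi_k\, d\mu = \lim_n \int \varphi_k\, d\mu_n \leq \liminf_n \mu_n(U)$, and passing $k\to\infty$ via monotone convergence produces $\mu(U) \leq \liminf_n \mu_n(U)$.

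To close the cycle with $(iii)$ and $(iv) \Rightarrow (i)$, the plan is to test against an arbitrary $\varphi \in C_b(\mathbb{R}^d)$; after an affine normalization I may assume $0 \leq \varphi \leq 1$. I would exploit the layer-cake identity
\begin{equation*}
\int_{\mathbb{R}^d} \varphi\, d\mu = \int_0^1 \mu(\{\varphi > t\})\, dt,
\end{equation*}
and the analogous formula for $\mu_n$. Since $\{\varphi > t\}$ is open, $(iii)$ combined with Fatou's lemma gives $\int \varphi\, d\mu \leq \liminf_n \int \varphi\, d\mu_n$. The matching upper bound $\limsup_n \int \varphi\, d\mu_n \leq \int \varphi\, d\mu$ follows either from $(iv)$ applied to the closed superlevel sets $\{\varphi \geq t\}$, or, more economically, by applying the previous $\liminf$ bound to $1-\varphi$. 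Combining both yields narrow convergence.

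The only non-routine step is the bounded-Lipschitz approximation in $(ii) \Rightarrow (iii)$: this is the unique point where one must reconcile the restriction from the full class $C_b$ down to the smaller class $\BL$, and it hinges on the fact that $\dist(\cdot,\mathbb{R}^d\setminus U)$ is itself $1$-Lipschitz and strictly positive precisely on $U$. The rest of the argument is a standard Fatou-plus-layer-cake maneuver, so I do not expect any serious obstacle beyond a careful handling of these approximations.
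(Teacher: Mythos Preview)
Your proof is correct and follows the standard cyclic scheme for Portmanteau's theorem. The paper does not actually supply its own proof of this statement: it is quoted as a classical result with a reference to \cite[Theorem 2.1]{B-99}, so there is nothing to compare against beyond noting that your argument is the textbook one.
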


Moreover, narrow compactness is fully characterized by the so-called Prokhorov's compactness theorem, see for instance \cite[Theorems 5.1 and 5.2]{B-99}.

\begin{theo}[Prokhorov]\label{T-Prokhorov}
Consider any family of measures $\Gamma\subseteq \mathcal{P}(\mathbb{R}^d)$. Then, $\Gamma$ is relatively compact in the narrow topology of $\mathcal{P}(\mathbb{R}^d)$ if, and only if, $\Gamma$ is uniformly tight, {\it i.e.}, for every $\varepsilon>0$ there exists a compact set $K\subseteq \mathbb{R}^d$ such that $\mu(K)\geq 1-\varepsilon$ for every $\mu\in \Gamma$.
\end{theo}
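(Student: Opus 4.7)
The plan is to split the proof into two independent directions and exploit the $\sigma$-compactness of $\mathbb{R}^d$ (exhausted by the closed balls $B_n := \overline{B(0,n)}$) together with Portmanteau's theorem \ref{T-Portmanteau}.

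For the necessity direction (relative compactness $\Rightarrow$ tightness) I would argue by contradiction. Assuming that $\Gamma$ is relatively compact but not uniformly tight, I would extract $\varepsilon_0>0$ and a sequence $\{\mu_n\}\subseteq\Gamma$ with $\mu_n(B_n)<1-\varepsilon_0$. By relative compactness, up to a subsequence $\mu_n\to\mu$ narrowly for some $\mu\in\mathcal{P}(\mathbb{R}^d)$. Fixing $m\in\mathbb{N}$ and applying Portmanteau to the open ball $\mathring{B}_m$, I would deduce
\begin{equation*}
\mu(\mathring{B}_m)\leq \liminf_{n\to\infty}\mu_n(\mathring{B}_m)\leq \liminf_{n\to\infty}\mu_n(B_n)\leq 1-\varepsilon_0,
\end{equation*}
and then send $m\to\infty$ by monotone convergence to obtain $\mu(\mathbb{R}^d)\leq 1-\varepsilon_0<1$, contradicting $\mu\in \mathcal{P}(\mathbb{R}^d)$.

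For the sufficiency direction (tightness $\Rightarrow$ relative compactness), since the narrow topology on $\mathcal{P}(\mathbb{R}^d)$ is metrizable, it suffices to extract a narrowly convergent subsequence from any sequence $\{\mu_n\}\subseteq\Gamma$. The plan is: (i) use uniform tightness to select an increasing family of compacts $K_j\subseteq\mathbb{R}^d$ with $\mu_n(\mathbb{R}^d\setminus K_j)<1/j$ uniformly in $n$; (ii) view the restrictions $\mu_n\lfloor K_j$ as elements of the dual $C(K_j)^*$ of norm $\leq 1$, and invoke Banach--Alaoglu (using separability of $C(K_j)$ for sequential compactness) to extract, on each fixed $j$, a weak-$*$ convergent subsequence to some positive Radon measure $\sigma_j$ on $K_j$; (iii) apply a diagonal extraction to obtain a single subsequence $\{\mu_{n_k}\}$ that converges on every $K_j$; (iv) verify that $\sigma_{j}=\sigma_{j'}\lfloor K_j$ for $j\leq j'$, so that the $\sigma_j$ are compatible restrictions of a unique positive Borel measure $\mu$ on $\mathbb{R}^d$ with $\mu(\mathbb{R}^d)\leq 1$; (v) check that in fact $\mu(\mathbb{R}^d)=1$, using the uniform tightness to write $\mu(K_j)\geq 1-1/j$; and finally (vi) promote weak-$*$ convergence on each compact to narrow convergence on $\mathbb{R}^d$ by splitting any $\varphi\in C_b(\mathbb{R}^d)$ into its restriction to $K_j$ (extended continuously via Tietze) plus an error controlled by $\|\varphi\|_\infty/j$ on $\mathbb{R}^d\setminus K_j$, and then letting $k\to\infty$ followed by $j\to\infty$.

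The main obstacle I foresee is step (vi): patching local weak-$*$ limits on each compact $K_j$ into a genuine narrow limit on the non-compact space $\mathbb{R}^d$. One must carefully handle the tails outside $K_j$, and the key is that uniform tightness provides a bound on those tails that is independent of the index $n$, so the approximation error can be made small uniformly along the extracted subsequence. Once this coupling between the tightness estimate and the compact weak-$*$ convergence is cleanly executed, everything else is essentially bookkeeping within the classical measure-theoretic framework.
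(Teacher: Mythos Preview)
The paper does not prove Theorem~\ref{T-Prokhorov}; it is quoted as a classical result and referenced to Billingsley. Your outline follows a standard route and is essentially correct, with one caveat.

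Your anticipated obstacle is step~(vi), but the more delicate point is actually step~(iv): the compatibility $\sigma_{j'}\lfloor K_j=\sigma_j$ can fail because of boundary effects. For instance, on $\mathbb{R}$ with $K_j=[-j,j]$, the sequence $\mu_n=\delta_{j+1/n}$ (for a fixed $j$) satisfies $\mu_n\lfloor K_j=0\to 0$, while $\mu_n\lfloor K_{j+1}=\delta_{j+1/n}\to\delta_j$ weak-$*$ in $C(K_{j+1})^*$, giving $\sigma_{j+1}\lfloor K_j=\delta_j\neq 0=\sigma_j$. Weak-$*$ convergence of the restrictions does not detect mass sliding onto $\partial K_j$ from the outside. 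A clean fix is to bypass the restrict-and-glue strategy entirely: apply Banach--Alaoglu directly on the separable space $C_0(\mathbb{R}^d)$ to extract a subsequence $\mu_{n_k}\to\mu$ weak-$*$ in $C_0(\mathbb{R}^d)^*$ with $\mu\geq 0$ and $\mu(\mathbb{R}^d)\leq 1$; then your steps~(v) and~(vi), exactly as written (using a compactly supported cutoff $\psi_j\in C_c(\mathbb{R}^d)$ with $\chi_{K_j}\leq\psi_j\leq 1$), show $\mu(\mathbb{R}^d)=1$ and upgrade weak-$*$ to narrow convergence. The necessity direction is fine as written.
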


Now, we recall some tools from classical optimal transport.

\begin{defi}[Push forward measure]
Set $d_1,d_2\in \mathbb{N}$, a Borel-measurable map $T:\RR^{d_1}\to\RR^{d_2}$ and a non-negative finite Radon measure $\mu\in \mathcal{P}(\RR^{d_1})$. Then, the pushforward of $\mu$ along $T$ is defined as the measure $T_\# \mu\in \mathcal{P}(\mathbb{R}^{d_2})$ such that
\begin{align*}
(T_\#\mu)(B) = \mu(T^{-1}(B)),
\end{align*}
for every Borel set $B\subseteq \RR^{d_2}$. Furthermore, a measurable function $g:\RR^{d_2}\longrightarrow \RR$ is $T_\#\mu$-integrable if, and only if, $g\circ T:\RR^{d_1}\longrightarrow \RR$ is $\mu$-integrable and we have
\begin{align*}
\int_{\RR^{d_2}}g\,d(T_\#\mu) = \int_{\RR^{d_1}} (g\circ T)\,d\mu.
\end{align*}
\end{defi}

One of the main contributions of classical transport theory is that it provides an appropriate metrization of the subspace $\mathcal{P}_2(\mathbb{R}^d)$ compatible with the narrow topology, using the so called \textit{quadratic Wasserstein distance}. We shall recall the main concepts in the sequel.

\begin{defi}[Transference plans]
Set $d_1,\ldots,d_n\in \mathbb{N}$ and $\mu_i\in \mathcal{P}(\mathbb{R}^{d_i})$ for $i=1,\ldots,n$. We say that a probability measure $\gamma\in \mathcal{P}(\RR^{d_1+\cdots +d_n})$ is a transference plan between $\mu_1,\ldots,\mu_n$ when 
$$(\pi_i)_\#\gamma = \mu_i,$$
for all $i=1,\ldots,n$. Here, $\pi_i:\RR^{d_1+\cdots + d_n}\longrightarrow \RR^{d_i}$ is the projection onto the $i$-th component. The set of all transport plans between $\mu_1,\ldots,\mu_n$ will be denoted by $\Gamma(\mu_1,...,\mu_n)$.
\end{defi}

\begin{defi}[Quadratic Wasserstein distance]\label{D-Wasserstein}
Consider $\mu_1,\mu_2\in {\mathcal P}_2(\RR^d)$. We shall define $W_2$ by the solution of the following optimal transportation problem
\begin{align}\label{wasdef}
W_2(\mu_1, \mu_2) = \left(\inf_{\gamma\in \Gamma(\mu_1,\mu_2)}\int_{\RR^{2d}}\vert x-x'\vert^2\,d\gamma(x,x')\right)^{1/2}.
\end{align}
\end{defi}

The existence of minimizers $\gamma$ of the above problem \eqref{wasdef} is guaranteed by the narrow compactness of the set of transference plan $\Gamma(\mu_1,\mu_2)$ and the lower semicontinuity of the functional, see \cite[Theorem 4.1]{V-09}.

\begin{defi}[Optimal transference plans]
We say that $\gamma\in \Gamma(\mu_1,\mu_2)$ is an optimal transference plan between $\mu_1$ and $\mu_2$ if it is a minimizer of \eqref{wasdef}. The set of all such optimal transference plans will be denoted by $\Gamma_o(\mu_1,\mu_2)$.
\end{defi}

We recall the class of optimal transference plans $\gamma\in \Gamma_o(\mu_1,\mu_2)$ of the form $\gamma:=(I,T)_\#\mu_1$, where $T:\mathbb{R}^d\longrightarrow \mathbb{R}^d$ is a Borel-measurable map, such that $T_\#\mu_1=\mu_2$. These maps $T$ are usually called {\it optimal transport maps} and, when they exist, they allow restating $W_2$ in the following equivalent way
$$W_2(\mu_1,\mu_2)=\left(\int_{\RR^d}\vert x-T(x)\vert^2\,d\mu_1(x)\right)^{1/2}.$$

We end this section by recalling the following classical results, which characterize the metric structure and convergence of the quadratic Wasserstein space. See \cite[Proposition 7.1.5]{AGS-08} and \cite[Theorem 6.9]{V-09} for further details and proofs.

\begin{pro}[Quadratic Wasserstein space]
The space $(\mathcal{P}_2(\RR^d),W_2)$ is a Polish space, {\it i.e.}, a complete, separable metric space.
\end{pro}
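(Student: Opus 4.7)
The plan is to verify that $W_2$ is a distance on $\mathcal{P}_2(\RR^d)$ and that the resulting metric space is both separable and complete. Non-negativity and symmetry are immediate from Definition \ref{D-Wasserstein}: symmetry follows by pushing any transference plan forward by the flip $(x,x')\mapsto(x',x)$, which preserves both the cost $|x-x'|^2$ and the marginal constraints. If $W_2(\mu_1,\mu_2)=0$, any optimal plan must be concentrated on the diagonal, which forces $\mu_1=\mu_2$ through the marginal identifications. The only nontrivial metric property is the triangle inequality. For this I would take $\mu_1,\mu_2,\mu_3\in\mathcal{P}_2(\RR^d)$, pick optimal plans $\gamma_{12}\in\Gamma_o(\mu_1,\mu_2)$ and $\gamma_{23}\in\Gamma_o(\mu_2,\mu_3)$, and invoke the standard gluing lemma (a direct consequence of disintegrating $\gamma_{12}$ and $\gamma_{23}$ with respect to their common marginal $\mu_2$) to construct a measure $\sigma\in\mathcal{P}(\RR^{3d})$ with $(\pi_{1,2})_\#\sigma=\gamma_{12}$ and $(\pi_{2,3})_\#\sigma=\gamma_{23}$. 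Then $(\pi_{1,3})_\#\sigma$ lies in $\Gamma(\mu_1,\mu_3)$, and Minkowski's inequality in $L^2(\sigma)$ applied to $|x_1-x_3|\leq|x_1-x_2|+|x_2-x_3|$ yields the triangle inequality.

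For separability, I would take as countable dense set the atomic measures $\sum_{i=1}^N q_i\delta_{x_i}$ with $N\in\NN$, rational weights $q_i\in\mathbb{Q}\cap[0,1]$ summing to $1$, and atoms $x_i\in\mathbb{Q}^d$. Given $\mu\in\mathcal{P}_2(\RR^d)$ and $\varepsilon>0$, I would first truncate by choosing $R>0$ large enough that $\int_{|x|>R}|x|^2\,d\mu<\varepsilon^2$, then partition the ball $B_R$ into finitely many Borel cells $\{C_i\}_{i=1}^N$ of diameter less than $\varepsilon$ with representative points $x_i\in C_i\cap\mathbb{Q}^d$. Sending each cell to its representative defines a transport map whose cost is bounded by $\varepsilon^2+4R^2\cdot\mu(\RR^d\setminus B_R)$ plus the tail moment; a final rational perturbation of the weights $\mu(C_i)$ preserves this estimate up to an $O(\varepsilon)$ error, so the resulting rational atomic measure approximates $\mu$ in $W_2$.

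The main obstacle is completeness, and specifically the step of upgrading narrow convergence to $W_2$-convergence. Given a $W_2$-Cauchy sequence $\{\mu_n\}\subset\mathcal{P}_2(\RR^d)$, I would first observe that by picking any reference $\mu_0\in\mathcal{P}_2(\RR^d)$, the Cauchy property bounds $W_2(\mu_n,\mu_0)$ uniformly, so a Minkowski-type comparison between $W_2$ and second moments yields $\sup_n\int|x|^2\,d\mu_n<\infty$. Chebyshev's inequality then gives uniform tightness, and Prokhorov's theorem \ref{T-Prokhorov} extracts a subsequence $\mu_{n_k}$ converging narrowly to some $\mu\in\mathcal{P}(\RR^d)$; Fatou applied to $|x|^2$ places $\mu\in\mathcal{P}_2(\RR^d)$. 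The delicate part is showing $W_2(\mu_{n_k},\mu)\to 0$. I would argue it via uniform integrability: the $W_2$-Cauchy property combined with the transport comparison shows that for any $\eta>0$ one can find $R$ such that $\sup_n\int_{|x|>R}|x|^2\,d\mu_n<\eta$, so the truncated moments converge along the narrowly convergent subsequence. This uniform integrability, together with narrow convergence and the lower semicontinuity of $W_2$ under narrow convergence (applied to both $(\mu_{n_k},\mu_{n_l})$ and the limit), gives $\limsup_k W_2(\mu_{n_k},\mu)^2\leq\liminf_{l,k\to\infty}W_2(\mu_{n_k},\mu_{n_l})^2=0$. Finally, since $\{\mu_n\}$ is Cauchy and admits a $W_2$-convergent subsequence, the full sequence converges to $\mu$ in $W_2$.
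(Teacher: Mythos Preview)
Your proof is correct and follows the standard route found in the references the paper cites. However, the paper itself does not prove this proposition at all: it is stated as a classical result with the proof deferred to \cite[Proposition 7.1.5]{AGS-08} and \cite[Theorem 6.9]{V-09}. So there is no ``paper's own proof'' to compare against; you have supplied what the paper deliberately omits.

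One minor remark on your completeness argument: the uniform integrability of $|x|^2$ that you extract from the $W_2$-Cauchy condition is correct and useful, but for the final step you do not actually need it. Narrow lower semicontinuity of $W_2$ alone already gives, for each fixed $k$,
\[
W_2(\mu_{n_k},\mu)\leq \liminf_{l\to\infty} W_2(\mu_{n_k},\mu_{n_l}),
\]
and taking $\limsup_k$ together with the Cauchy property finishes. The uniform integrability is what you would use if you instead wanted to conclude via the characterization of $W_2$-convergence (narrow convergence plus convergence of second moments, i.e.\ Proposition \ref{P-characterization-convergence-W2}); either route works.
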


\begin{pro}[Convergence in $(\mathcal{P}_2(\mathbb{R}^d),W_2)$]\label{P-characterization-convergence-W2}
Set any $\mu\in \mathcal{P}_2(\mathbb{R}^d)$ and  $\{\mu_n\}_{n\in \mathbb{N}}\subseteq \mathcal{P}_2(\mathbb{R}^d)$. Then, $\mu_n\rightarrow \mu$ in $W_2$ if, and only if, the following conditions are fulfilled:
\begin{enumerate}
\item (Narrow convergence) $\mu_n\rightarrow \mu$ narrowly in $\mathcal{P}(\mathbb{R}^d)$.
\item (Convergence of $2$-moments) The following conditions is verified
\begin{equation}\label{E-convergent-2-moments}
\lim_{n\rightarrow \infty}\int_{\mathbb{R}^d}\vert x\vert^2\,d\mu_n(x)=\int_{\mathbb{R}^d}\vert x\vert^2\,d\mu(x).
\end{equation}
\end{enumerate}
\end{pro}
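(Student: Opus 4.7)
The plan is to prove both implications by systematically exploiting optimal transference plans together with the triangle inequality in $L^2(\gamma)$, and to handle the unbounded cost $|x-y|^2$ against narrow convergence via a uniform integrability argument driven by hypothesis \eqref{E-convergent-2-moments}.

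\textbf{Forward direction:} Assume $W_2(\mu_n,\mu)\to 0$. Pick $\gamma_n\in \Gamma_o(\mu_n,\mu)$. First, the triangle inequality for the $L^2(\gamma_n)$-norm applied to the maps $(x,y)\mapsto x$ and $(x,y)\mapsto y$ yields
\begin{equation*}
\left|\left(\int_{\RR^d}|x|^2\,d\mu_n\right)^{1/2}-\left(\int_{\RR^d}|y|^2\,d\mu\right)^{1/2}\right|\leq \left(\int_{\RR^{2d}}|x-y|^2\,d\gamma_n\right)^{1/2}=W_2(\mu_n,\mu)\to 0,
\end{equation*}
proving convergence of second moments. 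Second, for any $\varphi\in \BL(\RR^d)$, a similar use of $\gamma_n$ and Jensen's inequality gives
$\bigl|\int\varphi\,d\mu_n-\int\varphi\,d\mu\bigr|\leq [\varphi]_{\Lip}\,W_2(\mu_n,\mu)\to 0.$
Since the family $\{\mu_n\}$ has uniformly bounded second moments, Markov's inequality shows it is uniformly tight, so by Prokhorov's Theorem \ref{T-Prokhorov} any narrow limit point is, by the above, equal to $\mu$. Hence $\mu_n\to\mu$ narrowly.

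\textbf{Reverse direction:} Assume narrow convergence and \eqref{E-convergent-2-moments}. Pick $\gamma_n\in \Gamma_o(\mu_n,\mu)$. Tightness of the marginals of $\gamma_n$ implies tightness of $\{\gamma_n\}$, so by Prokhorov we extract a subsequence (not relabeled) with $\gamma_n\to\gamma$ narrowly for some $\gamma\in \mathcal{P}(\RR^{2d})$. Passing to the limit in the marginal constraint $(\pi_1)_\#\gamma_n=\mu_n$ and $(\pi_2)_\#\gamma_n=\mu$ shows $\gamma\in \Gamma(\mu,\mu)$.

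\textbf{Limit passage in the unbounded cost:} The key technical step is
$\int |x-y|^2\,d\gamma_n\to \int |x-y|^2\,d\gamma$, which cannot be deduced directly from narrow convergence since $|x-y|^2$ is unbounded. I would expand
\begin{equation*}
\int_{\RR^{2d}}|x-y|^2\,d\gamma_n=\int_{\RR^d}|x|^2\,d\mu_n+\int_{\RR^d}|y|^2\,d\mu-2\int_{\RR^{2d}} x\cdot y\,d\gamma_n.
\end{equation*}
The first two integrals converge to $2\int |x|^2\,d\mu$ by the hypothesis. For the cross term, note that $|x\cdot y|\leq \tfrac{1}{2}(|x|^2+|y|^2)$, and the integrals of this upper dominant converge along $\gamma_n$. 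Combined with narrow convergence $\gamma_n\to\gamma$, this gives uniform integrability of $x\cdot y$ against $\{\gamma_n\}$, hence $\int x\cdot y\,d\gamma_n\to \int x\cdot y\,d\gamma$ by a standard Vitali-type argument. I expect this uniform integrability step to be the main subtlety, although it is by now a routine lemma.

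\textbf{Conclusion:} The limit $\gamma\in\Gamma(\mu,\mu)$ satisfies $\int |x-y|^2\,d\gamma\geq W_2(\mu,\mu)^2=0$; but $W_2(\mu,\mu)=0$ forces $\gamma$ to be supported on the diagonal and therefore $\gamma=(I\times I)_\#\mu$, so in fact $\int|x-y|^2\,d\gamma=0$. Combining, $W_2(\mu_n,\mu)^2\to 0$ along the extracted subsequence. Since every subsequence of $\{\mu_n\}$ admits a further subsequence converging to $\mu$ in $W_2$ by the same argument, the whole sequence satisfies $W_2(\mu_n,\mu)\to 0$.
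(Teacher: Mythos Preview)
The paper does not prove this classical proposition; it merely cites \cite[Proposition 7.1.5]{AGS-08} and \cite[Theorem 6.9]{V-09}. Your forward direction is correct (and the tightness/Prokhorov detour is unnecessary once you have convergence against all $\varphi\in\BL(\RR^d)$, by Portmanteau).

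There is, however, a genuine gap in the \textbf{Conclusion} step of the reverse direction. You write that ``$W_2(\mu,\mu)=0$ forces $\gamma$ to be supported on the diagonal''. This is false for a general $\gamma\in\Gamma(\mu,\mu)$: take $\mu$ uniform on two points and $\gamma=\mu\otimes\mu$. The vanishing of $W_2(\mu,\mu)$ only says that the \emph{optimal} plan has zero cost; it says nothing about your particular narrow limit $\gamma$. What you have actually proved is $W_2(\mu_n,\mu)^2\to\int|x-y|^2\,d\gamma$, and you need this limit to be $0$, which requires $\gamma\in\Gamma_o(\mu,\mu)$.

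The missing ingredient is the stability of optimal transference plans under narrow convergence: since each $\gamma_n\in\Gamma_o(\mu_n,\mu)$ and $\gamma_n\to\gamma$ narrowly, the limit $\gamma$ is itself optimal (see \cite[Proposition 7.1.3]{AGS-08} or \cite[Theorem 5.20]{V-09}). Then $\int|x-y|^2\,d\gamma=W_2(\mu,\mu)^2=0$ and the argument closes. This is exactly the mechanism the paper uses in its proof of the fibered analogue, Proposition \ref{P-characterization-convergence-W2nu}, where the stability result (Proposition \ref{P-stability-optimal-plans}) is invoked explicitly to ensure the limit plan lies in $\Gamma_{o,\nu}(\mu,\mu)$.
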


\subsection{Random probability measures}\label{subsec:random-probability-measures}

As proposed in the Remark \ref{ohhnaivety} the measure-valued solutions to \eqref{kurak} cannot take arbitrary values in the full space $\mathcal{P}(\mathbb{R}^{2d})$. Instead, they are confined to a specific subspace of probability measures with prescribed marginal distributions with respect to $\omega$. This suggests the following definition.

\begin{defi}[Fibered probability measures]\label{D-fibered-measures}
Let $\nu\in \mathcal{P}(\mathbb{R}^d)$ be any probability measure. We define the following subset of $\mathcal{P}(\mathbb{R}^{2d})$
\begin{equation}\label{E-fibered-measures}\mathcal{P}_\nu(\mathbb{R}^{2d}):=\{\mu \in\mathcal{P}(\mathbb{R}^{2d}):\,\pi_{\omega\#}\mu=\nu\}.
\end{equation}
\end{defi}

In order to handle measures $\mu\in \mathcal{P}_\nu(\mathbb{R}^{2d})$ we just need to understand the $x$-dependent distribution at each value of fiber $\omega\in \mathbb{R}^d$. To such an end we shall systematically use conditional probabilities, which can be computed by virtue of the classical disintegration theorem.

\begin{theo}[Disintegration]\label{dis}
Set $d_1,d_2\in \mathbb{N}$, define the projection onto the second component $\pi_2:\mathbb{R}^{d_1}\times \mathbb{R}^{d_2}\longrightarrow \mathbb{R}^{d_2}$ and consider $\mu\in \mathcal{P}(\mathbb{R}^{d_1}\times \mathbb{R}^{d_2})$ and $\nu:=(\pi_2)_\# \mu\in \mathcal{P}(\mathbb{R}^{d_2})$. Then, there exists a family of probability measures $\{\mu^{x_2}\}_{x_2\in \mathbb{R}^{d_2}}\subseteq \mathcal{P}(\mathbb{R}^{d_1})$, which is uniquely defined $\nu$-a.e and verifies the following properties:
\begin{enumerate}[label=(\roman*)]
\item {\it (Borel family)} The following map is Borel-measurable
$$
x_2\in \mathbb{R}^{d_2}  \longmapsto  \mu^{x_2}(B),
$$
for every Borel set $B\subseteq \mathbb{R}^{d_1}$.
\item {\it (Disintegration formula)} The following formula holds true
\begin{align}\label{disb}
\iint_{\mathbb{R}^{d_1}\times \mathbb{R}^{d_2}} \varphi(x_1,x_2)\,d\mu(x_1,x_2)=\int_{\mathbb{R}^{d_2}}\left(\int_{\mathbb{R}^{d_1}}\varphi(x_1,x_2)\,d\mu^{x_2}(x_1)\right)\,d\nu(x_2),
\end{align}
for every Borel-measurable map $\varphi:\mathbb{R}^{d_1}\times \mathbb{R}^{d_2}\longmapsto [0,+\infty)$.
\end{enumerate}
\end{theo}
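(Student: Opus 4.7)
The plan is to construct the family $\{\mu^{x_2}\}_{x_2\in\mathbb{R}^{d_2}}$ via a Radon--Nikodym argument applied to a countable generating algebra of Borel sets in $\mathbb{R}^{d_1}$, exploiting that $\mathbb{R}^{d_1}$ is a Polish (hence standard Borel) space. First I would fix a countable algebra $\mathcal{A}\subseteq \mathcal{B}(\mathbb{R}^{d_1})$ that generates the Borel $\sigma$-algebra (for instance, finite disjoint unions of rational half-open rectangles). For each $B\in\mathcal{A}$, define the finite Borel measure $\mu_B$ on $\mathbb{R}^{d_2}$ by $\mu_B(A):=\mu(B\times A)$. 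By monotonicity of $\mu$, we have $\mu_B\leq\nu$ and hence $\mu_B\ll \nu$, so the Radon--Nikodym theorem supplies a Borel density $f_B\in L^1(\mathbb{R}^{d_2},\nu)$ with $0\leq f_B\leq 1$ $\nu$-a.e.

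The second step is to pass from densities to measures. Since $\mathcal{A}$ is countable, there exists a single $\nu$-null set $N\subseteq \mathbb{R}^{d_2}$ outside of which all of the following hold simultaneously for every $B,B'\in \mathcal{A}$: finite additivity $f_{B\sqcup B'}(x_2)=f_B(x_2)+f_{B'}(x_2)$ when $B\cap B'=\emptyset$; monotone convergence $f_{B_n}(x_2)\downarrow 0$ along countable decreasing sequences in $\mathcal{A}$ with $\bigcap_n B_n=\emptyset$; and normalization $f_{\mathbb{R}^{d_1}}(x_2)=1$. For $x_2\notin N$, the map $B\mapsto f_B(x_2)$ is then a countably additive probability premeasure on $\mathcal{A}$, and Carath\'eodory's extension theorem produces a unique Borel probability measure $\mu^{x_2}$ on $\mathbb{R}^{d_1}$ with $\mu^{x_2}(B)=f_B(x_2)$ for $B\in\mathcal{A}$. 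On $N$ we assign an arbitrary fixed probability measure to complete the definition.

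Third, I would verify the two announced properties. Borel measurability of $x_2\mapsto \mu^{x_2}(B)$ for every Borel $B\subseteq \mathbb{R}^{d_1}$ follows from a monotone class argument: the family of $B$ for which this map is Borel contains $\mathcal{A}$ and is closed under monotone limits, hence coincides with $\mathcal{B}(\mathbb{R}^{d_1})$. The disintegration identity \eqref{disb} holds by construction for indicators $\mathbf{1}_{B\times A}$ with $B\in\mathcal{A}$ and $A$ Borel in $\mathbb{R}^{d_2}$, then extends to indicators of arbitrary Borel rectangles (and thus arbitrary Borel sets in $\mathbb{R}^{d_1}\times \mathbb{R}^{d_2}$) by a second monotone class argument, and finally to every non-negative Borel $\varphi$ via linearity and monotone convergence. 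Uniqueness $\nu$-a.e. is obtained by applying \eqref{disb} to $\varphi(x_1,x_2)=\mathbf{1}_{B_k}(x_1)\mathbf{1}_A(x_2)$ as $B_k$ ranges over a countable $\pi$-system generating $\mathcal{B}(\mathbb{R}^{d_1})$ and $A$ varies in $\mathcal{B}(\mathbb{R}^{d_2})$.

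The main obstacle is the measurable-selection issue: each $f_B$ is only defined up to a $\nu$-null set, and naively one would accumulate uncountably many such exceptional sets when handling all Borel $B$. The trick that makes the argument close is restricting the selection to the countable algebra $\mathcal{A}$, so that only countably many null sets must be discarded; the Polish/standard Borel structure of $\mathbb{R}^{d_1}$ is essential here, as it is precisely what guarantees the existence of such a countable $\sigma$-generating algebra, after which Carath\'eodory extension propagates the construction uniquely from $\mathcal{A}$ to all of $\mathcal{B}(\mathbb{R}^{d_1})$.
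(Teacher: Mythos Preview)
The paper does not give its own proof of this statement: Theorem~\ref{dis} is recalled in the preliminaries as the classical disintegration theorem, with no argument supplied. So there is no paper proof to compare against; I comment only on the soundness of your proposal.

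Your overall strategy (Radon--Nikodym on a countable generating algebra, then Carath\'eodory extension, then monotone-class arguments) is the standard route and is essentially correct, but there is one genuine gap. You write that, because $\mathcal{A}$ is countable, a single $\nu$-null set $N$ suffices to guarantee simultaneously finite additivity \emph{and} ``monotone convergence $f_{B_n}(x_2)\downarrow 0$ along countable decreasing sequences in $\mathcal{A}$ with $\bigcap_n B_n=\emptyset$''. The first and third conditions are indeed countably many pointwise identities indexed by pairs or single elements of $\mathcal{A}$, so a countable union of null sets handles them. But the second condition ranges over \emph{all} decreasing sequences in $\mathcal{A}$ with empty intersection, and there are continuum-many such sequences even when $\mathcal{A}$ is countable; you cannot discard a null set for each and stay null. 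This is precisely the point where a naive Radon--Nikodym construction fails to produce genuine measures, and it is not fixed merely by ``$\mathcal{A}$ countable''.

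The standard repairs all exploit more structure of $\mathbb{R}^{d_1}$ than just ``countably generated $\sigma$-algebra''. One option: reduce to $d_1=1$ via a Borel isomorphism $\mathbb{R}^{d_1}\cong\mathbb{R}$, then work with conditional CDFs $G(q,x_2)$ at rational $q$, enforce monotonicity and the correct limits at $\pm\infty$ off a single null set (these \emph{are} countably many conditions), and set $F(t,x_2):=\inf_{q>t}G(q,x_2)$; right-continuity is then automatic and each $F(\cdot,x_2)$ is a bona fide CDF. Another option: embed $\mathbb{R}^{d_1}$ into a compact metric space and use inner regularity by compact sets to force countable additivity of the finitely additive premeasure $B\mapsto f_B(x_2)$ uniformly in $x_2$ outside a null set. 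Either way, the Polish structure is doing more work than you credit it with in your final paragraph: it is needed not only to produce a countable generating algebra but to supply the extra order or compactness that converts a finitely additive conditional set function into a countably additive one off a single null set.
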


The above family $\{\mu^{x_2}\}_{x_2\in \mathbb{R}^{d_2}}$ of probability measures is usually called a {\it disintegration} of $\mu$ with respect to the marginal distribution $\nu$. For simplicity, we will often refer to \eqref{disb} as:
$$\mu(x_1,x_2)=\mu^{x_2}(x_1)\otimes \nu(x_2).$$
In a probabilistic setting, $\mu^{x_2}$ represents the {\it conditional probability} of $\mu$ given the value $x_2\in \mathbb{R}^{d_2}$. In the following result, we provide a useful characterization of the measurability condition in the first item of Theorem \ref{dis}, which will be key for our treatment of space $\mathcal{P}_\nu(\mathbb{R}^{2d})$. 

\begin{pro}[Borel family vs Borel-measurability]\label{P-Borel-family-vs-measurability}
Set $\nu\in \mathcal{P}(\mathbb{R}^d)$ and a family of probability measures $\{\mu^\omega\}_{\omega\in \mathbb{R}^d}\subseteq \mathcal{P}(\mathbb{R}^d)$. Then, the following conditions are equivalent:
\begin{enumerate}[label=(\roman*)]
\item {\it (Borel family I)} The following scalar function
$$\begin{array}{cccl}
\mu_B:&\mathbb{R}^d & \longrightarrow & \mathbb{R}\\
& \omega & \longmapsto & \mu^\omega(B),
\end{array}$$
is Borel-measurable for every Borel subset $B\subseteq \mathbb{R}^d$.
\item {\it (Borel family II)} The following scalar function
$$\begin{array}{cccl}
\mu_\phi:&\mathbb{R}^d & \longrightarrow & \mathbb{R}\\
& \omega & \longmapsto &  \int_{\mathbb{R}^d}\phi\,d\mu^\omega,
\end{array}$$
is Borel-measurable for every bounded and Borel-measurable $\phi:\mathbb{R}^d\longrightarrow \mathbb{R}$.
\item {\it (Borel-measurability)} The following measure-valued map
$$\begin{array}{cccl}
\mathfrak{X}_\mu:&\mathbb{R}^d & \longrightarrow & \mathcal{P}(\mathbb{R}^d)\\
& \omega & \longmapsto & \mu^\omega,
\end{array}
$$
is Borel-measurable when $\mathcal{P}(\mathbb{R}^d)$ is endowed with its narrow topology.
\end{enumerate}
Additionally, assume that $\mu^\omega\in \mathcal{P}_2(\mathbb{R}^{d})$ for each $\omega\in \mathbb{R}^d$. Then all the above three conditions are also equivalent with the fact that the following measure-valued map
$$\begin{array}{cccl}
\mathfrak{X}_\mu:&\mathbb{R}^d & \longrightarrow & \mathcal{P}_2(\mathbb{R}^d)\\
& \omega & \longmapsto & \mu^\omega,
\end{array}
$$
is Borel-measurable when $\mathcal{P}_2(\mathbb{R}^d)$ is endowed with the Wasserstein distance $W_2$.
\end{pro}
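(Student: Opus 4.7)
The plan is to close the loop $(\text{iii})\Rightarrow(\text{i})\Rightarrow(\text{ii})\Rightarrow(\text{iii})$ and then handle the final $W_2$-valued equivalence separately. For $(\text{iii})\Rightarrow(\text{i})$, I would note that Portmanteau's theorem (Theorem~\ref{T-Portmanteau}) makes $\mu\mapsto\mu(U)$ narrow-lower-semi-continuous for every open $U\subseteq\mathbb{R}^d$ and $\mu\mapsto\mu(C)$ narrow-upper-semi-continuous for every closed $C$, so both evaluations are narrow-Borel. A monotone-class argument then shows that the class of Borel sets $B$ for which $\mu\mapsto\mu(B)$ is narrow-Borel is a $\sigma$-algebra containing opens and closeds, hence all of $\mathcal{B}(\mathbb{R}^d)$; composition with $\mathfrak{X}_\mu$ yields (i). For $(\text{i})\Rightarrow(\text{ii})$ I would follow the textbook ladder: the conclusion holds for $\phi=\mathbf{1}_B$ by (i); by linearity it passes to simple Borel functions; and by dominated convergence (with a uniform bound on $\phi$) it extends to arbitrary bounded Borel $\phi$, preserving Borel-measurability in $\omega$ at each step.

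For $(\text{ii})\Rightarrow(\text{iii})$, I would exploit that $(\mathcal{P}(\mathbb{R}^d),d_{\BL})$ is a separable metric space inducing the narrow topology, where $d_{\BL}$ is the bounded-Lipschitz distance from Definition~\ref{D-Levy-Prokhorov-bounded-Lipschitz-distances}. By separability it is second-countable, so every narrow-open set is a countable union of open $d_{\BL}$-balls; it therefore suffices to show that $\omega\mapsto d_{\BL}(\mu^\omega,\nu_0)$ is Borel for each fixed $\nu_0\in\mathcal{P}(\mathbb{R}^d)$. I would then construct a countable family $\{\phi_k\}_k\subseteq\BL_1(\mathbb{R}^d)$ (by Lipschitz truncations on a compact exhaustion of $\mathbb{R}^d$) that is $\|\cdot\|_{\BL}$-dense enough that $d_{\BL}(\mu^\omega,\nu_0)=\sup_k\vert\int\phi_k\,d\mu^\omega-\int\phi_k\,d\nu_0\vert$; each term is Borel in $\omega$ by (ii) applied to $\phi_k$, and a countable supremum of Borel functions is Borel.

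For the last equivalence under $\mu^\omega\in\mathcal{P}_2(\mathbb{R}^d)$, I would show that the narrow-Borel and $W_2$-Borel $\sigma$-algebras on $\mathcal{P}_2(\mathbb{R}^d)$ coincide. Since $W_2$-convergence implies narrow convergence (Proposition~\ref{P-characterization-convergence-W2}), the identity map $(\mathcal{P}_2,W_2)\to(\mathcal{P}_2,\text{narrow})$ is continuous and every narrow-Borel set is $W_2$-Borel. Conversely, the standard narrow-lower-semi-continuity of $W_2$ (derivable from Fatou together with narrow compactness of $\Gamma(\mu_1,\mu_2)$) ensures that $\{\mu:W_2(\mu,\mu_0)\leq r\}$ is narrow-closed, so any open $W_2$-ball is a countable union of narrow-closed sets, and by separability of $(\mathcal{P}_2,W_2)$ every $W_2$-open set is a countable union of such balls, hence lies in the narrow-Borel $\sigma$-algebra. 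The main technical point I anticipate is the explicit construction of the countable $\|\cdot\|_{\BL}$-dense subfamily $\{\phi_k\}_k$ in step $(\text{ii})\Rightarrow(\text{iii})$ that genuinely recovers $d_{\BL}$ as a countable pointwise supremum; this is a routine separability argument on Lipschitz functions but must be done with care to avoid circularity with the very measurability question one is trying to settle.
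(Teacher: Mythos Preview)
Your argument is correct, but it takes a different and more hands-on route than the paper in two places.

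For the equivalence with (iii), the paper bypasses your separability and countable-supremum construction entirely by invoking a single structural fact (Kechris, \cite[Theorem~17.24]{K-95}): the Borel $\sigma$-algebra of $\mathcal{P}(\mathbb{R}^d)$ with the narrow topology is exactly the smallest $\sigma$-algebra making every evaluation functional $T_\phi:\lambda\mapsto\int\phi\,d\lambda$ measurable, for $\phi$ ranging over bounded Borel functions. Since $\mu_\phi=T_\phi\circ\mathfrak{X}_\mu$, this immediately gives (ii)$\Leftrightarrow$(iii) in one line. Your approach via a countable $\BL$-dense family achieves the same conclusion from first principles, which is self-contained but longer; the paper's citation is slicker but relies on an external reference.

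For the final $W_2$ equivalence, instead of comparing the narrow-Borel and $W_2$-Borel $\sigma$-algebras on $\mathcal{P}_2(\mathbb{R}^d)$ directly as you do, the paper introduces the auxiliary metric $d_{LP,2}(\lambda_1,\lambda_2):=d_{LP}(\lambda_1,\lambda_2)+\big|\int|x|^2\,d\lambda_1-\int|x|^2\,d\lambda_2\big|$, observes that it is topologically equivalent to $W_2$ by Proposition~\ref{P-characterization-convergence-W2}, and then checks that $\omega\mapsto d_{LP,2}(\mu^\omega,\sigma)$ is Borel for each fixed $\sigma$. Your $\sigma$-algebra comparison via narrow lower semicontinuity of $W_2$ is equally valid and arguably more conceptual; the paper's auxiliary metric is more concrete. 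Both approaches ultimately rest on the same characterization of $W_2$-convergence.
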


\begin{proof}
$\diamond$ {\sc Step 1}: Equivalence of the first and second conditions.\\
On the one hand, assume that the first condition is verified and take any bounded Borel-measurable function $\phi:\mathbb{R}^d\longrightarrow\mathbb{R}$ that can be assumed nonnegative without loss of generality. Using the layer cake representation \cite[Theorem 1.13]{LL-01}, we obtain
\begin{align*}
\int_{\mathbb{R}^d}\phi\,d\mu^\omega&=\int_0^{\Vert \phi\Vert_{L^\infty(\mathbb{R}^d)}} \mu^\omega(\{x\in \mathbb{R}^d:\,\phi(x)>\lambda\})\,d\lambda\\
&=\lim_{n\rightarrow\infty} \frac{\Vert\phi\Vert_{L^\infty(\mathbb{R}^d)}}{n}\sum_{k=1}^n\mu^\omega\left(\left\{x\in \mathbb{R}^d:\,\phi(x)> \Vert \phi\Vert_{L^\infty(\mathbb{R}^d)}\frac{k}{n} \right\}\right),
\end{align*}
where we have used an approximation with Riemann sums in the second line. Since $\phi$ is Borel-measurable, then each of the level sets is a Borel set. Thus, the second condition follows. On the other hand, the converse follows by taking the bounded Borel-measurable functions $\phi:=\chi_{B}$, where $B\subseteq \mathbb{R}^d$ is any arbitrary Borel set.

\medskip

$\diamond$ {\sc Step 2}: Equivalence with the third condition.\\
A proof of the ``if'' condition is given in \cite[Lemma 12.4.7]{AGS-08}. Here, we use an easier alternative derivation that shows the full equivalence. The cornerstone is the following observation from \cite{K-95}. For each bounded Borel-measurable function $\phi:\mathbb{R}^d\longrightarrow\mathbb{R}$ consider the associated functional
$$\begin{array}{cccc}
T_\phi:&\mathcal{P}(\mathbb{R}^d) & \longrightarrow & \mathbb{R}\\
& \lambda & \longmapsto & \int_{\mathbb{R}^d} \phi\,d\lambda.
\end{array}$$
We recall that the Borel $\sigma$-algebra of $\mathcal{P}(\mathbb{R}^d)$ (endowed with the narrow topology) is precisely the smallest $\sigma$-algebra on $\mathcal{P}(\mathbb{R}^d)$ for which $T_\phi$ is Borel-measurable for every bounded and Borel-measurable $\phi$, see \cite[Theorem 17.24]{K-95}. In symbols
\begin{equation}\label{E-sigma-algebra-P}
\mathcal{B}(\mathcal{P}(\mathbb{R}^d))=\sigma\left(\left\{T_\phi^{-1}((a,b]):\,\phi\mbox{ is bounded Borel-measurable and }a<b\right\}\right).
\end{equation}
Also, notice that by definition $\mu_\phi=T_\phi\circ \mathfrak{X}_\mu$ for every bounded Borel-measurable function $\phi$. Then, by the identity \eqref{E-sigma-algebra-P} it is clear that $\mathfrak{X}_\mu$ is Borel-measurable if, and only if, $\mu_\phi$ is Borel-measurable for every bounded and Borel-measurable $\phi$. This ends the proof of this part.

\medskip

$\diamond$ {\sc Step 3}: Equivalence with the last condition.\\
Assume that $\mu^\omega\in\mathcal{P}_2(\mathbb{R}^d)$ for each $\omega\in \mathbb{R}^d$ and let us show that the third condition is equivalent with the last one. First, assume that the third condition holds. Recall that the narrow topology on $\mathcal{P}(\mathbb{R}^d)$ is metrizable by the L\'evy--Prokhorov metric $d_{LP}$ given in \eqref{E-Levy-Prohorov-distance} of Definition \ref{D-Levy-Prokhorov-bounded-Lipschitz-distances}. Then we define the following distance over $\mathcal{P}_2(\mathbb{R}^d)$
\begin{equation}\label{E-quadratic-Wasserstein-distance-equivalent}
d_{LP,2}(\lambda_1,\lambda_2):=d_{LP}(\lambda_1,\lambda_2)+\left\vert\int_{\mathbb{R}^d} \vert x\vert^2\,d\lambda_1-\int_{\mathbb{R}^d}\vert x\vert^2\,d\lambda_2\right\vert,
\end{equation}
for $\lambda_1,\lambda_2\in \mathcal{P}_2(\mathbb{R}^d)$. Notice that the third condition is exactly equivalent to saying that
$$
\omega\in \mathbb{R}^d\longmapsto d_{LP}(\mu^\omega,\sigma),
$$
is Borel-measurable for every $\sigma\in \mathcal{P}(\mathbb{R}^d)$. Also, by \eqref{E-quadratic-Wasserstein-distance-equivalent} we infer that
$$
\omega\in \mathbb{R}^d\longmapsto d_{LP,2}(\mu^\omega,\sigma),
$$
is Borel-measurable for every $\sigma\in \mathcal{P}_2(\mathbb{R}^d)$. We conclude by noticing that this amounts to the last condition thanks to the fact that $d_{LP,2}$ is topologically equivalent to the quadratic Wasserstein distance $W_2$ in $\mathcal{P}_2(\mathbb{R}^d)$ by Proposition \ref{P-characterization-convergence-W2}. The ``only if'' part is clear by continuity (thus Borel measurability) of the embedding $\mathcal{P}_2(\mathbb{R}^d)\hookrightarrow \mathcal{P}(\mathbb{R}^d)$.
\end{proof}

\begin{rem}[Narrow vs stable topology]\label{R-narrow-vs-stable}
The above Theorem \ref{dis} and Proposition \ref{P-Borel-family-vs-measurability} allow identifying objects $\mu\in \mathcal{P}_\nu(\mathbb{R}^{2d})$ alternatively as:
\begin{enumerate}[label=(\roman*)]
\item A probability measure $\mu\in \mathcal{P}(\mathbb{R}^{2d})$ with a fixed marginal $\nu$.
\item A Borel family $\{\mu^\omega\}_{\omega\in \mathbb{R}^d}\subseteq \mathcal{P}(\mathbb{R}^d)$, {\it i.e.}, a {\it Markov transition kernel}.
\item A Borel-measurable map $\omega\in \mathbb{R}^d\longmapsto \mu^\omega\in \mathcal{P}(\mathbb{R}^d)$, {\it i.e.}, a {\it random probability measure}, or also called {\it Young measure}.
\end{enumerate} 
Consequently, various different topologies could be given to $\mathcal{P}_\nu (\mathbb{R}^{2d})$. For instance, the representation {\it (i)} suggests endowing $\mathcal{P}_\nu(\mathbb{R}^{2d})$ with the induced narrow topology from $\mathcal{P}(\mathbb{R}^{2d})$. Alternatively, representation {\it (iii)} identifies elements of $\mathcal{P}_\nu(\mathbb{R}^{2d})$ with Young measures. The canonical choice in this community is the stable topology, that is, the coarsest topology on $\mathcal{P}_\nu(\mathbb{R}^{2d})$ such that the maps $\mu\in \mathcal{P}_\nu(\mathbb{R}^{2d})\longmapsto \int_{\mathbb{R}^{2d}}\varphi\,d\mu$ are continuous for all bounded test functions $\varphi:\mathbb{R}^{2d}\longrightarrow \mathbb{R}$ verifying that $\varphi(x,\cdot)$ is Borel-measurable for all $x\in \mathbb{R}^d$ and $\varphi(\cdot,\omega)$ is continuous for $\nu$-a.e. $\omega\in \mathbb{R}^d$ (see \cite{Ba-99,CRV-04,HL-15,V-89}). Whilst apparently coarser than the narrow topology, both turn out to agree on $\mathcal{P}_\nu(\mathbb{R}^{2d})$, see \cite[Corollary 2.9]{JM-81}, \cite[Theorem 2.1.1(D)]{CRV-04} or \cite[Lemma 2.1]{BL-18-arxiv}.
\end{rem}

\begin{rem}[Narrow topology and fibers]\label{R-narrow-vs-fibers}
When a sequence $\{\mu_n\}_{n\in \mathbb{N}}\subseteq \mathcal{P}_\nu(\mathbb{R}^{2d})$ converges narrowly to some $\mu\in \mathcal{P}_\nu(\mathbb{R}^{2d})$, one might be tempted to claim that $\mu_n^\omega\rightarrow \mu^\omega$ narrowly for $\nu$-a.e. $\omega\in \mathbb{R}^d$. This naive idea would simplify many technical aspects of this paper to their classical counterparts on each fiber, but unfortunately such a property does not hold. A possible argument relies on the well-known density of Dirac Young measure, see \cite[Theorem 2.2.3]{CRV-04} and \cite[Proposition 2.2]{BL-18-arxiv}. Specifically, let us define
$$\mathfrak{X}_\nu(\mathbb{R}^{2d}):=\left\{\delta_{u(\omega)}(x)\otimes \nu(\omega)\,:u:\mathbb{R}^d\longrightarrow\mathbb{R}^d\mbox{ is Borel-measureable}\right\}.$$
Then, $\mathfrak{X}_\nu(\mathbb{R}^{2d})$ is narrowly dense in $\mathcal{P}_\nu(\mathbb{R}^{2d})$ for non-atomic $\nu$. However, for any sequence $\mu_n(x,\omega):=\delta_{u_n(\omega)}(x)\otimes \nu(\omega)$, the corresponding disintegrations $\mu_n^\omega(x)=\delta_{u_n(\omega)}(x)$ consist in Dirac masses, and then they cannot converge narrowly to anything else than a Dirac mass.
\end{rem}

\section{Fibered Wasserstein space}\label{sec:fibered-wasserstein}

In this section we introduce a novel transport distance, well adapted to the fibered structure of measures $\mathcal{P}_\nu(\mathbb{R}^{2d})$ in Definition \ref{D-fibered-measures}. We also present some results extending the main properties of the classical setting to our new fibered case. In particular, we obtain a fibered variant of the Riemannian structure of ${\mathcal P}_2(\RR^d)$ found by {\sc F. Otto} \cite{O-01}, which will set the basis to the study of fibered gradient flows. In doing so, we recall that there are fundamental obstructions when one tries to apply classical methods at each fiber value ({\it cf.} Remark \ref{ohhnaivety}). These will become apparent in the sequel.

\subsection{Fibered optimal transport}

\begin{defi}[Fibered quadratic Wasserstein space]\label{D-fibered-Wasserstein}
Let $\nu\in \mathcal{P}(\mathbb{R}^d)$ be any probability measure. We define the fibered quadratic Wasserstein space $(\mathcal{P}_{2,\nu}(\mathbb{R}^{2d}),W_{2,\nu})$ by
\begin{align}
\mathcal{P}_{2,\nu}(\mathbb{R}^{2d})&:=\left\{\mu\in \mathcal{P}_\nu(\mathbb{R}^{2d}):\,\int_{\mathbb{R}^{2d}}\vert x\vert^2\,d\mu(x,\omega)<\infty\right\},\label{p2nu}\\
W_{2,\nu}(\mu_1,\mu_2)&:=\left(\int_{\mathbb{R}^d}W_2^2(\mu_1^\omega,\mu_2^\omega)\,d\nu(\omega)\right)^{1/2}\label{dnu},
\end{align}
for any $\mu_1,\mu_2\in \mathcal{P}_{2,\nu}(\mathbb{R}^{2d})$, where $\{\mu_1^\omega\}_{\omega\in \mathbb{R}^d}$ and $\{\mu_2^\omega\}_{\omega\in \mathbb{R}^d}$ in $\mathcal{P}_2(\RR^d)$ are their families of disintegrations with respect to the variable $\omega$, and $W_2$ is the classical quadratic Wasserstein distance in $\RR^d$ ({\it cf.} Definition \ref{D-Wasserstein}).
\end{defi}

\begin{rem}[Good definition of $W_{2,\nu}$]\label{R-W2nu-good-definition}
Notice that $\mathcal{P}_{2,\nu}(\RR^d)$ in \eqref{p2nu} has been carefully chosen in order for $W_{2,\nu}(\mu_1,\mu_2)$ above to be well defined and finite. On the one hand, the function $\omega\in \mathbb{R}^d \longmapsto W_2(\mu_1^\omega,\mu_2^\omega)$ is Borel-measurable because $\{\mu^\omega_1\}_{\omega\in \RR^d}$ and $\{\mu^\omega_2\}_{\omega\in \RR^d}$ are Borel families in the sense of item $(i)$ in Theorem \ref{dis} (see \cite[Lemma 12.4.7]{AGS-08} and Proposition \ref{P-Borel-family-vs-measurability} above). On the other hand, the above function belongs indeed to $L^2_\nu(\RR^d)$ by the finiteness of the second order moment with respect to $x$ in \eqref{p2nu}, namely,
$$
\int_{\mathbb{R}^d}W_2^2(\mu_1^\omega,\mu_2^\omega)\,d\nu(\omega)\leq 2\int_{\mathbb{R}^{2d}}\vert x\vert^2\,d\mu_1(x,\omega)+2\int_{\mathbb{R}^{2d}}\vert x\vert^2\,d\mu_2(x,\omega)<\infty.
$$
Finally, since $\mu_1$ and $\mu_2$ have the same marginal $\nu$, then $W_{2,\nu}$ is non-degenerate. Specifically, $W_{2,\nu}(\mu_1,\mu_2)=0$ if, and only if, $\mu_1^\omega= \mu_2^\omega$ for $\nu$-a.e. $\omega\in \mathbb{R}^d$. Note that the latter amounts to $\mu_1=\mu_2$ thanks to the $\nu$-a.e. uniqueness of disintegrations in Theorem \ref{dis}. We remark that when different marginals $\nu_1$ and $\nu_2$ are involved, then the above definition (with either of the two $\nu_1$ or $\nu_2$) would clearly break down.
\end{rem}

One might be tempted to think that since measures in $\mathcal{P}_{2,\nu}(\mathbb{R}^{2d})$ have a fixed marginal $\nu$ in the second variable, then horizontal transport is always cheaper than any other type of transport and therefore $W_{2,\nu}$ should agree with $W_2$. However, this intuition is false as explained below.

\begin{eje}

Consider $x_1,x_2\in \mathbb{R}^d$ and $\omega_1,\omega_2\in \mathbb{R}^d$, and define the probability measures $\mu_1,\mu_2\in \mathcal{P}_{2,\nu}(\mathbb{R}^{2d})$ with $\nu(\omega):=\frac{1}{2}\delta_{\omega_1}(\omega)+\frac{1}{2}\delta_{\omega_2}(\omega)$ given by
\begin{align*}
\mu_1(x,\omega)&:=\frac{1}{2}\delta_{x_1}(x)\otimes \delta_{\omega_1}(\omega)+\frac{1}{2}\delta_{x_2}(x)\otimes \delta_{\omega_2}(\omega),\\
\mu_2(x,\omega)&:=\frac{1}{2}\delta_{x_2}(x)\otimes \delta_{\omega_1}(\omega)+\frac{1}{2}\delta_{x_1}(x)\otimes \delta_{\omega_2}(\omega).
\end{align*}
Then, by explicit calculation one has
$$W_{2,\nu}(\mu_1,\mu_2)=|x_1-x_2|,\quad W_2(\mu_1,\mu_2)=\min\{|x_1-x_2|,|\omega_1-\omega_2|\}.$$
Therefore, in general we have $W_2(\mu_1,\mu_2)\leq W_{2,\nu}(\mu_1,\mu_2)$ ({\it cf.} Proposition \ref{P-scaled-Wasserstein-distances} below). However, when the vertical distance is smaller than the horizontal distance, {\it i.e.}, $|\omega_1-\omega_2|<|x_1-x_2|$, then we actually have the strict inequality $W_2(\mu_1,\mu_2)<W_{2,\nu}(\mu_1,\mu_2)$.
\end{eje}

In what follows, we investigate the metric structure of $(\mathcal{P}_{2,\nu}(\mathbb{R}^{2d}),W_{2,\nu})$. This relies on an abstract construction that we recall in Appendix \ref{Appendix-metric-valued-L2}. Namely, given the metric measure space $(\RR^d,\vert \cdot\vert,\nu)$ and the metric space $(\mathcal{P}_2(\RR^d),W_2)$, we can define the metric-valued Lebesgue space $L^2_\nu(\RR^d,(\mathcal{P}_2(\RR^d),W_2))$ in Definition \ref{D-metric-valued-L2}. By construction and Proposition \ref{P-Borel-family-vs-measurability} we obtain
\begin{align}\label{E-W2nu-metric-valued-L2}
\begin{aligned}
\mathcal{P}_{2,\nu}(\mathbb{R}^{2d})&\equiv L^2_\nu(\RR^d,(\mathcal{P}_2(\RR^d),W_2)),\\
W_{2,\nu}&\equiv d_{L^2_\nu(\RR^d,(\mathcal{P}_2(\RR^d),W_2))}.
\end{aligned}
\end{align}
Consequently, by Proposition \ref{P-metric-valued-L2-properties} the following result holds true.

\begin{pro}\label{P-dnu-polish}
For any $\nu \in \mathcal{P}(\RR^d)$, the space $(\mathcal{P}_{2,\nu}(\RR^d),W_{2,\nu})$ is a Polish space.
\end{pro}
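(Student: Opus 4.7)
The plan is to exploit the identification \eqref{E-W2nu-metric-valued-L2}, which realizes $(\mathcal{P}_{2,\nu}(\mathbb{R}^{2d}),W_{2,\nu})$ as the metric-valued Lebesgue space $L^2_\nu(\RR^d,(\mathcal{P}_2(\RR^d),W_2))$ introduced in Appendix \ref{Appendix-metric-valued-L2}, and then simply invoke Proposition \ref{P-metric-valued-L2-properties} together with the fact that the target $(\mathcal{P}_2(\RR^d),W_2)$ is itself Polish. The only substantive checks are completeness and separability, which transfer fiberwise from the classical quadratic Wasserstein space.

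For \textbf{completeness}, given a Cauchy sequence $\{\mu_n\}_{n\in \mathbb{N}}$ in $W_{2,\nu}$, extract a fast subsequence $\{\mu_{n_k}\}_{k\in \mathbb{N}}$ with $W_{2,\nu}(\mu_{n_k},\mu_{n_{k+1}})\leq 2^{-k}$. By Chebyshev applied to \eqref{dnu} and Borel--Cantelli, $\sum_k W_2(\mu_{n_k}^\omega,\mu_{n_{k+1}}^\omega)<\infty$ for $\nu$-a.e.\ $\omega\in \RR^d$. Since $(\mathcal{P}_2(\RR^d),W_2)$ is complete, for $\nu$-a.e.\ $\omega$ the fiber sequence $\mu_{n_k}^\omega$ converges in $W_2$ to some $\mu^\omega\in\mathcal{P}_2(\RR^d)$, which after redefinition on a $\nu$-null set yields a family $\{\mu^\omega\}_{\omega\in\RR^d}$. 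Pointwise $W_2$-limits of Borel-measurable maps into the Polish space $(\mathcal{P}_2(\RR^d),W_2)$ are Borel-measurable, so by the last item of Proposition \ref{P-Borel-family-vs-measurability} this family defines an honest $\mu\in \mathcal{P}_{2,\nu}(\RR^{2d})$. Fatou's lemma applied to \eqref{dnu} then yields $W_{2,\nu}(\mu_n,\mu)\to 0$.

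For \textbf{separability}, pick a countable dense set $\{\rho_i\}_{i\in\NN}\subseteq \mathcal{P}_2(\RR^d)$ and a countable Borel $\pi$-system $\{A_j\}_{j\in\NN}\subseteq \mathcal{B}(\RR^d)$ generating $\mathcal{B}(\RR^d)$. Form the countable collection $\mathcal{D}$ of random probability measures of simple type $\omega\mapsto \sum_{j=1}^N \rho_{i_j}\chi_{A_j}(\omega)$ obtained from finite Borel partitions drawn from $\{A_j\}$ and labels from $\{\rho_i\}$. For any $\mu\in \mathcal{P}_{2,\nu}(\RR^{2d})$ one approximates $\omega\mapsto \mu^\omega$ in $L^2_\nu(\RR^d,(\mathcal{P}_2(\RR^d),W_2))$ by truncating the second moment, approximating by a simple measurable map valued in $\mathcal{P}_2(\RR^d)$, and finally replacing its values by nearby $\rho_i$; each step is controlled by the $L^2_\nu$-norm of $\omega\mapsto W_2(\mu^\omega,\cdot)$, which is precisely $W_{2,\nu}$.

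The main conceptual obstacle, which is precisely what justifies the preliminary work of Section \ref{subsec:random-probability-measures}, is the measurability bookkeeping in the completeness step: one must certify that the $\nu$-a.e.\ $W_2$-pointwise limit of a Borel-measurable random probability measure remains a random probability measure in the sense of Definition \ref{D-fibered-measures}. This is exactly where the equivalence between the scalar and measure-valued notions of Borel-measurability in Proposition \ref{P-Borel-family-vs-measurability} plays a pivotal role; without it, the natural candidate for the limit would only be available fiberwise and one could not extract a legitimate element of $\mathcal{P}_{2,\nu}(\RR^{2d})$.
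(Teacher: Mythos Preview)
Your proposal is correct and follows essentially the same approach as the paper: both rely on the identification \eqref{E-W2nu-metric-valued-L2} of $(\mathcal{P}_{2,\nu}(\mathbb{R}^{2d}),W_{2,\nu})$ with the abstract metric-valued Lebesgue space $L^2_\nu(\RR^d,(\mathcal{P}_2(\RR^d),W_2))$ and then invoke Proposition \ref{P-metric-valued-L2-properties}. The paper simply cites that proposition, whereas you have helpfully unpacked the completeness and separability arguments that underlie it; your use of Proposition \ref{P-Borel-family-vs-measurability} for the measurability of the fiberwise limit is exactly the point the paper emphasizes when setting up the identification.
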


Although the fibered quadratic Wasserstein distance $W_{2,\nu}$ in Definition \ref{D-fibered-Wasserstein} has been introduced through a gluing process of fibered information, in the following we restate it as a pure constrained optimization problem over an appropriate class of transference plans.

\begin{defi}[Admissible transference plans]\label{D-admissible-plans-fibered}
Set any $\nu\in \mathcal{P}(\mathbb{R}^d)$ and $\mu_i\in \mathcal{P}_\nu(\mathbb{R}^{2nd})$ for $i=1,\ldots,n$. We say that $\gamma\in \mathcal{P}(\mathbb{R}^{2nd})$ is a $\nu$-admissible transference plan between $\mu_1,\ldots,\mu_n$ when $\gamma\in \Gamma (\mu_1,\ldots,\mu_n)$ and, in addition, the support of the marginal $\pi_{(\omega_1,\ldots,\omega_n)\#}\gamma$ lies in the set of diagonal points $\omega_1=\ldots=\omega_n$. By disintegration, $\nu$-admissible plans can be written as
\begin{equation}\label{plan-multi}
\gamma(x_1,\ldots,x_n,\omega_1,\ldots,\omega_n)=\gamma^\omega(x_1,\ldots,x_n)\otimes \nu(\omega_1)\otimes \delta_{\omega_1}(\omega_2)\otimes \delta_{\omega_1}(\omega_3)\otimes ... \otimes \delta_{\omega_1}(\omega_n),
\end{equation}
for some Borel family of probability measures $\{\gamma^\omega\}_{\omega\in \mathbb{R}^d}\subseteq \mathcal{P}(\mathbb{R}^{nd})$ such that $\gamma^\omega\in \Gamma(\mu_1^\omega,\ldots,\mu_n^\omega)$ for $\nu$-a.e. $\omega\in \mathbb{R}^d$. The set of $\nu$-admissible transference plans is denoted by $\Gamma_\nu(\mu_1,\ldots,\mu_n)$.
\end{defi}

For $n=2$ note that the formula \eqref{plan-multi} for a $\nu$-admissible plan $\gamma\in \Gamma_\nu(\mu_1,\mu_2)$ reduces to
\begin{equation}\label{plan}
\gamma(x,x',\omega,\omega')=\gamma^\omega(x,x')\otimes \nu(\omega)\otimes\delta_{\omega}(\omega'),
\end{equation}
where $\{\gamma^\omega\}_{\omega\in \mathbb{R}^d}\subseteq \mathcal{P}(\mathbb{R}^{2d})$ is a Borel family of probability measures such that $\gamma^\omega\in \Gamma(\mu_1^\omega,\mu_2^\omega)$ for $\nu$-a.e. $\omega\in \mathbb{R}^d$. Using classical measurable selection theorems \cite{CV-77}, the following result becomes clear (see also \cite[Lemma 12.4.7]{AGS-08} and \cite[Corollary 5.22]{V-09}).

\begin{lem}[Measurable selections]\label{L-measurable-selection}
Set any probability measure $\nu \in\mathcal{P}(\RR^d)$ and consider $\mu_1,\mu_2\in \mathcal{P}_{2,\nu}(\RR^{2d})$. Then, there exists a Borel family of probability measures $\{\gamma^\omega_o\}_{\omega\in \RR^d}\subseteq \mathcal{P}(\RR^d)$ such that $\gamma^\omega_o\in \Gamma_o(\mu_1^\omega,\mu_2^\omega)$ for $\nu$-a.e. $\omega\in \RR^d$.
\end{lem}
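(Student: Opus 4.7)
The plan is to view the assignment $\omega \mapsto \Gamma_o(\mu_1^\omega,\mu_2^\omega)$ as a set-valued map (multifunction) from $\RR^d$ into the Polish space $\mathcal{P}(\RR^{2d})$ (endowed with narrow topology), and to apply a classical measurable selection theorem from \cite{CV-77} (e.g., the Kuratowski--Ryll-Nardzewski theorem, or equivalently the Aumann--von Neumann theorem once the graph is shown Borel). The conclusion will then follow by translating the Borel-measurability of the selection $\omega \mapsto \gamma_o^\omega\in\mathcal{P}(\RR^{2d})$ into the statement that $\{\gamma_o^\omega\}_{\omega\in\RR^d}$ is a Borel family in the sense of Theorem \ref{dis}, via (the natural analog of) Proposition \ref{P-Borel-family-vs-measurability} applied on $\RR^{2d}$.

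First I would record the input regularity of the disintegrations. Since $\mu_1,\mu_2\in\mathcal{P}_{2,\nu}(\RR^{2d})$, Proposition \ref{P-Borel-family-vs-measurability} guarantees that the maps $\omega\in\RR^d\mapsto\mu_i^\omega\in(\mathcal{P}_2(\RR^d),W_2)$ are Borel-measurable for $i=1,2$; in particular both maps are Borel into $(\mathcal{P}(\RR^d),\mbox{narrow})$. As a consequence, for every $\phi\in C_b(\RR^{2d})$, the scalar function $\omega\mapsto \int\phi\,d(\mu_1^\omega\otimes\mu_2^\omega)$ is Borel-measurable (by Fubini and a monotone-class argument), so in particular $\omega\mapsto (\mu_1^\omega,\mu_2^\omega)$ is Borel into the product $\mathcal{P}(\RR^d)^2$.

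Next I would verify that the graph
\[
\mathrm{gph}(F):=\left\{(\omega,\gamma)\in\RR^d\times \mathcal{P}(\RR^{2d}):\gamma\in\Gamma_o(\mu_1^\omega,\mu_2^\omega)\right\}
\]
is a Borel set. Membership is characterized by the three conditions $(\pi_1)_\#\gamma=\mu_1^\omega$, $(\pi_2)_\#\gamma=\mu_2^\omega$ and $\int_{\RR^{2d}}|x-x'|^2\,d\gamma(x,x')=W_2^2(\mu_1^\omega,\mu_2^\omega)$. The first two conditions are Borel in $(\omega,\gamma)$ because evaluating $\phi\in C_b(\RR^d)$ against the marginals of $\gamma$ is continuous in $\gamma$ and Borel in $\omega$. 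The third condition is Borel because $\gamma\mapsto\int|x-x'|^2\,d\gamma$ is lower semicontinuous, while $\omega\mapsto W_2^2(\mu_1^\omega,\mu_2^\omega)$ is Borel (it is already $\nu$-integrable, as noted in Remark \ref{R-W2nu-good-definition}). Moreover, for each fixed $\omega$, the fiber $F(\omega)=\Gamma_o(\mu_1^\omega,\mu_2^\omega)$ is nonempty (existence of optimal plans) and compact in $\mathcal{P}(\RR^{2d})$, since $\Gamma(\mu_1^\omega,\mu_2^\omega)$ is narrowly compact and the set of optimizers is narrowly closed by lower semicontinuity of the cost.

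With these ingredients the Aumann--von Neumann selection theorem (or, working through a Castaing representation with closed-valued measurable multifunctions, the Kuratowski--Ryll-Nardzewski theorem) produces a Borel map $\omega\mapsto\gamma_o^\omega\in\mathcal{P}(\RR^{2d})$ with $\gamma_o^\omega\in F(\omega)$ for every $\omega$. Finally, by the measurability equivalence of Proposition \ref{P-Borel-family-vs-measurability} (transported to $\RR^{2d}$), this is exactly the statement that $\{\gamma_o^\omega\}_{\omega\in\RR^d}$ is a Borel family of probability measures with $\gamma_o^\omega\in\Gamma_o(\mu_1^\omega,\mu_2^\omega)$ for $\nu$-a.e. (in fact for every) $\omega\in\RR^d$. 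The only mildly delicate point I anticipate is the Borel-measurability of $\omega\mapsto W_2^2(\mu_1^\omega,\mu_2^\omega)$ used to express the optimality constraint; this I would dispatch either by observing that $W_2$ is narrowly lower semicontinuous on $\mathcal{P}_2(\RR^d)^2$ and upper semicontinuous along sequences with convergent second moments (whence Borel), or directly by citing \cite[Lemma 12.4.7]{AGS-08}.
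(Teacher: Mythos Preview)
Your proposal is correct and follows exactly the route the paper indicates: the paper does not spell out a proof but simply invokes classical measurable selection theorems from \cite{CV-77} together with \cite[Lemma 12.4.7]{AGS-08} and \cite[Corollary 5.22]{V-09}, and your argument is precisely a detailed implementation of that strategy (Borel graph, nonempty compact fibers, then Kuratowski--Ryll-Nardzewski/Aumann). In particular, your only flagged delicate point---the Borel-measurability of $\omega\mapsto W_2^2(\mu_1^\omega,\mu_2^\omega)$---is exactly what the paper also defers to \cite[Lemma 12.4.7]{AGS-08} (cf.\ Remark \ref{R-W2nu-good-definition}).
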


The existence of measurable selections fiberwise determined by optimal plans allows restating $W_{2,\nu}$ in Definition \ref{D-fibered-Wasserstein} as a constrained optimization problem as follows.

\begin{pro}[Optimal transport formulation I]\label{P-dnu-optimal-problem}
Set any $\nu\in \mathcal{P}(\RR^d)$ and consider $\mu_1,\mu_2\in \mathcal{P}_{2,\nu}(\RR^{2d})$. Then, the following identity holds true
\begin{equation}\label{dnu-optimal-problem}
W_{2,\nu}^2(\mu_1,\mu_2)=\inf_{\gamma\in \Gamma_\nu(\mu_1,\mu_2)}\int_{\RR^{4d}}\vert x-x'\vert^2\,d\gamma(x,x',\omega,\omega').
\end{equation}
In addition, minimizers to the above problem \eqref{dnu-optimal-problem} exist and they all take the form
\begin{equation}\label{plan-optimal}
\gamma_o(x,x',\omega,\omega')=\gamma^\omega_o(x,x')\otimes \nu(\omega)\otimes \delta_{\omega}(\omega'),
\end{equation}
for a Borel family $\{\gamma^\omega_o\}_{\omega\in \RR^d}\subseteq \mathcal{P}(\RR^{2d})$ with $\gamma^\omega_o\in \Gamma_o(\mu_1^\omega,\mu_2^\omega)$ for $\nu$-a.e. $\omega\in \RR^d$ (Lemma \ref{L-measurable-selection}).
\end{pro}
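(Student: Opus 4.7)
The plan is to recognize both sides of \eqref{dnu-optimal-problem} as the result of the same fiberwise optimization, only arranged differently, and then use the measurable-selection Lemma \ref{L-measurable-selection} to produce an actual minimizer.

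First, I would fix any $\gamma\in\Gamma_\nu(\mu_1,\mu_2)$ and use the structural decomposition \eqref{plan} for $n=2$, which follows from the fact that the marginal on $(\omega,\omega')$ is forced to be concentrated on the diagonal with marginal $\nu$ (both $\mu_1,\mu_2$ share the $\omega$-marginal $\nu$). Writing $\gamma(x,x',\omega,\omega')=\gamma^\omega(x,x')\otimes\nu(\omega)\otimes\delta_\omega(\omega')$ with $\gamma^\omega\in\Gamma(\mu_1^\omega,\mu_2^\omega)$ for $\nu$-a.e.\ $\omega$, Fubini yields
\begin{equation*}
\int_{\mathbb{R}^{4d}}|x-x'|^2\,d\gamma=\int_{\mathbb{R}^d}\left(\int_{\mathbb{R}^{2d}}|x-x'|^2\,d\gamma^\omega(x,x')\right)d\nu(\omega)\geq\int_{\mathbb{R}^d}W_2^2(\mu_1^\omega,\mu_2^\omega)\,d\nu(\omega)=W_{2,\nu}^2(\mu_1,\mu_2),
\end{equation*}
where the inequality uses the definition of $W_2$ fiberwise. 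Taking the infimum over $\gamma$ gives the inequality $\geq$ in \eqref{dnu-optimal-problem}.

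For the reverse inequality, together with the existence of a minimizer, I would invoke Lemma \ref{L-measurable-selection} to pick a Borel family $\{\gamma_o^\omega\}_{\omega\in\mathbb{R}^d}$ such that $\gamma_o^\omega\in\Gamma_o(\mu_1^\omega,\mu_2^\omega)$ for $\nu$-a.e.\ $\omega$. Then I define $\gamma_o$ by formula \eqref{plan-optimal}; one checks it is a Borel probability measure (the Borel measurability of $\omega\mapsto\gamma_o^\omega$ ensures that the integral $\int_{\mathbb{R}^{4d}}\varphi\,d\gamma_o$ defined via \eqref{disb} is well-defined for every bounded Borel $\varphi$) and that its marginals are $\mu_1$ and $\mu_2$ with $(\omega,\omega')$-marginal on the diagonal, so $\gamma_o\in\Gamma_\nu(\mu_1,\mu_2)$. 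Computing the cost,
\begin{equation*}
\int_{\mathbb{R}^{4d}}|x-x'|^2\,d\gamma_o=\int_{\mathbb{R}^d}\left(\int_{\mathbb{R}^{2d}}|x-x'|^2\,d\gamma_o^\omega\right)d\nu(\omega)=\int_{\mathbb{R}^d}W_2^2(\mu_1^\omega,\mu_2^\omega)\,d\nu(\omega)=W_{2,\nu}^2(\mu_1,\mu_2),
\end{equation*}
so equality holds in \eqref{dnu-optimal-problem} and $\gamma_o$ is a minimizer.

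Finally, to show that every minimizer has the form \eqref{plan-optimal}, I would take any minimizer $\gamma_*\in\Gamma_\nu(\mu_1,\mu_2)$ with disintegration $\gamma_*^\omega$ as in \eqref{plan}. By the first inequality above one has $\int|x-x'|^2\,d\gamma_*^\omega\geq W_2^2(\mu_1^\omega,\mu_2^\omega)$ pointwise, while integration against $\nu$ produces equality. Since the integrand dominates a nonnegative function with the same integral, equality must hold $\nu$-a.e., so $\gamma_*^\omega\in\Gamma_o(\mu_1^\omega,\mu_2^\omega)$ for $\nu$-a.e.\ $\omega$. The only subtlety that I want to flag is the Borel-measurable selection of the optimal kernel $\omega\mapsto\gamma_o^\omega$ needed to build an actual minimizer; this is the main technical step, but it has already been handled in Lemma \ref{L-measurable-selection} via the classical measurable-selection theorems of \cite{CV-77}, so the argument reduces to the Fubini-style manipulations above.
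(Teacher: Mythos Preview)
Your proof is correct and follows essentially the same approach as the paper: both arguments disintegrate an arbitrary $\nu$-admissible plan, bound each fiber cost below by $W_2^2(\mu_1^\omega,\mu_2^\omega)$, use Lemma \ref{L-measurable-selection} to build an explicit minimizer of the form \eqref{plan-optimal}, and then characterize all minimizers via the ``nonnegative integrand with zero integral implies $\nu$-a.e.\ equality'' observation. The only cosmetic difference is that you separate the two inequalities in \eqref{dnu-optimal-problem} explicitly, whereas the paper packages them as a single comparison of $\gamma_o$ against a generic $\gamma$.
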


\begin{proof}

$\diamond$ {\sc Step 1}: Existence of minimizers.\\
First, consider any $\gamma_o$ as in \eqref{plan-optimal} and notice that, for any other $\nu$-admissible plan $\gamma$ as in \eqref{plan}, the following inequality holds
\begin{equation}\label{E-minimizers-relation}
\int_{\RR^{2d}}\vert x-x'\vert^2\,d\gamma_o^\omega(x,x')\leq \int_{\RR^{2d}}\vert x-x'\vert^2\,d\gamma^\omega(x,x'),
\end{equation}
for $\nu$-a.e. $\omega\in \RR^d$ by the optimality of the fibers $\gamma_o^\omega$. Integrating \eqref{E-minimizers-relation} against $\nu$ and recalling definition \eqref{dnu} of $W_{2,\nu}$ along with definitions \eqref{plan} and \eqref{plan-optimal} of $\gamma$ and $\gamma_o$ respectively, we obtain
$$W_{2,\nu}^2(\mu_1,\mu_2)=\int_{\mathbb{R}^{4d}}\vert x-x'\vert^2\,d\gamma_o(x,x',\omega,\omega')\leq \int_{\mathbb{R}^{4d}}\vert x-x'\vert^2\,d\gamma(x,x',\omega,\omega').$$
Hence, $\gamma_o$ is a minimizer. 

\medskip

$\diamond$ {\sc Step 2}: Minimizers take the form \eqref{plan-optimal}.\\
Now, consider any other minimizer $\widetilde{\gamma}_o\in \Gamma_\nu(\mu_1,\mu_2)$. Then, we obtain
$$\int_{\RR^d}\left(\int_{\RR^{2d}}\vert x-x'\vert^2\,d\widetilde{\gamma}_o^\omega(x,x')-\int_{\RR^{2d}} \vert x-x'\vert^2\,d\gamma_o^\omega(x,x')\right)\,d\nu(\omega)=0.$$
By the relation \eqref{E-minimizers-relation} with $\gamma=\widetilde{\gamma}_o$, we infer that
$$\int_{\RR^{2d}}\vert x-x'\vert^2\,d\widetilde{\gamma}_o^\omega(x,x')=\int_{\RR^{2d}} \vert x-x'\vert^2\,d\gamma_o^\omega(x,x')=W_2^2(\mu_1^\omega,\mu_2^\omega),$$
for $\nu$-a.e. $\omega\in \RR^d$, and this ends the proof.
\end{proof}

\begin{defi}[Optimal admissible transference plans]
Set any $\nu \in \mathcal{P}(\RR^d)$. We say that $\gamma\in \Gamma_\nu(\mu_1,\mu_2)$ is an optimal $\nu$-admissible transference plan between $\mu_1$ and $\mu_2$ if it is a minimizer of the problem \eqref{dnu-optimal-problem}. The set of all such optimal transference plans will be denoted by $\Gamma_{o,\nu}(\mu_1,\mu_2)$.
\end{defi}

We note that transference plans in the optimization problem \eqref{dnu-optimal-problem} in Proposition \ref{P-dnu-optimal-problem} are constrained to the class of $\nu$-admissible plans $\Gamma_\nu(\mu_1,\mu_2)$ in Definition \ref{D-admissible-plans-fibered}. In the following result we reformulate it as a genuine (constraint-free) optimal transport problem associated to a specific cost function; we omit the proof.

\begin{pro}[Optimal transport formulation II]\label{P-dnu-optimal-problem-2}
Set any $\nu\in \mathcal{P}(\RR^d)$ and consider $\mu_1,\mu_2\in \mathcal{P}_{2,\nu}(\RR^{2d})$. Then, the following identity holds true
\begin{equation}\label{dnu-optimal-problem-2}
W_{2,\nu}^2(\mu_1,\mu_2)=\inf_{\gamma\in \Gamma(\mu_1,\mu_2)}\int_{\mathbb{R}^{4d}}c_\infty((x,\omega),(x',\omega'))\,d\gamma(x,x',\omega,\omega'),
\end{equation}
where $c_\infty:\mathbb{R}^{2d}\times \mathbb{R}^{2d}\longrightarrow [0,+\infty]$ is the infinitely-valued cost function given by
\begin{equation}\label{E-scaled-costs-infinity}
c_\infty((x,\omega),(x',\omega')):=\left\{
\begin{array}{ll}
\vert x-x'\vert^2, & \mbox{if }\omega=\omega',\\
+\infty, & \mbox{if }\omega\neq \omega'.
\end{array}
\right.
\end{equation}
\end{pro}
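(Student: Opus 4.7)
The plan is to observe that the infinite value of $c_\infty$ off the diagonal $\{\omega = \omega'\}$ acts as a penalty that automatically selects exactly the $\nu$-admissible plans $\Gamma_\nu(\mu_1,\mu_2)$ from within the full class $\Gamma(\mu_1,\mu_2)$. Consequently, the constraint-free problem \eqref{dnu-optimal-problem-2} is essentially equivalent to the constrained problem \eqref{dnu-optimal-problem}, which was already handled in Proposition \ref{P-dnu-optimal-problem}. The proof reduces to splitting $\Gamma(\mu_1,\mu_2)$ into $\Gamma_\nu(\mu_1,\mu_2)$ and its complement, and analyzing the cost integral on each piece.

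On the one hand, if $\gamma \in \Gamma_\nu(\mu_1,\mu_2)$, then by Definition \ref{D-admissible-plans-fibered} and the representation \eqref{plan}, $\gamma$ is concentrated on the closed set $\Delta := \{(x,x',\omega,\omega')\in \mathbb{R}^{4d} : \omega = \omega'\}$. Since $c_\infty$ coincides with $|x-x'|^2$ on $\Delta$, we obtain
$$\int_{\mathbb{R}^{4d}} c_\infty((x,\omega),(x',\omega'))\,d\gamma = \int_{\mathbb{R}^{4d}}|x-x'|^2\,d\gamma,$$
and hence by Proposition \ref{P-dnu-optimal-problem}, the infimum of the right-hand side of \eqref{dnu-optimal-problem-2} restricted to $\Gamma_\nu(\mu_1,\mu_2)$ is exactly $W_{2,\nu}^2(\mu_1,\mu_2)$. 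Note also that such minimizers exist by the same proposition, so the infimum is finite and attained.

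On the other hand, if $\gamma \in \Gamma(\mu_1,\mu_2)\setminus \Gamma_\nu(\mu_1,\mu_2)$, then the support of $\pi_{(\omega,\omega')\#}\gamma$ is not contained in the closed diagonal $\{\omega=\omega'\}$. Its complement is therefore an open set that meets the support, and so it carries strictly positive mass for $\pi_{(\omega,\omega')\#}\gamma$. Pulling back, the off-diagonal set $\{\omega \neq \omega'\}\subseteq \mathbb{R}^{4d}$ has strictly positive $\gamma$-mass, and since $c_\infty \equiv +\infty$ there, the cost integral equals $+\infty$. Combining both cases we conclude \eqref{dnu-optimal-problem-2}.

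Since the argument is a direct bookkeeping reduction to Proposition \ref{P-dnu-optimal-problem}, no substantial obstacle is expected. The only delicate point is the transition between the topological statement \emph{support lies in the diagonal} and the measure-theoretic statement \emph{off-diagonal has zero mass}, but this is immediate because the off-diagonal set is open (so support not contained in the diagonal forces positive mass off-diagonal).
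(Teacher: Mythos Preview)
Your proof is correct. The paper itself omits the proof of this proposition, stating only that it follows from standard arguments; your argument---splitting $\Gamma(\mu_1,\mu_2)$ into $\Gamma_\nu(\mu_1,\mu_2)$ and its complement, and observing that non-admissible plans have infinite $c_\infty$-cost because the off-diagonal $\{\omega\neq\omega'\}$ is open---is exactly the kind of routine reduction to Proposition~\ref{P-dnu-optimal-problem} that the authors had in mind.
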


In other words, note that since $c_\infty$ is an infinitely-valued cost function, then $W_{2,\nu}^2$ can be regarded as a usual optimal transport problem where that transport between different fibers is penalized with an infinite cost. Indeed, we obtain the following relation, provided that $\nu\in {\mathcal P}_2(\RR^d)$.

\begin{pro}[Hierarchy of distances]\label{P-scaled-Wasserstein-distances}
For any $\varepsilon\in \mathbb{R}_+^*$, we define the cost function  $c_\varepsilon:\mathbb{R}^{2d}\times \mathbb{R}^{2d}\longrightarrow \mathbb{R}_+$ and its transport distance $W_{c_\varepsilon}:\mathcal{P}_2(\mathbb{R}^{2d})\times \mathcal{P}_2(\mathbb{R}^{2d})\longrightarrow\mathbb{R}_0^+$ as follows
\begin{align}
&c_\varepsilon((x,\omega),(x',\omega')):=\vert x-x'\vert^2+\varepsilon^2\, \vert \omega-\omega'\vert^2,\label{E-scaled-costs}\\
&W_{c_\varepsilon}^2(\mu_1,\mu_2):=\inf_{\gamma\in \Gamma(\mu_1,\mu_2)}\int_{\mathbb{R}^{4d}}c_\varepsilon((x,\omega),(x',\omega'))\,d\gamma(x,x',\omega,\omega'),\label{E-scaled-distances}
\end{align}
Then, the following embedding is found
\begin{equation}\label{E-scaled-distance-relations}
W_2(\pi_{x\#}\mu_1,\pi_{x\#}\mu_2)\leq W_{c_\varepsilon}(\mu_1,\mu_2)\leq W_{2,\nu}(\mu_1,\mu_2),
\end{equation}
for any $\varepsilon\in \mathbb{R}_+^*$ and $\mu_1,\mu_2\in \mathcal{P}_2(\mathbb{R}^{2d})$. Moreover, the asymptotic regimes $\varepsilon\rightarrow 0$ and $\varepsilon\rightarrow \infty$ are identified, namely, we obtain
\begin{align}\label{E-scaled-distance-limits}
\begin{aligned}
\lim_{\varepsilon\rightarrow 0}W_{c_\varepsilon}(\mu_1,\mu_2)&=W_2(\pi_{x\#}\mu_1,\pi_{x\#}\mu_2),\\
\lim_{\varepsilon\rightarrow \infty}W_{c_\varepsilon}(\mu_1,\mu_2)&=W_{2,\nu}(\mu_1,\mu_2),
\end{aligned}
\end{align}
for any $\mu_1,\mu_2\in \mathcal{P}_2(\mathbb{R}^{2d})$.
\end{pro}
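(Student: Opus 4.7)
The plan is to first establish the two-sided estimate \eqref{E-scaled-distance-relations}, and then to deduce the limits \eqref{E-scaled-distance-limits} from it by combining these inequalities with an explicit gluing construction (for $\varepsilon\to 0$) and a narrow compactness argument (for $\varepsilon\to\infty$).

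For the lower bound in \eqref{E-scaled-distance-relations}, any $\gamma\in\Gamma(\mu_1,\mu_2)$ projects via the $(x,x')$-components to a plan $\widetilde{\gamma}\in\Gamma(\pi_{x\#}\mu_1,\pi_{x\#}\mu_2)$, and $c_\varepsilon\geq |x-x'|^2$ gives $\int|x-x'|^2\,d\widetilde{\gamma}\leq \int c_\varepsilon\,d\gamma$; taking the infimum over $\gamma$ yields the claim. For the upper bound, any $\gamma\in\Gamma_\nu(\mu_1,\mu_2)\subseteq\Gamma(\mu_1,\mu_2)$ has $\omega=\omega'$ on $\supp\gamma$ by Definition~\ref{D-admissible-plans-fibered}, hence $c_\varepsilon\equiv|x-x'|^2$ there; invoking Proposition~\ref{P-dnu-optimal-problem} and then taking the infimum over $\Gamma_\nu$ gives $W_{c_\varepsilon}^2\leq W_{2,\nu}^2$.

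For the $\varepsilon\to 0$ limit, the lower bound just proved already delivers the $\liminf$. For the matching $\limsup$, I glue an optimal $\bar\gamma\in\Gamma_o(\pi_{x\#}\mu_1,\pi_{x\#}\mu_2)$ with the disintegrations $\mu_i=\mu_i^x\otimes\pi_{x\#}\mu_i$ (whose Borel families exist by Theorem~\ref{dis}) into
\begin{equation*}
\gamma(x,x',\omega,\omega'):=\bar\gamma(x,x')\otimes \mu_1^x(\omega)\otimes \mu_2^{x'}(\omega')\in\Gamma(\mu_1,\mu_2).
\end{equation*}
A direct computation gives
\begin{equation*}
\int c_\varepsilon\,d\gamma \leq W_2^2(\pi_{x\#}\mu_1,\pi_{x\#}\mu_2)+2\varepsilon^2\Bigl(\int|\omega|^2\,d\mu_1+\int|\omega|^2\,d\mu_2\Bigr),
\end{equation*}
whose remainder vanishes as $\varepsilon\to 0$ since $\mu_i\in\mathcal{P}_2(\mathbb{R}^{2d})$.

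The $\varepsilon\to\infty$ limit is the principal obstacle. Its $\limsup$ is immediate from the upper bound in \eqref{E-scaled-distance-relations}. For the matching $\liminf$, I pick optimal plans $\gamma_\varepsilon\in\Gamma(\mu_1,\mu_2)$ for $W_{c_\varepsilon}^2$ (existence is standard for the continuous quadratic cost $c_\varepsilon$); assuming $\mu_1,\mu_2\in\mathcal{P}_{2,\nu}(\mathbb{R}^{2d})$ so that $W_{2,\nu}^2<\infty$, the chain $W_{c_\varepsilon}^2\leq W_{2,\nu}^2$ forces
\begin{equation*}
\int|\omega-\omega'|^2\,d\gamma_\varepsilon\leq\frac{W_{2,\nu}^2(\mu_1,\mu_2)}{\varepsilon^2}\xrightarrow[\varepsilon\to\infty]{}0.
\end{equation*}
The fixed marginals $\mu_1,\mu_2$ imply tightness of $\{\gamma_\varepsilon\}$ via Prokhorov's Theorem~\ref{T-Prokhorov}, whence a subsequence converges narrowly to some $\gamma_\infty\in\Gamma(\mu_1,\mu_2)$. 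Lower semicontinuity under narrow convergence (applied to the nonnegative continuous function $|\omega-\omega'|^2$) yields $\int|\omega-\omega'|^2\,d\gamma_\infty=0$, so $\supp\gamma_\infty\subseteq\{\omega=\omega'\}$ and thus $\gamma_\infty\in\Gamma_\nu(\mu_1,\mu_2)$. The analogous lower semicontinuity of $\int|x-x'|^2$ combined with Proposition~\ref{P-dnu-optimal-problem} then produces
\begin{equation*}
W_{2,\nu}^2(\mu_1,\mu_2)\leq \int|x-x'|^2\,d\gamma_\infty\leq \liminf_{\varepsilon\to\infty}W_{c_\varepsilon}^2(\mu_1,\mu_2),
\end{equation*}
which closes the estimate. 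The delicate step is precisely promoting the vanishing $\omega$-penalty to fiber-diagonal support of $\gamma_\infty$, which is exactly what the narrow lower semicontinuity of $|\omega-\omega'|^2$ provides.
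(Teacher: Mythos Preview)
Your proof is correct. The paper itself omits the proof of this proposition, stating only that ``the proof follows from standard arguments,'' and your argument supplies precisely those arguments: projection for the lower bound, $\nu$-admissible plans for the upper bound, an explicit gluing for $\varepsilon\to 0$, and Prokhorov compactness plus narrow lower semicontinuity for $\varepsilon\to\infty$. One small remark that would streamline the $\varepsilon\to\infty$ step: since $c_\varepsilon$ is pointwise non-decreasing in $\varepsilon$, so is $W_{c_\varepsilon}^2$, hence the limit exists a priori and your subsequence argument automatically yields the full limit without further care.
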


\begin{proof}
First, note that for any $\varepsilon\in \mathbb{R}^+$ we have $c_0\leq c_\varepsilon\leq c_\infty$, and thus $W_{c_0}(\mu_1,\mu_2)\leq W_{c_\varepsilon}(\mu_1,\mu_2)\leq W_{c_\infty}(\mu_1,\mu_2)$ for any $\mu_1,\mu_2\in \mathcal{P}_2(\mathbb{R}^{2d})$. Then, \eqref{E-scaled-distance-relations} follows by noting that
$$W_{c_0}(\mu_1,\mu_2)=W_2(\pi_{x\#}\mu_1,\pi_{x\#}\mu_2)\quad \mbox{and}\quad W_{c_\infty}(\mu_1,\mu_2)=W_{2,\nu}(\mu_1,\mu_2).$$
Second, note that $W_{c_\varepsilon}(\mu_1,\mu_2)$ is non-decreasing with $\varepsilon$. Hence, in the sequel we just focus on proving \eqref{E-scaled-distance-limits} along a subsequence.

\medskip

$\diamond$ {\sc Step 1}: Regime $\varepsilon\rightarrow 0$.\\
Let
$T_\varepsilon(x,\omega):=(x,\varepsilon\omega)$ for  $(x,\omega)\in\mathbb{R}^{2d}$ and $\varepsilon\geq 0$.
Then, by definition \eqref{E-scaled-distances},
\begin{equation}\label{E-scaled-distance-pushforward}
W_{c_\varepsilon}(\mu_1,\mu_2)=W_2(T_{\varepsilon\#}\mu_1,T_{\varepsilon\#}\mu_2).
\end{equation}
By definition $T_{\varepsilon\#}\mu_1\rightarrow T_{0\#}\mu_1$ and $T_{\varepsilon\#}\mu_2\rightarrow T_{0\#}\mu_2$ narrowly as $\varepsilon\rightarrow 0$. In addition, 
$$\int_{\mathbb{R}^{2d}}(\vert x\vert^2+\vert \omega\vert^2)\,d(T_{\varepsilon\#}\mu_i)(x,\omega)=\int_{\mathbb{R}^d}\vert x\vert^2\,d(\pi_{x\#}\mu_i)(x)+\varepsilon^2\int_{\mathbb{R}^d}\vert \omega\vert^2\,d(\pi_{\omega\#}\mu_i)(\omega),$$
for any $i=1,2$. Thus
$$\lim_{\varepsilon\rightarrow 0}\int_{\mathbb{R}^{2d}}(\vert x\vert^2+\vert \omega\vert^2)\,d(T_{\varepsilon\#}\mu_i)=\int_{\mathbb{R}^{2d}}(\vert x\vert^2+\vert \omega\vert^2)\,d(T_{0\#}\mu_i),$$
for any $i=1,2$. Consequently, by Proposition \ref{P-characterization-convergence-W2}, $T_{\varepsilon\#}\mu_1\rightarrow T_{0\#}\mu_1$ and $ T_{\varepsilon\#}\mu_2\rightarrow T_{0\#}\mu_2$ in $W_2$. Hence, we can pass to the limit in \eqref{E-scaled-distance-pushforward} and find
$$\lim_{\varepsilon\rightarrow 0}W_{c_\varepsilon}(\mu_1,\mu_2)=W_2(T_{0\#}\mu_1,T_{0\#}\mu_2).$$
We conclude the proof of $\eqref{E-scaled-distance-limits}_1$ by noting that $T_{0\#}\mu_i=(\pi_{x\#}\mu_i)\otimes \delta_0(\omega)$.

\medskip

$\diamond$ {\sc Step 2}: Regime $\varepsilon\to\infty$.\\
For any $\varepsilon>0$ consider $\gamma_\varepsilon\in \Gamma(\mu_1,\mu_2)$ optimal with respect to the cost function $c_\varepsilon$. Since $\{\gamma_\varepsilon\}_{\varepsilon\in \mathbb{R}^+}$ is uniformly tight by \cite[Lemma 4.4]{V-09}, then there exists a sequence $\varepsilon_n\rightarrow \infty$ and $\gamma_\infty\in \Gamma(\mu_1,\mu_2)$ such that $\gamma_{\varepsilon_n}\rightarrow \gamma_\infty$ narrowly by Prokhorov's Theorem \ref{T-Prokhorov}. By \eqref{E-scaled-distance-relations} we have
\begin{equation}\label{E-scaled-distance-concentration}
\int_{\mathbb{R}^{4d}}\vert \omega-\omega'\vert^2\,d\gamma_\varepsilon(x,x',\omega,\omega')\leq \frac{W_{2,\nu}^2(\mu_1,\mu_2)}{\varepsilon^2}\xrightarrow{\varepsilon\to\infty} 0,
\end{equation}
which implies that $\gamma_\infty$ is concentrated on $\{\omega=\omega'\}$ and thus $\gamma_\infty\in \Gamma_\nu(\mu_1,\mu_2)$.
 Then,
$$W_{2,\nu}^2(\mu_1,\mu_2)\leq \int_{\mathbb{R}^{4d}}c_\infty\,d\gamma_\infty=\int_{\mathbb{R}^{4d}}\vert x-x'\vert^2\,d\gamma_\infty\leq \liminf_{n\rightarrow \infty} \int_{\mathbb{R}^{4d}}c_0\,d\gamma_{\varepsilon_n}\leq \liminf_{n\rightarrow\infty} W_{c_{\varepsilon_n}}^2(\mu_1,\mu_2).$$
where in the first inequality we have used Proposition \ref{P-dnu-optimal-problem-2}, in the first identity we have used that $\gamma_\infty$ is $\nu$-admissible and in the second inequality we have applied the lower semicontinuity property \cite[Lemma 5.1.7]{AGS-08}. Hence, by the reverse inequalities \eqref{E-scaled-distance-relations} we conclude that
$$\lim_{n\rightarrow\infty}W_{c_{\varepsilon_n}}(\mu_1,\mu_2)=W_{2,\nu}(\mu_1,\mu_2).$$
\end{proof}

Note that $W_{c_1}$ reduces to the usual quadratic Wassertein distance, and then \eqref{E-scaled-distance-relations} yields a continuous and non-expansive embedding $(\mathcal{P}_{2,\nu}(\mathbb{R}^{2d}),W_{2,\nu})\hookrightarrow (\mathcal{P}_2(\mathbb{R}^{2d}),W_2)$. In addition, \eqref{E-scaled-distance-limits} identifies the asymptotic regimes of cheapest ($\varepsilon\rightarrow 0$) and most expensive ($\varepsilon\rightarrow \infty$) transportation cost in the variable $\omega$ as $W_2(\pi_{x\#}\mu_1,\pi_{x\#}\mu_1)$ and $W_{2,\nu}(\mu_1,\mu_2)$ respectively. Whilst the latter has not been much treated in the literature (it is the object of study of this paper), the former was already studied in \cite{CGS-10} with $d=1$. Indeed, the authors obtained sufficient conditions on $\mu_1$ and $\mu_2$ ({\it e.g.}, absolutely continuous and compactly supported) so the optimal plans $\gamma_\varepsilon$ associated with $W_{c_\varepsilon}(\mu_1,\mu_2)$ converge to $(I,T_{K})_{\#}\mu_1$ as $\varepsilon\rightarrow 0$, being $T_{K}$ the increasing (in lexicographical order) Knothe--Rosenblatt rearrangement between $\mu_1$ and $\mu_2$. This in particular recovers the first half $\eqref{E-scaled-distance-limits}_1$.

Notice that, although infinitely-valued, the cost function $c_\infty$ in \eqref{E-scaled-costs-infinity} of Proposition \ref{P-dnu-optimal-problem-2} is lower semicontinuous. This guarantees that many results of classical optimal transport are still available. In particular, optimal transference plans always exist thanks to \cite[Theorem 4.1]{V-09}. Indeed, they all belong to the class of $\nu$-admissible plans as proved in Proposition \ref{P-dnu-optimal-problem}. Similarly, Kantorovich duality still holds by virtue of \cite[Theorem 6.1.1]{AGS-08} or \cite[Theorem 5.10]{V-09}. Unfortunately, in our case $c_\infty$-cyclical monotonicity does not fully characterize optimality of transference plans.

\begin{rem}[Optimality of plans vs $c_\infty$-cyclical monotonocity]\label{optvsmon}
~

\indent $\bullet$ {\it (Necessary condition)} Since $c_\infty$ is lower-semicontinuous, then the $c_\infty$-cyclical monotonicity is still a necessary condition for the optimality of a transference plan (see \cite[Theorem 6.1.4]{AGS-08}). Indeed, consider any $\mu_1,\mu_2\in \mathcal{P}_{2,\nu}(\mathbb{R}^{2d})$ and assume that $\gamma\in \Gamma_{\nu}(\mu_1,\mu_2)$ is any optimal transference plan for $W_{2,\nu}(\mu_1,\mu_2)$. Then, $\gamma$ is concentrated on a subset $\Gamma\subseteq \mathbb{R}^{4d}$ which verifies
\begin{equation}\label{E-cinfty-cyclical-monotonicity}
\sum_{i=1}^n c_\infty((x,\omega),(x',\omega'))\leq \sum_{i=1}^n c_\infty((x_i,\omega_i),(x_{\sigma(i)}',\omega_{\sigma(i)}')),
\end{equation}
for any $(x_1,\omega_1,x_1',\omega_1'),\ldots,(x_n,\omega_n,\omega_n',x_k')\in \Gamma$ and each permutation $\sigma\in \mathfrak{S}_n$. Sets $\Gamma$ verifying \eqref{E-cinfty-cyclical-monotonicity} are often called $c_\infty$-cyclically monotone.

$\bullet$ {\it (Sufficient condition)} Condition \eqref{E-cinfty-cyclical-monotonicity} ensures optimality for many cost functions, particularly for finitely-valued costs (see \cite{AP-03}) and continuous infinitely-valued costs (see \cite{P-08}). However, there are classical counter-examples for generic lower semicontinuous costs ({\it e.g.}, \cite{AP-03}). Indeed, a more appropriate notion characterizing optimality in those degenerate cases appears to be the strong cyclical monotonicity proposed in \cite{ST-08}, which unfortunately loses the pointwise character in definition \eqref{E-cinfty-cyclical-monotonicity}. Our cost function $c_\infty$ is infinitely-valued and discontinuous, and it is indeed easy to infer that \eqref{E-cinfty-cyclical-monotonicity} does not characterize optimal plans.
\end{rem}

 In the following we study the stability of optimality of plans $\gamma_n\in \Gamma_{o,\nu}(\mu_n,\widetilde{\mu}_n)$ under narrow convergence of $\mu_n$ and $\widetilde{\mu}_n$. Note that the classical approach to stability of optimality with respect to the narrow convergence exploits the cyclical monotonocity as a characterization of optimality of $\gamma_n$, and the Kuratowski convergence of the supports of the transference plans under narrow convergence (see \cite[Proposition 7.1.3]{AGS-08} and \cite[Theorem 5.20]{V-09}). However, as explained in Remark \ref{optvsmon}, our cost $c_\infty$ is neither finitely-valued nor continuous so that cyclical monotonicity does not characterize optimality, and then this approach breaks down. 
 
 In addition, note that a simple fiberwise argument where one applies the classical result on each fiber $\omega\in \mathbb{R}^d$ would require the narrow convergence of each $\gamma_n^\omega$, and this is certainly something that one cannot ensure under solely joint narrow convergence of the plans $\gamma_n$ ({\it cf}. Remark \ref{R-narrow-vs-fibers}). Instead, we propose a suitable adjustment of the classical proof, which does not use the fiberwise narrow convergence of the plans, but it rather exploits the (weaker) fiberwise Kuratowski convergence of the supports of the transference plans under joint narrow convergence (see \eqref{E-fibered-Kuratowski} below), along with the fiberwise characterization of optimality through cyclical monotonocity. Whilst possibly confusing at first glance in view of Remarks \ref{ohhnaivety} and \ref{R-narrow-vs-fibers}, we anticipate that there is no real conflict as justified below in Remark \ref{R-narrow-fiberwise-Kuratowski-convergence}.

\begin{pro}[Narrow lsc and stability of optimality]\label{P-stability-optimal-plans}
Consider $\nu\in \mathcal{P}(\mathbb{R}^d)$, $\mu,\,\widetilde{\mu}\in \mathcal{P}_{2,\nu}(\mathbb{R}^{2d})$ and sequences $\{\mu_n\}_{n\in \mathbb{N}},\,\{\widetilde{\mu}_n\}_{n\in \mathbb{N}}\subseteq\mathcal{P}_{2,\nu}(\mathbb{R}^{2d})$ such that $\mu_n\rightarrow\mu$ and $\widetilde{\mu}_n\rightarrow \widetilde{\mu}$ narrowly in $\mathcal{P}_\nu(\mathbb{R}^{2d})$.
Then, the following properties hold true:
\begin{enumerate}[label=(\roman*)]
\item (Narrow lower semicontinuity)
\begin{equation}\label{E-lsc-W2nu}
W_{2,\nu}(\mu,\widetilde{\mu})\leq \liminf_{n\rightarrow\infty}W_{2,\nu}(\mu_n,\widetilde{\mu}_n).
\end{equation}
\item (Stability of optimality) 
Set $\gamma_n\in \Gamma_{o,\nu}(\mu_n,\widetilde{\mu}_n)$ for every $n\in \mathbb{N}$. Then, $\{\gamma_n\}_{n\in \mathbb{N}}$ is narrowly relatively compact and any limit point belongs to $\Gamma_{o,\nu}(\mu,\widetilde{\mu})$.
\end{enumerate}
\end{pro}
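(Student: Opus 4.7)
My plan is to handle (i) and (ii) simultaneously by extracting a limit $\gamma$ from a sequence of optimal plans $\gamma_n\in\Gamma_{o,\nu}(\mu_n,\widetilde{\mu}_n)$ and then showing both the lower semicontinuity bound and the optimality of $\gamma$. By Proposition \ref{P-dnu-optimal-problem} each $\gamma_n$ has the admissible form $\gamma_n=\gamma_n^\omega(x,x')\otimes\nu(\omega)\otimes\delta_\omega(\omega')$ with $\gamma_n^\omega\in\Gamma_o(\mu_n^\omega,\widetilde{\mu}_n^\omega)$ for $\nu$-a.e.\ $\omega$. I view $\gamma_n$ as a $\nu$-random measure on $\RR^{2d}$ in the coordinates $(x,x')$, fibered by the common marginal $\omega=\omega'$. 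Convergence $\mu_n,\widetilde{\mu}_n\to\mu,\widetilde{\mu}$ in the $\nu$-random narrow topology and Theorem \ref{T-random-Prokhorov} yield $\nu$-random uniform tightness of both sequences, and combining two such compacts $K_\varepsilon,\widetilde{K}_\varepsilon\subset\RR^d$ through the Fréchet bound $\gamma_n^\omega(K_\varepsilon\times\widetilde{K}_\varepsilon)\geq\mu_n^\omega(K_\varepsilon)+\widetilde{\mu}_n^\omega(\widetilde{K}_\varepsilon)-1$ gives tightness of $\{\gamma_n\}$. A further application of Theorem \ref{T-random-Prokhorov} extracts a $\nu$-random narrowly convergent subsequence $\gamma_n\to\gamma$; testing against $\nu$-random continuous functions of $(x,\omega)$ alone and of $(x',\omega)$ alone identifies the marginals of $\gamma$ as $\mu$ and $\widetilde{\mu}$, while the admissible structure is preserved by construction, so $\gamma\in\Gamma_\nu(\mu,\widetilde{\mu})$.

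For (i), I truncate: for each $M>0$ the function $c_M(x,x'):=\min(|x-x'|^2,M)$ is bounded continuous in $(x,x')$, hence belongs to $C_{b,\nu}(\RR^{4d})$. The $\nu$-random narrow convergence gives $\int c_M\,d\gamma=\lim_n\int c_M\,d\gamma_n\leq \liminf_n W_{2,\nu}^2(\mu_n,\widetilde{\mu}_n)$, and monotone convergence as $M\to\infty$ on the left, combined with $W_{2,\nu}^2(\mu,\widetilde{\mu})\leq\int |x-x'|^2\,d\gamma$ from Proposition \ref{P-dnu-optimal-problem}, produces \eqref{E-lsc-W2nu}.

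The main obstacle is (ii), for the reasons flagged in Remark \ref{optvsmon}: the cost $c_\infty$ is discontinuous and infinitely-valued, so $c_\infty$-cyclical monotonicity neither is preserved by $\nu$-random narrow limits nor characterizes optimality. My workaround is a fiberwise (``random'') cyclical monotonicity argument, which exploits that the fiber cost $|x-x'|^2$ is itself continuous. For each $k\in\NN$ let
\begin{equation*}
\Sigma_k:=\left\{\big((x_1,y_1),\ldots,(x_k,y_k)\big)\in(\RR^{2d})^k:\ \textstyle\sum_{i=1}^k|x_i-y_i|^2\leq\sum_{i=1}^k|x_i-y_{\sigma(i)}|^2\ \forall\,\sigma\in\mathfrak{S}_k\right\},
\end{equation*}
which is closed in $(\RR^{2d})^k$. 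By the classical characterization of optimality for the quadratic cost, $(\gamma_n^\omega)^{\otimes k}(\Sigma_k)=1$ for $\nu$-a.e.\ $\omega$ and every $n$, so the $\nu$-random measure $\Gamma_n^{(k)}:=(\gamma_n^\omega)^{\otimes k}\otimes\nu(\omega)$ on $(\RR^{2d})^k\times\RR^d$ satisfies $\Gamma_n^{(k)}(\Sigma_k\times\RR^d)=1$.

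The key step is to upgrade $\nu$-random narrow convergence $\gamma_n\to\gamma$ to $\Gamma_n^{(k)}\to\Gamma^{(k)}:=(\gamma^\omega)^{\otimes k}\otimes\nu(\omega)$ in the $\nu$-random narrow topology of measures on $(\RR^{2d})^k$. This is done by testing against tensor products $\prod_{i=1}^k\psi_i(x_i,y_i,\omega)$ of $\nu$-random continuous functions, iterating $k$ times the definition of $\nu$-random narrow convergence, and extending by a density argument in $C_{b,\nu}((\RR^{2d})^k)$. Treating $\Sigma_k\times\RR^d$ as a deterministic closed random set, the random Portmanteau Theorem \ref{T-random-Portmanteau} then gives $1\geq\Gamma^{(k)}(\Sigma_k\times\RR^d)\geq\limsup_n\Gamma_n^{(k)}(\Sigma_k\times\RR^d)=1$, whence $(\gamma^\omega)^{\otimes k}(\Sigma_k)=1$ for $\nu$-a.e.\ $\omega$. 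Since $k$ was arbitrary, $\gamma^\omega$ is $|x-x'|^2$-cyclically monotone for $\nu$-a.e.\ $\omega$, hence optimal for $W_2(\mu^\omega,\widetilde{\mu}^\omega)$ by the classical characterization, and therefore $\gamma\in\Gamma_{o,\nu}(\mu,\widetilde{\mu})$. This tensor-product-plus-random-Portmanteau device is precisely the ``random variant of $c$-monotonicity'' announced in Remark \ref{ohhnaivety}.
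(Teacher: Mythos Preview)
Your treatment of relative compactness and of item (i) is correct and close to the paper's: tightness via the Fr\'echet bound, extraction of a $\nu$-random narrow limit $\gamma\in\Gamma_\nu(\mu,\widetilde{\mu})$, and a truncation argument for lower semicontinuity.

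For item (ii), however, the crucial step is wrong. The claim that $\Gamma_n^{(k)}:=(\gamma_n^\omega)^{\otimes k}\otimes\nu(\omega)$ converges $\nu$-random narrowly to $\Gamma^{(k)}:=(\gamma^\omega)^{\otimes k}\otimes\nu(\omega)$ is \emph{false} in general, and the proposed iteration cannot work: $\nu$-random narrow convergence of $\gamma_n$ only yields, for a fixed $\psi\in C_{b,\nu}$, \emph{weak} convergence in $L^1_\nu$ of the maps $\omega\mapsto f^n(\omega):=\int\psi\,d\gamma_n^\omega$ (test against $g(\omega)\psi$, which is again in $C_{b,\nu}$). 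Taking fiberwise tensor powers is a nonlinear operation that does not pass to weak limits. Concretely: let $\nu$ be Lebesgue on $[0,1]$, $r_n(\omega)\in\{0,1\}$ the $n$-th binary digit of $\omega$, and $\gamma_n^\omega:=\delta_{(r_n(\omega),r_n(\omega))}$ (an optimal plan between Dirac marginals). Then $\gamma_n\to\gamma$ $\nu$-random narrowly with $\gamma^\omega=\tfrac12\delta_{(0,0)}+\tfrac12\delta_{(1,1)}$, yet $(\gamma_n^\omega)^{\otimes 2}=\delta_{((r_n,r_n),(r_n,r_n))}$ converges $\nu$-random narrowly to the measure with fibers $\tfrac12\delta_{((0,0),(0,0))}+\tfrac12\delta_{((1,1),(1,1))}$, which is \emph{not} $(\gamma^\omega)^{\otimes 2}$. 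More generally, any $\nu$-random narrow limit of $\Gamma_n^{(k)}$ has fibers that are merely \emph{couplings} of $k$ copies of $\gamma^\omega$, and being concentrated on $\Sigma_k$ for some such coupling does not force $\supp\gamma^\omega$ to be cyclically monotone (the diagonal coupling is always supported in $\Sigma_k$, for any $\gamma^\omega$).

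The paper avoids this obstruction by a different device. It represents the random closed set $\omega\mapsto\supp\gamma^\omega$ via Castaing's selection theorem as the closure of countably many measurable selections $\{(x_m(\omega),x_m'(\omega))\}_{m\in\NN}$, applies the random Portmanteau theorem (Theorem \ref{T-random-Portmanteau}) to the open \emph{random} balls $B_{1/k}(x_m(\omega))\times B_{1/k}(x_m'(\omega))$, and extracts, for $\nu$-a.e.\ $\omega$ and every $(x,x')\in\supp\gamma^\omega$, approximating points $(x_n,x_n')\in\supp\gamma_{\sigma(n)}^\omega$ with $(x_n,x_n')\to(x,x')$. This fiberwise Kuratowski-type support approximation then transfers cyclical monotonicity from $\gamma_n^\omega$ to $\gamma^\omega$ by the classical argument. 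That is the ``random variant of $c$-monotonicity'' alluded to in Remark \ref{ohhnaivety}, not the tensor-product construction.
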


\begin{proof}

$\diamond$ {\sc Step 1}: Compactness of plans in the narrow topology.\\
Since the sequences $\{\mu_n\}_{n\in \mathbb{N}}$ and $\{\widetilde{\mu}_n\}_{n\in \mathbb{N}}$ are  narrowly convergent, then they are narrowly relatively compact. A classical argument based on Prokhorov's theorem ensures that any sequence $\gamma_n\in \Gamma_{\nu}(\mu_n,\widetilde{\mu}_n)$ is narrowly relatively compact  (see \cite[Lemma 5.2.2]{AGS-08}). Thus, up to a subsequence, $\gamma_n\to\gamma$ narrowly in ${\mathcal P}(\RR^{4d})$. Moreover, it is easy to show that the narrow convergence preserves the structure \eqref{plan}, and thus, the limit $\gamma$ belongs to $\Gamma_\nu(\mu,\widetilde{\mu})$.

\medskip

$\diamond$ {\sc Step 2}: Lower semicontinuity property \eqref{E-lsc-W2nu}.\\
By {\sc Step 1}, we can subtract a subsequence (denoted by $\{\gamma_n\}_{n\in \mathbb{N}}$ for simplicty) so that $\gamma_n\rightarrow \gamma$ narrowly in $\mathcal{P}(\mathbb{R}^{4d})$. Since the function $(x,x',\omega,\omega')\in \mathbb{R}^{4d}\longmapsto \vert x-x'\vert^2$ is lower semicontinuous and lower bounded then a standard lower semicontinuity argument \cite[Lemma 5.1.7]{AGS-08} implies
\begin{equation}\label{E-lsc-W2nu-finer}
\int_{\mathbb{R}^{4d}}\vert x-x'\vert^2\,d\gamma\leq \liminf_{n\rightarrow \infty}\int_{\mathbb{R}^{4d}}\vert x-x'\vert^2\,d\gamma_n=\liminf_{n\rightarrow\infty}W_{2,\nu}^2(\mu_n,\widetilde{\mu}_n),
\end{equation}
where in the last identity we have used that $\gamma_n\in\Gamma_{o,\nu}(\mu_n,\widetilde{\mu}_n)$. In particular, since $\gamma$ is a competitor in $W_{2,\nu}(\mu,\widetilde{\mu})$, then the above inequality implies
$$W_{2,\nu}(\mu,\widetilde{\mu})\leq \liminf_{n\rightarrow \infty}W_{2,\nu}(\mu_n,\widetilde{\mu}_n),$$
over the above convergence subsequence $\gamma_n\rightarrow \gamma$. Notice that the above argument can be repeated for any subsequence of $\{\mu_n\}_{n\in \mathbb{N}}$ and $\{\widetilde{\mu}_n\}_{n\in \mathbb{N}}$. Then, it implies that the above property holds for the full sequence, thus yielding \eqref{E-lsc-W2nu}.

\medskip

$\diamond$ {\sc Step 3}. Optimality of limiting plans.\\
Narrow relative compactness of $\{\gamma_n\}_{n\in{\mathbb N}}$ follows from {\sc Step 1}. Let $\gamma\in \Gamma_\nu(\mu,\widetilde{\mu})$ be any narrow limit of an appropriate subsequence again denoted by $\{\gamma_n\}_{n\in{\mathbb N}}$ for simplicity. Our goal is to show that $\gamma\in \Gamma_{o,\nu}(\mu,\widetilde{\mu})$, that is, $\gamma^\omega\in \Gamma_o(\mu^\omega,\widetilde{\mu}^\omega)$ for $\nu$-a.e. $\omega\in \mathbb{R}^d$. 

As discussed in Remark \ref{R-narrow-vs-stable} the narrow and stable topologies agree in $\mathcal{P}_\nu(\mathbb{R}^{2d})$ and therefore we can apply \cite[Theorem 4.12]{Ba-99} (see also \cite[Theorem 4.3.12]{CRV-04}), which yields the Kuratowski convergence of the supports of almost all of fibers $\gamma^\omega_n$. Specifically, we have
\begin{equation}\label{E-fibered-Kuratowski}
\supp\gamma^\omega\subseteq \Ls_{n\rightarrow\infty}\supp\gamma^\omega_n,
\end{equation}
for $\nu$-a.e. $\omega\in \mathbb{R}^d$ (say on a $\nu$-full measure set $F\subseteq \mathbb{R}^d$). Here, $\Ls$ denotes the Kuratowski superior limit, which is defined for any sequence of sets $A_n$ as $\Ls_{n\rightarrow\infty}A_n:=\cap_{n\in \mathbb{N}}\overline{\cup_{m\geq n}A_m}$. 

Fix any $\omega\in F$ and take any finite collection of points $(x_1,x_1'),\ldots,(x_k,x_k')\in \supp\gamma^\omega$. Our goal is to show the cyclical monotonicity of such a collection of points. The above Kuratowski convergence \eqref{E-fibered-Kuratowski} allows extracting a subsequence $\{\gamma^\omega_{\kappa(n)}\}_{n\in \mathbb{N}}$ (depending on the fixed $\omega$) and also $(x_{i,n},x_{i,n}')\in \supp\gamma_{\kappa(n)}^\omega$ for all $i=1,\ldots,k$ such that $(x_{i,n},x_{i,n}')\rightarrow (x_i,x_i')$ as $n\rightarrow \infty$. The subsequence may depend on the specific $(x_i,x_i')$, but since there are finitely many, we can always take a common subsequence. Since $\gamma_{\kappa(n)}^\omega\in \Gamma_o(\mu_{\kappa(n)}^\omega,\widetilde{\mu}_{\kappa(n)}^\omega)$, then
$$\sum_{i=1}^k |x_{i,n}-x_{i,n}'|^2\leq \sum_{i=1}^k |x_{i,n}-x_{\sigma(i),n}'|^2,$$
for all permutation $\sigma$ of $\{1,\ldots,k\}$ and all $n\in \mathbb{N}$. Passing to the limit in the above inequality shows that $\supp\gamma^\omega$ is cyclically monotone for the quadratic cost, and this ends the proof.
\end{proof}

\begin{rem}[Narrow convergence and fiberwise Kuratowski convergence]\label{R-narrow-fiberwise-Kuratowski-convergence}
The use of the fiberwise Kuratowski convergence \eqref{E-fibered-Kuratowski} in the above proof could appear to contradict Remark \ref{R-narrow-vs-fibers} at the first glance, since the latter provides examples of narrowly convergent sequences of transference plans, which are not fiberwise narrowly convergent $\gamma_n^\omega\rightarrow \gamma^\omega$. However, there is no such contradiction in view of the exact relations below:

\medskip

    \begin{center}
    \begin{tikzcd}[arrows=Rightarrow, column sep=1.3cm, row sep=1.3cm, every arrow/.append style={shift left=0.8ex}]
    \mu_n\rightarrow\mu\mbox{ narrowly} \quad
    \arrow[negated]{d}
    \arrow{r}
    \arrow[shorten <= 15pt, shorten >= 25pt]{dr}
    & \quad \mbox{supp}\,\gamma\subseteq {\rm Ls}_{n\rightarrow \infty}\mbox{supp}\,\gamma_n 
    \arrow[negated]{l}
    \arrow[negated]{d}
    \\
    \mu_n^\omega\rightarrow\mu^\omega\mbox{ narrowly} \quad
    \arrow{r}
    \arrow{u}
    & \quad \mbox{supp}\,\gamma^\omega\subseteq {\rm Ls}_{n\rightarrow \infty}\mbox{supp}\,\gamma_n^\omega
    \arrow[negated]{l}
    \arrow{u}
    \arrow[negated,shorten <= 25pt, shorten >= 15pt]{ul}
    \end{tikzcd}
    \end{center}

\medskip
    
In particular, note that, whilst both the narrow convergence and the fiberwise narrow convergence imply the fiberwise Kuratowski convergence, the latter does not imply any of the former. This can be further explained by noting that, whilst narrow convergence does not imply fiberwise narrow convergence in general, it does imply a certain fiberwise approximation of the supports due to the inherent oscillatory behavior of the stable convergence.
\end{rem}

\begin{rem}[Narrow vs $W_{2,\nu}$ convergence]\label{R-characterization-convergence-W2nu}
It is unclear if the classical characterisation of the $W_2$ convergence \cite[Theorem 6.9]{V-09} as narrow convergence supplemented with convergence of second order moments can be extended to the fibered case. Here we make the following observation. It is clear that if $\mu_n\rightarrow\mu$ in $W_{2,\nu}$ then $\mu_n\rightarrow\mu$ narrowly and 
\begin{equation*}
\lim_{n\rightarrow\infty}\int_{\mathbb{R}^{2d}}\vert x\vert^2\,d\mu_n(x,\omega)=\int_{\mathbb{R}^{2d}}\vert x\vert^2\,d\mu(x,\omega).
\end{equation*}
Whether it is possible to reverse the above implication seems unlikely. We refer to Example \ref{E-Rademacher} for a counterexample. In fact, by Vitali's convergence theorem $\mu_n\rightarrow\mu$ in $W_{2,\nu}$ if and only if the sequence of functions $\omega\mapsto W_2^2(\mu^\omega_n,\mu^\omega)$ converges to $0$ in measure $\nu$ and is uniformly integrable with respect to $\nu$, which implies $\mu_n^\omega\rightarrow\mu^\omega$ narrowly for $\nu$-a.e. $\omega\in \mathbb{R}^d$ up to subsequence.
\end{rem}

\begin{eje}[A Rademacher sequence]\label{E-Rademacher}
Set $\nu(\omega):=\chi_{(0,1)}(\omega)\,d\omega$ and define $u_n:(0,1)\longrightarrow \mathbb{R}$ by $u_n(\omega):=
{\rm sign}(\sin(2^n\pi\omega))$. Define the Young measures $\mu_n,\mu\in \mathcal{P}_{2,\nu}(\mathbb{R}^2)$ given by 
$$\mu_n(x,\omega):=\delta_{u_n(\omega)}(x)\otimes \nu(\omega),\quad \mu(x,\omega):=\frac{1}{2}(\delta_{-1}(x)+\delta_{1}(x))\otimes \nu(\omega).$$
By \cite[Example 3.2, Remark 4.2]{Ba-99} (see also \cite[Example 3.1.4]{CRV-04}) we have that $\mu_n\rightarrow \mu$ narrowly as $n\rightarrow\infty$, and also by inspection we obtain
$$\int_{\mathbb{R}^{2d}}|x|^2\,d\mu_n=1\rightarrow 1=\int_{\mathbb{R}^{2d}}|x|^2\,d\mu.$$
However, it is not true that $\mu_n^\omega\rightarrow \mu^\omega$ narrowly for {\it a.e.} $\omega\in (0,1)$.
\end{eje}

\subsection{Absolutely continuous curves and tangent bundle}

In this section, we recall the concept of absolutely continuous curves over a Polish space and provide a dynamic representation of absolutely continuous curves over the fibered quadratic Wasserstein space $(\mathcal{P}_{2,\nu}(\RR^{2d}),W_{2,\nu})$. Our result is reminiscent of the analogous one for absolutely continuous curves over the classical quadratic Wasserstein space $(\mathcal{P}_2(\RR^d),W_2)$ in terms of continuity equations, see \cite[Theorem 8.3.1]{AGS-08}. Interestingly, absolutely continuous curves along the fibered space solve a continuity equation where the transportation with respect to the variable $\omega$ is deprived and it only occurs with respect to the variable $x$ in a fiberwise way.

\begin{defi}[Absolutely continuous curves]\label{D-ac}
Let $({\mathbb X},d_\mathbb{X})$ be a Polish space and $p\in [1,\infty]$. We say that a curve $\bm{\mu}$ over $\mathbb{X}$ is $p$-absolutely continuous in $(a,b)$ with respect to $d_\mathbb{X}$ if there exists $\delta\in L^p(a,b)$ such that
\begin{align}\label{contm}
d_\mathbb{X}(\mu_t,\mu_s)\leq \int_s^t \delta(\tau)d\tau,
\end{align}
for all $a<s\leq t<b$. The space of all $p$-absolutely continuous curves with respect to $d_\mathbb{X}$ will be denoted by $AC^p(a,b;(\mathbb X,d_\mathbb{X}))$. For $p=1$, curves will be simply called {\rm absolutely continuous curves} and denoted by $AC(a,b;(\mathbb{X},d_\mathbb{X}))\equiv AC^1(a,b;(\mathbb{X},d_\mathbb{X}))$, whilst for $p=\infty$, curves will be called Lipschitz continuous and we denote $AC^\infty(a,b;(\mathbb{X},d_\mathbb{X}))\equiv {\rm Lip}(a,b;(\mathbb{X},d_\mathbb{X}))$.
\end{defi}

\begin{pro}[Metric derivative]\label{md}
Let $({\mathbb X},d_\mathbb{X})$ be a Polish space, $p\in [1,\infty]$ and let $\bm{\mu}\in AC^p(a,b;(\mathbb{X},d_{\mathbb{X}}))$. Then the following limit
\begin{align*}
\vert\bm{\mu}'\vert_{d_\mathbb{X}}(t):=\lim_{s\to t}\frac{d_\mathbb{X}(\mu_t,\mu_s)}{|t-s|},
\end{align*}
referred to as the metric derivative of $\bm{\mu}$ with respect to $d_\mathbb{X}$, exists for a.e. $t\in[a,b]$. Moreover $t\in (a,b)\mapsto \vert \bm{\mu}'\vert_{d_\mathbb{X}}(t)$ belongs to $L^p(a,b)$ and is the minimal admissible function in the right-hand side of \eqref{contm}.
\end{pro}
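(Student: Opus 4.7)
The plan is to follow the classical strategy for metric derivatives of absolutely continuous curves in a metric space (see, e.g., \cite[Theorem 1.1.2]{AGS-08}) and reduce the problem to the differentiability a.e. of a countable family of real-valued absolutely continuous functions. Specifically, I would fix a countable dense subset $\{t_n\}_{n\in\mathbb{N}}\subseteq (a,b)$ and set $\phi_n(t):=d_\mathbb{X}(\mu_t,\mu_{t_n})$ for each $n\in\mathbb{N}$. The triangle inequality combined with \eqref{contm} gives $|\phi_n(t)-\phi_n(s)|\leq d_\mathbb{X}(\mu_t,\mu_s)\leq \int_s^t \delta(\tau)\,d\tau$, so each $\phi_n:(a,b)\rightarrow \mathbb{R}$ belongs to the classical $AC^p(a,b)$. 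Hence $\phi_n'$ exists a.e. with $|\phi_n'|\leq \delta$ a.e., and, after discarding the countable union of the corresponding null sets, I can define the candidate
\begin{equation*}
m(t):=\sup_{n\in \mathbb{N}}|\phi_n'(t)|,
\end{equation*}
which is Borel-measurable as a countable supremum and satisfies $m\leq \delta$ a.e., so $m\in L^p(a,b)$.

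The heart of the argument is to show that $m$ is itself an admissible function in \eqref{contm}. Here I would use that the continuity of $\bm{\mu}$ (inherited from absolute continuity) combined with the density of $\{t_n\}$ in $(a,b)$ yields the identity $d_\mathbb{X}(\mu_t,\mu_s)=\sup_{n\in \mathbb{N}}|\phi_n(t)-\phi_n(s)|$ for all $s,t\in(a,b)$. Integrating $|\phi_n'|$ over $[s,t]$ and exchanging $\sup_n$ with the integral then gives
\begin{equation*}
d_\mathbb{X}(\mu_t,\mu_s)\leq \sup_{n\in \mathbb{N}}\int_s^t|\phi_n'(\tau)|\,d\tau\leq \int_s^t m(\tau)\,d\tau.
\end{equation*}

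With admissibility of $m$ in hand, the upper bound for the limit is obtained from the Lebesgue differentiation theorem: at every Lebesgue point $t$ of $m$, the above inequality yields $\limsup_{s\to t}d_\mathbb{X}(\mu_s,\mu_t)/|s-t|\leq m(t)$. Conversely, at every differentiability point of each $\phi_n$ the estimate $|\phi_n(s)-\phi_n(t)|\leq d_\mathbb{X}(\mu_s,\mu_t)$ gives $|\phi_n'(t)|\leq \liminf_{s\to t}d_\mathbb{X}(\mu_s,\mu_t)/|s-t|$, and passing to $\sup_n$ delivers the matching lower bound $m(t)\leq\liminf$. Hence $|\bm{\mu}'|_{d_\mathbb{X}}(t)$ exists and equals $m(t)$ for a.e. $t\in (a,b)$. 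Minimality is then automatic: any other admissible $\widetilde\delta\in L^p(a,b)$ in \eqref{contm} produces $|\phi_n'|\leq \widetilde\delta$ a.e. for each $n$ by the same absolute-continuity argument, whence $|\bm{\mu}'|_{d_\mathbb{X}}=m\leq \widetilde\delta$ a.e.

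The main delicate point I expect is the identity $d_\mathbb{X}(\mu_t,\mu_s)=\sup_n|\phi_n(t)-\phi_n(s)|$: the inequality ``$\geq$'' is immediate from the triangle inequality, but the reverse requires the continuity of $\bm{\mu}$ in order to approximate $\mu_s$ by $\mu_{t_n}$ in the definition of $\phi_n$. Once this identity is in place, the rest of the argument reduces to standard Lebesgue differentiation.
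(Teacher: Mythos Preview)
The paper does not supply its own proof of this proposition: it is stated as a recalled standard fact (the metric-derivative theorem for absolutely continuous curves in a metric space), and the text moves on immediately after the statement. Your proposal reproduces the classical argument from \cite[Theorem~1.1.2]{AGS-08}, which is exactly the reference one would point to here, so there is nothing to compare against.

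Your argument is correct. The only minor deviation from the usual presentation is that you take $\{t_n\}$ dense in the \emph{time interval} $(a,b)$ rather than taking a countable dense set in the image $\{\mu_t:t\in(a,b)\}$ or in $\mathbb{X}$; this works because $\bm{\mu}$ is continuous, which you correctly flag as the delicate point needed for the identity $d_\mathbb{X}(\mu_t,\mu_s)=\sup_n|\phi_n(t)-\phi_n(s)|$. Everything else (Lebesgue differentiation for the upper bound, passing $\sup_n$ inside the $\liminf$ for the lower bound, minimality from $|\phi_n'|\leq\widetilde\delta$) is standard and sound.
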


We refer to \cite[Theorem 1.1.2]{AGS-08} for the proof of the above proposition. When the distance $d_\mathbb{X}$ is clear from the context, we will simplify notation in Definition \ref{D-ac} and Proposition \ref{md} and will write $AC^p(a,b;\mathbb{X})$ and $\vert \bm{\mu}'\vert(t)$ for simplicity. Next, we proceed with the characterization of curves in $AC^p(a,b,\mathcal{P}_{2,\nu}(\mathbb{R}^{2d}))$. To this end, we first introduce the candidate for the tangent space at any measure in $\mathcal{P}_{2,\nu}(\mathbb{R}^{2d})$.

\begin{defi}[Tangent space]\label{D-tan}
Consider $\nu\in \mathcal{P}(\mathbb{R}^d)$ and $\mu\in \mathcal{P}_{2,\nu}(\mathbb{R}^{2d})$. We define
\begin{equation}\label{tan}
{\rm Tan}_\mu(\mathcal{P}_{2,\nu}(\mathbb{R}^{2d})):=\overline{\{\nabla_x\varphi:\,\varphi\in C^\infty_c(\mathbb{R}^{2d})\}}^{L^2_\mu(\mathbb{R}^{2d},\mathbb{R}^d)}.
\end{equation}
\end{defi}

Note that in Definition \ref{D-tan} gradients are computed only with respect to $x$ whilst $\varphi\in C^\infty_c(\mathbb{R}^{2d})$ depends on both $x$ and $\omega$. This induces a significant difference with the tangent bundle of the classical Wasserstein space $(\mathcal{P}_2(\mathbb{R}^{2d}),W_2)$, where gradients are computed with respect to both variables. To better understand the structure in \eqref{tan} we note that, similar to the classical case, there exists a canonical projection of generic $L^2_\mu(\mathbb{R}^{2d},\mathbb{R}^d)$ vector fields onto the tangent space ${\rm Tan}_\mu(\mathcal{P}_{2,\nu}(\mathbb{R}^{2d}))$. More precisely, the latter can be regarded as the orthogonal of a certain subspace of $L^2_\mu(\mathbb{R}^{2d},\mathbb{R}^d)$. Using the same arguments as in \cite[Lemma 8.4.2]{AGS-08} supported by the Hilbert projection theorem we obtain the following.

\begin{lem}[Projection into ${\rm Tan}_\mu(\mathcal{P}_{2,\nu}(\mathbb{R}^{2d}))$]\label{L-projection-tangent-space}
Consider $\nu\in \mathcal{P}(\mathbb{R}^d)$, $\mu\in \mathcal{P}_{2,\nu}(\mathbb{R}^{2d})$ and define the subspace of $L^2_\mu(\mathbb{R}^{2d},\mathbb{R}^d)$ divergence-free vector fields with respect to $x$, {\it i.e.},
$$\mathcal{X}_\mu:=\{\boldsymbol{w}\in L^2_\mu(\mathbb{R}^{2d},\mathbb{R}^d):\,\divop_x(\boldsymbol{w}\mu)=0\},$$
where the divergence is considered in distributional sense. Then, 
\begin{equation}\label{tan-3}
{\rm Tan}_\mu(\mathcal{P}_{2,\nu}(\mathbb{R}^{2d}))=\mathcal{X}_\mu^\perp.
\end{equation}
In particular, the orthogonal projection $\Pi_\mu:L^2_\mu(\mathbb{R}^{2d},\mathbb{R}^d)\rightarrow {\rm Tan}_\mu(\mathcal{P}_{2,\nu}(\mathbb{R}^{2d})),$ verifies the following variational selection principle:
$$\Vert \Pi_\mu[\boldsymbol{u}]\Vert_{L^2_\mu(\mathbb{R}^{2d},\mathbb{R}^d)}\leq \Vert \boldsymbol{u}+\boldsymbol{w}\Vert_{L^2_\mu(\mathbb{R}^{2d},\mathbb{R}^d)},$$
for any $\boldsymbol{u}\in L^2_\mu(\mathbb{R}^{2d},\mathbb{R}^d)$ and any $\boldsymbol{w}\in \mathcal{X}_\mu$.
\end{lem}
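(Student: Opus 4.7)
The plan is to mimic the classical argument of \cite[Lemma 8.4.2]{AGS-08} in the fibered setting, since only a pair of bookkeeping adjustments are needed: gradients and divergences are taken exclusively with respect to $x$, while the ambient Hilbert space $L^2_\mu(\mathbb{R}^{2d},\mathbb{R}^d)$ and its projection calculus are unchanged. The skeleton has three stages: prove the inclusion ${\rm Tan}_\mu(\mathcal{P}_{2,\nu}(\mathbb{R}^{2d}))\subseteq \mathcal{X}_\mu^\perp$, prove the reverse inclusion, and then deduce the variational selection principle.

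For the first inclusion, I would take $\varphi\in C^\infty_c(\mathbb{R}^{2d})$ and $\boldsymbol{w}\in \mathcal{X}_\mu$. By the very definition of the distributional $x$-divergence,
\begin{equation*}
\int_{\mathbb{R}^{2d}}\nabla_x\varphi\cdot \boldsymbol{w}\,d\mu=-\langle \divop_x(\boldsymbol{w}\mu),\varphi\rangle=0.
\end{equation*}
Since $\boldsymbol{w}$ was arbitrary in $\mathcal{X}_\mu$ and the orthogonality condition is preserved under $L^2_\mu$-limits, the closure $\{\nabla_x\varphi:\varphi\in C^\infty_c(\mathbb{R}^{2d})\}^{L^2_\mu}$ lies inside $\mathcal{X}_\mu^\perp$.

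For the reverse inclusion, I would check that $\mathcal{X}_\mu$ itself is closed in $L^2_\mu(\mathbb{R}^{2d},\mathbb{R}^d)$: if $\boldsymbol{w}_n\to\boldsymbol{w}$ in $L^2_\mu$ with $\divop_x(\boldsymbol{w}_n\mu)=0$, then for every $\varphi\in C^\infty_c(\mathbb{R}^{2d})$ the test function $\nabla_x\varphi$ is bounded with compact support, hence lies in $L^2_\mu$, and Cauchy--Schwarz gives
\begin{equation*}
\langle \divop_x(\boldsymbol{w}\mu),\varphi\rangle=-\int_{\mathbb{R}^{2d}}\nabla_x\varphi\cdot \boldsymbol{w}\,d\mu=-\lim_{n\to\infty}\int_{\mathbb{R}^{2d}}\nabla_x\varphi\cdot \boldsymbol{w}_n\,d\mu=0,
\end{equation*}
so $\boldsymbol{w}\in\mathcal{X}_\mu$. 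Consequently $\mathcal{X}_\mu^{\perp\perp}=\mathcal{X}_\mu$, and it suffices to show $\mathcal{X}_\mu\supseteq {\rm Tan}_\mu^\perp$. But if $\boldsymbol{u}\in L^2_\mu$ satisfies $\int \boldsymbol{u}\cdot\nabla_x\varphi\,d\mu=0$ for every $\varphi\in C^\infty_c(\mathbb{R}^{2d})$, then by definition $\divop_x(\boldsymbol{u}\mu)=0$, so $\boldsymbol{u}\in\mathcal{X}_\mu$, yielding \eqref{tan-3}.

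Finally, for the selection principle I would use that $L^2_\mu$ is Hilbert, so the closed subspace decomposition $L^2_\mu=\mathcal{X}_\mu\oplus{\rm Tan}_\mu(\mathcal{P}_{2,\nu}(\mathbb{R}^{2d}))$ holds. Writing $\boldsymbol{u}=\Pi_\mu[\boldsymbol{u}]+\boldsymbol{v}$ with $\boldsymbol{v}\in\mathcal{X}_\mu$, we get for any $\boldsymbol{w}\in\mathcal{X}_\mu$ the orthogonal expansion
\begin{equation*}
\|\boldsymbol{u}+\boldsymbol{w}\|_{L^2_\mu}^2=\|\Pi_\mu[\boldsymbol{u}]\|_{L^2_\mu}^2+\|\boldsymbol{v}+\boldsymbol{w}\|_{L^2_\mu}^2\geq \|\Pi_\mu[\boldsymbol{u}]\|_{L^2_\mu}^2,
\end{equation*}
which is the announced inequality. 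The only mildly delicate point is the closedness of $\mathcal{X}_\mu$ above, but this is genuinely routine because test functions in $C^\infty_c(\mathbb{R}^{2d})$ are compactly supported in both variables; no issue arises from the fact that the divergence is partial.
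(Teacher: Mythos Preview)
Your proof is correct and follows exactly the approach the paper indicates: the paper does not spell out a proof but simply cites \cite[Lemma 8.4.2]{AGS-08} together with the Hilbert projection theorem, which is precisely the argument you reproduce. The only adjustments required---taking gradients and divergences in $x$ only while testing with $\varphi\in C^\infty_c(\mathbb{R}^{2d})$---are handled correctly in your write-up.
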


Using the above lemma, we arrive at the following alternative representation of the tangent space ${\rm Tan}_\mu(\mathcal{P}_{2,\nu}(\mathbb{R}^{2d}))$, where the fibered role of $\omega$ becomes apparent once again.

\begin{pro}\label{P-tan}
Consider $\nu\in \mathcal{P}(\mathbb{R}^d)$ and $\mu\in \mathcal{P}_{2,\nu}(\mathbb{R}^{2d})$. Then,
\begin{equation}\label{tan-2}
{\rm Tan}_\mu(\mathcal{P}_{2,\nu}(\mathbb{R}^{2d}))=\{\boldsymbol{u}\in L^2_\mu(\mathbb{R}^{2d},\mathbb{R}^d):\,\boldsymbol{u}(\cdot,\omega)\in {\rm Tan}_{\mu^\omega}(\mathcal{P}_2(\mathbb{R}^d)) \mbox{ for }\nu\mbox{-a.e. }\omega\in \mathbb{R}^d\}.
\end{equation}
\end{pro}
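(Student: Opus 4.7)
The plan is to leverage the orthogonal characterisation $\Tan_\mu(\mathcal{P}_{2,\nu}(\RR^{2d}))=\mathcal{X}_\mu^\perp$ provided by Lemma \ref{L-projection-tangent-space}, together with its classical analogue $\Tan_{\mu^\omega}(\mathcal{P}_2(\RR^d))=\mathcal{X}_{\mu^\omega}^\perp$ on each fiber (AGS Lemma 8.4.2, already invoked in the proof of Lemma \ref{L-projection-tangent-space}), and to glue fiberwise information via the disintegration Theorem \ref{dis} and Fubini's theorem. Denote the right-hand side of \eqref{tan-2} by $S$; the two inclusions $\Tan_\mu\subseteq S$ and $S\subseteq\Tan_\mu$ will be proved separately.

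For $\Tan_\mu\subseteq S$, first I would observe that for every $\varphi\in C^\infty_c(\RR^{2d})$ and every $\omega\in\RR^d$ the $\omega$-section $x\mapsto\varphi(x,\omega)$ belongs to $C^\infty_c(\RR^d)$, so its gradient $\nabla_x\varphi(\cdot,\omega)=\nabla[\varphi(\cdot,\omega)]$ lies in the classical $\Tan_{\mu^\omega}(\mathcal{P}_2(\RR^d))$. Hence the generating family in Definition \ref{D-tan} is already contained in $S$. To pass to the closure it remains to check that $S$ is closed in $L^2_\mu(\RR^{2d},\RR^d)$: given $\boldsymbol{u}_n\to\boldsymbol{u}$ in $L^2_\mu$ with $\boldsymbol{u}_n\in S$, the disintegration formula \eqref{disb} yields
\[
\int_{\RR^d}\Vert\boldsymbol{u}_n(\cdot,\omega)-\boldsymbol{u}(\cdot,\omega)\Vert_{L^2_{\mu^\omega}}^2\,d\nu(\omega)=\Vert\boldsymbol{u}_n-\boldsymbol{u}\Vert_{L^2_\mu}^2\longrightarrow 0,
\]
so along a subsequence the integrand tends to zero for $\nu$-a.e.\ $\omega$; after discarding a countable union of $\nu$-null sets (one per $n$), closedness of each classical $\Tan_{\mu^\omega}$ gives $\boldsymbol{u}(\cdot,\omega)\in\Tan_{\mu^\omega}$ for $\nu$-a.e.\ $\omega$, i.e., $\boldsymbol{u}\in S$.

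For $S\subseteq\Tan_\mu$, by Lemma \ref{L-projection-tangent-space} it suffices to prove the orthogonality in $L^2_\mu$ between any $\boldsymbol{u}\in S$ and every $\boldsymbol{w}\in\mathcal{X}_\mu$. The core step is to disintegrate the global constraint $\divop_x(\boldsymbol{w}\mu)=0$ into the fiberwise constraints $\divop(\boldsymbol{w}(\cdot,\omega)\mu^\omega)=0$ valid for $\nu$-a.e.\ $\omega$. Testing the global identity against tensor-product test functions $\eta(\omega)\varphi(x)$ with $\eta\in C^\infty_c(\RR^d)$ arbitrary and $\varphi\in C^\infty_c(\RR^d)$ fixed forces
\[
\int_{\RR^d}\nabla\varphi(x)\cdot\boldsymbol{w}(x,\omega)\,d\mu^\omega(x)=0\qquad\mbox{for $\nu$-a.e.\ }\omega\in\RR^d,
\]
since $\eta$ is free. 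By letting $\varphi$ range over a countable family $\{\varphi_k\}\subseteq C^\infty_c(\RR^d)$ that is dense in $C^1_c(\RR^d)$ on every compact exhaustion, the exceptional $\nu$-null set becomes independent of $k$; uniform convergence of $\nabla\varphi_k$ on a common compact support together with the integrability $\boldsymbol{w}(\cdot,\omega)\in L^2_{\mu^\omega}$ (valid $\nu$-a.e.\ by Fubini applied to $\Vert\boldsymbol{w}\Vert_{L^2_\mu}^2<\infty$) then extends the vanishing to all $\varphi\in C^\infty_c(\RR^d)$. Once the fiberwise divergence-free property is in hand, the classical orthogonality $\Tan_{\mu^\omega}\perp\mathcal{X}_{\mu^\omega}$ applied for $\nu$-a.e.\ $\omega$ and integrated against $\nu$ gives $\int_{\RR^{2d}}\boldsymbol{u}\cdot\boldsymbol{w}\,d\mu=0$, completing the argument.

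The main obstacle is the disintegration of the distributional divergence-free condition in the second inclusion: $\divop_x(\boldsymbol{w}\mu)=0$ is a single global statement paired against $C^\infty_c(\RR^{2d})$, and extracting from it a clean fiberwise statement requires reducing to product test functions, choosing a countable separating family on each fiber, and consolidating uncountably many $\nu$-null exceptional sets into a single one. The closedness-plus-generators argument giving $\Tan_\mu\subseteq S$ is comparatively mild, reducing by Fubini to the classical closedness statement fiber by fiber.
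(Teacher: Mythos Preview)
Your proposal is correct and follows essentially the same approach as the paper's proof: for the inclusion $\Tan_\mu\subseteq S$ both arguments reduce to extracting a subsequence converging in $L^2_{\mu^\omega}$ for $\nu$-a.e.\ $\omega$ (you package this as ``$S$ is closed'', the paper applies it directly to an approximating sequence of gradients), and for $S\subseteq\Tan_\mu$ both use Lemma~\ref{L-projection-tangent-space}, disintegrate the divergence-free constraint via tensor-product test functions with a countable dense family in $C^\infty_c(\mathbb{R}^d)$, and then invoke the classical fiberwise orthogonality from \cite[Lemma~8.4.2]{AGS-08}.
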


\begin{proof}
Let us assume that $\boldsymbol{u}\in {\rm Tan}_\mu(\mathcal{P}_{2,\nu}(\mathbb{R}^{2d}))$. By definition there exists a sequence of test functions $\{\varphi_n\}_{n\in \mathbb{N}}\subseteq C^\infty_c(\mathbb{R}^{2d})$ such that $\nabla_x\varphi_n\rightarrow \boldsymbol{u}$ in $L^2_\mu(\mathbb{R}^{2d},\mathbb{R}^d)$. Then, for an appropriate subsequence $\{\varphi_{\sigma(n)}\}_{n\in \mathbb{N}}$ we obtain $\nabla_x\varphi_{\sigma(n)}(\cdot,\omega)\rightarrow\boldsymbol{u}(\cdot,\omega)$ in $L^2_{\mu^\omega}(\mathbb{R}^d,\mathbb{R}^d)$ for $\nu$-a.e. $\omega\in \mathbb{R}^d$. By definition we conclude that $\boldsymbol{u}(\cdot,\omega)\in {\rm Tan}_{\mu^\omega}(\mathcal{P}_2(\mathbb{R}^d))$ for $\nu$-a.e. $\omega\in \mathbb{R}^d$. Conversely, assume that the latter holds and take any $\boldsymbol{w}\in \mathcal{X}_\mu$. Since $C^\infty_c(\mathbb{R}^d)$ is separable under the topology of $C^1_c(\mathbb{R}^d)$, then we build special test functions of the form $\varphi_n(x,\omega)=\phi_n(x)\psi(\omega)$ for a generic $\psi\in C^\infty_c(\mathbb{R}^d)$ and a dense subset $\{\phi_n\}_{n\in \mathbb{N}}\subseteq C^\infty_c(\mathbb{R}^d)$. Testing the distributional equation for $\boldsymbol{w}$ against such a family and using the disintegration Theorem \ref{dis} imply that $\divop_x(\boldsymbol{w}(\cdot,\omega)\mu^\omega)=0$ in the sense of distributions for $\nu$-a.e. $\omega\in \mathbb{R}^d$. By \cite[Lemma 8.4.2]{AGS-08} we then obtain
$$\int_{\mathbb{R}^d}\boldsymbol{u}(x,\omega)\cdot \boldsymbol{w}(x,\omega)\,d\mu^\omega(x)=0,$$
for $\nu$-a.e. $\omega\in \mathbb{R}^d$. Hence, integrating against $\nu$ yields $\boldsymbol{u}\in \mathcal{X}_\mu^\perp$ by the arbitrariness of $\boldsymbol{w}\in\mathcal{X}_\mu$. We then conclude that $\boldsymbol{u}\in {\rm Tan}_\mu(\mathcal{P}_{2,\nu}(\mathbb{R}^{2d}))$ by Lemma \ref{L-projection-tangent-space}.
\end{proof}

The above choice for the tangent space will be clarified in the following result.

\begin{pro}\label{equiv}
Set $\nu\in \mathcal{P}(\mathbb{R}^d)$, let $\boldsymbol{\mu}$ be a curve in $\mathcal{P}_{2,\nu}(\mathbb{R}^{2d})$ and consider $p\in [1,\infty]$. 

\begin{enumerate}[label=(\roman*)]
\item If $\boldsymbol{\mu}$ belongs to the space $AC^p(0,T;\mathcal{P}_{2,\nu}(\mathbb{R}^{2d}))$, then there exists a Borel family of vector fields $\boldsymbol{u}_t\in L^2_{\mu_t}(\mathbb{R}^{2d},\mathbb{R}^d)$ with $\int_0^T\Vert \boldsymbol{u}_t\Vert_{L^2_{\mu_t}(\mathbb{R}^{2d},\mathbb{R}^d)}^p\,dt<\infty$ such that $(\mu_t,\boldsymbol{u}_t)$ verifies
\begin{equation}\label{E-continuity-equation}
\partial_t\mu+\divop_x(\boldsymbol{u}\mu)=0,\quad t\geq 0,\,(x,\omega)\in \mathbb{R}^{2d},
\end{equation}
in the sense of distributions and $\Vert \boldsymbol{u}_t\Vert_{L^2_{\mu_t}(\mathbb{R}^{2d},\mathbb{R}^d)}\leq \vert \boldsymbol{\mu}'\vert_{W_{2,\nu}}(t)$ for a.e. $t\in [0,T]$. In addition, the vector field can be taken so that $\boldsymbol{u}_t\in {\rm Tan}_{\mu_t}(\mathcal{P}_{2,\nu}(\mathbb{R}^{2d}))$ for a.e. $t\in (0,T)$.

\item Conversely, if $(\mu_t,\boldsymbol{u}_t)$ verifies \eqref{E-continuity-equation} in the sense of distribution for some Borel family of vector fields $\boldsymbol{u}_t   \in L^2_{\mu_t}(\mathbb{R}^{2d},\mathbb{R}^d)$ such that $\int_0^T\Vert \boldsymbol{u}_t\Vert_{L^2_{\mu_t}(\mathbb{R}^{2d},\mathbb{R}^d)}^p\,dt<\infty$, then $\boldsymbol{\mu}$ belongs to $AC^p(0,T;\mathcal{P}_{2,\nu}(\mathbb{R}^{2d}))$ and $\vert \boldsymbol{\mu}'\vert_{W_{2,\nu}}(t)\leq \Vert \boldsymbol{u}_t\Vert_{L^2_{\mu_t}(\mathbb{R}^{2d},\mathbb{R}^d)}$ for a.e. $t\in (0,T)$.
\end{enumerate}
Moreover, $\boldsymbol{\mu}\in AC^p(0,T;\mathcal{P}_{2,\nu}(\mathbb{R}^{2d}))$ if, and only if, there exists $\widehat{\delta}\in L^p(0,T;L^2_\nu(\mathbb{R}^d))$ such that
\begin{equation}\label{E-fibered-ac}
W_2(\mu_t^\omega,\mu_s^\omega)\leq \int_s^t \widehat{\delta}(\tau,\omega)\,d\tau,
\end{equation}
for $0\leq s\leq t\leq T$ and $\nu$-a.e. $\omega\in \mathbb{R}^d$. In particular, $\boldsymbol{\mu}^\omega\in AC^p(0,T;\mathcal{P}_2(\mathbb{R}^d))$ for $\nu$-a.e. $\omega\in \mathbb{R}^d$.
\end{pro}

Note that once \eqref{E-continuity-equation} holds for some $\boldsymbol{u}_t\in L^2_{\mu_t}(\mathbb{R}^{2d},\mathbb{R}^d)$, then Lemma \ref{L-projection-tangent-space} guarantees that it also holds with $\boldsymbol{u}_t$ replaced by the tangent vector $\Pi_{\mu_t}[\boldsymbol{u}_t]\in{\rm Tan}_{\mu_t}(\mathcal{P}_{2,\nu}(\mathbb{R}^{2d}))$ because $\boldsymbol{u}_t-\Pi_{\mu_t}[\boldsymbol{u}_t]\in \mathcal{X}_{\mu_t}$. 
If $\nu\in \mathcal{P}_2(\mathbb{R}^d)$, then $AC^2(0,T;\mathcal{P}_{2,\nu}(\mathbb{R}^{2d}))\subseteq AC^2(0,T;\mathcal{P}_2(\mathbb{R}^{2d}))$, which was evident by the embedding in Proposition \ref{P-scaled-Wasserstein-distances}. The proof of Proposition \ref{equiv} requires a subtle adaptation of the classical result \cite[Theorem 8.3.1]{AGS-08} for the quadratic Wasserstein space that bears in mind the fibered nature of the new space $(\mathcal{P}_{2,\nu}(\mathbb{R}^{2d}),W_2)$. We sketch it here for clarity.

\medskip

\begin{proof}[Proof of Proposition \ref{equiv}]

~
\medskip

$\diamond$ {\sc Step 1}: $\boldsymbol{\mu}\in AC^p(0,T;\mathcal{P}_{2,\nu}(\mathbb{R}^{2d}))$ implies the continuity equation \eqref{E-continuity-equation}.\\
Let us assume that $\boldsymbol{\mu}$ belongs to $AC^p(0,T;\mathcal{P}_{2,\nu}(\mathbb{R}^{2d})) $ and set any test function $\varphi\in C^\infty_c(\mathbb{R}^{2d})$. Now define the bounded and upper-semicontinuous function
$$H_\varphi(x,x',\omega):=\left\{
\begin{array}{ll}
\displaystyle\frac{\vert \varphi(x,\omega)-\varphi(x',\omega)\vert}{\vert x-x'\vert}, & \mbox{if }x\neq x',\\
\displaystyle\vert \nabla_x\varphi(x,\omega)\vert, & \mbox{if }x=x'.
\end{array}
\right.$$
Then, using the above function $H_\varphi$ and the Cauchy--Schwarz inequality, we obtain
\begin{align*}
\left\vert\int_{\mathbb{R}^{2d}}\varphi\,d(\mu_{t+h}-\mu_t)\right\vert&=\left\vert\int_{\mathbb{R}^d}\int_{\mathbb{R}^{2d}} (\varphi(x,\omega)-\varphi(x',\omega))\,d\gamma_{t+h,t}^\omega(x,x')\,d\nu(\omega)\right\vert\\
&\leq W_{2,\nu}(\mu_{t+h},\mu_t) \left(\int_{\mathbb{R}^{4d}}H_\varphi(x,x',\omega)^2\,d\gamma_{t+h,t}(x,x',\omega,\omega')\right)^{1/2}.
\end{align*}
for every $t\in (0,T)$ and $h\in (-t,T-t)$.
Here, $\gamma_{t+h,t}\in \Gamma_{o,\nu}(\mu_{t+h},\mu_t)$ that is,
$$\gamma_{t+h,t}(x,x',\omega,\omega'):=\gamma_{t+h,t}^\omega(x,x')\otimes \nu(\omega)\otimes \delta_{\omega}(\omega'),$$
where $\{\gamma_{t+h,t}^\omega\}_{\omega\in \mathbb{R}^d}$ is any Borel family of probability measures with $\gamma_{t+h,t}^\omega\in \Gamma_o(\mu_{t+h}^\omega,\mu_t^\omega)$ for $\nu$-a.e. $\omega\in \mathbb{R}^d$ (see Lemma \ref{L-measurable-selection}). By the assumptions on $\boldsymbol{\mu}$ and the boundedness of $H_\varphi$ the function $t\in(0,T)\longmapsto \int_{\mathbb{R}^{2d}}\varphi\,d\mu_t(x,\omega)$ is absolutely continuous and
\begin{equation}\label{insert}
\left\vert\frac{d}{dt}\int_{\mathbb{R}^{2d}}\varphi\,d\mu_t\right\vert\leq \vert \boldsymbol{\mu}'\vert_{W_{2,\nu}}(t)\,\limsup_{h\rightarrow 0}\left(\int_{\mathbb{R}^{4d}}H_\varphi(x,x',\omega)^2\,d\gamma_{t+h,t}(x,x',\omega,\omega')\right)^{1/2},
\end{equation}
for a.e. $t\in (0,T)$. Moreover by Taylor's formula, we have
\begin{align*}
    H_\varphi(x,x',\omega) = \frac{|\nabla_x\varphi(x',\omega)\cdot (x-x') + \mathcal{R}(x,x',\omega)|}{|x-x'|},
\end{align*}
whenever $x\neq x'$, for a remainder $\mathcal{R}$ verifying $|\mathcal{R}(x,x',\omega)|\leq \|\varphi\|_{C^2(\mathbb{R}^{2d})} |x-x'|^2$. Thus,
\begin{align*}
    H_\varphi(x,x',\omega)^2 \leq |\nabla_x\varphi(x',\omega)|^2 + 2\|\varphi\|_{C^2(\mathbb{R}^{2d})}\Vert\varphi\Vert_{C^1(\mathbb{R}^{2d})}|x-x'| + \|\varphi\|_{C^2(\mathbb{R}^{2d})}^2|x-x'|^2,
\end{align*}
for every $x,x',\omega\in \mathbb{R}^d$. Therefore, by integrating against $\gamma_{t+h,t}$ we obtain
\begin{multline*}
\int_{\mathbb{R}^{4d}}H_\varphi(x,x',\omega)^2\, d\gamma_{t+h,t}(x,x',\omega,\omega')
\leq \int_{\RR^{2d}}|\nabla_x\varphi(x,\omega)|^2d\mu_t(x,\omega)\\
+2\Vert \varphi\Vert_{C^2(\mathbb{R}^{2d})} \Vert\varphi\Vert_{C^1(\mathbb{R}^{2d})}W_{2,\nu}(\mu_{t+h},\mu_t)+\Vert \varphi\Vert_{C^2(\mathbb{R}^{2d})}^2 W_{2,\nu}^2(\mu_{t+h},\mu_t).
\end{multline*}
Taking $\limsup$ as $h\rightarrow 0$ and noting that $\lim_{h\rightarrow 0} W_{2,\nu}(\mu_{t+h},\mu_t)=0$ by hypothesis, we obtain
$$\limsup_{h\rightarrow 0}\left(\int_{\mathbb{R}^{4d}}H_\varphi(x,x',\omega)^2\,d\gamma_{t+h,t}(x,x',\omega,\omega')\right)^{1/2}\leq \Vert \nabla_x\varphi\Vert_{L^2_{\mu_t}(\mathbb{R}^{2d},\mathbb{R}^d)},$$
which, together with \eqref{insert}, yields
\begin{equation}\label{E-equiv-ac-weak}
\left\vert \frac{d}{dt}\int_{\mathbb{R}^{2d}}\varphi\,d\mu_t\right\vert\leq \vert \boldsymbol{\mu}'\vert_{W_{2,\nu}}(t)\,\Vert \nabla_x\varphi\Vert_{L^2_{\mu_t}(\mathbb{R}^{2d},\mathbb{R}^d)},
\end{equation}
for a.e. $t\in (0,T)$. Let us now define the vector space $\mathcal{X}_0:=\{\nabla_x\varphi:\,\varphi\in C^\infty_c((0,T)\times\mathbb{R}^{2d})\}$ and the linear functional $\mathcal{L}:\mathcal{X}_0\longrightarrow \mathbb{R}$ given by
$$\mathcal{L}(\nabla_x\varphi):=-\int_0^T\int_{\mathbb{R}^{2d}}\partial_t\varphi(t,x,\omega)\,d\mu_t(x,\omega)\,dt,\quad \nabla_x\varphi\in \mathcal{X}_0.$$
Moreover, by \eqref{E-equiv-ac-weak} and H\"{o}lder's inequality, we obtain
$$\vert \mathcal{L}(\nabla_x\varphi)\vert\leq \Vert \vert \mu'\vert_{W_{2,\nu}}\Vert_{L^p(0,T)}\, \Vert \nabla_x\varphi\Vert_{L^{p'}(0,T;L^2_\mu(\mathbb{R}^{2d},\mathbb{R}^d))},$$
for every $\nabla_x\varphi\in \mathcal{X}_0$, where $p'=\frac{p}{p-1}$ is the Lebesgue conjugate exponent. Hence $\mathcal{L}$ is a bounded linear operator with respect to the norm of the space $L^{p'}(0,T;L^2_\mu(\mathbb{R}^{2d},\mathbb{R}^d))$ and it can be extended by continuity to $\mathcal{X}:=\overline{\mathcal{X}_0}^{L^{p'}(0,T;L^2_\mu(\mathbb{R}^{2d},\mathbb{R}^d))}$, {\it cf.} \cite[Remark 8.1.1]{AGS-08}. By the Riesz representation theorem for Lebesgue--Bochner spaces (see \cite[Theorem IV.1.1]{DU-77}), we then claim that there exists $\boldsymbol{u}\in L^p(0,T;L^2_\mu(\mathbb{R}^{2d},\mathbb{R}^d))$ such that
$$-\int_0^T\int_{\mathbb{R}^{2d}}\partial_t \varphi(x,\omega)\,d\mu_t(x,\omega)\,dt=\int_0^T\int_{\mathbb{R}^{2d}} \boldsymbol{u}_t(x,\omega)\cdot \nabla_x\varphi(t,x,\omega)\,d\mu_t(x,\omega)\,dt,$$
for each $\varphi\in C^\infty_c((0,T)\times \mathbb{R}^{2d})$. Then, $(\mu_t,\boldsymbol{u}_t)$ verifies the continuity equation \eqref{E-continuity-equation} in distributional sense. Moreover, as stated below the statement of Proposition \ref{equiv}, by Lemma \ref{L-projection-tangent-space}, we may assume without loss of generality that $\boldsymbol{u}_t$ belongs to ${\rm Tan}_{\mu_t}(\mathcal{P}_{2,\nu}(\mathbb{R}^{2d}))$ for a.e. $t\in (0,T)$. Finally, an easy cut-off argument shows that $\Vert\boldsymbol{u}_t\Vert_{L^2_{\mu_t}(\mathbb{R}^{2d},\mathbb{R}^d)}\leq \vert \boldsymbol{\mu}'\vert_{W_{2,\nu}}(t)$ for a.e. $t\in (0,T)$.

\medskip

$\diamond$ {\sc Step 2}: The continuity equation \eqref{E-continuity-equation} implies $\boldsymbol{\mu}\in AC^p(0,T;\mathcal{P}_{2,\nu}(\mathbb{R}^{2d}))$.\\
Assume that \eqref{E-continuity-equation} holds for $(\mu_t,\boldsymbol{u}_t)$, where $\boldsymbol{u}_t\in L^2_{\mu_t}(\mathbb{R}^{2d},\mathbb{R}^d)$ is a Borel family of vector fields with $\int_0^T\Vert \boldsymbol{u}_t\Vert_{L^2_{\mu_t}(\mathbb{R}^{2d},\mathbb{R}^d)}^p\,dt<\infty$.

\medskip

$\circ$ {\sc Step 2.1}: Parameter $p\in [1,2]$.\\ 
Note that the space $C^\infty_c((0,T)\times \mathbb{R}^d)$ is separable under the topology of $C^1_c((0,T)\times \mathbb{R}^d)$. Then, we can set a family of special test functions of the form $\varphi(t,x,\omega)=\phi_n(t,x)\psi(\omega)$ for generic $\psi\in C^\infty_c(\mathbb{R}^d)$ and a dense subset $\{\phi_n\}_{n\in \mathbb{N}}\subseteq C^\infty_c((0,T)\times \mathbb{R}^d)$. Writing \eqref{E-continuity-equation} in weak form against such a family of test functions and using the disintegration Theorem \ref{dis} we achieve
\begin{equation}\label{cont-eq-dis}
\partial_t\mu_t^\omega+\divop_x(\boldsymbol{u}_t(\cdot,\omega)\mu_t^\omega)=0,
\end{equation}
in the sense of distribution for $\nu$-a.e. $\omega\in \mathbb{R}^d$. In addition, since $p\leq 2$ ({\it i.e.} $\frac{2}{p}\geq 1$), then Minkowski's integral inequality yields
$$\left[\int_{\mathbb{R}^d}\left(\int_0^T\Vert \boldsymbol{u}_t(\cdot,\omega)\Vert_{L^2_{\mu_t^\omega}(\mathbb{R}^d,\mathbb{R}^d)}^p\,dt\right)^{2/p}\,d\nu(\omega)\right]^{p/2}\leq \int_0^T \Vert \boldsymbol{u}_t\Vert_{L^2_{\mu_t}(\mathbb{R}^{2d},\mathbb{R}^d)}^p\,dt<\infty.$$
In particular, we obtain
\begin{equation}\label{cont-eq-dis-norm}
\int_0^T \Vert \boldsymbol{u}_t(\cdot,\omega)\Vert_{L^2_{\mu_t^\omega}(\mathbb{R}^d,\mathbb{R}^d)}^p\,dt<\infty,
\end{equation}
for $\nu$-a.e. $\omega\in \mathbb{R}^d$. By virtue of \eqref{cont-eq-dis} and \eqref{cont-eq-dis-norm}, the classical result \cite[Theorem 8.3.1]{AGS-08} can be applied at any fiber $\omega\in \mathbb{R}^d$ implying that $\boldsymbol{\mu}^\omega$ belongs to $AC^p(0,T;\mathcal{P}_2(\mathbb{R}^d))$ for $\nu$-a.e. $\omega\in \mathbb{R}^d$ and $\vert ({\boldsymbol{\mu}^\omega})'\vert_{W_2}(t)\leq \Vert \boldsymbol{u}_t(\cdot,\omega)\Vert_{L^2_{\mu_t^\omega}(\mathbb{R}^d,\mathbb{R}^d)}$. This amounts to \eqref{E-fibered-ac} with 
$$\widehat{\delta}(t,\omega):=\Vert \boldsymbol{u}_t(\cdot,\omega)\Vert_{L^2_{\mu_t^\omega}(\mathbb{R}^d,\mathbb{R}^d)}.$$
Taking $L^2_\nu(\mathbb{R}^d)$ norms in \eqref{E-fibered-ac} and using Minkowski's integral inequality entails
\begin{equation}\label{E-fibered-ac-pre}
W_{2,\nu}(\mu_t,\mu_s)\leq \left[\int_{\mathbb{R}^d}\left(\int_s^t \widehat{\delta}(\tau,\omega)\,d\tau\right)^2\,d\nu(\omega)\right]^{1/2}\leq \int_s^t \Vert \widehat{\delta}(\tau,\cdot)\Vert_{L^2_\nu(\mathbb{R}^d)}\,d\tau=\int_s^t \Vert \boldsymbol{u}_\tau\Vert_{L^2_{\mu_\tau}(\mathbb{R}^{2d},\mathbb{R}^d)}\,d\tau,
\end{equation}
for every $0\leq s\leq t\leq T$. Since $\int_0^T\Vert \boldsymbol{u}_t\Vert_{L^2_{\mu_t}(\mathbb{R}^{2d},\mathbb{R}^d)}^p\,dt<\infty$, we have $\boldsymbol{\mu}\in AC^p(0,T;\mathcal{P}_{2,\nu}(\mathbb{R}^{2d})).$

\medskip

$\circ$ {\sc Step 2.2}: Parameter $p\in (2,\infty]$.\\
Notice that since $(0,T)$ is a finite interval, then our assumptions on the vector field imply that $\int_0^T \Vert \boldsymbol{u}_t\Vert_{L^2_{\mu_t}(\mathbb{R}^{2d},\mathbb{R}^d)}^2\,dt<\infty$. Then, by applying the above step we recover the inequality \eqref{E-fibered-ac-pre}. However, notice that we indeed have $\int_0^T\Vert \boldsymbol{u}_t\Vert_{L^2_{\mu_t}(\mathbb{R}^{2d},\mathbb{R}^d)}^p\,dt<\infty$ and this ends the proof.
\end{proof}

\medskip

In the following result, we derive the generic differentiability of $W_{2,\nu}$ along absolutely continuous curves, which extend the classical result in \cite[Theorem 8.4.7]{AGS-08}. 

\begin{pro}\label{T-dif}
Let $\boldsymbol{\mu}\in AC(0,T;\mathcal{P}_{2,\nu}(\mathbb{R}^{2d}))$, $\sigma\in \mathcal{P}_{2,\nu}(\mathbb{R}^{2d})$ and $\boldsymbol{u}_t\in {\rm Tan}_{\mu_t}(\mathcal{P}_{2,\nu}(\mathbb{R}^{2d}))$ be the vector field tangent to $\boldsymbol{\mu}$ characterized by Proposition \ref{equiv}. Then
\begin{equation}\label{dif}
\frac{d}{dt}\frac{1}{2}W_{2,\nu}^2(\mu_t,\sigma)=\int_{\mathbb{R}^{4d}}(x-x')\cdot \boldsymbol{u}_t(x,\omega)\, d\gamma_t(x,x',\omega,\omega'),
\end{equation}
for a.e. $t\in [0,T]$ and any $\gamma_t\in \Gamma_{o,\nu}(\mu_t,\sigma)$ optimal $\nu$-admissible transference plan for $W_{2,\nu}$.
\end{pro}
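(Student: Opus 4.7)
The plan is to reduce Proposition \ref{T-dif} to its classical fiberwise analogue \cite[Theorem 8.4.7]{AGS-08} and then to integrate against the marginal $\nu$. The groundwork for such a reduction is already in place: by the last assertion of Proposition \ref{equiv}, the fiber curve $\boldsymbol{\mu}^\omega$ belongs to $AC(0,T;\mathcal{P}_2(\mathbb{R}^d))$ for $\nu$-a.e. $\omega\in \mathbb{R}^d$; by Proposition \ref{P-tan}, $\boldsymbol{u}_t(\cdot,\omega)\in {\rm Tan}_{\mu_t^\omega}(\mathcal{P}_2(\mathbb{R}^d))$ for a.e. $t\in(0,T)$ and $\nu$-a.e. $\omega$; and a separation-of-variables test-function argument (analogous to {\sc Step 2.1} in the proof of Proposition \ref{equiv}) shows that $(\mu_t^\omega,\boldsymbol{u}_t(\cdot,\omega))$ solves the continuity equation $\partial_t\mu_t^\omega+\divop_x(\boldsymbol{u}_t(\cdot,\omega)\mu_t^\omega)=0$ on each such fiber. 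Thus $\boldsymbol{u}_t(\cdot,\omega)$ is the unique tangent vector field to $\boldsymbol{\mu}^\omega$ in the classical sense.

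Next, given $\gamma_t\in \Gamma_{o,\nu}(\mu_t,\sigma)$, use Proposition \ref{P-dnu-optimal-problem} to disintegrate it as $\gamma_t^\omega\in \Gamma_o(\mu_t^\omega,\sigma^\omega)$ for $\nu$-a.e. $\omega$. Applying \cite[Theorem 8.4.7]{AGS-08} fiberwise yields, for $\nu$-a.e. $\omega$, that the scalar map $g(\cdot,\omega):t\mapsto \tfrac12 W_2^2(\mu_t^\omega,\sigma^\omega)$ is absolutely continuous on $[0,T]$ with
\begin{equation*}
\partial_t g(t,\omega)=\int_{\mathbb{R}^{2d}}(x-x')\cdot \boldsymbol{u}_t(x,\omega)\,d\gamma_t^\omega(x,x'),
\end{equation*}
for a.e. $t\in(0,T)$. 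Equivalently, in integral form,
\begin{equation*}
g(t,\omega)-g(s,\omega)=\int_s^t\left(\int_{\mathbb{R}^{2d}}(x-x')\cdot \boldsymbol{u}_r(x,\omega)\,d\gamma_r^\omega(x,x')\right)dr,
\end{equation*}
for all $0\leq s\leq t\leq T$ and $\nu$-a.e. $\omega$. The final step is to integrate this identity against $\nu$ and interchange the order of integration via Fubini. The required integrability follows by Cauchy--Schwarz at the fiber level and once more against $\nu$:
\begin{equation*}
\int_{\mathbb{R}^d}\left\vert\int_{\mathbb{R}^{2d}}(x-x')\cdot \boldsymbol{u}_r(x,\omega)\,d\gamma_r^\omega\right\vert d\nu(\omega)\leq W_{2,\nu}(\mu_r,\sigma)\,\Vert \boldsymbol{u}_r\Vert_{L^2_{\mu_r}(\mathbb{R}^{2d},\mathbb{R}^d)},
\end{equation*}
which is $L^1(0,T)$ in $r$ because $r\mapsto W_{2,\nu}(\mu_r,\sigma)$ is bounded (by absolute continuity of $\boldsymbol{\mu}$) and $r\mapsto \Vert \boldsymbol{u}_r\Vert_{L^2_{\mu_r}}$ is in $L^1(0,T)$ (by Proposition \ref{equiv}). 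Hence $F(t):=\tfrac12 W_{2,\nu}^2(\mu_t,\sigma)=\int_{\mathbb{R}^d}g(t,\omega)\,d\nu(\omega)$ is absolutely continuous with
\begin{equation*}
F'(t)=\int_{\mathbb{R}^d}\int_{\mathbb{R}^{2d}}(x-x')\cdot \boldsymbol{u}_t(x,\omega)\,d\gamma_t^\omega(x,x')\,d\nu(\omega)=\int_{\mathbb{R}^{4d}}(x-x')\cdot \boldsymbol{u}_t(x,\omega)\,d\gamma_t(x,x',\omega,\omega'),
\end{equation*}
for a.e. $t\in(0,T)$, which is precisely \eqref{dif}.

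The main delicate point is the fact that in the fiberwise application of the classical result the exceptional set of times depends on $\omega$, which in principle obstructs a pointwise-in-$t$ exchange of differentiation and integration. This issue is bypassed by working with the integral (fundamental-theorem-of-calculus) formulation above and then invoking Fubini on the bounded-in-$L^1$ integrand, from which the a.e. differentiability of $F$ and the formula \eqref{dif} follow. Joint Borel-measurability of the integrand in $(r,x,x',\omega)$ needed for Fubini is provided by the Borel selection of $\gamma_r^\omega$ (Lemma \ref{L-measurable-selection}) together with Proposition \ref{P-Borel-family-vs-measurability}.
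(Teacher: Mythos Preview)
Your proposal is correct and follows essentially the same approach as the paper's proof: both reduce to the fiberwise classical result \cite[Theorem 8.4.7]{AGS-08}, establish the $L^1_{dt\otimes\nu}$ integrability of the fiberwise derivative via Cauchy--Schwarz, and then exchange differentiation in $t$ with integration in $\omega$. The paper packages the last step as an application of ``the version for absolutely continuous functions of Leibniz' rule'' under the three hypotheses \eqref{E-ac-t}, \eqref{E-int-omega}, \eqref{E-int-derivative-t-omega}, whereas you spell it out via the integral (fundamental-theorem-of-calculus) formulation plus Fubini; these are equivalent, and your explicit treatment of the $\omega$-dependent exceptional time sets is a welcome clarification of exactly that point.
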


\begin{proof}
Let us define the following function
$$F(t,\omega):=\frac{1}{2}W_2^2(\mu_t^\omega,\sigma^\omega),\quad (t,\omega)\in [0,T]\times \mathbb{R}^d.$$
It is defined for a.e. $t\in [0,T]$ and $\nu$-a.e. $\omega\in \mathbb{R}^d$. Since $\boldsymbol{\mu}\in AC(0,T;\mathcal{P}_{2,\nu}(\mathbb{R}^{2d}))$ by assumption, then Proposition \ref{equiv} guarantees that $\boldsymbol{\mu}^\omega\in AC(0,T;\mathcal{P}_2(\mathbb{R}^d))$ for $\nu$-a.e. $\omega\in \mathbb{R}^d$. In particular,
\begin{equation}\label{E-ac-t}
F(\cdot,\omega)\in AC(0,T),\quad \mbox{for }\nu\mbox{-a.e. }\omega\in \mathbb{R}^d.
\end{equation}
In addition, by definition of $W_2$ we readily infer the following estimate
$$F(t,\omega)\leq \int_{\mathbb{R}^d}\vert x\vert^2\,d\mu_t^\omega(x)+\int_{\mathbb{R}^d}\vert x\vert^2\,d\sigma^\omega(x),$$
for a.e. $t\in (0,T)$ and $\nu$-a.e. $\omega\in \mathbb{R}^d$. Since $\mu_t,\sigma\in \mathcal{P}_{2,\nu}(\mathbb{R}^{2d})$ we conclude that
\begin{equation}\label{E-int-omega}
F(t,\cdot)\in L^1_\nu(\mathbb{R}^d),\quad \mbox{for a.e. }t\in [0,T].
\end{equation}
For a.e. $t\in [0,T]$, let us set any optimal $\nu$-admissible plan $\gamma_t\in \Gamma_{o,\nu}(\mu_t,\sigma)$ according to Definition \ref{D-admissible-plans-fibered} and  consider the velocity field $\boldsymbol{u}_t$ in Proposition \ref{equiv}. By virtue of the definition of ${\rm Tan}_{\mu_t}(\mathcal{P}_{2,\nu}(\mathbb{R}^{2d}))$ in \eqref{tan}, we notice that $\boldsymbol{u}_t(\cdot,\omega)\in {\rm Tan}_{\mu_t^\omega}(\mathcal{P}_2(\mathbb{R}^d))$. Again, recall that $\boldsymbol{\mu}^\omega\in AC(0,T;\mathcal{P}_2(\mathbb{R}^d))$ for $\nu$-a.e. $\omega\in \mathbb{R}^d$. Hence, we can readily apply the classical general differentiability property of $W_2$ to obtain
$$\frac{\partial F}{\partial t}(t,\omega)=\int_{\mathbb{R}^{2d}}(x-x')\cdot \boldsymbol{u}_t(x,\omega)\,d\gamma_t^\omega(x,x'),$$
for a.e. $t\in [0,T]$ and $\nu$-a.e. $\omega\in \mathbb{R}^d$, see \cite[Theorem 8.4.7]{AGS-08}. Integrating with respect to $t\in [0,T]$ and $\omega\in \mathbb{R}^d$ and using the Cauchy-Schwarz inequality we infer
$$\int_0^T\int_{\mathbb{R}^d}\left\vert\frac{\partial F}{\partial t}(t,\omega)\right\vert\,d\nu(\omega)\,dt\leq \sup_{0\leq t\leq T}W_{2,\nu}(\mu_t,\sigma)\,\int_0^T \Vert \boldsymbol{u}_t\Vert_{L^2(\mu_t)}\,dt.$$
By assumptions and Proposition \ref{equiv} we conclude that the right hand side in the above inequality is finite, {\it i.e.},
\begin{equation}\label{E-int-derivative-t-omega}
\frac{\partial F}{\partial t}\in L^1_{dt\otimes \nu}((0,T)\times \mathbb{R}^d).    
\end{equation}
Since hypothesis \eqref{E-ac-t}, \eqref{E-int-omega} and \eqref{E-int-derivative-t-omega} hold true, then the version for absolutely continuous functions of Leibniz' rule applies and we conclude that $\int_{\mathbb{R}^d} F(\cdot,\omega)\,d\nu(\omega)\in AC(0,T)$ and
$$\frac{d}{dt}\int_{\mathbb{R}^d}F(t,\omega)\,d\nu(\omega)=\int_{\mathbb{R}^d}\frac{\partial F}{\partial t}(t,\omega)\,d\nu(\omega),$$
for a.e. $t\in (0,T)$. This ends the proof.
\end{proof}

Note that in \eqref{dif} the above tangent vector $\boldsymbol{u}_t$ can actually be replaced by a generic Borel family of vector fields $\boldsymbol{u}_t\in L^2_{\mu_t}(\mathbb{R}^{2d},\mathbb{R}^d)$ with $\int_0^t\Vert \boldsymbol{u}_t\Vert_{L^2_{\mu_t}(\mathbb{R}^{2d},\mathbb{R}^d)}^2\,dt<\infty$ realizing the continuity equation \eqref{E-continuity-equation} as given in Proposition \ref{equiv}. Indeed, note that $\boldsymbol{w}_t:=\boldsymbol{u}_t-\Pi_{\mu_t}[\mu_t]$ lies in $\mathcal{X}_{\mu_t}$. Then, a straightforward extension of \cite[Proposition 8.5.4]{AGS-08} yields the following property.

\begin{pro}\label{P-tangent-barycentric-projections}
Consider any $\nu\in \mathcal{P}(\mathbb{R}^d)$ and $\mu_1,\mu_2\in \mathcal{P}_{2,\nu}(\mathbb{R}^{2d})$. Then,
$$\int_{\mathbb{R}^{4d}}(x-x')\cdot \boldsymbol{w}(x,\omega)\,d\gamma(x,x',\omega,\omega)=0,$$
for any $\boldsymbol{w}\in \mathcal{X}_{\mu_1}$ and each $\gamma\in \Gamma_{o,\nu}(\mu_1,\mu_2)$.
\end{pro}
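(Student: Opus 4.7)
\medskip

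The plan is to reduce the claim to the classical statement \cite[Proposition 8.5.4]{AGS-08} applied on every fiber $\omega\in\mathbb{R}^d$, exploiting the two disintegrations available: one for the optimal admissible plan $\gamma$ (Proposition \ref{P-dnu-optimal-problem}) and one for the divergence-free field $\boldsymbol{w}$ (in the spirit of Proposition \ref{P-tan}).

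First, by Proposition \ref{P-dnu-optimal-problem} we may write any $\gamma\in\Gamma_{o,\nu}(\mu_1,\mu_2)$ as $\gamma(x,x',\omega,\omega')=\gamma^\omega(x,x')\otimes\nu(\omega)\otimes\delta_\omega(\omega')$ for a Borel family $\{\gamma^\omega\}_{\omega\in\mathbb{R}^d}$ with $\gamma^\omega\in\Gamma_o(\mu_1^\omega,\mu_2^\omega)$ for $\nu$-a.e. $\omega\in\mathbb{R}^d$. Applying Fubini together with the disintegration formula \eqref{disb} yields
\begin{equation*}
\int_{\mathbb{R}^{4d}}(x-x')\cdot\boldsymbol{w}(x,\omega)\,d\gamma(x,x',\omega,\omega)=\int_{\mathbb{R}^d}\left(\int_{\mathbb{R}^{2d}}(x-x')\cdot\boldsymbol{w}(x,\omega)\,d\gamma^\omega(x,x')\right)d\nu(\omega).
\end{equation*}
Since $\boldsymbol{w}\in L^2_{\mu_1}(\mathbb{R}^{2d},\mathbb{R}^d)$, Fubini also guarantees that $\boldsymbol{w}(\cdot,\omega)\in L^2_{\mu_1^\omega}(\mathbb{R}^d,\mathbb{R}^d)$ for $\nu$-a.e. $\omega\in\mathbb{R}^d$, so the inner integrand is well defined.

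Second, I would show that the fibered condition $\divop_x(\boldsymbol{w}\mu_1)=0$ disintegrates, {\it i.e.}, that $\divop_x(\boldsymbol{w}(\cdot,\omega)\mu_1^\omega)=0$ in $\mathcal{D}'(\mathbb{R}^d)$ for $\nu$-a.e. $\omega\in\mathbb{R}^d$. This is exactly the argument already used in the proof of Proposition \ref{P-tan}: testing the distributional identity against separable test functions $\varphi(x,\omega)=\phi_n(x)\psi(\omega)$ with $\{\phi_n\}_{n\in\mathbb{N}}$ a countable dense subset of $C^\infty_c(\mathbb{R}^d)$ (for the $C^1_c$ topology) and $\psi\in C^\infty_c(\mathbb{R}^d)$ arbitrary, and then invoking the disintegration formula \eqref{disb} together with the arbitrariness of $\psi$, yields a common $\nu$-full measure set $F\subseteq\mathbb{R}^d$ on which $\int \nabla\phi_n\cdot\boldsymbol{w}(\cdot,\omega)\,d\mu_1^\omega=0$ for every $n$. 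Density in $C^1_c(\mathbb{R}^d)$ then upgrades this to $\boldsymbol{w}(\cdot,\omega)\in\mathcal{X}_{\mu_1^\omega}$ in the classical (non-fibered) sense for every $\omega\in F$.

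Third, with the two per-fiber properties $\gamma^\omega\in\Gamma_o(\mu_1^\omega,\mu_2^\omega)$ and $\divop(\boldsymbol{w}(\cdot,\omega)\mu_1^\omega)=0$ at hand, the classical result \cite[Proposition 8.5.4]{AGS-08} (orthogonality of divergence-free fields to optimal displacement vectors in $\mathcal{P}_2(\mathbb{R}^d)$) applies fiberwise and gives
\begin{equation*}
\int_{\mathbb{R}^{2d}}(x-x')\cdot\boldsymbol{w}(x,\omega)\,d\gamma^\omega(x,x')=0,\qquad\text{for }\nu\text{-a.e. }\omega\in\mathbb{R}^d.
\end{equation*}
Integrating this equality against $\nu$ and combining with the Fubini identity of the first paragraph closes the proof. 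The only step requiring genuine care is the disintegration of the divergence constraint in the second paragraph (a measurability/countability issue that must be handled with a single universal null set, not an $\omega$-dependent one); all the remaining steps are direct applications of already established tools.
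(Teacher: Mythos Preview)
Your proposal is correct and is precisely the ``straightforward extension of \cite[Proposition 8.5.4]{AGS-08}'' that the paper alludes to without spelling out: disintegrate the optimal $\nu$-admissible plan into fiberwise optimal plans via Proposition \ref{P-dnu-optimal-problem}, disintegrate the divergence-free constraint via the separable test-function argument from the proof of Proposition \ref{P-tan}, and then apply the classical result on each fiber. The paper does not provide its own proof beyond this reference, so your write-up is exactly the intended argument made explicit.
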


We end this section by showing that the tangent vector $\boldsymbol{u}_t\in {\rm Tan}_{\mu_t}(\mathcal{P}_{2,\nu}(\mathbb{R}^{2d}))$ to  any $\boldsymbol{\mu}\in AC(0,T;\mathcal{P}_{2,\nu}(\mathbb{R}^{2d}))$ ({\it cf.} Proposition \ref{equiv}) can be recovered through the infinitesimal behaviour of $\nu$-admissible optimal transference plans along the
curve. This extends \cite[Proposition 8.4.6]{AGS-08}.

\begin{pro}\label{P-infinitesimal-vector-from-plans}
Consider any $\nu\in \mathcal{P}(\mathbb{R}^d)$ and $\boldsymbol{\mu}\in AC(0,T;\mathcal{P}_{2,\nu}(\mathbb{R}^{2d}))$. Let $\boldsymbol{u}_t\in {\rm Tan}_{\mu_t}(\mathcal{P}_{2,\nu}(\mathbb{R}^{2d}))$ be the associated vector fulfilling the continuity equation \eqref{E-continuity-equation} according to Proposition \ref{equiv}. Set $\gamma_{t,h}\in \Gamma_{o,\nu}(\mu_t,\mu_{t+h})$ for any $t\in (0,T)$ and any $h\in (-t,T-t)\setminus\{0\}$. Then,
$$\lim_{h\rightarrow 0}\big(\pi_x,\frac{1}{h}(\pi_{x'}-\pi_x),\pi_\omega,\pi_{\omega'}\big)_{\#}\gamma_{t,h}=((\pi_x,\boldsymbol{u}_t,\pi_\omega)_{\#}\mu_t)\otimes\delta_\omega(\omega')\quad \mbox{narrowly},$$
for a.e. $t\in (0,T)$.
\end{pro}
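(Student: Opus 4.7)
The plan is to reduce the problem to the classical analogue \cite[Proposition 8.4.6]{AGS-08} applied at each fiber $\omega\in\mathbb{R}^d$, and then lift the resulting fiberwise narrow convergence to convergence in $\mathcal{P}_{2,\widetilde{\nu}}(\mathbb{R}^{4d})$ via the characterization of Proposition \ref{P-characterization-convergence-W2nu}. First, I would set $\sigma_{t,h}:=\big(\pi_x,\frac{1}{h}(\pi_{x'}-\pi_x)\big)_\#\gamma_{t,h}$ (under the convention that the $(\omega,\omega')$ coordinates are kept untouched) and $\sigma_t:=(I,\boldsymbol{u}_t)_\#\mu_t$, understood as the push-forward along the map $(x,\omega)\mapsto(x,\boldsymbol{u}_t(x,\omega),\omega,\omega)$, so that both measures belong to $\mathcal{P}_{2,\widetilde{\nu}}(\mathbb{R}^{4d})$. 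Their fibered disintegrations read $\sigma_{t,h}^\omega=\big(\pi_x,\frac{1}{h}(\pi_{x'}-\pi_x)\big)_\#\gamma_{t,h}^\omega$ and $\sigma_t^\omega=(I,\boldsymbol{u}_t(\cdot,\omega))_\#\mu_t^\omega$, so the target becomes $\sigma_{t,h}\to\sigma_t$ in $\mathcal{P}_{2,\widetilde{\nu}}(\mathbb{R}^{4d})$.

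By Propositions \ref{equiv} and \ref{P-tan}, for $\nu$-a.e. $\omega\in\mathbb{R}^d$ the fiberwise curve $\boldsymbol{\mu}^\omega$ belongs to $AC(0,T;\mathcal{P}_2(\mathbb{R}^d))$ with tangent velocity $\boldsymbol{u}_t(\cdot,\omega)\in{\rm Tan}_{\mu_t^\omega}(\mathcal{P}_2(\mathbb{R}^d))$ solving the fiberwise continuity equation, while Proposition \ref{P-dnu-optimal-problem} ensures that $\gamma_{t,h}^\omega\in\Gamma_o(\mu_t^\omega,\mu_{t+h}^\omega)$. Therefore the classical result \cite[Proposition 8.4.6]{AGS-08} applies fiberwise and yields, for $\nu$-a.e. $\omega$ and every $t$ in a full-measure subset $A_\omega\subseteq(0,T)$, the narrow convergence $\sigma_{t,h}^\omega\to\sigma_t^\omega$ in $\mathcal{P}_2(\mathbb{R}^{2d})$ as $h\to 0$. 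A Fubini argument on $\{(t,\omega):\,t\in A_\omega\}$ then shows that for a.e. $t\in(0,T)$ the fiberwise convergence holds at $\nu$-a.e. $\omega$.

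Fix such a good time $t$. To upgrade fiberwise narrow convergence to $\widetilde{\nu}$-random narrow convergence, I would test against any $\varphi\in C_{b,\widetilde{\nu}}(\mathbb{R}^{4d})$ and combine the fiberwise narrow convergence with dominated convergence in $\omega$; the hypotheses are ensured by the $\nu$-integrable envelope $\omega\mapsto\sup_{(x,v)}|\varphi(x,v,\omega,\omega)|$ built into Definition \ref{D-random-continuous-functions}. For the convergence of second moments, the $x$-moments are actually constant along the sequence, since the $(x,\omega)$-marginal of both $\sigma_{t,h}$ and $\sigma_t$ is $\mu_t$, while the velocity moment reduces to the one-line computation
\begin{equation*}
\int_{\mathbb{R}^{4d}}|v|^2\,d\sigma_{t,h}=\frac{1}{h^2}W_{2,\nu}^2(\mu_t,\mu_{t+h})\xrightarrow[h\to 0]{}|\boldsymbol{\mu}'|_{W_{2,\nu}}^2(t)=\|\boldsymbol{u}_t\|_{L^2_{\mu_t}(\mathbb{R}^{2d},\mathbb{R}^d)}^2=\int_{\mathbb{R}^{4d}}|v|^2\,d\sigma_t,
\end{equation*}
where the middle equality uses the minimality of the tangent vector $\boldsymbol{u}_t$ from Proposition \ref{equiv}. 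Proposition \ref{P-characterization-convergence-W2nu} then delivers the convergence in $\mathcal{P}_{2,\widetilde{\nu}}(\mathbb{R}^{4d})$.

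The main obstacle will be the measure-theoretic bookkeeping that links fiberwise convergence with the random narrow topology: the good times $A_\omega$ depend on the fiber and must be synchronized by Fubini, and fiberwise narrow convergence has to be shown compatible with integration against $\widetilde{\nu}$-random continuous functions, for which the envelope condition in Definition \ref{D-random-continuous-functions} plays the decisive role. Once these compatibility steps are settled, the classical fiberwise identification of the difference-quotient limit as the tangent vector of minimal $L^2$-norm is what closes the argument.
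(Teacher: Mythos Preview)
Your approach is essentially correct but takes a genuinely different route from the paper. The paper never disintegrates to individual fibers: it argues globally by (i) proving $\widetilde{\nu}$-random tightness of $\{\sigma_{t,h}\}_h$ via Markov's inequality and the existence of $\vert\boldsymbol{\mu}'\vert_{W_{2,\nu}}(t)$, (ii) extracting a $\widetilde{\nu}$-random narrow limit $\widetilde{\gamma}_t$ by the random Prokhorov theorem, (iii) identifying $\widetilde{\gamma}_t=(I,\boldsymbol{u}_t)_\#\mu_t$ by showing that its barycentric projection $\widetilde{\boldsymbol{u}}_t$ satisfies $\boldsymbol{u}_t-\widetilde{\boldsymbol{u}}_t\in\mathcal{X}_{\mu_t}$ and $\Vert\widetilde{\boldsymbol{u}}_t\Vert_{L^2_{\mu_t}}\leq\Vert\boldsymbol{u}_t\Vert_{L^2_{\mu_t}}$, so the variational characterization of the tangent space (Lemma~\ref{L-projection-tangent-space}) forces equality. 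Only at the very end does the paper invoke Proposition~\ref{P-characterization-convergence-W2nu}, exactly as you do.

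Your fiberwise reduction to \cite[Proposition~8.4.6]{AGS-08} is cleaner conceptually, but the Fubini swap of the null sets in $t$ and $\omega$ is precisely the kind of step the paper's Remark~\ref{ohhnaivety} warns about. It does go through here, yet it is not free: you must check that the ``good-time'' sets $A_\omega$ are jointly measurable in $(t,\omega)$, and you must also justify that $\boldsymbol{u}_t(\cdot,\omega)$ is the tangent vector of the fiberwise curve $\boldsymbol{\mu}^\omega$ in the sense required by \cite[Theorem~8.3.1]{AGS-08}, which itself needs a Fubini swap (Proposition~\ref{P-tan} gives the tangent-space membership in the order ``a.e.\ $t$, $\nu$-a.e.\ $\omega$'', and you need the reverse). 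The paper's compactness-plus-barycenter argument sidesteps all of this bookkeeping at the price of reproving the identification of the limit from scratch; what it buys is a self-contained argument that never relies on fiberwise narrow convergence, consistent with the philosophy that the fibers are coupled through $\boldsymbol{u}_t$.
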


\begin{proof}
Let $\widetilde{\nu}(\omega,\omega'):=\nu(\omega)\otimes \delta_{\omega}(\omega')\in \mathcal{P}(\mathbb{R}^{2d})$ and note that we can set the associated space of fibered probability measures $\mathcal{P}_{\widetilde{\nu}}(\mathbb{R}^{4d})$ in Definition \ref{D-fibered-measures} endowed with the narrow topology. Similarly, we can define the fibered Wasserstein space $(\mathcal{P}_{2,\widetilde{\nu}}(\mathbb{R}^{4d}),W_{2,\widetilde{\nu}})$ according to Definition \ref{D-fibered-Wasserstein}. Let us define the subset $F\subseteq (0,T)$ consisting of $t\in (0,T)$ so that:
\begin{align}
&\lim_{h\rightarrow 0}\frac{W_{2,\nu}(\mu_{t+h},\mu_t)}{\vert h\vert}=\vert \boldsymbol{\mu}'\vert_{W_{2,\nu}}(t),\label{E-full-measure-1}\\
&\left.\frac{d}{ds}\right\vert_{s=t}\int_{\mathbb{R}^{2d}}\varphi\,d\mu_s=\int_{\mathbb{R}^{2d}}\nabla_x\varphi\cdot \boldsymbol{u}_t\,d\mu_t\quad \forall\varphi\in C^\infty_c(\mathbb{R}^{2d}),\label{E-full-measure-2}
\end{align}
assuming, in particular, that the limits on the left-hand side above exists.
By Proposition \ref{md}, Equation \eqref{E-continuity-equation} and the integrability property $\int_0^T\Vert \boldsymbol{u}_t\Vert_{L^2_{\mu_t}(\mathbb{R}^{2d},\mathbb{R}^d)}\,dt<\infty$ we guarantee that $F$ has full measure. In other words, the properties \eqref{E-full-measure-1} and \eqref{E-full-measure-2} hold for a.e. $t\in (0,T)$. Let us fix an  arbitrary $t\in F$ and denote for simplicity
\begin{equation}\label{E-19}
\widetilde{\gamma}_{t,h}:=\big(\pi_x,\frac{1}{h}(\pi_{x'}-\pi_x),\pi_\omega,\pi_{\omega'}\big)_{\#}\gamma_{t,h},\quad h\in (-t,T-t).
\end{equation}

$\diamond$ {\sc Step 1}: Narrow relative compactness.\\
By Markov's inequality and the definition \eqref{E-19} of $\widetilde{\gamma}_{t,h}$ we easily infer
$$
(\pi_{(x,x')\#}\widetilde{\gamma}_{t,h})(\mathbb{R}^{2d}\setminus B_R\times B_R)\leq\frac{1}{R^2}\left(W_{2,\nu}^2(\mu_t,\delta_0\otimes \nu)+\frac{W_{2,\nu}^2(\mu_{t+h},\mu_t)}{h^2}\right),
$$
for any $h\in (-t,T-t)$ and $R\in \mathbb{R}_+^*$. Thanks to \eqref{E-full-measure-1}, the right hand side converges to zero uniformly in $h$ as $R\rightarrow\infty$. Since $\pi_{(\omega,\omega')\#}\widetilde{\gamma}_{t,h}=\widetilde{\nu}$ is fixed and independent of $h$, then the full $\{\widetilde{\gamma}_{t,h}\}_{h\in (-t,T-t)}$ is uniformly tight. Hence, Prokhorov's theorem implies that $\{\widetilde{\gamma}_{t,h}\}_{h\in (-t,T-t)}$ is relatively compact in the narrow topology. In particular, there exist $h_n\rightarrow 0$ and $\widetilde{\gamma}_t\in \mathcal{P}_{\widetilde{\nu}}(\mathbb{R}^{4d})$ such that $\widetilde{\gamma}_{t,h_n}\rightarrow \widetilde{\gamma}_t$ as $n\rightarrow \infty$ narrowly.

\medskip

$\diamond$ {\sc Step 2}: Identification of the limit point $\widetilde{\gamma}_t$.\\
By the disintegration Theorem \ref{dis} let us write $\widetilde{\gamma}_t(x,x',\omega,\omega')=\mu_t(x,\omega)\otimes \widetilde{\gamma}_t^{x,\omega}(x')\otimes \delta_\omega(\omega')$ and define the associated barycentric projection $\widetilde{\boldsymbol{u}}_t\in L^2_{\mu_t}(\mathbb{R}^{2d},\mathbb{R}^d)$ ({\it cf.} Appendix \ref{Appendix-slope-subdifferential}) given by
\begin{equation}\label{E-18}
\widetilde{\boldsymbol{u}}_t(x,\omega):=\int_{\mathbb{R}^d}x'\,d\widetilde{\gamma}_t^{x,\omega}(x'),\quad (x,\omega)\in \mathbb{R}^{2d}.
\end{equation}
Fix any $\varphi\in C^\infty_c(\mathbb{R}^{2d})$ and use \eqref{E-full-measure-2} and the definition \eqref{E-19} of $\widetilde{\gamma}_{t,h_n}$ to obtain
\begin{align*}
\int_{\mathbb{R}^{2d}}\nabla_x\varphi\cdot \boldsymbol{u}_t\,d\mu_t
&=\lim_{n\rightarrow \infty}\frac{1}{h_n}\int_{\mathbb{R}^{4d}}(\varphi(x+h_nx',\omega)-\varphi(x,\omega))\,d\widetilde{\gamma}_{t,h_n}(x,x',\omega,\omega')\\
&=\lim_{n\rightarrow \infty}\left(\int_{\mathbb{R}^{4d}}\nabla_x\varphi(x,\omega)\cdot x'\,d\widetilde{\gamma}_{t,h_n}(x,x',\omega,\omega')+I_n\right)=\int_{\mathbb{R}^{2d}}\nabla_x\varphi\cdot\widetilde{\boldsymbol{u}}_t\,d\mu_t,
\end{align*}
where in the second identity we have used Taylor's formula with a residual term $I_n$ such that
$$\vert I_n\vert\leq h_n\Vert \varphi\Vert_{C^2(\mathbb{R}^{2d})}\int_{\mathbb{R}^{4d}}\vert x'\vert^2\,d\widetilde{\gamma}_{t,h_n}(x,x',\omega,\omega')=h_n\Vert \varphi\Vert_{C^2(\mathbb{R}^d)}\frac{W_{2,\nu}^2(\mu_t,\mu_{t+h_n})}{h_n^2},$$
(thus vanishing as $n\rightarrow \infty$ by \eqref{E-full-measure-1}) and in the last identity we have used the narrow convergence of $\widetilde{\gamma}_{t,h_n}$ and the definition \eqref{E-18} of $\widetilde{\boldsymbol{u}}_t$. Therefore, $\boldsymbol{u}_t-\widetilde{\boldsymbol{u}}_t\in \mathcal{X}_{\mu_t}$. In addition,
\begin{multline*}\Vert \widetilde{\boldsymbol{u}}_t\Vert_{L^2_{\mu_t}(\mathbb{R}^{2d},\mathbb{R}^d)}^2\leq \int_{\mathbb{R}^{4d}}\vert x'\vert^2\,d\widetilde{\gamma}_t\leq \liminf_{n\rightarrow\infty}\int_{\mathbb{R}^{4d}}\vert x'\vert^2\,d\widetilde{\gamma}_{t,h_n}\\
=\lim_{n\rightarrow\infty}\frac{W_{2,\nu}^2(\mu_t,\mu_{t+h_n})}{h_n^2}=\vert \boldsymbol{\mu}'\vert^2_{W_{2,\nu}}(t)=\Vert \boldsymbol{u}_t\Vert_{L^2_{\mu_t}(\mathbb{R}^{2d},\mathbb{R}^d)}^2,
\end{multline*}
where we have used Jensen's inequality, the Portmanteu Theorem \ref{T-Portmanteau}, property \eqref{E-full-measure-1} and Proposition \ref{equiv}. By Lemma \ref{L-projection-tangent-space} we conclude that $\widetilde{\boldsymbol{u}}_t=\boldsymbol{u}_t$. In particular, this implies that the first of the above inequalities must be an identity, that is,
\begin{align*}
0&=\int_{\mathbb{R}^{2d}}\int_{\mathbb{R}^d}\vert x'\vert^2\,d\widetilde{\gamma}_t^{x,\omega}(x')\,d\mu_t(x,\omega)-\int_{\mathbb{R}^{2d}}\left\vert\int_{\mathbb{R}^d}x'\,d\widetilde{\gamma}_t^{x,\omega}(x')\right\vert^2\,d\mu_t(x,\omega)\\
&=\int_{\mathbb{R}^{2d}}\int_{\mathbb{R}^d}\vert x'-\widetilde{\boldsymbol{u}}_t(x,\omega)\vert^2\,d\widetilde{\gamma}_t^{x,\omega}(x')\,d\mu_t(x,\omega).
\end{align*}
This of course implies that $\widetilde{\gamma}_t^{x,\omega}(x')=\delta_{\boldsymbol{u}_t(x,\omega)}(x')$ for $\mu_t$-a.e. $(x,\omega)\in \mathbb{R}^{2d}$, which amounts to saying that $\widetilde{\gamma}_t=((\pi_x,\boldsymbol{u}_t,\pi_\omega)_{\#}\mu_t)\otimes \delta_\omega(\omega')$. Since the above can be repeated for any subsequence of $\{\widetilde{\gamma}_{t,h}\}_{h\in (-t,T-t)}$, then the full sequence is convergent.
\end{proof}

\subsection{Subdifferential calculus and gradient flows}\label{sec:sub_calc_grad_flows}

In this part, we show that the classical theory of gradient flows can be extended to the new fibered space $(\mathcal{P}_{2,\nu}(\mathbb{R}^{2d}), W_{2,\nu})$. Indeed, for special energy functionals, it is consistent with the notion of weak measure-valued solutions to the corresponding evolution PDE. Recall that in \cite[Part I]{AGS-08}, the theory of gradient flows is derived for generic metric spaces supported by purely metric concepts. In particular, one can apply it directly to our Polish space $(\mathcal{P}_{2,\nu}(\mathbb{R}^{2d}),W_{2,\nu})$. However, when the base space has further structure ({\it e.g.} Banach spaces, Riemannian manifolds, etc), convexity allows representing gradient flows in a robust way as shown in \cite[Part II]{AGS-08} for the Wasserstein space $(\mathcal{P}_2(\mathbb{R}^d),W_2)$. Our goal here is to revisit this theory for the formal Riemannian manifold $(\mathcal{P}_{2,\nu}(\mathbb{R}^{2d}),W_{2,\nu})$. For consistency, we review some key parts of the proofs inspired by the treatment in \cite{AGS-08}.

\begin{defi}[Metric slope]\label{D-metric-slope}
Let $(\mathbb{X},d_{\mathbb{X}})$ be any complete metric space, consider any functional $\mathcal{E}:\mathbb{X}\longrightarrow (-\infty,+\infty]$ and $x\in D(\mathcal{E})$. We define the metric (local) slope of $\mathcal{E}$ at $x$ by
$$\vert \partial \mathcal{E}\vert_{d_\mathbb{X}}[x]:=\limsup_{y\rightarrow x}\frac{(\mathcal{E}[x]-\mathcal{E}[y])^+}{d_\mathbb{X}(x,y)}.$$
When the distance $d_\mathbb{X}$ is clear from the context, we shall simply write $\vert \partial \mathcal{E}\vert[x]$.
\end{defi}

Notice that the above definition is purely metric, and in particular, it can be considered for functionals over the fibered quadratic Wasserstein space $(\mathcal{P}_{2,\nu}(\mathbb{R}^{2d}),W_{2,\nu})$ in Definition \ref{D-fibered-Wasserstein}. However, as depicted in last section, $(\mathcal{P}_{2,\nu}(\mathbb{R}^{2d}),W_{2,\nu})$ has further structure, namely, it is formally a Riemannian manifold whose tangent space is given by \eqref{tan}. We emphasize that several notions of subdifferential calculus can be defined according to such a differential structure. For instance, a special definition for functionals over regular probability measures $\mathcal{P}_2^r(\mathbb{R}^d)$ was studied in \cite{AGS-08}. Later, a different notion was presented in \cite{AG-08} for generic functionals over $\mathcal{P}_2(\mathbb{R}^d)$. Recently, an equivalent reformulation was derived in \cite{GTu-19} (see also \cite{CDFLS-11}). Indeed, for functionals over $\mathcal{P}_2^r(\mathbb{R}^d)$ all the three notions agree. In the following Definition \ref{D-Frechet-subdifferential} we have chosen to follow the approach in  \cite{GTu-19}.

\begin{defi}[Fibered Fr\'echet subdifferential]\label{D-Frechet-subdifferential}
Consider $\nu\in \mathcal{P}(\mathbb{R}^d)$ and any functional $\mathcal{E}:\mathcal{P}_{2,\nu}(\mathbb{R}^{2d})\longrightarrow (-\infty,+\infty]$. Set any $\mu\in D(\mathcal{E})$ and any $\boldsymbol{u}\in L^2_\mu(\mathbb{R}^{2d},\mathbb{R}^d)$. We say that $\boldsymbol{u}$ belongs to the (fibered) Fr\'echet subdifferential of $\mathcal{E}$ at $\mu$, and we write $\boldsymbol{u}\in\partial_{W_{2,\nu}}\mathcal{E}[\mu]$, when the following inequality holds
\begin{equation}\label{E-fibered-subdifferential}
\mathcal{E}[\sigma]-\mathcal{E}[\mu]\geq \inf_{\gamma\in \Gamma_{o,\nu}(\mu,\sigma)}\int_{\mathbb{R}^{4d}}\boldsymbol{u}(x,\omega)\cdot(x'-x)\,d\gamma(x,x',\omega,\omega')+o(W_{2,\nu}(\mu,\sigma)),
\end{equation}
for every $\sigma\in D(\mathcal{E})$. 
By ${\partial}^\circ_{W_{2,\nu}}\mathcal{E}[\mu]$ we denote the subset of ${\partial}_{W_{2,\nu}}\mathcal{E}[\mu]$ with minimal $L^2_\mu(\mathbb{R}^{2d},\mathbb{R}^d)$-norm and we refer to it as the minimal (fibered) Fr\'echet subdifferential of $\mathcal{E}$ at $\mu$.
\end{defi}

The term ``fibered'' will be used to distinguish the above notion of subdifferential over the fibered space $(\mathcal{P}_{2,\nu}(\mathbb{R}^{2d}),W_{2,\nu})$ and the classical Fr\'echet subdifferential on $(\mathcal{P}_2(\mathbb{R}^{2d}),W_2)$. When it is clear from the context, we shall rather say ``Fr\'echet subdifferential'' or simply ``subdifferential''. Note that by Proposition \ref{P-tangent-barycentric-projections}, for any vector field $\boldsymbol{u}\in L^2_\mu(\mathbb{R}^{2d},\mathbb{R}^d)$, we have
$$\boldsymbol{u}\in \partial_{W_{2,\nu}}\mathcal{E}[\mu] \quad \Longleftrightarrow \quad \Pi_\mu[\boldsymbol{u}]\in \partial_{W_{2,\nu}}\mathcal{E}[\mu],$$
where $\Pi_\mu:L^2_\mu(\mathbb{R}^{2d},\mathbb{R}^d)\longrightarrow {\rm Tan}_\mu (\mathcal{P}_{2,\nu}(\mathbb{R}^{2d}))$ is the projection operator in Lemma \ref{L-projection-tangent-space}. For this reason, we will often restrict to elements in the fibered Fr\'echet subdifferential which are tangent vectors. In the sequel, we show that the preceding Definition \ref{D-Frechet-subdifferential} can be simplified under certain convexity assumptions on $\mathcal{E}$.

\begin{defi}($\lambda$-convexity along generalized geodesics)\label{D-gen-geodesic-convexity}
Consider $\nu\in \mathcal{P}(\mathbb{R}^d)$ and any functional $\mathcal{E}:\mathcal{P}_{2,\nu}(\mathbb{R}^{2d})\longrightarrow (-\infty,+\infty]$. We say that $\mathcal{E}$ is $\lambda$-convex along generalized geodesics with respect to $W_{2,\nu}$ if for any $\mu_*,\mu_0, \mu_1\in D(\mathcal{E})$ there is a $\nu$-admissible plan $\gamma\in \Gamma_{\nu}(\mu_*,\mu_0,\mu_1)$ with $\pi_{(x_*,x_0,\omega_*,\omega_0)\#}\gamma\in\Gamma_{o,\nu}(\mu_*,\mu_0)$, $\pi_{(x_*,x_1,\omega_*,\omega_1)\#}\gamma\in\Gamma_{o,\nu}(\mu_*,\mu_1)$ such that
\begin{align}\label{e-gen}
\mathcal{E}[\mu_\theta^{0\rightarrow 1}]\leq (1-\theta)\mathcal{E}[\mu_0]+\theta\mathcal{E}[\mu_1]-\frac{\lambda}{2}\theta(1-\theta)W_{\gamma}^2(\mu_0,\mu_1),    
\end{align}
for all $\theta\in [0,1]$. Here, the curve $\theta\in [0,1]\mapsto \mu_\theta^{0\rightarrow1}(x,\omega):=\mu_\theta^{\omega\,0\rightarrow 1}(x)\otimes \nu(\omega)$ represents the associated generalized geodesic from $\mu_0$ to $\mu_1$ with base $\mu_*$ and it is defined fiberwise by
$$\mu_\theta^{\omega\,0\rightarrow 1} := ((1-\theta)\pi_{x_0} + \theta\pi_{x_1})_\#\gamma^\omega,$$
for all $\theta\in [0,1]$ and $\nu$-a.e. $\omega\in \mathbb{R}^d$. In addition, in \eqref{e-gen} we denote:
$$W^2_\gamma(\mu_0,\mu_1):=\int_{\RR^{6d}}|x_0-x_1|^2d\gamma \geq W_{2,\nu}^2(\mu_0,\mu_1).$$
\end{defi}

Definition \ref{D-gen-geodesic-convexity} is the fibered version of the notion of convexity along generalized geodesics for functionals over the classical quadratic Wasserstein space $(\mathcal{P}_2(\mathbb{R}^d),W_2)$, see Definitions 9.2.2 and 9.2.4 in \cite{AGS-08}.

\begin{rem}[$\lambda$-geodesic convexity]\label{R-geodesic-convexity}
If in Definition \ref{D-gen-geodesic-convexity} we restrict to $\mu_*=\mu_1$, then we recover the fibered version of the classical notion of $\lambda$-geodesic convexity in \cite[Definition 9.1.1]{AGS-08}. Namely, we say that $\mathcal{E}$ is $\lambda$-geodesically convex with respect to $W_{2,\nu}$ when for any $\mu_0,\mu_1\in D(\mathcal{E})$ there exists a $\nu$-admissible optimal plan $\gamma\in \Gamma_{o,\nu}(\mu_0,\mu_1)$ such that
\begin{equation}\label{E-geodesic-convexity}
\mathcal{E}[\mu_\theta^{0\rightarrow 1}]\leq (1-\theta)\mathcal{E}[\mu_0]+\theta\mathcal{E}[\mu_1]-\frac{\lambda}{2}\theta(1-\theta)W_{2,\nu}^2(\mu_0,\mu_1),
\end{equation}
for all $\theta\in [0,1]$. Here, $\theta\in [0,1]\mapsto \mu_\theta^{0\rightarrow1}(x,\omega):=\mu_\theta^{\omega\,0\rightarrow 1}(x)\otimes \nu(\omega)$ represents the associated geodesic joining $\mu_0$ to $\mu_1$ and is fiberwise defined by
\begin{equation}\label{E-displacement-interpolation}
\mu_\theta^{\omega\,0\rightarrow 1}:=((1-\theta)\pi_x+\theta\pi_{x'})_{\#}\gamma^\omega,
\end{equation}
for $\theta\in [0,1]$ and $\nu$-a.e. $\omega\in \mathbb{R}^d$.
\end{rem}

When the functional $\mathcal{E}$ is $\lambda$-geodesically convex with respect to $W_{2,\nu}$, the Fr\'echet subdifferential can be simplified like in the classical theory. We emphasize that our notion of $\lambda$-geodesic convexity in Definition \ref{D-gen-geodesic-convexity} and Remark \ref{R-geodesic-convexity} agrees with the one in \cite{AGS-08}, which is slightly weaker than the one in \cite{AG-08} since the inequalities \eqref{e-gen} and \eqref{E-geodesic-convexity} only need to hold for a special plan $\gamma$. However, following the lines of \cite[Proposition 4.2]{AG-08} we recover the following characterization.

\begin{pro}[Variational inequality]\label{P-Frechet subdifferential-convex}
Consider $\nu\in \mathcal{P}(\mathbb{R}^d)$ and a $\lambda$-geodesically convex functional $\mathcal{E}:\mathcal{P}_{2,\nu}(\mathbb{R}^{2d})\longrightarrow (-\infty,+\infty]$ with respect to $W_{2,\nu}$. Set $\mu\in D(\mathcal{E})$ and any vector $\boldsymbol{u}\in L^2_\mu(\mathbb{R}^{2d},\mathbb{R}^d)$. Then, $\boldsymbol{u}$ belongs to $\partial_{W_{2,\nu}}\mathcal{E}[\mu]$ if, and only if,
\begin{equation}\label{E-fibered-subdifferential-convex}
\mathcal{E}[\sigma]-\mathcal{E}[\mu]\geq \int_{\mathbb{R}^{4d}}\boldsymbol{u}(x,\omega)\cdot (x'-x)\,d\gamma(x,x',\omega,\omega')+\frac{\lambda}{2}W_{2,\nu}^2(\mu,\sigma),
\end{equation}
for every $\sigma\in D(\mathcal{E})$ and each $\gamma\in \Gamma_{o,\nu}(\mu,\sigma)$ so that $\mathcal{E}$ is $\lambda$-convex along its associated geodesic $\theta\in [0,1]\mapsto \mu_\theta^{0\rightarrow 1}$ in \eqref{E-displacement-interpolation} joining $\mu$ to $\sigma$.
\end{pro}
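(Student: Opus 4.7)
My plan is to prove the two implications separately. The direction ``$\Leftarrow$'' is immediate: if \eqref{E-fibered-subdifferential-convex} holds for every $\gamma\in \Gamma_{o,\nu}(\mu,\sigma)$, taking the infimum over such $\gamma$ on the right-hand side and using $\tfrac{\lambda}{2}W_{2,\nu}^2(\mu,\sigma) = o(W_{2,\nu}(\mu,\sigma))$ as $\sigma\to \mu$ gives $\boldsymbol{u}\in \partial_{W_{2,\nu}}\mathcal{E}[\mu]$ in the sense of Definition \ref{D-Frechet-subdifferential}.

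For the converse direction ``$\Rightarrow$'', I would fix $\sigma\in D(\mathcal{E})$ and $\gamma\in \Gamma_{o,\nu}(\mu,\sigma)$ as in the statement, so that $\mathcal{E}$ is $\lambda$-convex along the associated geodesic $\theta\mapsto \mu_\theta^{0\to 1}$. The classical displacement interpolation applied fiberwise, combined with Proposition \ref{P-dnu-optimal-problem}, gives $W_{2,\nu}(\mu,\mu_\theta^{0\to 1}) = \theta W_{2,\nu}(\mu,\sigma)$ and places the natural plan
\begin{equation*}
\gamma_\theta := (\pi_{x_0},(1-\theta)\pi_{x_0}+\theta\pi_{x_1},\pi_{\omega_0},\pi_{\omega_0})_\#\gamma
\end{equation*}
in $\Gamma_{o,\nu}(\mu,\mu_\theta^{0\to 1})$. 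I would then rearrange the $\lambda$-convexity inequality \eqref{E-geodesic-convexity} and combine it with the subdifferential bound at $\mu_\theta^{0\to 1}$ (whose error term becomes $o(1)$ after division by $\theta$, since it is $o(W_{2,\nu}(\mu,\mu_\theta^{0\to 1})) = o(\theta W_{2,\nu}(\mu,\sigma))$), obtaining
\begin{equation*}
\mathcal{E}[\sigma]-\mathcal{E}[\mu] \geq \frac{1}{\theta}\inf_{\gamma'_\theta \in \Gamma_{o,\nu}(\mu,\mu_\theta^{0\to 1})}\int_{\mathbb{R}^{4d}}\boldsymbol{u}\cdot (x'-x)\,d\gamma'_\theta + \frac{\lambda}{2}(1-\theta)W_{2,\nu}^2(\mu,\sigma) + o(1).
\end{equation*}

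The crucial step will be to establish the asymptotic lower bound
\begin{equation*}
\liminf_{\theta\to 0^+}\frac{1}{\theta}\inf_{\gamma'_\theta \in \Gamma_{o,\nu}(\mu,\mu_\theta^{0\to 1})}\int_{\mathbb{R}^{4d}}\boldsymbol{u}\cdot (x'-x)\,d\gamma'_\theta \geq \int_{\mathbb{R}^{4d}}\boldsymbol{u}(x_0,\omega_0)\cdot (x_1-x_0)\,d\gamma,
\end{equation*}
which, combined with the previous display and the passage $\theta\to 0^+$, produces the sought inequality \eqref{E-fibered-subdifferential-convex}. My strategy will be to pick near-minimizers $\gamma'_\theta$, form the rescaled plans $\widetilde{\gamma}_\theta := (\pi_x,\tfrac{1}{\theta}(\pi_{x'}-\pi_x),\pi_\omega,\pi_{\omega'})_\#\gamma'_\theta \in \mathcal{P}_{\widetilde{\nu}}(\mathbb{R}^{4d})$, and observe their $\widetilde{\nu}$-random uniform tightness since $\int|x'|^2\,d\widetilde{\gamma}_\theta = W_{2,\nu}^2(\mu,\sigma)$ is independent of $\theta$. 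Extracting a $\widetilde{\nu}$-random narrow limit point $\widetilde{\gamma}_0$ via Theorem \ref{T-random-Prokhorov} and identifying it, along the template of Proposition \ref{P-infinitesimal-vector-from-plans}, as $(I,\boldsymbol{v}_0)_\#\mu$---where $\boldsymbol{v}_0 \in {\rm Tan}_\mu(\mathcal{P}_{2,\nu}(\mathbb{R}^{2d}))$ is the velocity at $\theta=0^+$ of the geodesic $\mu_\theta^{0\to 1}$, namely the barycentric projection of $\gamma$ onto $\mu$---the disintegration formula with respect to $\mu$ will then give $\int_{\mathbb{R}^{2d}}\boldsymbol{u}\cdot\boldsymbol{v}_0\,d\mu = \int_{\mathbb{R}^{4d}}\boldsymbol{u}(x_0,\omega_0)\cdot (x_1-x_0)\,d\gamma$, closing the limit.

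The hard part will be precisely this identification of $\widetilde{\gamma}_0$. Since the $\widetilde{\nu}$-random narrow topology does not decompose fiberwise (cf.\ Remark \ref{ohhnaivety}), the classical fiberwise analogue of Proposition \ref{P-infinitesimal-vector-from-plans} is not directly applicable. Instead, I expect to combine the support-approximation technique used in the proof of Proposition \ref{P-stability-optimal-plans} with a weak-$L^2_\mu$ compactness argument for the barycentric projections of $\widetilde{\gamma}_\theta$ (which are uniformly bounded in $L^2_\mu$ by $W_{2,\nu}(\mu,\sigma)$ thanks to Jensen's inequality), and invoke Proposition \ref{P-tangent-barycentric-projections} to discard the non-tangential component of $\boldsymbol{u}$ when passing to the limit.
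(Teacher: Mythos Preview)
Your overall architecture coincides with the paper's: combine the $\lambda$-convexity inequality along the geodesic $\theta\mapsto\mu_\theta^{0\to 1}$ with the subdifferential inequality applied at $\mu_\theta^{0\to 1}$, divide by $\theta$, and send $\theta\to 0$. The difference is that you have missed the one observation that makes the ``crucial step'' trivial, and as a result you propose an elaborate compactness argument where none is needed.

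The point you overlooked is that for every $\theta\in(0,1)$ the optimal $\nu$-admissible plan between $\mu$ and $\mu_\theta^{0\to 1}$ is \emph{unique}. Indeed, by \cite[Lemma~7.2.1]{AGS-08} applied at each fiber $\omega$, the only optimal plan in $\Gamma_o(\mu^\omega,\mu_\theta^{\omega\,0\to 1})$ is $(\pi_x,(1-\theta)\pi_x+\theta\pi_{x'})_\#\gamma^\omega$; by Proposition~\ref{P-dnu-optimal-problem} this forces $\Gamma_{o,\nu}(\mu,\mu_\theta^{0\to 1})=\{\gamma_\theta\}$ with your $\gamma_\theta$. Hence the infimum in the subdifferential condition collapses to a single term, and a direct change of variables gives
\[
\int_{\mathbb{R}^{4d}}\boldsymbol{u}(x,\omega)\cdot(x'-x)\,d\gamma_\theta
=\theta\int_{\mathbb{R}^{4d}}\boldsymbol{u}(x,\omega)\cdot(x'-x)\,d\gamma,
\]
for every $\theta\in(0,1)$. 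Your liminf inequality is therefore an exact equality at every $\theta$, and the proof concludes in one line. The machinery you propose---rescaled plans, $\widetilde\nu$-random narrow compactness, identification of the limit via Proposition~\ref{P-infinitesimal-vector-from-plans}, weak-$L^2_\mu$ compactness of barycenters---is entirely unnecessary here; it would also carry genuine technical risk, since passing $\int\boldsymbol u\cdot x'\,d\widetilde\gamma_\theta$ to the limit for a merely $L^2_\mu$ integrand $\boldsymbol u$ under narrow convergence requires a uniform-integrability argument you have not supplied.
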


\begin{proof}
Since the necessary condition is clear, we only prove the sufficient one. Namely, we show that \eqref{E-fibered-subdifferential} implies \eqref{E-fibered-subdifferential-convex}. Assume that $\boldsymbol{u}\in \partial_{W_{2,\nu}}\mathcal{E}[\mu]$, fix any $\sigma\in \mathcal{P}_{2,\nu}(\mathbb{R}^{2d})$ and denote $\mu_0:=\mu$ and $\mu_1:=\sigma$ for simplicity of notation. Let $\gamma\in\Gamma_{o,\nu}(\mu,\sigma)$ be the $\nu$-admissible optimal plan realizing the convexity condition \eqref{E-geodesic-convexity} in Remark \ref{R-geodesic-convexity} for its associated geodesic $\theta\in [0,1]\mapsto \mu_\theta^{0\rightarrow 1}$ given by \eqref{E-displacement-interpolation} joining $\mu_0$ to $\mu_1$. For any $\theta\in (0,1)$ Lemma 7.2.1 in \cite{AGS-08} guarantees that the only $\gamma_\theta\in \Gamma_{o,\nu}(\mu,\mu_\theta^{0\rightarrow 1})$ is given by $\gamma_\theta^\omega=(\pi_x,(1-\theta)\pi_x+\theta\pi_{x'})_{\#}\gamma^\omega$, for $\nu$-a.e. $\omega\in \mathbb{R}^d$. On the one hand, the subdifferential condition \eqref{E-fibered-subdifferential} with $\mu_\theta^{0\rightarrow 1}$ playing the role of $\sigma$ yields
\begin{align}\label{E-16}
\begin{aligned}
\mathcal{E}[\mu_\theta^{0\rightarrow 1}]-\mathcal{E}[\mu]&\geq \int_{\mathbb{R}^{4d}}\boldsymbol{u}(x,\omega)\cdot (x'-x)\,d\gamma_\theta(x,x',\omega,\omega')+o(W_{2,\nu}(\mu_0,\mu_\theta^{0\rightarrow 1}))\\
&=\theta\int_{\mathbb{R}^{4d}}\boldsymbol{u}(x,\omega)\cdot (x'-x)\,d\gamma(x,x',\omega,\omega')+o(\theta)
\end{aligned}
\end{align}
as $\theta \rightarrow 0$, where in the last line we have used that $W_{2,\nu}(\mu_0,\mu_\theta^{0\rightarrow 1})=\theta W_{2,\nu}(\mu,\sigma)$ and the explicit form of $\gamma_\theta$. On the other hand, the convexity property \eqref{E-geodesic-convexity} implies
\begin{equation}\label{E-17}
\mathcal{E}[\mu_\theta^{0\rightarrow 1}]-\mathcal{E}[\mu]\leq \theta (\mathcal{E}[\sigma]-\mathcal{E}[\mu])-\frac{\lambda}{2}\theta(1-\theta)W_{2,\nu}^2(\mu,\sigma),
\end{equation}
for any $\theta\in (0,1)$. Combining \eqref{E-16} and \eqref{E-17}, dividing by $\theta$ and passing to the limit as $\theta\rightarrow 0$ implies \eqref{E-fibered-subdifferential-convex}.
\end{proof}

Since it will be required throughout this section, we collect here some minimal assumptions on the energy functional $\mathcal{E}$, that are reminiscent of those in \cite{AGS-08} for functionals defined over the classical quadratic Wasserstein space $(\mathcal{P}_2(\mathbb{R}^d),W_2)$.

\medskip

\begin{defi}[Framework $\framework$]\label{D-framework-F}
Consider any $\nu\in \mathcal{P}(\mathbb{R}^d)$ and $\mathcal{E}:\mathcal{P}_{2,\nu}(\mathbb{R}^{2d})\longrightarrow (-\infty,+\infty]$. We shall say that $\mathcal{E}$ satisfies framework $\framework$ if the following assumptions are fulfilled:
\begin{enumerate}
\item[(${\mathcal F}_1$)] $\mathcal{E}$ is a proper functional, {\it i.e.},
$$
D(\mathcal{E})\neq \emptyset.
$$
\item[(${\mathcal F}_2$)] $\mathcal{E}$ is coercive, {\it i.e.}, there exists $\sigma\in {\mathcal P}_{2,\nu}(\mathbb{R}^{2d})$ and $r>0$ such that
$$
\inf\Big\{{\mathcal E}[\mu] : \ \mu\in {\mathcal P}_{2,\nu}(\RR^{2d}),\ W_{2,\nu}(\sigma,\mu)\leq r\Big\}>-\infty
$$
\item[(${\mathcal F}_3$)] $\mathcal{E}$ is lower semicontinuous, {\it i.e.},
$$
\lim_{n\rightarrow \infty}W_{2,\nu}(\mu_n,\mu)=0\quad \Longrightarrow\quad \mathcal{E}[\mu]\leq \liminf_{n\rightarrow\infty}\mathcal{E}[\mu_n].
$$
\item[(${\mathcal F}_4$)] $\mathcal{E}$ is $\lambda$-convex along generalized geodesics ({\it cf.} Definition \ref{D-gen-geodesic-convexity}).
\end{enumerate}
\end{defi}

A fundamental observation is that the convexity property along generalized geodesic in the above item $({\mathcal F}_4)$ implies that the penalized energy functional below is also convex. More specifically, we have the following result.

\begin{lem}[Convexity of the penalized energy]\label{L-convexity-penalized-energy}
Consider $\nu\in \mathcal{P}(\mathbb{R}^d)$ and any functional $\mathcal{E}:\mathcal{P}_{2,\nu}(\mathbb{R}^{2d})\longrightarrow (-\infty,+\infty]$ that is $\lambda$-convex along generalized geodesics. Define the associated penalized energy functional as follows
\begin{equation}\label{E-penalized-energy}
\Phi(\tau,\mu_*;\mu):=\mathcal{E}[\mu]+\frac{1}{2\tau}W_{2,\nu}^2(\mu,\mu_*),\quad \mu,\,\mu_*\in \mathcal{P}_{2,\nu}(\mathbb{R}^{2d}),\quad\tau>0.
\end{equation}
Then, for any $\mu_*,\mu_0,\mu_1\in D(\mathcal{E})$ and any $0<\tau<\frac{1}{\lambda^-}$ there is a curve $\theta\in [0,1]\mapsto \mu_\theta\in \mathcal{P}_{2,\nu}(\mathbb{R}^{2d})$ interpolating between $\mu_0$ and $\mu_1$ such that the following convexity property holds
\begin{equation}\label{E-convexity-penalized-energy}
\Phi(\tau,\mu_*;\mu_\theta)\leq (1-\theta) \Phi(\tau,\mu_*;\mu_0) + \theta\Phi(\tau,\mu_*;\mu_1) - \frac{1+\lambda\tau}{2\tau}\theta(1-\theta) W^2_{2,\nu}(\mu_0,\mu_1),
\end{equation}
for any $\theta\in [0,1]$.
\end{lem}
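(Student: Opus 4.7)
The plan is to use the generalized geodesic $\theta\in[0,1]\mapsto \mu_\theta^{0\to 1}$ joining $\mu_0$ to $\mu_1$ with base $\mu_*$ as the required interpolating curve $\mu_\theta$, exploit the $\lambda$-convexity of ${\mathcal E}$ along it (which is hypothesis $({\mathcal F}_4)$, i.e. Definition \ref{D-gen-geodesic-convexity}), and separately establish a convexity estimate for the quadratic term $W_{2,\nu}^2(\cdot,\mu_*)$ using the same 3-plan $\gamma\in\Gamma_\nu(\mu_*,\mu_0,\mu_1)$ that defines the geodesic.

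First, I would write down the generalized geodesic explicitly. By Definition \ref{D-gen-geodesic-convexity} there exists a $\nu$-admissible 3-plan $\gamma$, concentrated on $\omega_*=\omega_0=\omega_1$ and disintegrating as $\gamma^\omega(x_*,x_0,x_1)\otimes\nu(\omega)\otimes\delta_\omega\otimes\delta_\omega$, such that the projections onto the $(*,0)$ and $(*,1)$ components are optimal $\nu$-admissible, and $\mu_\theta^{\omega\,0\to 1}=((1-\theta)\pi_{x_0}+\theta\pi_{x_1})_\#\gamma^\omega$. Then $\lambda$-convexity of ${\mathcal E}$ gives
\begin{equation}\label{eqn:convE}
{\mathcal E}[\mu_\theta^{0\to 1}]\leq (1-\theta){\mathcal E}[\mu_0]+\theta{\mathcal E}[\mu_1]-\frac{\lambda}{2}\theta(1-\theta)W_\gamma^2(\mu_0,\mu_1).
\end{equation}

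Second, I would estimate $W_{2,\nu}^2(\mu_*,\mu_\theta^{0\to 1})$ from above. The map $(x_*,x_0,x_1,\omega_*,\omega_0,\omega_1)\mapsto (x_*,(1-\theta)x_0+\theta x_1,\omega_*,\omega_*)$ pushes $\gamma$ forward to a $\nu$-admissible (but not necessarily optimal) transference plan in $\Gamma_\nu(\mu_*,\mu_\theta^{0\to 1})$, since only $x$-components are altered and the diagonal structure on the $\omega$ variables is preserved. Thus
$$W_{2,\nu}^2(\mu_*,\mu_\theta^{0\to 1})\leq \int_{\RR^{6d}}\bigl|(1-\theta)(x_*-x_0)+\theta(x_*-x_1)\bigr|^2d\gamma.$$
Expanding with the elementary identity $|(1-\theta)a+\theta b|^2=(1-\theta)|a|^2+\theta|b|^2-\theta(1-\theta)|a-b|^2$ and using the optimality of the two marginal projections of $\gamma$ yields
\begin{equation}\label{eqn:convW}
W_{2,\nu}^2(\mu_*,\mu_\theta^{0\to 1})\leq (1-\theta)W_{2,\nu}^2(\mu_*,\mu_0)+\theta W_{2,\nu}^2(\mu_*,\mu_1)-\theta(1-\theta)W_\gamma^2(\mu_0,\mu_1).
\end{equation}

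Third, I would add \eqref{eqn:convE} to $\frac{1}{2\tau}$ times \eqref{eqn:convW} to obtain
$$\Phi(\tau,\mu_*;\mu_\theta^{0\to 1})\leq (1-\theta)\Phi(\tau,\mu_*;\mu_0)+\theta\Phi(\tau,\mu_*;\mu_1)-\theta(1-\theta)\frac{1+\lambda\tau}{2\tau}W_\gamma^2(\mu_0,\mu_1).$$
Under the restriction $0<\tau<1/\lambda^-$ the coefficient $(1+\lambda\tau)/(2\tau)$ is strictly positive, and since $W_\gamma^2(\mu_0,\mu_1)\geq W_{2,\nu}^2(\mu_0,\mu_1)$ by Definition \ref{D-gen-geodesic-convexity}, we may replace $W_\gamma^2$ by $W_{2,\nu}^2$ in the last term while preserving the $\leq$ sign, which yields \eqref{E-convexity-penalized-energy}.

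The only genuine point requiring care is the admissibility of the coupling produced by the pushforward in the second step, i.e.~that its $\omega$-marginals match and live on the diagonal; this is automatic from the structure of $\gamma$, so no real obstacle arises. Everything else is the standard parallelogram identity plus the sign check on $1+\lambda\tau$.
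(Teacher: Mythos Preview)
Your proof is correct and follows essentially the same approach as the paper: choose the generalized geodesic with base $\mu_*$, combine the $\lambda$-convexity of $\mathcal{E}$ with the $1$-convexity of $\frac{1}{2}W_{2,\nu}^2(\mu_*,\cdot)$ along that geodesic, and use $W_\gamma^2\geq W_{2,\nu}^2$ together with $1+\lambda\tau>0$. The only cosmetic difference is that the paper obtains \eqref{eqn:convW} by applying the classical fiberwise result \cite[Lemma~9.2.1]{AGS-08} and integrating against $\nu$, whereas you derive it directly from the competitor plan and the parallelogram identity---which is exactly what the fiberwise proof does anyway.
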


\begin{proof} 
Let us set $\mu_*,\mu_0,\mu_1$ and $\tau$ as in the statement and let $\theta\in [0,1]\mapsto \mu_\theta^{0\rightarrow 1}\in \mathcal{P}_{2,\nu}(\mathbb{R}^{2d})$ be the generalized geodesic joining $\mu_0$ to $\mu_1$ with base $\mu_*$ ({\it cf.} Definition \ref{D-gen-geodesic-convexity}) so that $\mathcal{E}$ satisfies the convexity property \eqref{e-gen}. We divide the reminder of the proof into two steps.

\medskip

$\diamond$ {\sc Step 1}: $1$-convexity of $\frac{1}{2}W_{2,\nu}^2(\mu_*,\cdot)$ along $\mu_\theta^{0\rightarrow1}$.\\
By Definition \ref{D-gen-geodesic-convexity} we know that $\mu_\theta^{0\rightarrow 1}(x,\omega)=\mu_\theta^{\omega 0\rightarrow 1}(x)\otimes \nu(\omega)$ and $\theta\in [0,1]\mapsto \mu_\theta^{\omega 0\rightarrow 1}$ is a generalized geodesic joining $\mu_0^\omega$ to $\mu_1^\omega$ with base $\mu_*^\omega$ for $\nu$-a.e. $\omega\in \mathbb{R}^d$. In addition, by \cite[Lemma 9.2.1]{AGS-08}, the squared classical Wasserstein distance is $1$-convex along generalized geodesics so that we have the inequality
\begin{align*}
    \frac{1}{2}W_2^2(\mu_*^\omega,\mu_\theta^{\omega 0\to 1})\leq (1-\theta)\frac{1}{2}W_2^2(\mu_*^\omega,\mu_{0}^{\omega}) + \theta \frac{1}{2}W_2^2(\mu_*^\omega,\mu_{1}^{\omega}) - \frac{1}{2}\theta(1-\theta)W_{\gamma^\omega}^2(\mu_{0}^{\omega},\mu_{1}^{\omega}),
\end{align*}
for $\nu$-a.e. $\omega\in \mathbb{R}^d$ and each $\theta\in [0,1]$. Here, $\gamma\in \Gamma_\nu(\mu_*,\mu_0,\mu_1)$ is given in Definition \ref{D-gen-geodesic-convexity} and
$$W_{\gamma^\omega}^2(\mu_{0}^{\omega},\mu_{1}^{\omega}) := \int_{\RR^{3d}}|x_0-x_1|^2d\gamma^\omega\geq W_2(\mu_0^\omega,\mu_1^\omega),$$
for $\nu$-a.e. $\omega\in \mathbb{R}^{2d}$. Then integrating with respect to $d\nu(\omega)$ (note that all of the involved integrands are Borel measurable in the sense of Proposition \ref{P-Borel-family-vs-measurability}) we obtain
\begin{equation}\label{E-convexity-W2}
\frac{1}{2}W_{2,\nu}^2(\mu_*,\mu_\theta^{0\to 1})\leq (1-\theta)\frac{1}{2}W_{2,\nu}^2(\mu_*,\mu_{0}) + \theta \frac{1}{2}W_{2,\nu}^2(\mu_*,\mu_{1}) - \frac{1}{2}\theta(1-\theta)W_{\gamma}^2(\mu_{0},\mu_{1}),
\end{equation}
for every $\theta\in [0,1]$.

\medskip

$\diamond$ {\sc Step 2}: Proof of \eqref{E-convexity-penalized-energy}.\\
Since $\mathcal{E}$ is $\lambda$-convex along $\mu_\theta^{0\rightarrow1}$, then adding \eqref{e-gen} and \eqref{E-convexity-W2} yields
$$
\Phi(\tau,\mu_*;\mu_\theta^{0\rightarrow1})\leq (1-\theta) \Phi(\tau,\mu_*;\mu_0) + \theta\Phi(\tau,\mu_*;\mu_1) - \frac{1+\lambda\tau}{2\tau}\theta(1-\theta) W^2_{\gamma}(\mu_0,\mu_1),
$$
for every $\theta\in [0,1]$. Since $W^2_\gamma(\mu_0,\mu_1)\geq W_{2,\nu}(\mu_0,\mu_1)$ we conclude \eqref{E-convexity-penalized-energy} provided that $1+\lambda\tau>0$, which is of course true as long as $\tau\in(0,\frac{1}{\lambda^-})$.
\end{proof}

Similarly to the classical subdifferential calculus in Hilbert spaces, the metric slope and the subdifferential with respect to $W_{2,\nu}$ of functionals over $(\mathcal{P}_{2,\nu}(\mathbb{R}^{2d}),W_{2,\nu})$ are related as follows (the proof can be found in Appendix \ref{Appendix-slope-subdifferential}).

\begin{pro}[Metric slope vs minimal subdifferential]\label{P-slope-minimal-subdifferential}
Consider $\nu\in \mathcal{P}(\mathbb{R}^d)$ and any functional $\mathcal{E}$ satisfying framework $\framework$. If $\mu \in D(\vert \partial \mathcal{E}\vert_{W_{2,\nu}})$, then $\partial_{W_{2,\nu}} \mathcal{E}[\mu]\neq \emptyset$ and
\begin{equation}\label{E-slope-minimal-subdifferential}
\vert \partial\mathcal{E}\vert_{W_{2,\nu}}[\mu]=\min\left\{\Vert \boldsymbol{u}\Vert_{L^2_\mu(\mathbb{R}^{2d},\mathbb{R}^d)}:\,\boldsymbol{u}\in \partial_{W_{2,\nu}}\mathcal{E}[\mu]\right\}.
\end{equation}
\end{pro}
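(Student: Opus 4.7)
My plan is to adapt the classical argument of \cite[Proposition 2.4.9 and Theorem 10.3.10]{AGS-08} to the fibered setting, splitting the proof into two inequalities and coping with the technical obstructions flagged in Remark \ref{ohhnaivety}.

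The easy direction $|\partial\mathcal{E}|_{W_{2,\nu}}[\mu] \leq \|\boldsymbol{u}\|_{L^2_\mu(\RR^{2d},\RR^d)}$ for any $\boldsymbol{u}\in\partial_{W_{2,\nu}}\mathcal{E}[\mu]$ follows directly from the definition. Indeed, fixing $\sigma\in D(\mathcal{E})$ and any $\gamma\in \Gamma_{o,\nu}(\mu,\sigma)$, Cauchy--Schwarz together with the fact that $\gamma$ is $\nu$-admissible with first marginal $\mu$ gives
\begin{equation*}
\int_{\RR^{4d}} \boldsymbol{u}(x,\omega)\cdot(x'-x)\,d\gamma \geq -\|\boldsymbol{u}\|_{L^2_\mu(\RR^{2d},\RR^d)}\,W_{2,\nu}(\mu,\sigma).
\end{equation*}
Plugging this into \eqref{E-fibered-subdifferential}, dividing by $W_{2,\nu}(\mu,\sigma)$ and taking $\limsup$ as $\sigma\to \mu$ yields the bound; minimizing over $\boldsymbol{u}\in\partial_{W_{2,\nu}}\mathcal{E}[\mu]$ shows $|\partial\mathcal{E}|_{W_{2,\nu}}[\mu]\leq \inf\{\|\boldsymbol{u}\|_{L^2_\mu}: \boldsymbol{u}\in\partial_{W_{2,\nu}}\mathcal{E}[\mu]\}$.

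For the reverse direction I would carry out the Moreau--Yosida/JKO variational approximation. For $\tau\in (0, 1/\lambda^-)$, the penalized functional $\Phi(\tau,\mu;\cdot)$ of \eqref{E-penalized-energy} is proper, coercive and lower semicontinuous on the Polish space $(\mathcal{P}_{2,\nu}(\RR^{2d}),W_{2,\nu})$ by framework $\framework$ and Proposition \ref{P-dnu-polish}, hence it admits a minimizer $\mu_\tau$, which is moreover unique by the strict convexity along generalized geodesics established in \eqref{E-convexity-penalized-energy}. Comparison of $\mu_\tau$ with the competitor $\sigma=\mu$ yields the Moreau--Yosida estimate $\mathcal{E}[\mu_\tau] + \frac{1}{2\tau}W_{2,\nu}^2(\mu,\mu_\tau) \leq \mathcal{E}[\mu]$, which together with framework $\framework$ forces $\mu_\tau\to \mu$ in $W_{2,\nu}$. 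Applying Lemma \ref{L-convexity-penalized-energy} with base $\mu_*=\mu$, $\mu_0=\mu_\tau$ and $\mu_1=\sigma\in D(\mathcal{E})$ arbitrary, dividing \eqref{E-convexity-penalized-energy} by $\theta$ and letting $\theta\to 0^+$, I obtain a first-order variational inequality at $\mu_\tau$. Choosing $\nu$-admissible optimal plans $\gamma_\tau\in\Gamma_{o,\nu}(\mu_\tau,\mu)$ and letting $\boldsymbol{u}_\tau\in L^2_{\mu_\tau}(\RR^{2d},\RR^d)$ be the barycentric projection of the rescaled displacement $\frac{1}{\tau}(x-x')$, this inequality rewrites precisely as the fibered subdifferential inclusion $\boldsymbol{u}_\tau\in \partial_{W_{2,\nu}}\mathcal{E}[\mu_\tau]$ together with $\|\boldsymbol{u}_\tau\|_{L^2_{\mu_\tau}}\leq W_{2,\nu}(\mu,\mu_\tau)/\tau$. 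Combining the Moreau--Yosida estimate with the slope bound $\mathcal{E}[\mu]-\mathcal{E}[\mu_\tau]\leq (|\partial\mathcal{E}|[\mu]+o(1))\,W_{2,\nu}(\mu,\mu_\tau)$ yields $\|\boldsymbol{u}_\tau\|_{L^2_{\mu_\tau}}\leq |\partial\mathcal{E}|_{W_{2,\nu}}[\mu]+o(1)$.

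The main obstacle is the passage to the limit $\tau\to 0^+$: the fields $\boldsymbol{u}_\tau$ live in the varying Hilbert spaces $L^2_{\mu_\tau}$ rather than the fixed $L^2_\mu$, and the naive fiberwise approach is ruled out by Remark \ref{ohhnaivety}. My plan is to encode each $\boldsymbol{u}_\tau$ via the push-forward plan $(I,\boldsymbol{u}_\tau)_\# \mu_\tau$ in the spirit of Proposition \ref{P-infinitesimal-vector-from-plans}. The uniform $L^2$ bound provides second-moment control in the second component, which via the random Prokhorov Theorem \ref{T-random-Prokhorov} (applied in the $\widetilde{\nu}$-random framework with $\widetilde{\nu}(\omega,\omega')=\nu(\omega)\otimes \delta_\omega(\omega')$) yields random narrow relative compactness. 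Any cluster point will then be of the form $(I,\boldsymbol{u}^\circ)_\# \mu$ for some $\boldsymbol{u}^\circ\in L^2_\mu(\RR^{2d},\RR^d)$ satisfying $\|\boldsymbol{u}^\circ\|_{L^2_\mu}\leq\liminf_{\tau\to 0}\|\boldsymbol{u}_\tau\|_{L^2_{\mu_\tau}}\leq |\partial\mathcal{E}|_{W_{2,\nu}}[\mu]$, by the random narrow lower semicontinuity of the quadratic moment. Finally, passing to the $\liminf$ in the subdifferential inequality satisfied by $\boldsymbol{u}_\tau$ at $\mu_\tau$ and using lower semicontinuity of $\mathcal{E}$ together with the stability of $\nu$-admissible optimal plans from Proposition \ref{P-stability-optimal-plans} will give $\boldsymbol{u}^\circ\in \partial_{W_{2,\nu}}\mathcal{E}[\mu]$, thereby completing the minimality claim \eqref{E-slope-minimal-subdifferential}.
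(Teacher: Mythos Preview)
Your overall strategy matches the paper's almost step for step: the Moreau--Yosida minimizers $\mu_\tau$, the rescaled optimal displacements, the encoding via plans, the random-narrow compactness in the $\widetilde\nu$-framework, and the slope identity $W_{2,\nu}(\mu_\tau,\mu)/\tau\to|\partial\mathcal E|_{W_{2,\nu}}[\mu]$ all appear in Appendix~\ref{Appendix-slope-subdifferential}. There is, however, one genuine gap.

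You assert that any $\widetilde\nu$-random narrow cluster point of $(I,\boldsymbol u_\tau)_\#\mu_\tau$ is again of the form $(I,\boldsymbol u^\circ)_\#\mu$. This is not justified: narrow limits of plans concentrated on graphs need not be concentrated on a graph. In Proposition~\ref{P-infinitesimal-vector-from-plans} the graph structure survives in the limit only because additional constraints (the continuity equation together with $\Vert\boldsymbol u_t\Vert=\vert\boldsymbol\mu'\vert$) force Jensen's inequality for the barycentric projection to be saturated; no such mechanism is available here \emph{before} you know the limit lies in the subdifferential. This is precisely the obstruction that motivates the \emph{extended} plan-valued subdifferential in \cite{AGS-08}, and the paper follows that route. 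It works throughout with the rescaled plans $\gamma_\tau\in\boldsymbol L^2_{\mu_\tau}(\mathbb R^{2d})$ themselves (never projecting to vector fields at finite $\tau$), introduces the extended fibered subdifferential $\boldsymbol\partial_{W_{2,\nu}}\mathcal E$ (Definition~\ref{D-Frechet-subdifferential-extended}), and first proves the plan-level identity $\vert\partial\mathcal E\vert_{W_{2,\nu}}[\mu]=\min\{\Vert\gamma\Vert_{\boldsymbol L^2_\mu}:\gamma\in\boldsymbol\partial_{W_{2,\nu}}\mathcal E[\mu]\}$ (Proposition~\ref{P-slope-minimal-subdifferential-plans}). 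Passing to the limit in the subdifferential inequality is done at the level of $3$-plans $\eta_\tau\in\Gamma_{o,\nu}(\gamma_\tau,\sigma)$ via a uniform-integrability argument, yielding a limiting plan $\gamma\in\boldsymbol\partial_{W_{2,\nu}}\mathcal E[\mu]$ which need not be a graph. Only at the very end is the barycentric projection $\boldsymbol u:=\mathfrak b_\mu[\gamma]$ taken, and Proposition~\ref{P-fibered-Frechet-subdifferential-barycentric} ensures $\boldsymbol u\in\partial_{W_{2,\nu}}\mathcal E[\mu]$ with $\Vert\boldsymbol u\Vert_{L^2_\mu}\leq\Vert\gamma\Vert_{\boldsymbol L^2_\mu}=\vert\partial\mathcal E\vert_{W_{2,\nu}}[\mu]$, which closes the argument. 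Your proof can be repaired along exactly these lines: drop the claim that the limit is a graph, keep the limiting object as a plan, and project only at the end.
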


In the sequel, we recall several notions of gradient flows associated with an energy functional $\mathcal{E}:\mathcal{P}_{2,\nu}(\mathbb{R}^{2d})\longrightarrow (-\infty,+\infty]$. We refer to Definitions 1.3.2 and 11.1.1 along with Theorem 11.1.4 in \cite{AGS-08} for gradient flows over the classical Wassertein space.

\begin{defi}\label{D-solutions-1st-order-gradient-flow}
Consider $\nu\in \mathcal{P}(\mathbb{R}^d)$, any functional $\mathcal{E}:\mathcal{P}_{2,\nu}(\mathbb{R}^{2d})\longrightarrow (-\infty,+\infty]$ and $\boldsymbol{\mu}$ in $AC^2(0,T;\mathcal{P}_{2,\nu}(\mathbb{R}^{2d}))$. Then we have the following definitions for $\boldsymbol{\mu}$:
\begin{enumerate}[label=(\roman*)]
\item {\it (Curve of maximal slope)} There exists a non-increasing function $E$ so that $E(t)=\mathcal{E}[\mu_t]$ for a.e. $t\in [0,T]$ and we have
\begin{equation}\label{E-D-maximal-slope-1st-order}
\frac{d}{dt}E(t)\leq -\frac{1}{2}\vert \boldsymbol{\mu}'\vert_{W_{2,\nu}}^2(t)-\frac{1}{2}\vert \partial \mathcal{E}\vert_{W_{2,\nu}}^2[\mu_t],
\end{equation}
for a.e. $t\in [0,T]$.
\item {\it (Gradient flow)} The tangent vector $\boldsymbol{u}_t\in {\rm Tan}_{\mu_t}(\mathcal{P}_{2,\nu}(\mathbb{R}^{2d}))$ verifies
\begin{equation}\label{E-D-gradient-flow-sol-1st-order}
\boldsymbol{u}_t\in -\partial_{W_{2,\nu}}\mathcal{E}[\mu_t],\\
\end{equation}
for a.e. $t\in [0,T]$.
\item {\it (Solution to the E.V.I with $\lambda\in \RR$)} For each $\sigma\in D(\mathcal{E})$ we have
\begin{equation}\label{E-D-EVI-1st-order}
\frac{1}{2}\frac{d}{dt}W_{2,\nu}^2(\mu_t,\sigma)+\frac{\lambda}{2}W_{2,\nu}^2(\mu_t,\sigma)\leq \mathcal{E}[\sigma]-\mathcal{E}[\mu_t],
\end{equation}
for a.e. $t\in [0,T]$.
\end{enumerate}
\end{defi}

We emphasize that the definitions of curve of maximal slope \eqref{E-D-maximal-slope-1st-order} and E.V.I. \eqref{E-D-EVI-1st-order} are purely metric and can be considered over general Polish spaces. However, the definition \eqref{E-D-gradient-flow-sol-1st-order} of gradient flow requires a richer structure of the base space that allows computing Fr\'echet subdifferentials. The typical setting is a Hilbert space, but it has been extended to $\mathcal{P}_2(\mathbb{R}^d)$ based on the underlying formal Riemannian structure. For the same reason, we can extend to the fibered space $\mathcal{P}_{2,\nu}(\mathbb{R}^{2d})$. As for the classical quadratic Wasserstein space $\mathcal{P}_2(\mathbb{R}^d)$, we shall show that all the above definitions are equivalent in $\mathcal{P}_{2,\nu}(\mathbb{R}^{2d})$, provided that the energy functional $\mathcal{E}$ satisfies framework $\framework$. 

\begin{theo}\label{T-equivalence-definitions-solutions-1st-order}
Consider $\nu\in \mathcal{P}(\mathbb{R}^d)$ and any functional $\mathcal{E}$ satisfying framework $\framework$. Set $\boldsymbol{\mu}$ in $AC^2(0,T;\mathcal{P}_{2,\nu}(\mathbb{R}^{2d}))$. Then, \eqref{E-D-maximal-slope-1st-order}, \eqref{E-D-gradient-flow-sol-1st-order} and \eqref{E-D-EVI-1st-order} are mutually equivalent. In that case, the tangent vector $\boldsymbol{u}_t\in {\rm Tan}_{\mu_t}(\mathcal{P}_{2,\nu}(\mathbb{R}^{2d}))$ to $\boldsymbol{\mu}$ satisfies the minimal selection principle
\begin{equation}\label{E-D-gradient-flow-sol-1st-order-slow}
\boldsymbol{u}_t\in -\partial^\circ_{W_{2,\nu}}\mathcal{E}[\mu_t],
\end{equation}
for a.e. $t\in [0,T]$. Moreover, $\boldsymbol{\mu}\in \Lip(0,T;\mathcal{P}_{2,\nu}(\mathbb{R}^{2d}))$ and $t\mapsto \mathcal{E}[\mu_t]$ is locally Lipschitz.
\end{theo}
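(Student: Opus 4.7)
The plan is to adapt the classical equivalence of the three notions of gradient flow in $(\mathcal{P}_2(\RR^d),W_2)$ (see \cite[Theorem 11.1.4]{AGS-08}) to the fibered setting by establishing the cycle of implications \eqref{E-D-gradient-flow-sol-1st-order}$\Rightarrow$\eqref{E-D-EVI-1st-order}$\Rightarrow$\eqref{E-D-maximal-slope-1st-order}$\Rightarrow$\eqref{E-D-gradient-flow-sol-1st-order-slow}. All fibered replacements of the classical toolkit are already at our disposal: Proposition \ref{equiv} (tangent velocity field of AC curves), Proposition \ref{T-dif} (generic differentiability of $W_{2,\nu}^2$), Proposition \ref{P-Frechet subdifferential-convex} (variational form of the subdifferential under $\lambda$-convexity, which holds under $(\mathcal{F}_4)$ since generalized-geodesic convexity implies geodesic convexity), Proposition \ref{P-slope-minimal-subdifferential} (slope equals minimal subdifferential norm), Proposition \ref{P-infinitesimal-vector-from-plans} (identification of the tangent vector from rescaled plans) and Proposition \ref{P-tangent-barycentric-projections}, together with Lemma \ref{L-convexity-penalized-energy}.

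The cornerstone is a \emph{fibered chain rule}: for $\boldsymbol{\mu}\in AC^2(0,T;\mathcal{P}_{2,\nu}(\mathbb{R}^{2d}))$ with tangent velocity $\boldsymbol{u}_t$, and any Borel selection $\boldsymbol{v}_t\in \partial_{W_{2,\nu}}\mathcal{E}[\mu_t]$ with $\Vert\boldsymbol{v}_t\Vert_{L^2_{\mu_t}}\in L^2(0,T)$, the map $t\mapsto\mathcal{E}[\mu_t]$ is absolutely continuous on $[0,T]$ and
\begin{equation*}
\frac{d}{dt}\mathcal{E}[\mu_t]\geq \int_{\mathbb{R}^{2d}}\boldsymbol{v}_t(x,\omega)\cdot \boldsymbol{u}_t(x,\omega)\,d\mu_t(x,\omega)\qquad \mbox{for a.e. }t\in (0,T).
\end{equation*}
This is obtained by inserting $\sigma=\mu_{t\pm h}$ into the variational inequality of Proposition \ref{P-Frechet subdifferential-convex}, dividing by $h$, and passing to the limit $h\to 0$ using the $\widetilde{\nu}$-random narrow (in fact $W_{2,\widetilde{\nu}}$) convergence of the rescaled plans established in Proposition \ref{P-infinitesimal-vector-from-plans}; the $O(h)$ term from the $\lambda$-correction vanishes thanks to $\vert\boldsymbol{\mu}'\vert_{W_{2,\nu}}\in L^2(0,T)$.

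Once the chain rule is at hand, the three implications read as follows. For \eqref{E-D-gradient-flow-sol-1st-order}$\Rightarrow$\eqref{E-D-EVI-1st-order}, plug $\boldsymbol{v}_t=-\boldsymbol{u}_t\in\partial_{W_{2,\nu}}\mathcal{E}[\mu_t]$ into Proposition \ref{P-Frechet subdifferential-convex} with any $\gamma_t\in\Gamma_{o,\nu}(\mu_t,\sigma)$ and identify the transport integral with $-\tfrac{d}{dt}\tfrac{1}{2}W_{2,\nu}^2(\mu_t,\sigma)$ via Proposition \ref{T-dif}. For \eqref{E-D-EVI-1st-order}$\Rightarrow$\eqref{E-D-maximal-slope-1st-order}, specialize EVI to $\sigma=\mu_s$ for $s$ close to $t$, combine it with the differentiability of $W_{2,\nu}^2$ along $\boldsymbol{\mu}$ and the definition of $\vert\partial\mathcal{E}\vert_{W_{2,\nu}}$, and extract a pointwise version of \eqref{E-D-maximal-slope-1st-order}; the monotone envelope $E$ of Definition \ref{D-solutions-1st-order-gradient-flow}(i) is then built from the lower semicontinuity of the slope on sublevel sets of $\mathcal{E}$, itself a consequence of the convexity of the penalized energy \eqref{E-convexity-penalized-energy}. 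Finally, for \eqref{E-D-maximal-slope-1st-order}$\Rightarrow$\eqref{E-D-gradient-flow-sol-1st-order-slow}, pick $\boldsymbol{v}_t\in \partial^\circ_{W_{2,\nu}}\mathcal{E}[\mu_t]$ and combine Proposition \ref{P-slope-minimal-subdifferential} (yielding $\Vert\boldsymbol{v}_t\Vert_{L^2_{\mu_t}}=\vert\partial\mathcal{E}\vert_{W_{2,\nu}}[\mu_t]$) and Proposition \ref{equiv} (yielding $\Vert\boldsymbol{u}_t\Vert_{L^2_{\mu_t}}\leq \vert\boldsymbol{\mu}'\vert_{W_{2,\nu}}(t)$) with the chain rule and \eqref{E-D-maximal-slope-1st-order}:
\begin{equation*}
-\tfrac{1}{2}\vert\boldsymbol{\mu}'\vert^2-\tfrac{1}{2}\vert\partial\mathcal{E}\vert^2\geq \frac{d}{dt}\mathcal{E}[\mu_t]\geq \int_{\mathbb{R}^{2d}}\boldsymbol{v}_t\cdot\boldsymbol{u}_t\,d\mu_t\geq -\tfrac{1}{2}\Vert\boldsymbol{v}_t\Vert^2-\tfrac{1}{2}\Vert\boldsymbol{u}_t\Vert^2.
\end{equation*}
All four quantities must coincide, which, via the equality cases of Cauchy--Schwarz and Young's inequality, forces $\boldsymbol{u}_t=-\boldsymbol{v}_t\in -\partial^\circ_{W_{2,\nu}}\mathcal{E}[\mu_t]$ a.e.

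The Lipschitz regularity of $\boldsymbol{\mu}$ and local Lipschitz regularity of $t\mapsto\mathcal{E}[\mu_t]$ will follow from the identity $\vert\boldsymbol{\mu}'\vert_{W_{2,\nu}}(t)=\vert\partial\mathcal{E}\vert_{W_{2,\nu}}[\mu_t]$ together with the essential monotonicity of $t\mapsto\vert\partial\mathcal{E}\vert_{W_{2,\nu}}[\mu_t]$ along the flow, which is produced by the $\lambda$-convexity of the penalized energy encoded in Lemma \ref{L-convexity-penalized-energy}. The main expected obstacle is the fibered chain rule itself: the classical proof uses narrow convergence of the rescaled plans $(\pi_x,\frac{1}{h}(\pi_{x'}-\pi_x))_\#\gamma_{t,t+h}$, which mixes fibers and therefore cannot be transplanted directly to $(\mathcal{P}_{2,\nu}(\mathbb{R}^{2d}),W_{2,\nu})$; we must instead rely on the $\widetilde{\nu}$-random narrow convergence provided by Proposition \ref{P-infinitesimal-vector-from-plans}, and carry out a careful uniform-integrability argument (via the uniform second-moment control given by $W_{2,\widetilde{\nu}}$-convergence) in order to commute the limit $h\to 0$ with the transport integral against the possibly unbounded subdifferential selection $\boldsymbol{v}_t$.
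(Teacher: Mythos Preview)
Your cycle of implications and the fibered toolkit you invoke (Propositions \ref{equiv}, \ref{T-dif}, \ref{P-Frechet subdifferential-convex}, \ref{P-slope-minimal-subdifferential}, \ref{P-infinitesimal-vector-from-plans}) coincide with the paper's proof, and your ``fibered chain rule'' is exactly what the paper carries out in its Step~1 (equations \eqref{E-5}--\eqref{E-derivative-E}). For the implication \eqref{E-D-EVI-1st-order}$\Rightarrow$\eqref{E-D-maximal-slope-1st-order} the paper simply quotes the purely metric result \cite[Proposition 4.6]{AG-13}, but your hands-on sketch would also work.

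The one genuine weak point is your argument for the Lipschitz regularity of $\boldsymbol{\mu}$ and of $t\mapsto\mathcal{E}[\mu_t]$. You propose to deduce it from the identity $\vert\boldsymbol{\mu}'\vert_{W_{2,\nu}}=\vert\partial\mathcal{E}\vert_{W_{2,\nu}}[\mu_t]$ together with an ``essential monotonicity of the slope along the flow, produced by Lemma \ref{L-convexity-penalized-energy}''. But Lemma \ref{L-convexity-penalized-energy} only asserts convexity of the penalized energy $\Phi(\tau,\mu_*;\cdot)$; it does not by itself yield slope monotonicity along the \emph{continuous} flow. In the classical theory that monotonicity is obtained either via the discrete JKO scheme (which you are not invoking) or directly from EVI. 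The paper takes the latter, much shorter route: once EVI is established, one compares the two solutions $\mu_t$ and $\mu_{t+h}$ via the EVI contraction
\[
W_{2,\nu}(\mu_{t+h},\mu_t)\leq e^{-\lambda(t-s)}W_{2,\nu}(\mu_{s+h},\mu_s),
\]
divides by $h$ and lets $h\to 0$ to obtain $\vert\boldsymbol{\mu}'\vert_{W_{2,\nu}}(t)\leq e^{-\lambda(t-s)}\vert\boldsymbol{\mu}'\vert_{W_{2,\nu}}(s)$, hence $\boldsymbol{\mu}\in\Lip$; then plugging $\sigma=\mu_s$ back into EVI and using the bound $\big|\tfrac{d}{dt}\tfrac12 W_{2,\nu}^2(\mu_t,\sigma)\big|\leq L\,W_{2,\nu}(\mu_t,\sigma)$ gives local Lipschitz continuity of $t\mapsto\mathcal{E}[\mu_t]$. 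Replacing your last paragraph by this EVI-based argument closes the gap with no extra machinery.
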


\begin{proof}
First, we observe that if $\boldsymbol{\mu}$ verifies E.V.I. \eqref{E-D-EVI-1st-order}, then $\boldsymbol{\mu}$ is automatically a curve of maximal slope in the sense \eqref{E-D-maximal-slope-1st-order}. In fact, such a result is well-known for lower semicontinuous functionals in a much more general setting, where $({\mathcal P}_{2,\nu}(\RR^{2d}),W_{2,\nu})$ is replaced by any Polish space $(\mathbb{X},d_{\mathbb{X}})$, see \cite[Proposition 4.6]{AG-13} for instance. Thus, we shall focus on the remaining implications, that require the abstract Riemannian structure of $\mathcal{P}_{2,\nu}(\mathbb{R}^{2d})$. That is, we will prove that \eqref{E-D-maximal-slope-1st-order} implies \eqref{E-D-gradient-flow-sol-1st-order} and that \eqref{E-D-gradient-flow-sol-1st-order} implies \eqref{E-D-EVI-1st-order}. 

\medskip

$\diamond$ {\sc Step 0}: Preparatory result.\\
Fix any $t\in (0,T)$ and $\widetilde{\boldsymbol{u}}_t\in {\rm Tan}_{\mu_t}(\mathcal{P}_{2,\nu}(\mathbb{R}^{2d}))$, and define the map $\varphi:\mathbb{R}^{4d}\longrightarrow\mathbb{R}$ with 
$$\varphi(x,x',\omega,\omega'):=\widetilde{\boldsymbol{u}}_t(x,\omega)\cdot x'.$$
Fix $\gamma_{t,h}\in \Gamma_{o,\nu}(\mu_t,\mu_{t+h})$ for any $h\in (0,T-t)$, and define $\widetilde{\gamma}_{t,h}:=(\pi_x,\frac{1}{h}(\pi_{x'}-\pi_x),\pi_\omega,\pi_{\omega'})_\#\gamma_{t,h}$ as in Proposition \ref{P-infinitesimal-vector-from-plans}. Then, $\varphi$ is uniformly integrable with respect to $\{\widetilde{\gamma}_{t,h}\}_{h\in (0,T-t)}$. To prove such a claim, note that we can split the integral as follows
\begin{align*}
\int_{\{\vert\varphi\vert\geq R\}}\vert \varphi\vert\,d\widetilde{\gamma}_{t,h}&\leq \int_{\{\vert \widetilde{\boldsymbol{u}}_t\vert\geq S\}}\vert \varphi\vert\,d\widetilde{\gamma}_{t,h}+\int_{\{\vert \widetilde{\boldsymbol{u}}_t\vert<S,\,\vert x'\vert\geq R/S\}}\vert \varphi\vert\,d\widetilde{\gamma}_{t,h}\\
&\leq M_{t,h}^{1/2}\left(\int_{\{|\widetilde{\boldsymbol{u}}_t|\geq S\}}|\widetilde{\boldsymbol{u}}_t|^2\,d\mu_t\right)^{1/2}+\frac{S^2 M_{t,h}}{R},
\end{align*}
for any $R,S>0$, where we denote
$$M_{t,h}:=\int_{\mathbb{R}^{4d}}\vert x'\vert^2\,d\widetilde{\gamma}_{t,h}=\frac{1}{h^2}\int_{\mathbb{R}^{4d}}\vert x-x'\vert^2\,d\gamma_{t,h}=\frac{W_{2,\nu}^2(\mu_t,\mu_{t+h})}{h^2}.$$
Note that $M_t:=\sup_{h\in (0,T-t)}M_{t,h}<\infty$ for {\it a.e.} $t\in (0,T)$ because $\boldsymbol{\mu}\in AC(0,T;\mathcal{P}_{2,\nu}(\mathbb{R}^{2d}))$ and therefore $M_{t,h} $ converges to the metric derivative $\vert \boldsymbol{\mu}'\vert_{W_{2,\nu}}^2(t)$ by Proposition \ref{md}. Then,
$$
\sup_{h\in (-t,T-t)}\int_{\{\vert\varphi\vert\geq R\}}\vert \varphi\vert\,d\widetilde{\gamma}_{t,h}\leq M_t^{1/2}\left(\int_{\{|\widetilde{\boldsymbol{u}}_t|\geq S\}}|\widetilde{\boldsymbol{u}}_t|^2\,d\mu_t\right)^{1/2}+\frac{S^2 M_t}{R}.
$$
Taking limits as $R\rightarrow \infty$ we have
$$\limsup_{R\rightarrow\infty} \sup_{h\in (-t,T-t)}\int_{\{\vert\varphi\vert\geq R\}}\vert \varphi\vert\,d\widetilde{\gamma}_{t,h}\leq M_t^{1/2}\left(\int_{\{|\widetilde{\boldsymbol{u}}_t|\geq S\}}|\widetilde{\boldsymbol{u}}_t|^2\,d\mu_t\right)^{1/2},$$
for any $S>0$. Finally, taking limits $S\rightarrow \infty$ and using that $\widetilde{\boldsymbol{u}}_t\in L^2_{\mu_t}(\mathbb{R}^{2d},\mathbb{R}^d)$ we conclude
$$\lim_{R\rightarrow\infty} \sup_{h\in (-t,T-t)}\int_{\{\vert\varphi\vert\geq R\}}\vert \varphi\vert\,d\widetilde{\gamma}_{t,h}=0.$$

\medskip

$\diamond$ {\sc Step 1}: Proof that \eqref{E-D-maximal-slope-1st-order} implies \eqref{E-D-gradient-flow-sol-1st-order}.\\
Assume that $\boldsymbol{\mu}$ is a curve of maximal slope of $\mathcal{E}$ in the sense of \eqref{E-D-maximal-slope-1st-order}. First, let us show that $t\mapsto \mathcal{E}[\mu_t]$ is indeed locally absolutely continuous and we can take $E(t)=\mathcal{E}[\mu_t]$ for all $t\in [0,T]$ in \eqref{E-D-maximal-slope-1st-order}. On the one hand, by integrating \eqref{E-D-maximal-slope-1st-order} and using Young's inequality along with Lebesgue's theorem for the integral of non-decreasing functions we obtain 
\begin{equation}\label{E-int-slope-speed}
\int_s^t \vert \boldsymbol{\mu}'\vert_{W_{2,\nu}}(\tau)\vert \partial \mathcal{E}[\mu_\tau]\vert\,d\tau\leq \frac{1}{2}\int_s^t\vert \boldsymbol{\mu}'\vert_{W_{2,\nu}}(\tau)+\frac{1}{2}\int_s^t\vert \partial \mathcal{E}\vert[\mu_\tau]^2\,d\tau\leq E(s)-E(t),
\end{equation}
for each $0<s<t<T$.
On the other hand, since $\mathcal{E}$ is lower semicontinuous and $\lambda$-geodesically convex we can apply \cite[Proposition 4.19]{AG-13} and obtain the inequality
\begin{equation}\label{E-strog-upper-gradient}
\vert \mathcal{E}[\mu_t]-\mathcal{E}[\mu_s]\vert\leq \int_s^t\vert \boldsymbol{\mu}'\vert_{W_{2,\nu}}(\tau)\vert \partial \mathcal{E}\vert_{W_{2,\nu}}[\mu_\tau]\,d\tau,
\end{equation}
for each $0<s\leq t<T$ ({\it i.e.}, $\vert \partial \mathcal{E}\vert_{W_{2,\nu}}$ is a strong upper gradient). By \eqref{E-int-slope-speed} the integrand in \eqref{E-strog-upper-gradient} belongs to $L^1_{\text{loc}}(0,T)$ and this ends the claim. Second, we prove that $\boldsymbol{\mu}$ verifies \eqref{E-D-gradient-flow-sol-1st-order}. Using the definitions of metric derivatives and slope with respect to $W_{2,\nu}$ it is clear that
\begin{align*}
    \frac{d}{dt}E(t)=\lim_{s\downarrow t}\frac{\mathcal{E}[\mu_s]-\mathcal{E}[\mu_t]}{s-t} &=\lim_{s\downarrow t}\frac{\mathcal{E}[\mu_s]-\mathcal{E}[\mu_t]}{W_{2,\nu}(\mu_s,\mu_t)}\frac{W_{2,\nu}(\mu_s,\mu_t)}{s-t}\\
    &\geq -\vert \partial\mathcal{E}\vert_{W_{2,\nu}}\vert\boldsymbol{\mu}'\vert_{W_{2,\nu}}\geq -\frac{1}{2}\vert \partial\mathcal{E}\vert_{W_{2,\nu}}^2 - \frac{1}{2}\vert\boldsymbol{\mu}'\vert_{W_{2,\nu}}^2,
\end{align*}
at a.e. $t\in [0,T]$ at which $E$ is strictly decreasing (for $\frac{d}{dt}E(t)=0$ the result is obvious). Then, we achieve the equality in \eqref{E-D-maximal-slope-1st-order}, {\it i.e.},
\begin{equation}\label{E-D-maximal-slope-1st-order-equality}
\frac{1}{2}\vert \boldsymbol{\mu}'\vert_{W_{2,\nu}}^2(t)+\frac{1}{2}\vert \partial \mathcal{E}\vert_{W_{2,\nu}}^2[\mu_t]+\frac{d}{dt}E(t)=0,
\end{equation}
for a.e. $t\geq 0$. On the other hand, take $\widetilde{\boldsymbol{u}}_t\in \partial^\circ_{W_{2,\nu}}\mathcal{E}[\mu_t]$, that is non-empty by Proposition \ref{P-slope-minimal-subdifferential}. Using the definition of the fibered subdifferential we obtain
\begin{multline*}
\mathcal{E}[\mu_{t+h}]-\mathcal{E}[\mu_t]\geq \inf_{\gamma_{t,h}\in \Gamma_{o,\nu}(\mu_t,\mu_{t+h})}\int_{\mathbb{R}^{4d}}\widetilde{\boldsymbol{u}}_t(x,\omega)\cdot (x'-x)\,d\gamma_{t,h}(x,x',\omega,\omega')+o(W_{2,\nu}(\mu_{t+h},\mu_t)),
\end{multline*}
as $h\rightarrow 0$ for every $t\in (0,T)$. 
Divide the above by $h$, approximate the infimum modulo an arbitrarily small error $\varepsilon>0$ by some optimal plan $\gamma_{t,h}^\varepsilon\in \Gamma_{o,\nu}(\mu_t,\mu_{t+h})$, and define the modified plans $\widetilde{\gamma}_{t,h}^\varepsilon:=(\pi_x,\frac{1}{h}(\pi_{x'}-\pi_x),\pi_\omega,\pi_{\omega'})_{\#}\gamma_{t,h}^\varepsilon$ as above to obtain
\begin{equation}\label{E-5}
\frac{\mathcal{E}[\mu_{t+h}]-\mathcal{E}[\mu_t]}{h}\\
\geq \int_{\mathbb{R}^{4d}}\widetilde{\boldsymbol{u}}_t(x,\omega)\cdot x'\,d\widetilde{\gamma}_{t,h}^\varepsilon(x,x',\omega,\omega')-\varepsilon+o(1),
\end{equation}
as $h\rightarrow 0$. Since $\widetilde{\gamma}^\varepsilon_{t,h}\rightarrow \widetilde{\gamma}_t$ narrowly as $h\rightarrow 0$ with $\widetilde{\gamma}_t:=((\pi_x,\boldsymbol{u}_t,\pi_\omega)_{\#}\mu_t)\otimes \delta_\omega(\omega')$ by Proposition \ref{P-infinitesimal-vector-from-plans}, we have
\begin{equation}\label{E-5bis}
\lim_{h\rightarrow 0}\int_{\mathbb{R}^{4d}}\widetilde{\boldsymbol{u}}_t(x,\omega)\cdot x'\,d\widetilde{\gamma}_{t,h}^\varepsilon(x,x',\omega,\omega')= \int_{\mathbb{R}^{2d}}\widetilde{\boldsymbol{u}}_t(x,\omega)\cdot \boldsymbol{u}_t(x,\omega)\,d\mu_t(x,\omega),
\end{equation}
where $\boldsymbol{u}_t\in {\rm Tan}_{\mu_t}({\mathcal P}_{2,\nu}(\RR^{2d}))$ is the tangent vector in Proposition \ref{equiv}. Above we have used \cite[Lemma 5.1.7]{AGS-08} along with the uniform integrability property in {\sc Step 0}. Therefore, passing with $h\rightarrow 0$ in \eqref{E-5} and using \eqref{E-5bis} and the arbitrariness of $\varepsilon>0$ leads to
\begin{equation}\label{E-derivative-E}
\frac{d}{dt}E(t)\geq \int_{\mathbb{R}^{2d}}\widetilde{\boldsymbol{u}}_t(x,\omega)\cdot \boldsymbol{u}_t(x,\omega)\,d\mu_t(x,\omega),
\end{equation}
for a.e. $t\in [0,T]$. Putting \eqref{E-D-maximal-slope-1st-order-equality} and \eqref{E-derivative-E} together, we obtain that
\begin{multline*}
\frac{1}{2}\int_{\mathbb{R}^{2d}}\vert \boldsymbol{u}_t(x,\omega)+\widetilde{\boldsymbol{u}}_t(x,\omega)\vert^2\,d\mu_t(x,\omega)\\
=\frac{1}{2}\Vert \boldsymbol{u}_t\Vert_{L^2_{\mu_t}(\mathbb{R}^{2d},\mathbb{R}^d)}^2+\frac{1}{2}\Vert\widetilde{\boldsymbol{u}}_t\Vert_{L^2_{\mu_t}(\mathbb{R}^{2d},\mathbb{R}^d)}^2+\int_{\mathbb{R}^{2d}}\widetilde{\boldsymbol{u}}_t(x,\omega)\cdot \boldsymbol{u}_t(x,\omega)\,d\mu_t(x,\omega)\\
\leq\frac{1}{2}\vert \boldsymbol{\mu}'\vert^2_{W_{2,\nu}}(t)+\frac{1}{2}\vert \partial \mathcal{E}\vert^2_{W_{2,\nu}}[\mu_t]+\frac{d}{dt}E(t)=0,
\end{multline*}
for a.e. $t\in [0,T]$, were we have used \eqref{E-slope-minimal-subdifferential} and Proposition \ref{equiv} in the last line. Consequently, $\boldsymbol{u}_t=-\widetilde{\boldsymbol{u}}_t\in -\partial^\circ_{W_{2,\nu}}\mathcal{E}[\mu_t]$ for a.e. $t\geq 0$. Thus {\sc Step 1} of the proof is finished and $\boldsymbol{\mu}$ satisfies the minimal selection principle.

\medskip

$\diamond$ {\sc Step 2}: Proof that \eqref{E-D-gradient-flow-sol-1st-order} implies \eqref{E-D-EVI-1st-order}.\\
Assume that $\boldsymbol{\mu}$ is a gradient flow solution in the sense of \eqref{E-D-gradient-flow-sol-1st-order}. On the one hand, since $\boldsymbol{\mu}$ is a fibered gradient flow of $\mathcal{E}$ then, by Proposition \ref{P-Frechet subdifferential-convex}, the tangent vector field $\boldsymbol{u}_t\in {\rm Tan}_{\mu_t}(\mathcal{P}_{2,\nu}(\mathbb{R}^{2d}))$ associated with the curve satisfies the following inequality
\begin{equation}\label{E-8}
\mathcal{E}[\sigma]-\mathcal{E}[\mu_t]\geq -\int_{\mathbb{R}^{4d}}\boldsymbol{u}(x,\omega)\cdot (x'-x)\,d\gamma_t(x,x',\omega,\omega')+\frac{\lambda}{2}W_{2,\nu}^2(\mu_t,\sigma),
\end{equation}
for a.e. $t\in [0,T]$ and any $\gamma_t\in \Gamma_{o,\nu}(\mu_t,\sigma)$ so that $\mathcal{E}$ is $\lambda$-convex along the associated geodesic joining $\mu_t$ to $\sigma$. By Proposition \ref{T-dif}, we can identify the first term in the right hand side of \eqref{E-8} as the time derivative of $\frac{1}{2}W_{2,\nu}^2(\mu_t,\sigma)$. Then, the E.V.I. \eqref{E-D-EVI-1st-order} follows.

\medskip

$\diamond$ {\sc Step 3}: Regularizing effect.\\
For $h>0$ small enough, set the curves $\mu^1_t=\mu_t$ and $\mu^2_t=\mu_{t+h}$. Since both $\boldsymbol{\mu}^1$ and $\boldsymbol{\mu}^2$ solve the E.V.I. \eqref{E-D-EVI-1st-order}, then using the chain rule and Gr\"{o}nwall's lemma yields the stability estimate
$$W_{2,\nu}(\mu_{t+h},\mu_t)\leq e^{-\lambda(t-s)}W_{2,\nu}(\mu_{s+h},\mu_s),$$
for any $0\leq s\leq t\leq T-h$. Dividing by $h$ and taking limits as $h\rightarrow 0$ at any couple of points $0<t\leq s<T$ where the metric derivative exists, we obtain
$$\vert \boldsymbol{\mu}'\vert_{W_{2,\nu}}(t)\leq e^{-\lambda(t-s)}\vert \boldsymbol{\mu}'\vert_{W_{2,\nu}}(s).$$
Then, there exists $L>0$ with $\vert \boldsymbol{\mu}'\vert_{W_{2,\nu}}(t)\leq L$ for a.e. $t\in (0,T)$. Thus, we obtain $\boldsymbol{\mu}\in \Lip(0,T;\mathcal{P}_{2,\nu}(\mathbb{R}^{2d}))$. In particular, for each $\sigma \in D(\mathcal{E})$
$$\left\vert\frac{d}{dt}\frac{1}{2}W_{2,\nu}^2(\mu_t,\sigma)\right\vert\leq \Vert \boldsymbol{u}_t\Vert_{L^2(\mu_t)}W_{2,\nu}(\mu_t,\sigma)\leq L W_{2,\nu}(\mu_t,\sigma),$$
for a.e. $t\in (0,T)$. Plugging it in the E.V.I. \eqref{E-D-EVI-1st-order} we obtain
$$-LW_{2,\nu}(\mu_t,\sigma)+\frac{\lambda}{2}W_{2,\nu}^2(\mu_t,\sigma)\leq \mathcal{E}[\sigma]-\mathcal{E}[\mu_t],$$
for a.e. $t\in (0,T)$. By lower semicontinuity of $t\mapsto \mathcal{E}[\mu_t]$, the above condition actually holds for each $t\in (0,T)$. Thus, we can take $\sigma=\mu_s$ for any $s\in (0,T)$ and we conclude that
$$\vert \mathcal{E}[\mu_t]-\mathcal{E}[\mu_s]\vert\leq L \vert t-s\vert+\frac{\lambda_-}{2}L^2(t-s)^2,$$
for any $t,s\in (0,T)$. Therefore, $t\mapsto \mathcal{E}[\mu_t]$ is locally Lipschitz.
\end{proof}

\medskip

We end this section by stating Theorem A on the existence of fibered gradient flows in the sense of Definition \ref{D-solutions-1st-order-gradient-flow} provided that $\mathcal{E}$ satisfies framework $\framework$. Our goal is to use the classical theory of gradient flows on metric spaces. Particularly, we aim to apply \cite[Theorem 4.0.4]{AGS-08}. The rigorous statement of Theorem A then reads as follows.

\begin{theo}[Theorem A: existence of fibered gradient flows]\label{t-A-rig}
Let $\mathcal{E}:{\mathcal P}_{2,\nu}(\RR^{2d})\rightarrow(-\infty,+\infty]$ satisfy framework $\framework$. Then for all $\mu_0\in D({\mathcal E})$ the following assertions hold:
\begin{enumerate}[label=(\roman*)]
    \item ({\it Existence and uniqueness}) There exists a unique gradient flow $\boldsymbol{\mu}\in AC^2_{{\rm loc}}(0,\infty;\mathcal{P}_{2,\nu}(\mathbb{R}^{2d}))$ of ${\mathcal E}$ in the sense of Definition \ref{D-solutions-1st-order-gradient-flow} with $\lim_{t\to 0^+}W_{2,\nu}(\mu_t,\mu_0)=0$.
    \item ({\it Regularity}) We have $\boldsymbol{\mu}\in \Lip_{{\rm loc}}(0,\infty;\mathcal{P}_{2,\nu}(\mathbb{R}^{2d}))$ with $\mu_t\in D(|\partial \mathcal{E}|)\subset D(\mathcal{E})$ for every $t>0$ and, in addition,
\begin{align*}
\begin{aligned}
    \mathcal{E}[\mu_t] &\leq \mathcal{E}[\mu_0]\leq \mathcal{E}[\sigma] + \frac{1}{2t}W^2_{2,\nu}(\sigma,\mu_0) & &\forall\, \sigma\in D(\mathcal{E}),\\
    |\partial\mathcal{E}|_{W_{2,\nu}}^2[\mu_t] &\leq |\partial\mathcal{E}|_{W_{2,\nu}}^2[\sigma] + \frac{1}{t^2}W^2_{2,\nu}(\sigma,\mu_0) & &\forall\, \sigma\in D(|\partial\mathcal{E}|).
\end{aligned}
\end{align*}

\item {\it (Stability)} Let $\boldsymbol{\mu}^1$ and $\boldsymbol{\mu}^2$ be two gradient flows with $\mu_0^1,\,\mu_0^2\in D(\mathcal{E})$. Then, we have
\begin{align*}
W_{2,\nu}(\mu_t^1,\mu_t^2)\leq e^{-\lambda t}W_{2,\nu}(\mu^1_0,\mu^2_0),
\end{align*}
for all $t>0$.
\end{enumerate}
\end{theo}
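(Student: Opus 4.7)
The plan is to verify the hypotheses of the abstract existence theorem for gradient flows on metric spaces (\cite[Theorem 4.0.4]{AGS-08}) in the Polish space $({\mathcal P}_{2,\nu}(\RR^{2d}),W_{2,\nu})$, deduce existence/uniqueness of Evolutionary Variational Inequality (EVI) solutions, and then translate these into fibered gradient flows in the sense of Definition \ref{D-solutions-1st-order-gradient-flow} via the equivalence result Theorem \ref{T-equivalence-definitions-solutions-1st-order}.

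First, I would collect all structural prerequisites of AGS Theorem 4.0.4. Polishness of the ambient space is precisely Proposition \ref{P-dnu-polish}. Properness, coercivity and lower semicontinuity of ${\mathcal E}$ are assumptions $({\mathcal F}_1)$--$({\mathcal F}_3)$ of framework $\framework$. The crucial remaining ingredient -- and the only place where the fibered structure would cause a genuine obstruction -- is the existence of, for every $\tau\in(0,\frac{1}{\lambda^-})$ and every $\mu_*\in D({\mathcal E})$, a family of curves along which the penalized functional $\Phi(\tau,\mu_*;\cdot)$ defined in \eqref{E-penalized-energy} is $\tfrac{1+\lambda\tau}{\tau}$-convex. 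This is exactly the content of Lemma \ref{L-convexity-penalized-energy}, which in turn is a direct consequence of the fibered $\lambda$-convexity along generalized geodesics assumed in $({\mathcal F}_4)$. Therefore all hypotheses of the AGS theorem are met.

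Applying \cite[Theorem 4.0.4]{AGS-08} directly yields, for each $\mu_0\in D({\mathcal E})$, a unique locally $2$-absolutely continuous curve $t\mapsto \mu_t$ starting at $\mu_0$ which satisfies the EVI \eqref{E-D-EVI-1st-order}, together with the two regularity estimates in item (ii) (the first being the standard energy decay plus implicit-Euler minimality bound, the second being the AGS decay of the slope along gradient flows). Item (iii) (exponential contraction) follows upon writing the EVI for two solutions $\boldsymbol{\mu}^1,\boldsymbol{\mu}^2$, adding them with the roles of test measure swapped, applying Proposition \ref{T-dif} to identify $\tfrac{d}{dt}\tfrac{1}{2}W_{2,\nu}^2(\mu_t^1,\mu_t^2)$, and invoking Gr\"onwall -- this is the same argument already used in Step 3 of the proof of Theorem \ref{T-equivalence-definitions-solutions-1st-order}.

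Finally, to conclude item (i) in its full form (namely that $\boldsymbol{\mu}$ is actually a fibered gradient flow, solving \eqref{E-D-gradient-flow-sol-1st-order} with tangent velocity in $-\partial_{W_{2,\nu}}^\circ{\mathcal E}[\mu_t]$), I invoke Theorem \ref{T-equivalence-definitions-solutions-1st-order}, which asserts the mutual equivalence of EVI solutions, curves of maximal slope, and gradient flows under framework $\framework$, and moreover delivers the Lipschitz regularity in (ii) and the minimal-selection principle. The main conceptual obstacle was building a robust subdifferential calculus and a fibered generalized-geodesic $\lambda$-convexity compatible with the degenerate cost $c_\infty$ of \eqref{E-scaled-costs-infinity}; once Lemma \ref{L-convexity-penalized-energy} and Theorem \ref{T-equivalence-definitions-solutions-1st-order} are in place, the proof of Theorem \ref{t-A-rig} reduces to a clean citation of the AGS machinery, with no further fiber-by-fiber reduction required.
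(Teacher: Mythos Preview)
Your proposal is correct and follows essentially the same approach as the paper: verify that framework $\framework$ supplies all hypotheses of \cite[Theorem 4.0.4]{AGS-08} on the Polish space $(\mathcal{P}_{2,\nu}(\mathbb{R}^{2d}),W_{2,\nu})$, with Lemma \ref{L-convexity-penalized-energy} providing the key convexity of the penalized energy, and then read off existence, uniqueness, regularity and contraction from the abstract AGS machinery. You add a bit more detail than the paper (explicitly invoking Theorem \ref{T-equivalence-definitions-solutions-1st-order} to pass from EVI to the fibered gradient-flow formulation and spelling out the Gr\"onwall argument for (iii)), but the overall route is identical.
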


The proof follows from the standard theory of gradient flows for metric-compatible $\lambda$-convex functionals. It suffices to apply \cite[Theorem 4.0.4]{AGS-08} with the ambient Polish space $({\mathcal P}_{2,\nu}(\mathbb{R}^{2d}), W_{2,\nu})$, the energy functional $\mathcal{E}$, and the initial datum $\mu_0$. All of the assumptions in \cite[Theorem 4.0.4]{AGS-08} follow directly from the hypothesis in framework $\framework$, except for the convexity property of the associated penalized energy functional, which holds by Lemma \ref{L-convexity-penalized-energy} thanks to the assumed $\lambda$-convexity of $\mathcal{E}$ along generalized geodesic.

\begin{rem}\label{R-why-lambda-convexity}
As mentioned in Section \ref{maingoal2}, the $\lambda$-convexity is actually not necessary to prove the existence of gradient flows. Indeed, independently on any convexity assumption, the lower-semicontinuity of the energy functional and its metric slope, along with the condition that the latter be an upper gradient may still hold for smoother functionals. As discussed in \cite{S-17}, these three conditions provide all the necessary control in order to pass to the limit in the time-discrete JKO scheme and find a time-continuous curve of maximal slope. Of course, the last two are immediately guaranteed under $\lambda$-convexity, which is essentially the approach followed in \cite{AGS-08}. However, such a restriction is far from immaterial, as, for example, the energy functional of the Patlak-Keller-Segel equation is typically not $\lambda$-convex and yet a weak solution can be found by taking limits in the JKO scheme, see \cite{BCC-08}. In cases like those, the gradient flow formalism may be too heavy and it could be more instructive to forget about it and simply identify a PDE for the limiting curve of the JKO scheme. In this paper, we work within framework $\framework$ for the sake of simplicity in order to keep our setting consistent for the existence of gradient flows in Theorem \ref{t-A-rig}, the uniqueness and the equivalence of all the notions in Theorem \ref{T-equivalence-definitions-solutions-1st-order}.
\end{rem}

\section{Examples of gradient flows in $(\mathcal{P}_{2,\nu}(\mathbb{R}^{2d}),W_{2,\nu})$}\label{sec:exist}

Our goal in this section is to analyse three classical examples of energy functionals namely, {\it internal, external} and {\it interaction} energy functionals, with particular emphasis on applications to the examples presented in Section \ref{maingoal2}. First, we provide the heuristic interpretation of fibered gradient flows as solutions to the PDE \eqref{kurak-variation}, where the velocity field is determined through the Euler first variation of the energy functional. This requires computing formally the Fr\'echet fibered subdifferential of a number of energy functionals of integral form. Second, we realize all the examples in Section \ref{maingoal2} as fibered gradent flows, many of them associated with non-convex but smooth energy functionals. Finally, we provide sufficient conditions for the main three types of functionals to satisfy framework $\framework$, so that our theory of gradient flows can be applied.

\subsection{Examples of fibered gradient flows}\label{subsec:examples-applications}

In this section we focus on a sufficiently large class of functionals containing the examples in Section \ref{maingoal2}. We shall stick to formal arguments for clarity of the presentation, but rigorous statements can be found in the next section. These functionals can be regarded as natural fibered analogues of the three classical types of energy functionals introduced in \eqref{E-intro-energies}, and they take the form $\mathcal{E}:\mathcal{P}_{2,\nu}(\mathbb{R}^{2d})\longrightarrow (-\infty,+\infty]$ with
\begin{align}\label{E-energies-fibered}
\begin{aligned}
\mathcal{E}[\mu]&=\int_{\mathbb{R}^{2d}} U(\omega,\rho(x,\omega))\,dx\,d\nu(\omega)+\int_{\mathbb{R}^{2d}}V(\omega,x)\,d\mu(x,\omega)\\
&+\frac{1}{2}\iint_{\mathbb{R}^{2d}\times \mathbb{R}^{2d}}W(\omega,\omega',x-x')\,d\mu(x,\omega)\,d\mu(x',\omega'),
\end{aligned}
\end{align}
if $\mu=\rho(x,\omega)\,dx\otimes d\nu(\omega)\in \mathcal{P}_{2,\nu}(\mathbb{R}^{2d})$ and $\mathcal{E}[\mu]=+\infty$ otherwise. We note that all these functionals are particular instances of more general integral functionals:
\begin{align}
\mathcal{E}_1[\mu]&:=\int_{\mathbb{R}^{2d}}F_1(x,\omega,\rho(x,\omega))\,dx\,d\nu(\omega),\label{E-functional-type-I}\\
\mathcal{E}_2[\mu]&:=\int_{\mathbb{R}^{4d}}F_2(x,x',\omega,\omega',\rho(x,\omega),\rho(x',\omega'))\,dx\,dx'\,d\nu(\omega)\,d\nu(\omega'),\label{E-functional-type-II}
\end{align}
whenever $\mu=\rho(x,\omega)\,dx\otimes \nu(\omega)\in \mathcal{P}_{2,\nu}(\mathbb{R}^{2d})$ for some $\rho\in L^1_{dx\otimes \nu}(\mathbb{R}^{2d})$, and $\mathcal{E}_1[\mu]=\mathcal{E}_2[\mu]=+\infty$ otherwise, for regular enough integrands $F_1:\mathbb{R}^{2d}\times \mathbb{R}_+\longrightarrow \mathbb{R}_+$ and $F_2:\mathbb{R}^{4d}\times \mathbb{R}_+^2\longrightarrow \mathbb{R}_+$. Then, we obtain the following characterization of the fibered Fr\'echet subdifferential of $\mathcal{E}_1$ and $\mathcal{E}_2$ in terms of their associated Euler first variations.

\begin{lem}[Variational integral I]\label{L-variational-integral-I}
For any integrand $F_1:\mathbb{R}^{2d}\times \mathbb{R}_+\longrightarrow \mathbb{R}_+$ of class $C^2$ with $F(x,\omega,z)=0$ at $z=0$ let us define $\mathcal{E}_1:\mathcal{P}_{2,\nu}(\mathbb{R}^{2d})\longrightarrow (-\infty,+\infty]$ by \eqref{E-functional-type-I}. Then,
$$\partial_{W_{2,\nu}}\mathcal{E}_1[\mu]=\left\{\nabla_x \frac{\delta\mathcal{E}_1}{\delta\rho}\right\},$$
for all $\mu\in D(\mathcal{E}_1)$, where $\frac{\delta\mathcal{E}_1}{\delta\rho}$ is the first variation of $\mathcal{E}_1$ as functional of $\rho$, {\it i.e.},
$$\frac{\delta\mathcal{E}_1}{\delta\rho}(x,\omega)=\partial_z F_1(x,\omega,\rho(x,\omega)).$$
\end{lem}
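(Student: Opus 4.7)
The plan is to reduce the subdifferential computation to a fiberwise application of the classical first-variation formula for integral functionals on $(\mathcal{P}_2(\mathbb{R}^d), W_2)$, and then integrate back against $\nu$. Set the candidate $\boldsymbol{u}^*(x,\omega) := \nabla_x\bigl[\partial_z F_1(x,\omega,\rho(x,\omega))\bigr]$; the $C^2$ regularity of $F_1$, the boundary condition $F_1(x,\omega,0)=0$, and the fact that $\mu \in \mathcal{P}_{2,\nu}(\mathbb{R}^{2d})$ together guarantee $\boldsymbol{u}^* \in L^2_\mu(\mathbb{R}^{2d},\mathbb{R}^d)$, while by \eqref{tan} it lies in $\mathrm{Tan}_\mu(\mathcal{P}_{2,\nu}(\mathbb{R}^{2d}))$.

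First I would fix $\sigma \in D(\mathcal{E}_1)$, invoke Proposition \ref{P-dnu-optimal-problem} to select $\gamma \in \Gamma_{o,\nu}(\mu,\sigma)$ with Borel disintegration $\{\gamma^\omega\}$ such that $\gamma^\omega \in \Gamma_o(\mu^\omega,\sigma^\omega)$ for $\nu$-a.e.\ $\omega$, and consider the geodesic $\mu_\theta$ determined fiberwise by $\mu_\theta^\omega := ((1-\theta)\pi_x + \theta \pi_{x'})_\#\gamma^\omega$. At each fixed $\omega$, the classical change-of-variables and integration-by-parts argument of Section~10.4 of \cite{AGS-08} applied to $\mathcal{F}^\omega[\lambda] := \int_{\mathbb{R}^d} F_1(x,\omega,\lambda^{\mathrm{ac}}(x))\,dx$ yields
\begin{equation*}
\left.\frac{d}{d\theta}\right|_{\theta=0^+} \mathcal{F}^\omega[\mu_\theta^\omega] = \int_{\mathbb{R}^{2d}} \boldsymbol{u}^*(x,\omega)\cdot(x'-x)\, d\gamma^\omega(x,x').
\end{equation*}

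Next I would integrate this identity against $\nu$. Since $F_1 \in C^2$ with $F_1(\cdot,\cdot,0)\equiv 0$ and $\int W_2^2(\mu^\omega,\sigma^\omega)\,d\nu(\omega) = W_{2,\nu}^2(\mu,\sigma) < \infty$, a fiberwise Taylor expansion paired with an $\omega$-uniform bound on the remainder (controlled pointwise by $|x-x'|^2$ times quantities in $L^1_\nu$) lets a dominated-convergence argument pass the limit through $\int\cdots d\nu(\omega)$. This produces
\begin{equation*}
\mathcal{E}_1[\mu_\theta] - \mathcal{E}_1[\mu] = \theta \int_{\mathbb{R}^{4d}} \boldsymbol{u}^*\cdot(x'-x)\,d\gamma + o(\theta),
\end{equation*}
which is the defining first-order Taylor inequality \eqref{E-fibered-subdifferential} and thus gives $\boldsymbol{u}^* \in \partial_{W_{2,\nu}}\mathcal{E}_1[\mu]$. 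For the opposite inclusion, given any $\boldsymbol{v} \in \partial_{W_{2,\nu}}\mathcal{E}_1[\mu]$, I would test the defining inequality against small perturbations $\sigma_\varepsilon := (\pi_x + \varepsilon \nabla_x\phi,\, \pi_\omega)_\#\mu$ for arbitrary $\phi\in C^\infty_c(\mathbb{R}^{2d})$ and both signs of $\varepsilon\to 0$; combined with the first-variation identity from the previous step this forces $\int_{\mathbb{R}^{2d}}(\boldsymbol{v}-\boldsymbol{u}^*)\cdot\nabla_x\phi\,d\mu = 0$ for every such $\phi$, so $\boldsymbol{v}-\boldsymbol{u}^* \in \mathcal{X}_\mu$ and $\Pi_\mu\boldsymbol{v}=\boldsymbol{u}^*$ by Lemma~\ref{L-projection-tangent-space}. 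Since $\boldsymbol{u}^*$ is itself tangent, this pins down the unique tangent representative of the subdifferential.

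The main obstacle I expect is the passage from the fiberwise $o_\omega(\theta)$ remainder to a genuine $o(\theta)$ after integrating against $\nu$: the densities $\rho(\cdot,\omega)$ need not be smooth nor uniformly bounded in $\omega$, so one must explicitly produce an $L^1_\nu$ envelope for the $\omega$-wise remainders using only the $C^2$ bounds on $F_1$, the vanishing at $z=0$, and the $L^2_\nu$-integrability of $\omega\mapsto W_2(\mu^\omega,\sigma^\omega)$. The cleanest route is to first establish the identity for smooth bounded densities and then pass to the limit via a truncation-and-mollification scheme compatible with the fibered structure.
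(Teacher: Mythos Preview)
Your proposal is broadly correct and in the same spirit as the paper, but it takes a somewhat different route. Note first that the paper does not actually prove Lemma~\ref{L-variational-integral-I}: it explicitly says ``For simplicity, we provide a proof of the second result'' and states at the outset of Section~\ref{subsec:examples-applications} that ``We shall stick to formal arguments for clarity of the presentation.'' So the comparison is really with the (formal) proof given for Lemma~\ref{L-variational-integral-II}.

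The paper's argument there is shorter and more heuristic than yours. It starts from an arbitrary $\boldsymbol{u}\in\partial_{W_{2,\nu}}\mathcal{E}_2[\mu]$, \emph{asserts} that by smoothness of $\mathcal{E}_2$ the subdifferential reduces to the Fr\'echet gradient, and then identifies that gradient by computing $\left.\tfrac{d}{d\varepsilon}\right|_{\varepsilon=0}\mathcal{E}_2[\mu_\varepsilon]$ along pushforward perturbations $\mu_\varepsilon=(I+\varepsilon\nabla_x\varphi,\omega)_\#\mu$ via the change-of-variables formula and an integration by parts. There is no separate verification of the subdifferential inequality \eqref{E-fibered-subdifferential} and no fiberwise reduction to classical results; the whole argument lives at the level of the full measure $\mu$.

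Your plan, by contrast, splits into two parts: first you argue membership $\boldsymbol{u}^*\in\partial_{W_{2,\nu}}\mathcal{E}_1[\mu]$ via a fiberwise Taylor expansion (invoking the classical first-variation results of \cite[Section~10.4]{AGS-08} at each $\omega$) followed by integration against $\nu$; second you argue uniqueness by testing against the perturbations $\sigma_\varepsilon=(\pi_x+\varepsilon\nabla_x\phi,\pi_\omega)_\#\mu$. Your second step is essentially the paper's computation; your first step is additional and more rigorous than what the paper does. One small imprecision: you phrase the first step as a derivative $\left.\tfrac{d}{d\theta}\right|_{\theta=0^+}$ along the geodesic and then write the expansion as $o(\theta)$, but what you actually need (and what your remark about the remainder being ``controlled pointwise by $|x-x'|^2$'' delivers) is the direct Taylor estimate $\mathcal{E}_1[\sigma]-\mathcal{E}_1[\mu]=\int\boldsymbol{u}^*\cdot(x'-x)\,d\gamma+O(W_{2,\nu}^2(\mu,\sigma))$ for general $\sigma$ close to $\mu$, which then gives the required $o(W_{2,\nu}(\mu,\sigma))$ in \eqref{E-fibered-subdifferential}. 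The geodesic parametrization is not really needed here. Finally, your uniqueness argument only pins down the tangent representative (you correctly obtain $\boldsymbol{v}-\boldsymbol{u}^*\in\mathcal{X}_\mu$), which is consistent with the paper's convention, stated after Definition~\ref{D-Frechet-subdifferential}, of restricting attention to tangent elements of the subdifferential.
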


\begin{lem}[Variational integral II]\label{L-variational-integral-II}
For any integrand $F_2:\mathbb{R}^{4d}\times \mathbb{R}_+^2\longrightarrow \mathbb{R}_+$ of class $C^2$ with $F(x,x',\omega,\omega',z,z')=0$ at $zz'=0$ let us define  $\mathcal{E}_2:\mathcal{P}_{2,\nu}(\mathbb{R}^{2d})\longrightarrow (-\infty,+\infty]$ by \eqref{E-functional-type-II}. Then,
$$\partial_{W_{2,\nu}}\mathcal{E}_2[\mu]=\left\{\nabla_x \frac{\delta\mathcal{E}_2}{\delta\rho}\right\},$$
for all $\mu\in D(\mathcal{E}_2)$, where $\frac{\delta\mathcal{E}_2}{\delta\rho}$ is the first variation of $\mathcal{E}_2$ as functional of $\rho$, {\it i.e.},
\begin{align*}
\frac{\delta\mathcal{E}_2}{\delta\rho}(x,\omega)&=\int_{\mathbb{R}^{2d}}\partial_z F_2(x,x',\omega,\omega',\rho(x,\omega),\rho(x',\omega'))\,dx'\,d\nu(\omega')\\
&+ \int_{\mathbb{R}^{2d}}\partial_{z'} F_2(x,x',\omega',\omega,\rho(x',\omega'),\rho(x,\omega))\,dx'\,d\nu(\omega').
\end{align*}
\end{lem}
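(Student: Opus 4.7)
The plan is to emulate the strategy that underlies Lemma \ref{L-variational-integral-I} (the classical Otto calculus for local functionals), with the extra ingredient that $F_2$ depends symmetrically on the two copies $(x,\omega,z)$ and $(x',\omega',z')$. First, I would establish the formula for the first variation $\frac{\delta\mathcal{E}_2}{\delta\rho}$ by a direct Euler--Lagrange computation: perturb $\rho\mapsto\rho+\epsilon\eta$ with $\eta\in C^\infty_c(\mathbb{R}^{2d})$ satisfying $\int\eta\,dx\,d\nu=0$, differentiate $\mathcal{E}_2[\rho+\epsilon\eta]$ at $\epsilon=0$ via the chain rule applied to $F_2$, and combine the two resulting contributions (from the $z$-slot and the $z'$-slot) by relabelling $(x,\omega)\leftrightarrow(x',\omega')$ to obtain the claimed expression. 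The assumptions that $F_2\in C^2$ and $F_2=0$ on $\{zz'=0\}$ ensure that this integrand, together with its $\nabla_x$, lies in $L^2_\mu(\mathbb{R}^{2d},\mathbb{R}^d)$ whenever $\mu\in D(\mathcal{E}_2)$.

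Next, to verify that $\nabla_x\frac{\delta\mathcal{E}_2}{\delta\rho}\in\partial_{W_{2,\nu}}\mathcal{E}_2[\mu]$, I would fix $\sigma\in D(\mathcal{E}_2)$ and $\gamma\in\Gamma_{o,\nu}(\mu,\sigma)$, then consider the fibered displacement interpolation $\mu_\theta(x,\omega)=\mu_\theta^\omega(x)\otimes\nu(\omega)$ with $\mu_\theta^\omega=((1-\theta)\pi_x+\theta\pi_{x'})_\#\gamma^\omega$ ({\it cf.} \eqref{E-displacement-interpolation}). Writing $\rho_\theta$ for the density of $\mu_\theta$ (after a regularization step, see below), the curve $\rho_\theta$ satisfies a fibered continuity equation $\partial_\theta\rho_\theta+\divop_x(\boldsymbol{v}_\theta\rho_\theta)=0$ with no $\omega$-transport, the velocity $\boldsymbol{v}_\theta$ being determined by the plan, and $\boldsymbol{v}_0(x,\omega)=x'-x$ in the sense of Proposition \ref{P-infinitesimal-vector-from-plans}. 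Writing $\mathcal{E}_2[\sigma]-\mathcal{E}_2[\mu]=\int_0^1\frac{d}{d\theta}\mathcal{E}_2[\mu_\theta]\,d\theta$, applying the chain rule in $F_2$ in both slots and integrating by parts in $x$ at each fiber $\omega$ (exactly as in Lemma \ref{L-variational-integral-I}) yields
\begin{equation*}
\frac{d}{d\theta}\mathcal{E}_2[\mu_\theta]=\int_{\mathbb{R}^{2d}}\nabla_x\frac{\delta\mathcal{E}_2}{\delta\rho_\theta}(x,\omega)\cdot\boldsymbol{v}_\theta(x,\omega)\,d\mu_\theta(x,\omega),
\end{equation*}
and a Taylor expansion near $\theta=0$, together with continuity in $\theta$ of this integrand in $L^2_{\mu_\theta}$, delivers the subdifferential inequality \eqref{E-fibered-subdifferential} with $o(W_{2,\nu}(\mu,\sigma))$ remainder.

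The fact that $\partial_{W_{2,\nu}}\mathcal{E}_2[\mu]$ is a singleton is then a consequence of $\nabla_x\frac{\delta\mathcal{E}_2}{\delta\rho}$ already belonging to ${\rm Tan}_\mu(\mathcal{P}_{2,\nu}(\mathbb{R}^{2d}))$ (being a gradient in $x$, hence in the closure of Definition \ref{D-tan}) combined with the smoothness of $F_2$, which also yields the reverse inequality by swapping the roles of $\mu$ and $\sigma$: any other tangent element $\boldsymbol{u}\in\partial_{W_{2,\nu}}\mathcal{E}_2[\mu]$ must satisfy $\int\boldsymbol{u}\cdot\boldsymbol{\xi}\,d\mu=\int\nabla_x\frac{\delta\mathcal{E}_2}{\delta\rho}\cdot\boldsymbol{\xi}\,d\mu$ for a $W_{2,\nu}$-dense class of test vector fields $\boldsymbol{\xi}$ generated by optimal plans, forcing equality in $L^2_\mu$. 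The main obstacle I anticipate is that the interpolated measures $\mu_\theta$ need not be absolutely continuous with respect to $dx\otimes\nu$ in general, so $\rho_\theta$ may fail to exist; this is the standard difficulty in the Wasserstein literature and would be resolved via a regularization of $\mu$ and $\sigma$ (e.g.\ convolution at each fiber $\omega$ against a mollifier) followed by passage to the limit, exploiting the $C^2$ regularity and the vanishing condition on $F_2$ to obtain the required uniform-in-$\theta$ estimates on $\Vert\nabla_x\frac{\delta\mathcal{E}_2}{\delta\rho_\theta}\Vert_{L^2_{\mu_\theta}}$. The genuine novelty with respect to \cite[Section 10.4]{AGS-08} lies in executing the approximation fiberwise while preserving the marginal $\nu$, which is where the random-narrow and $W_{2,\nu}$-continuity tools developed in Section \ref{sec:fibered-wasserstein} enter.
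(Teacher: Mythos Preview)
Your approach is correct in spirit but takes a substantially more laborious route than the paper. The paper does \emph{not} interpolate between $\mu$ and an arbitrary competitor $\sigma$ along a geodesic; instead it perturbs $\mu$ directly by a pushforward along a smooth tangent vector field. Concretely, for $\boldsymbol{\xi}=\nabla_x\varphi$ with $\varphi\in C^\infty_c(\mathbb{R}^{2d})$ the paper sets
\[
\mu_\varepsilon(x,\omega):=\big((I+\varepsilon\,\boldsymbol{\xi}(\cdot,\omega))_\#\mu^\omega\big)\otimes\nu(\omega),
\]
which is automatically absolutely continuous with density $\rho_\varepsilon$ given by the change-of-variables formula. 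One then computes $\left.\frac{d}{d\varepsilon}\right|_{\varepsilon=0}\mathcal{E}_2[\mu_\varepsilon]$ using $\left.\frac{d}{d\varepsilon}\right|_{\varepsilon=0}\rho_\varepsilon=-\divop_x(\boldsymbol{\xi}\rho)$ and an integration by parts, obtaining $\int\nabla_x\frac{\delta\mathcal{E}_2}{\delta\rho}\cdot\boldsymbol{\xi}\,d\mu$. Since for any $\boldsymbol{u}\in\partial_{W_{2,\nu}}\mathcal{E}_2[\mu]$ the smoothness of $\mathcal{E}_2$ forces this same directional derivative to equal $\int\boldsymbol{u}\cdot\boldsymbol{\xi}\,d\mu$, density of such $\boldsymbol{\xi}$ in the tangent space (Definition~\ref{D-tan}) identifies $\boldsymbol{u}$.

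The main payoff of the paper's route is that the obstacle you flag---$\mu_\theta$ possibly failing to be absolutely continuous along a geodesic to $\sigma$---simply does not arise: the pushforward perturbation stays in $D(\mathcal{E}_2)$ by construction, so no regularization or fiberwise mollification is needed. Your approach would work after the regularization step you outline, and it has the merit of establishing the subdifferential inequality \eqref{E-fibered-subdifferential} more directly rather than invoking ``smoothness $\Rightarrow$ Fr\'echet differentiable'' as the paper does (the paper itself notes that this section sticks to formal arguments). But the pushforward trick is the standard shortcut from \cite[Section~10.4]{AGS-08} and is worth knowing.
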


The proofs can be regarded as fibered extensions of the results in \cite[Section 10.4]{AGS-08} for the classical Wasserstein space. For simplicity, we provide a proof of the second result. 

\medskip

\begin{proof}[Proof of Lemma \ref{L-variational-integral-II}]
Let us set $\mu\in \mathcal{P}_{2,\nu}(\mathbb{R}^{2d})$ of the form $\mu(x,\omega)=\rho(x,\omega)\,dx\otimes \nu(\omega)$ for some $\rho\in L^1_{dx\otimes \nu}(\mathbb{R}^{2d})$, and consider any $\boldsymbol{u}\in \partial_{W_{2,\nu}}\mathcal{E}_2[\mu]$. By the smoothness of $\mathcal{E}_2$, we infer that $\boldsymbol{u}$ reduces to the Fr\'echet gradient, which we shall identify in the sequel. Set any smooth tangent vector $\boldsymbol{\xi}=\nabla_x\varphi\in {\rm Tan}_\mu \mathcal{P}_{2,\nu}(\mathbb{R}^{2d})$ for $\varphi\in C^\infty_c(\mathbb{R}^{2d})$ and define the perturbations
$$\mu_\varepsilon(x,\omega):=((I+\varepsilon\,\boldsymbol{\xi}(\cdot,\omega))_{\#} \mu^\omega)\otimes\nu(\omega)=\rho_\varepsilon(x,\omega)\,dx\otimes \nu(\omega),$$
for small enough $\varepsilon\in \mathbb{R}_+$. By the change of variable theorem we indeed have 
$$\rho_\varepsilon(x,\omega):=\left(\frac{\rho(\cdot,\omega)}{\det \nabla(I+\varepsilon\,\boldsymbol{\xi}(\cdot,\omega))}\circ (I+\varepsilon\,\boldsymbol{\xi}(\cdot,\omega))^{-1}\right)(x),$$
which is well defined for small enough $\varepsilon\in \mathbb{R}_+$ thanks to the compact support of $\boldsymbol{\xi}$. Therefore,
\begin{align*}
\int_{\mathbb{R}^{2d}}\boldsymbol{u}\cdot \boldsymbol{\xi}\,d\mu&=\left.\frac{d}{d\varepsilon}\right\vert_{\varepsilon=0}\mathcal{E}_2[\mu_\varepsilon]=\left.\frac{d}{d\varepsilon}\right\vert_{\varepsilon=0}\int_{\mathbb{R}^{4d}} F_2(x,x',\omega,\omega',\rho_\varepsilon(x,\omega),\rho_\varepsilon(x',\omega'))\,dx\,dx'\,d\nu(\omega)\,d\nu(\omega')\\
&=\int_{\mathbb{R}^{4d}}\partial_z F_2(x,x',\omega,\omega',\rho(x,\omega),\rho(x',\omega'))\left.\frac{d}{d\varepsilon}\right\vert_{\varepsilon=0}\rho_\varepsilon(x,\omega)\,dx\,dx'\,d\nu(\omega)\,d\nu(\omega')\\
&+\int_{\mathbb{R}^{4d}}\partial_{z'} F_2(x,x',\omega,\omega',\rho(x,\omega),\rho(x',\omega'))\left.\frac{d}{d\varepsilon}\right\vert_{\varepsilon=0}\rho_\varepsilon(x',\omega')\,dx\,dx'\,d\nu(\omega)\,d\nu(\omega')\\
&=-\int_{\mathbb{R}^{2d}}\frac{\delta\mathcal{E}_2}{\delta\rho}(x,\omega)\divop_x(\boldsymbol{\xi}(x,\omega)\rho(x,\omega))\,dx\,d\nu(\omega)=\int_{\mathbb{R}^{2d}}\nabla_x \frac{\delta\mathcal{E}_2}{\delta\rho}\cdot \boldsymbol{\xi}\,d\mu.
\end{align*}
where above we have used the straightforward relation 
$$\left.\frac{d}{d\varepsilon}\right\vert_{\varepsilon=0}\rho_\varepsilon(x,\omega)=-\divop_x(\boldsymbol{\xi}(x,\omega)\rho(x,\omega)),$$
and we have integrated by parts in the last step. Since $\boldsymbol{\xi}=\nabla_x\varphi$ span all ${\rm Tan}_\mu\mathcal{P}_{2,\nu}(\mathbb{R}^{2d})$ by density according to Definition \ref{D-tan}, then we conclude that $\boldsymbol{u}=\nabla_x\frac{\delta\mathcal{E}_2}{\delta\rho}$.
\end{proof}

Of course, Lemmas \ref{L-variational-integral-I} and \ref{L-variational-integral-II} guarantee the PDE reformulation \eqref{kurak-variation} of the fibered gradient flows associated with functionals $\mathcal{E}:\mathcal{P}_{2,\nu}(\mathbb{R}^{2d})\longrightarrow (-\infty,+\infty]$ having the form $\mathcal{E}=\mathcal{E}_1+\mathcal{E}_2$, at least when these are smooth enough. As a consequence, all the examples in Section \ref{maingoal2} are fibered gradient flows associated to appropriate energy functionals.

\begin{rem}[Examples of fibered gradient flows]\label{R-examples-from-intro}

~\smallskip

\noindent $(i)$ {\bf The Kuramoto-Sakaguchi equation}. Consider $\nu\in \mathcal{P}(\mathbb{R})$,
\begin{align*}
\begin{aligned}
&F_1=-\theta\,\omega\,z, && (\theta,\omega,z)\in \mathbb{R}\times \mathbb{R}\times \mathbb{R}_+,\\
&F_2=-\frac{K}{2}\cos(\theta-\theta')\,z\,z', && (\theta,\theta',\omega,\omega',z,z')\in \mathbb{R}^2\times \mathbb{R}^2\times\mathbb{R}_+,
\end{aligned}
\end{align*}
and set the functional $\mathcal{E}=\mathcal{E}_1+\mathcal{E}_2$ as in \eqref{E-functional-type-I}-\eqref{E-functional-type-II}. Then we have
$$\partial_\theta\frac{\delta\mathcal{E}}{\delta\rho}=-\omega-K\int_{\mathbb{R}^2}\sin(\theta'-\theta)\,d\mu(\theta',\omega'),$$
for any $\mu(\theta,\omega)=\rho(\theta,\omega)\,dx\otimes \nu(\omega)\in \mathcal{P}_{2,\nu}(\mathbb{R}^{2d})$. Therefore, the Kuramoto-Sakaguchi equation \eqref{E-Kuramoto-Sakaguchi} agrees with the fibered gradient flow of such $\mathcal{E}$.

\medskip

\noindent $(ii)$ {\bf The kinetic Lohe matrix model}. Consider $\nu\in \mathcal{P}([0,1])$, 
\begin{align*}
\begin{aligned}
&F_1=-d\,\theta\,\omega\,z, & & (\theta,V,\omega,z)\in \mathbb{R}\times \boldsymbol{SU}(d)\times \mathbb{R}\times \mathbb{R}_+,\\
&F_2=-\frac{K}{2}{\rm tr}\left(e^{i(\theta-\theta')}V\,{V'}^\dagger\right)z\,z', && (\theta,\theta',\omega,\omega',V,V',z,z')\in \mathbb{R}^2\times \boldsymbol{SU}(d)^2\times \mathbb{R}^2\times \mathbb{R}_+^2,
\end{aligned}
\end{align*}
and set the functional $\mathcal{E}=\mathcal{E}_1+\mathcal{E}_2$ as in \eqref{E-functional-type-I}-\eqref{E-functional-type-II}. Then, we have
\begin{align*}
&\partial_\theta\frac{\delta\mathcal{E}}{\delta\rho}=-\omega-K\int_{\mathbb{R}\times \boldsymbol{SU}(d)\times \mathbb{R}} {\rm Im}\, {\rm tr}\left(V'\,V^\dagger e^{i(\theta'-\theta)}\right)\,d\mu(\theta',V',\omega'),\\
&\nabla_V\frac{\delta\mathcal{E}}{\delta\rho}=-\bigg[\frac{K}{2}\int_{\mathbb{R}\times \boldsymbol{SU}(d)\times \mathbb{R}} \bigg(V'\,V^\dagger e^{i(\theta'-\theta)}-V\,{V'}^\dagger e^{i(\theta-\theta')}\\
&\hspace{5.2cm}-\frac{1}{d}{\rm tr}(V'\,V^\dagger e^{i(\theta'-\theta)}-V\,{V'}^\dagger e^{i(\theta-\theta')})I_d\bigg)\,d\mu(\theta',V',\omega')\bigg]V,
\end{align*}
for any $\mu(\theta,V,\omega)=\rho(\theta,V,\omega)\,d\theta\otimes dV\otimes \nu(\omega)\in \mathcal{P}_{2,\nu}(\mathbb{R}\times \boldsymbol{SU}(d)\times \mathbb{R})$, see \cite[Proposition 5.4]{HKR-18}. Therefore, the kinetic Lohe matrix model \eqref{E-kinetic-Lohe} agrees with the fibered gradient flow of such $\mathcal{E}$.

\medskip

\noindent $(iii)$ {\bf Non-exchangeable/multispecies systems}. Consider $\nu\in \mathcal{P}([0,1])$,
\begin{align*}
\begin{aligned}
&F_1=\sigma\,z\log z, && (x,\omega,z)\in \mathbb{R}^d\times \mathbb{R}^d\times \mathbb{R}_+,\\
&F_2=\frac{1}{2}\alpha(\omega,\omega')\,W(x-x')\,z\,z', && (x,x',\omega,\omega',z,z')\in \mathbb{R}^{2d}\times \mathbb{R}^{2d}\times \mathbb{R}_+^2,
\end{aligned}
\end{align*}
and set the funtional $\mathcal{E}=\mathcal{E}_1+\mathcal{E}_2$ as in \eqref{E-functional-type-I}-\eqref{E-functional-type-II}. Then we have
$$\nabla_x\frac{\delta\mathcal{E}}{\delta\rho}=\int_{\mathbb{R}^d}\int_0^1\alpha(\omega,\omega')\,\nabla W(x-x')\,d\mu(x',\omega')+\sigma\nabla_x\log \rho(x,\omega),$$
for any $\mu(x,\omega)=\rho(x,\omega)\,dx\otimes \nu(\omega)\in \mathcal{P}_{2,\nu}(\mathbb{R}^d\times [0,1])$. Therefore, the kinetic non-exchangeable or multispecies system \eqref{E-kinetic-nonexchangeable} agrees with the fibered gradient flow of such $\mathcal{E}$.

\medskip

\noindent $(iv)$ {\bf Kuramoto-like alignment dynamics}. Consider $\nu\in \mathcal{P}(\mathbb{R}^d)$,
\begin{align*}
\begin{aligned}
&F_1=-x\cdot\omega\,z, && (x,\omega,z)\in \mathbb{R}^d\times \mathbb{R}^d\times \mathbb{R}_+,\\
&F_2=W(x-x')\,z\,z', && (x,x',\omega,\omega',z,z')\in \mathbb{R}^{2d}\times \mathbb{R}^{2d}\times \mathbb{R}_+^2,
\end{aligned}
\end{align*}
and set the funtional $\mathcal{E}=\mathcal{E}_1+\mathcal{E}_2$ as in \eqref{E-functional-type-I}-\eqref{E-functional-type-II}. Then we have
$$\nabla_x\frac{\delta\mathcal{E}}{\delta\rho}=-\omega+K\int_{\mathbb{R}^{2d}}\nabla W(x-x')\,d\mu(x',\omega'),$$
for any $\mu(x,\omega)=\rho(x,\omega)\otimes dx\otimes \nu(\omega)\in \mathcal{P}_{2,\nu}(\mathbb{R}^{2d})$. Therefore, the Kuramoto-type alignment dynamics \eqref{kurakk} in Section \ref{maingoal3} agrees with the gradient flow of $\mathcal{E}$.
\end{rem}

\begin{rem}
Whilst examples \eqref{E-Kuramoto-Sakaguchi}, \eqref{E-kinetic-Lohe}, \eqref{E-kinetic-nonexchangeable}, \eqref{E-multispecies-PKS} in Section \ref{maingoal2}, and also \eqref{kurakk} in Section \ref{maingoal3} can all be regarded formally as fibered gradient flows associated with the energy functionals above, we remark that not all of them lie in the framework $\framework$. On the one hand,  the Kuramoto-Sakaguchi equation \eqref{E-Kuramoto-Sakaguchi} and the kinetic Lohe matrix model \eqref{E-kinetic-Lohe} do not satisfy any convexity property. However, we note that existence and uniqueness is not an issue in those cases as discussed in Remark \ref{R-why-lambda-convexity} since these energy functionals are actually smooth enough (analytic indeed). It can be an issue though for less regular energies as it is the case of the multispecies Patlak-Keller-Segel \eqref{E-multispecies-PKS}. On the other hand, examples like the kinetic non-exchangeable system \eqref{E-kinetic-nonexchangeable} or the Kuramoto-type kinetic system \eqref{kurakk} with lower semicontinuous $\lambda$-convex interaction potential $W$ perfectly lie in our framework $\framework$ as it becomes clear in the next section.
\end{rem}

\subsection{Functionals verifying framework $\framework$}

Our next goal is to propose some criteria to guarantee that the basic examples \eqref{E-energies-fibered} satisfy framework $\framework$, and thus Theorem \ref{t-A-rig} applies to them. Essentially, this will be a consequence of the following main lemma.

\begin{lem}\label{L-fibered-functional-I}
Consider $\nu\in{\mathcal P}_2(\RR^d)$ and define the functional $\mathcal{E}:{\mathcal P}_{2,\nu}(\RR^{2d})\longrightarrow (-\infty,+\infty]$ by
$$
\mathcal{E}[\mu]:= \int_{\RR^d}\mathscr{E} (\omega;\mu^\omega)\,d\nu(\omega),
$$
for each $\mu \in\mathcal{P}_{2,\nu}(\mathbb{R}^{2d})$, where $\mathscr{E}:\RR^{d}\times {\mathcal P}_2(\RR^d)\longrightarrow (-\infty,+\infty]$ is Borel-measurable with $\mathcal{P}_2(\mathbb{R}^d)$ endowed with the $W_2$ metric, and it satisfies the following assumptions:
    
\begin{enumerate}[label=(\roman*)]
    \item (Coercivity) There exists $C>0$ such that
    \begin{align*}
        \mathscr{E}(\omega;\sigma)\geq -C(|\omega|^2 + M_2(\sigma) +1),
    \end{align*}
    for all $\sigma\in {\mathcal P}_{2}(\RR^d)$ and $\nu$-a.e. $\omega\in \mathbb{R}^d$, with $M_2(\sigma):=\int_{\mathbb{R}^d}|x|^2\,d\sigma(x)$.
    \item (Proper) There exist $C>0$ and $\sigma_*\in D:=\cap_{\omega\in\RR^{d}}D(\mathscr{E}(\omega;\cdot))$ such that
    \begin{align*}
        \mathscr{E}(\omega;\sigma_*)\leq C(|\omega|^2 +1),
    \end{align*}
    for $\nu$-a.e. $\omega\in \mathbb{R}^d$.
    \item (Lower semicontinuity) The functional $\sigma\in \mathcal{P}_2(\mathbb{R}^d)\mapsto \mathscr{E}(\omega;\sigma)$ is lower semicontinuous in $({\mathcal P}_2(\RR^d),W_2)$ for $\nu$-a.e. $\omega\in \mathbb{R}^d$. 
    \item (Convexity) There is $\lambda\in\RR$ such that $\sigma\in \mathcal{P}_2(\mathbb{R}^d)\mapsto \mathscr{E}(\omega;\sigma)$ is $\lambda$-convex along every generalized geodesics in the classical $({\mathcal P}_2(\RR^d),W_2)$ sense for $\nu$-a.e. $\omega\in \mathbb{R}^d$.
\end{enumerate}
Then, $\mathcal{E}$ satisfies framework $\framework$, and thus Theorem \ref{t-A-rig} applies to it.
\end{lem}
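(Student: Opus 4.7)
My plan is to verify the four conditions of framework $\framework$ (${\mathcal F}_1$-${\mathcal F}_4$) one by one, leveraging the fact that the hypotheses on $\mathscr{E}(\omega;\cdot)$ mirror, fiberwise, the framework itself in the classical space $({\mathcal P}_2(\RR^d),W_2)$. The disintegration identity $W_{2,\nu}^2(\mu_1,\mu_2)=\int W_2^2(\mu_1^\omega,\mu_2^\omega)\,d\nu(\omega)$ together with Proposition \ref{P-Borel-family-vs-measurability} will ensure the relevant measurability in $\omega$; moreover, the hypothesis $\nu\in {\mathcal P}_2(\RR^d)$ will be crucial to absorb the quadratic growth in $|\omega|$ appearing in both coercivity and properness.

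For (${\mathcal F}_1$) and (${\mathcal F}_2$), I would take the canonical candidate $\mu_*(x,\omega):=\sigma_*(x)\otimes\nu(\omega)\in{\mathcal P}_{2,\nu}(\RR^{2d})$, so that by the properness assumption $(ii)$ and $\nu\in{\mathcal P}_2$ we get $\mathcal{E}[\mu_*]\leq C(M_2(\nu)+1)<\infty$, hence $D(\mathcal{E})\neq\emptyset$. For coercivity, the lower bound in $(i)$ combined with disintegration yields, for every $\mu\in{\mathcal P}_{2,\nu}(\RR^{2d})$,
\begin{equation*}
\mathcal{E}[\mu]\geq -C\bigl(M_2(\nu)+\textstyle\int_{\RR^{2d}}|x|^2d\mu+1\bigr),
\end{equation*}
which is bounded below on any $W_{2,\nu}$-ball around $\mu_*$ by the triangle inequality.

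For (${\mathcal F}_3$), suppose $W_{2,\nu}(\mu_n,\mu)\to 0$. Then $W_2(\mu_n^\omega,\mu^\omega)\to 0$ in $L^2_\nu(\RR^d)$ and, passing to a subsequence, $\nu$-a.e. By the classical lower semicontinuity assumption $(iii)$ we obtain $\mathscr{E}(\omega;\mu^\omega)\leq\liminf_k\mathscr{E}(\omega;\mu_{n_k}^\omega)$ for $\nu$-a.e. $\omega$. To pass to the integral I apply Fatou's lemma to the nonnegative quantities $\mathscr{E}(\omega;\mu_{n_k}^\omega)+C(|\omega|^2+M_2(\mu_{n_k}^\omega)+1)$; the key observation is that the additive correction $\int C(|\omega|^2+M_2(\mu_{n_k}^\omega)+1)\,d\nu(\omega)$ converges (rather than merely liminf) to the same quantity with $\mu$ in place of $\mu_{n_k}$, by virtue of Proposition \ref{P-characterization-convergence-W2nu} (convergence of second moments in $x$ under $W_{2,\nu}$-convergence). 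Subtracting this finite limit from both sides yields the desired inequality.

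For (${\mathcal F}_4$), fix $\mu_*,\mu_0,\mu_1\in D(\mathcal{E})$. The fiberwise $\lambda$-convexity assumption $(iv)$ says that for $\nu$-a.e. $\omega$ there exists a plan $\gamma^\omega\in\Gamma(\mu_*^\omega,\mu_0^\omega,\mu_1^\omega)$ with optimal $(1,2)$- and $(1,3)$-marginals along which the classical generalized-geodesic convexity inequality \eqref{e-gen} holds. A measurable-selection argument, analogous to Lemma \ref{L-measurable-selection} but for three-plans (the Castaing-type theorems in \cite{CV-77} apply verbatim since $\Gamma_o(\cdot,\cdot)$ is a Borel-measurable multifunction in its marginals), furnishes a Borel family $\{\gamma^\omega\}_{\omega\in\RR^d}$ realizing this choice; gluing via
\begin{equation*}
\gamma(x_*,x_0,x_1,\omega_*,\omega_0,\omega_1):=\gamma^\omega(x_*,x_0,x_1)\otimes\nu(\omega)\otimes\delta_\omega(\omega_0)\otimes\delta_\omega(\omega_1)
\end{equation*}
produces an admissible $\gamma\in\Gamma_\nu(\mu_*,\mu_0,\mu_1)$ in the sense of Definition \ref{D-admissible-plans-fibered}. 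Integrating the fiberwise inequality against $\nu$ and recognizing $W_{\gamma}^2(\mu_0,\mu_1)=\int W_{\gamma^\omega}^2(\mu_0^\omega,\mu_1^\omega)\,d\nu(\omega)$ delivers \eqref{e-gen} for $\mathcal{E}$. The main obstacle I anticipate is precisely this last step: ensuring that the fiberwise selection can be performed in a Borel-measurable way, i.e., verifying the Effros-measurability of the three-marginal optimal-plan multifunction and checking that the base $\mu_\theta^{\omega\,0\to1}$ it generates depends measurably on $\omega$ (so that the resulting displacement interpolation lies in ${\mathcal P}_{2,\nu}(\RR^{2d})$ via Proposition \ref{P-Borel-family-vs-measurability}); everything else is essentially bookkeeping over fibers.
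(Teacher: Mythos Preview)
Your plan is correct and essentially matches the paper's proof for ($\mathcal{F}_1$), ($\mathcal{F}_2$), and ($\mathcal{F}_3$); in fact your treatment of ($\mathcal{F}_3$) is more careful than the paper's, which invokes Fatou directly without making explicit the nonnegative shift you describe.

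For ($\mathcal{F}_4$), however, you have misread assumption $(iv)$: it states that $\mathscr{E}(\omega;\cdot)$ is $\lambda$-convex along \emph{every} generalized geodesic, not merely that there \emph{exists} one along which convexity holds. This makes your anticipated ``main obstacle'' disappear. You do not need to measurably select, for each $\omega$, a three-plan realizing the convexity inequality; rather, you first construct \emph{any} admissible $\gamma\in\Gamma_\nu(\mu_*,\mu_0,\mu_1)$ with $\pi_{(x_*,x_0)\#}\gamma^\omega\in\Gamma_o(\mu_*^\omega,\mu_0^\omega)$ and $\pi_{(x_*,x_1)\#}\gamma^\omega\in\Gamma_o(\mu_*^\omega,\mu_1^\omega)$ --- this requires only the two-marginal measurable selection of Lemma \ref{L-measurable-selection} followed by a standard fiberwise gluing --- and then observe that, for $\nu$-a.e.\ $\omega$, the curve $\theta\mapsto\mu_\theta^{\omega\,0\to1}$ is \emph{a} generalized geodesic in $(\mathcal{P}_2(\RR^d),W_2)$, so assumption $(iv)$ automatically gives the fiberwise inequality. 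Integrating against $\nu$ then yields \eqref{e-gen}. This is exactly how the paper proceeds, and it proves the stronger statement that $\mathcal{E}$ is $\lambda$-convex along every fibered generalized geodesic. The delicate three-plan selection you worry about is never needed.
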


\begin{proof}

$\diamond$ {\sc Step 1}: Good definition.\\
To show that $\mathcal{E}$ is well defined it suffices to prove that for any $\mu\in{\mathcal P}_{2,\nu}(\RR^{2d})$ the function $\omega\in \mathbb{R}^d\mapsto \mathscr{E}(\omega;\mu^\omega)$ is Borel-measurable and its negative part belongs to $L^1_\nu(\mathbb{R}^d)$, so that the integral is well defined and takes (possibly infinite) values in $(-\infty,+\infty]$. On the one hand, note that by Proposition \ref{P-Borel-family-vs-measurability}, for any $\mu\in {\mathcal P}_{2,\nu}(\RR^{2d})$ the function $\mathfrak{X}_\mu:\omega\in \mathbb{R}^d\mapsto \mu^\omega$ is Borel-measurable as a function from $\RR^d$ to $({\mathcal P}_2(\RR^d),W_2)$. Consequently, the function $\omega\in \mathbb{R}^d\mapsto \mathscr{E}(\omega;\mu^\omega)$ is Borel-measurable as a composition of Borel-measurable mappings. On the other hand, by assumption $(i)$ the integral of the negative part of $\mathscr{E}(\omega;\mu^\omega)$ satisfies
$$
    \int_{\RR^{d}} \mathscr{E}(\omega;\mu^\omega)^-\,d\nu(\omega) \lesssim \int_{\mathbb{R}^d}|\omega|^2\,d\nu(\omega) + \int_{\mathbb{R}^{2d}}|x|^2\,d\mu(x,\omega) +1<+\infty,
$$
which ensures that $\mathcal{E}$ is well defined and takes values in $(-\infty,+\infty]$.

\medskip

$\diamond$ {\sc Step 2}: The functional $\mathcal{E}$ is proper and coercive.\\
In this step we prove that $\mathcal{E}$ satisfies assumptions ${\mathcal F}_1$ and ${\mathcal F}_2$ from Definition \ref{D-framework-F}. Assumption $(ii)$ directly ensures the existence of $\sigma_*\in D$ such that
\begin{align*}
    \mathcal{E}[\sigma_*\otimes\nu] = \int_{\RR^d}\mathscr{E}(\omega;\sigma_*)\,d\nu(\omega) \lesssim M_2(\nu)+1<+\infty,
\end{align*}
which implies that $\sigma_*\otimes\nu\in D(\mathcal{E})$, and consequently $\mathcal{E}$ is proper. Regarding coercivity, set any $\sigma\in {\mathcal P}_{2,\nu}(\RR^{2d})$, $r>0$, and consider any $\mu\in \mathcal{P}_{2,\nu}(\mathbb{R}^{2d})$ such that $W_{2,\nu}(\sigma,\mu)\leq r$. Then, by assumption $(i)$ and by the triangle inequality we have
\begin{align*}
    \int_{\RR^d} \mathscr{E}(\omega,\mu^\omega)\,d\nu(\omega) &\gtrsim -M_2(\nu) - W_{2,\nu}^2(\mu,\delta_0\otimes \nu) -1\\
    & \gtrsim  - M_2(\nu) -r^2- W_{2,\nu}^2(\sigma,\delta_0\otimes\nu)-1>-\infty,
\end{align*}
which implies the coercivity of $\mathcal{E}$.

\medskip

$\diamond$ {\sc Step 3}: The functional $\mathcal{E}$ is lower semicontinuous.\\
We prove that $\mathcal{E}$ satisfies ${\mathcal F}_3$ from Definition \ref{D-framework-F}, namely lower semicontinuity. Consider any $\mu_n\rightarrow \mu$ in $W_{2,\nu}$. By definition of $W_{2,\nu}$, there exists a subsequence $\{n_k\}_{k\in \mathbb{N}}$ such that $\mu^\omega_{n_k}\to\mu^\omega$ in $({\mathcal P}_2(\RR^d),W_2)$ for $\nu$-a.e. $\omega\in \mathbb{R}^d$. Thus, by the Fatou lemma and assumption $(iii)$ we have
\begin{align*}
    \mathcal{E}[\mu]=\int_{\RR^d}\mathscr{E}(\omega;\mu^\omega)d\nu\leq\int_{\RR^d}\liminf_{k\to\infty}\mathscr{E}(\omega;\mu^\omega_{n_k})d\nu\leq \liminf_{k\to\infty}\int_{\RR^d}\mathscr{E}(\omega;\mu^\omega_{n_k})d\nu = \liminf_{k\to\infty}\mathcal{E}[\mu_{n_k}].
\end{align*}
Since the above can be repeated on every subsequence of $\{\mu_n\}_{n\in \mathbb{N}}$, this concludes the proof of the lower semicontinuity.

\medskip

$\diamond$ {\sc Step 4}: Convexity along fibered generalized geodesics.\\
 Finally, we prove that $\mathcal{E}$ satisfies ${\mathcal F}_4$ from Definition \ref{D-framework-F}, that is the $\lambda$-convexity along fibered generalized geodesics. We shall actually prove that $\mathcal{E}$ is $\lambda$-convex along every generalized geodesic, which is stronger than Definition \ref{D-gen-geodesic-convexity}, and we naturally exploit assumption $(iv)$.  Let us take a triple $\mu_*$, $\mu_0$ and $\mu_1$ in $\mathcal{P}_{2,\nu}(\mathbb{R}^{2d})$ as well as any curve $\mu_\theta^{0\rightarrow1}$ as in Definition \ref{D-gen-geodesic-convexity}. Then, by construction we know that the family $\mu_\theta^{\omega\,0\rightarrow1}$ is a classical generalized geodesic connecting $\mu_0^\omega$ and $\mu_1^\omega$ with base in $\mu_*^\omega$ for $\nu$-a.e. $\omega$. Therefore for every $\theta\in[0,1]$ we have
\begin{align*}
    \mathcal{E}[\mu_\theta^{0\rightarrow1}] &= \int_{\RR^d}\mathscr{E}(\omega;\mu_\theta^{\omega \,0\rightarrow1})d\nu(\omega)\\
    &\leq  \int_{\RR^d}\left((1-\theta) \mathscr{E}(\omega;\mu_0^\omega) + \theta \mathscr{E}(\omega;\mu_1^\omega) - \frac{\lambda}{2}\theta(1-\theta)\int_{\mathbb{R}^{3d}} |x_0-x_1|^2d\gamma^\omega(x_*,x_0,x_1) \right)\,d\nu(\omega)\\
    &= (1-\theta)\mathcal{E}[\mu_0] + \theta\mathcal{E}[\mu_1] - \frac{\lambda}{2}\theta(1-\theta) W_\gamma^2(\mu_0,\mu_1),
\end{align*}
which finishes the proof of $\lambda$-convexity and the proof of the lemma.
\end{proof}

Using Lemma \ref{L-fibered-functional-I} and a similar $2$-fiber version on the three types of energy functionals \eqref{E-energies-fibered}, and applying the classical result from \cite{AGS-08} on each fiber, we obtain the following result.

\begin{theo}\label{T-the-three-energies}
Consider $\nu\in \mathcal{P}_2(\mathbb{R}^d)$ and define the internal, external and interaction energy functionals $\mathcal{U},\mathcal{V},\mathcal{W}:\mathcal{P}_{2,\nu}(\mathbb{R}^{2d})\longrightarrow (-\infty,+\infty]$ by
\begin{align}\label{E-the-three-energies}
\begin{aligned}
\mathcal{U}[\mu]&:=\left\{\begin{array}{ll}\int_{\mathbb{R}^{2d}} U(\omega,\rho(x,\omega))\,dx\,d\nu(\omega), &\mbox{if }\mu=\rho(x,\omega)\,dx\otimes \nu(\omega),\\ +\infty, & \mbox{otherwise},\end{array}\right.\\
\mathcal{V}[\mu]&:=\int_{\mathbb{R}^{2d}} V(\omega,x)\,d\mu(x,\omega),\\
\mathcal{W}[\mu]&:=\frac{1}{2}\iint_{\mathbb{R}^{2d}\times \mathbb{R}^{2d}}W(\omega,\omega',x-x')\,d\mu(x,\omega)\,d\mu(x',\omega').
\end{aligned}
\end{align}
Assume that $U:\mathbb{R}^d\times \mathbb{R}_+\longrightarrow \mathbb{R}$, $V:\mathbb{R}^{2d}\longrightarrow \mathbb{R}$ and $W:\mathbb{R}^{2d}\times \mathbb{R}^d\longrightarrow\mathbb{R}$ are Borel-measurable mappings which satisfy the following assumptions:
\begin{enumerate}[label=(\roman*)]
\item (Internal energy) For $\nu$-a.e. $\omega\in \mathbb{R}^d$ the $\omega$-section $z\in \mathbb{R}_+\mapsto U(\omega,z)$ is a convex and lower semicontinuous function with superlinear growth such that 
\begin{equation}\label{E-hypothesis-internal-energy}
U(\omega,0)=0,\quad \liminf_{z\searrow 0} \inf_{\omega\in \mathbb{R}^d}\frac{U(\omega,z)}{z^\alpha(1+|\omega|^2)^{1-\alpha}}>-\infty,\quad \sup_{\omega\in \mathbb{R}^d}\frac{U(\omega,z_0)}{1+|\omega|^2}<\infty
\end{equation}
for some $\frac{d}{2+d}<\alpha< 1$ and some $z_0>0$, and in addition $z\in\mathbb{R}_+\mapsto z^d\, U(\omega,z^{-d})$ is convex and non-increasing.
\item (External energy) For $\nu$-a.e. $\omega\in \mathbb{R}^d$ the $\omega$-section $x\in \mathbb{R}^d\mapsto V(\omega,x)$ is proper, lower semicontinuous, $\lambda$-convex and bounded by
    \begin{align}\label{potential-eq1}
        |V(\omega,x)|\leq C(|\omega|^2 +|x|^2 +1).
    \end{align}
\item (Interaction energy) For $\nu\otimes\nu$-a.e. $\omega,\omega'\in \mathbb{R}^d$ the $(\omega,\omega')$-section $x\in \mathbb{R}^d\mapsto W(\omega,\omega',x)$ is proper, even, lower semicontinuous, $\lambda$-convex and bounded by
    \begin{align*}
        |W(\omega,\omega',x)|\leq C(|\omega|^2 +|\omega'|^2 +|x|^2 +1).
    \end{align*}
\end{enumerate}
Then $\mathcal{U}$, $\mathcal{V}$ and $\mathcal{W}$ satisfy framework $\framework$, and thus Theorem \ref{t-A-rig} applies to all of them. Moreover, functional $\mathcal{U}$ is convex, $\mathcal{V}$ is $\lambda$-convex and $\mathcal{W}$ is $\min\{0,\lambda\}$-convex along generalized geodesics. In addition, when restricted to probability measures in $\mathcal{P}_{2,\nu}(\mathbb{R}^{2d})$ with fixed center of mass in the variable $x$, functional $\mathcal{W}$ is actually $\lambda$-convex along generalized geodesics.
\end{theo}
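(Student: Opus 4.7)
The plan is to reduce the three cases to applications of Lemma \ref{L-fibered-functional-I} (and a straightforward two-fiber analogue for $\mathcal{W}$), since each of $\mathcal{U}$, $\mathcal{V}$, $\mathcal{W}$ can be written as the $\nu$-integral (or $\nu\otimes\nu$-integral) of a classical Wasserstein functional over fibers. For $\mathcal{U}$ and $\mathcal{V}$ I would write them in the form $\mathcal{E}[\mu]=\int \mathscr{E}(\omega;\mu^\omega)\,d\nu(\omega)$ with $\mathscr{E}(\omega;\sigma)$ being the classical internal and external energy evaluated at the disintegration $\sigma=\mu^\omega$, and verify the four conditions (coercivity, properness, lower semicontinuity, $\lambda$-convexity along generalized geodesics) fiberwise using standard results in \cite[Sections 9.3 and 9.4]{AGS-08}. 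The proof concludes by invoking Lemma \ref{L-fibered-functional-I}, so the heart of the task is to certify that the uniform-in-$\omega$ quantitative bounds in hypotheses \eqref{E-hypothesis-internal-energy} and \eqref{potential-eq1} translate into the uniform coercivity and integrability constants required by Lemma \ref{L-fibered-functional-I}.

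For $\mathcal{U}$, the McCann-type condition that $z\mapsto z^d U(\omega,z^{-d})$ be convex and non-increasing guarantees geodesic (hence generalized geodesic) convexity on each fiber; superlinear growth of $U(\omega,\cdot)$ combined with the lower bound in \eqref{E-hypothesis-internal-energy} produces a $\nu$-integrable minorant of the form $-C(|\omega|^2+M_2(\mu^\omega)+1)$ via the classical argument from \cite[Example 9.3.6]{AGS-08} (here the choice $\alpha>d/(d+2)$ ensures that the tightness-induced bound on $\int U(\omega,\rho)^-\,dx$ is controlled by a constant times the second moment). The upper bound at some $z_0>0$ gives $\sigma_*:=\mathcal{L}^d{\llcorner}K/|K|\in D$ for any compact $K$ of measure $z_0^{-1}$. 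Lower semicontinuity in $(\mathcal{P}_2(\mathbb{R}^d),W_2)$ is a standard consequence of the convex, superlinear structure of $U(\omega,\cdot)$.

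For $\mathcal{V}$, fiberwise $\lambda$-convexity of $x\mapsto V(\omega,x)$ transfers to $\lambda$-convexity along generalized geodesics in $\mathcal{P}_2(\mathbb{R}^d)$ (see \cite[Proposition 9.3.2]{AGS-08}), lower semicontinuity is inherited trivially, and the quadratic bound \eqref{potential-eq1} produces both the coercive minorant and a finite value at any $\sigma_*\in\mathcal{P}_2(\mathbb{R}^d)$, so Lemma \ref{L-fibered-functional-I} applies. For $\mathcal{W}$ I would state a twin-fiber variant of Lemma \ref{L-fibered-functional-I} of the form $\mathcal{E}[\mu]=\iint\mathscr{W}(\omega,\omega';\mu^\omega,\mu^{\omega'})\,d\nu(\omega)\,d\nu(\omega')$ and verify the analogous hypotheses; the good-definition and coercivity/properness/lsc steps follow verbatim from the proof of Lemma \ref{L-fibered-functional-I} once one uses Proposition \ref{P-Borel-family-vs-measurability} twice to see that $(\omega,\omega')\mapsto \iint W(\omega,\omega',x-x')\,d\mu^\omega(x)\,d\mu^{\omega'}(x')$ is Borel-measurable.

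The main obstacle, and the only genuinely new computation, is the convexity bookkeeping for $\mathcal{W}$. Given $\mu_*,\mu_0,\mu_1\in\mathcal{P}_{2,\nu}(\mathbb{R}^{2d})$, I would use the $\nu$-admissible three-plan $\gamma$ from Definition \ref{D-gen-geodesic-convexity} on each fiber and plug the generalized geodesic $\mu_\theta^{\omega\,0\to 1}=((1-\theta)\pi_{x_0}+\theta\pi_{x_1})_\#\gamma^\omega$ into $\mathcal{W}$. Because $W(\omega,\omega',\cdot)$ depends only on $x-x'$, expanding along the geodesic gives, after integrating against $\gamma^\omega\otimes\gamma^{\omega'}$ and using $\lambda$-convexity of $y\mapsto W(\omega,\omega',y)$,
\begin{align*}
\mathcal{W}[\mu_\theta^{0\to 1}]\leq (1-\theta)\mathcal{W}[\mu_0]+\theta\mathcal{W}[\mu_1]-\frac{\lambda}{2}\theta(1-\theta)\iint|(x_0-x_1)-(x_0'-x_1')|^2\,d\gamma^\omega\,d\gamma^{\omega'}\,d\nu\,d\nu.
\end{align*}
When $\lambda\leq 0$, the inequality $|(x_0-x_1)-(x_0'-x_1')|^2\leq 2|x_0-x_1|^2+2|x_0'-x_1'|^2$ applied with the reversed sign produces $-\lambda\theta(1-\theta)W_\gamma^2(\mu_0,\mu_1)$ up to a factor of $2$ absorbed by the $\tfrac12$ in front of $\mathcal{W}$; this yields $\lambda$-convexity (hence $\min\{0,\lambda\}$-convexity) along generalized geodesics. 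When $\lambda\geq 0$ the same inequality gives the weaker but nontrivial $0$-convexity. Finally, when the center of mass in $x$ is fixed on $\mu_0$ and $\mu_1$, expanding $|(x_0-x_1)-(x_0'-x_1')|^2$ and integrating makes the cross term $\int(x_0-x_1)\cdot\int(x_0'-x_1')$ vanish, reducing the quantity exactly to $2W_\gamma^2(\mu_0,\mu_1)$ and upgrading the estimate to genuine $\lambda$-convexity. With these estimates in hand, the twin-fiber lemma gives framework $\framework$ for $\mathcal{W}$ and closes the proof.
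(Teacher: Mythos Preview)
Your overall strategy is exactly the paper's: write $\mathcal{U},\mathcal{V}$ as $\int \mathscr{E}(\omega;\mu^\omega)\,d\nu$ and $\mathcal{W}$ as a double fiber integral, then verify the hypotheses of Lemma~\ref{L-fibered-functional-I} (and its two-fiber analogue) using the classical results in \cite[Section~9.3]{AGS-08}. The treatment of $\mathcal{U}$ and $\mathcal{V}$ matches the paper's.

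The one place your argument falls short of the stated theorem is the convexity constant for $\mathcal{W}$ when $\lambda\leq 0$. First, the coefficient in your display should be $-\tfrac{\lambda}{4}$, not $-\tfrac{\lambda}{2}$: the $\tfrac12$ in the definition of $\mathcal{W}$ multiplies the quadratic remainder from the $\lambda$-convexity of $W(\omega,\omega',\cdot)$. Second, the crude bound $|(x_0-x_1)-(x_0'-x_1')|^2\leq 2|x_0-x_1|^2+2|x_0'-x_1'|^2$ integrates to $4W_\gamma^2$, so even with the correct coefficient you obtain $-\tfrac{\lambda}{4}\cdot 4W_\gamma^2=-\lambda\,\theta(1-\theta)W_\gamma^2$, which is $2\lambda$-convexity, not the $\lambda$-convexity claimed. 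The paper avoids this loss by using the \emph{exact} expansion you already invoke in the fixed-center-of-mass case: writing
\[
\iint |(x_0-x_1)-(x_0'-x_1')|^2\,d\gamma^\omega\,d\gamma^{\omega'}\,d\nu\,d\nu
= 2W_\gamma^2(\mu_0,\mu_1) - 2\Big|\int_{\mathbb{R}^{2d}} x\,d\mu_0-\int_{\mathbb{R}^{2d}} x\,d\mu_1\Big|^2,
\]
one sees the integral is $\leq 2W_\gamma^2$, which with the correct $-\tfrac{\lambda}{4}$ yields exactly $-\tfrac{\lambda}{2}\theta(1-\theta)W_\gamma^2$, i.e.\ $\lambda$-convexity. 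This same identity makes the upgrade under fixed center of mass immediate (the square vanishes), and for $\lambda>0$ one simply drops the nonpositive remainder $-\tfrac{\lambda}{4}Q\leq 0$ to get $0$-convexity. So your plan is right; just replace the crude parallelogram-type inequality by the exact expansion throughout to recover the sharp constant the theorem asserts.
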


\begin{proof}
Note that the three functionals above can be restated as follows
\begin{align*}
&\mathcal{U}[\mu]=\int_{\mathbb{R}^d}\mathscr{U}(\omega;\mu^\omega)\,d\nu(\omega), \quad\mathcal{V}[\mu]=\int_{\mathbb{R}^d}\mathscr{V}(\omega;\mu^\omega)\,d\nu(\omega),\\
&\mathcal{W}[\mu]=\iint_{\mathbb{R}^d\times \mathbb{R}^d}\mathscr{W}(\omega,\omega';\mu^\omega,\mu^{\omega'})\,d\nu(\omega)\,d\nu(\omega'),
\end{align*}
for the functionals
\begin{align*}
&\mathscr{U}(\omega;\sigma):=\left\{\begin{array}{ll}\int_{\mathbb{R}^d} U(\omega,\rho(x))\,dx, & \sigma(x)=\rho(x)\,dx,\\
+\infty, & \mbox{otherwise},\end{array}\right.\quad \mathscr{V}(\omega;\sigma):=\int_{\mathbb{R}^d} V(\omega,x)\,d\sigma(x),\\
&\mathscr{W}(\omega,\omega';\sigma,\sigma'):=\frac{1}{2}\int_{\mathbb{R}^d\times \mathbb{R}^d}W(\omega,\omega',x-x')\,d\sigma(x)\,d\sigma'(x'),
\end{align*}
with $\omega,\omega'\in\mathbb{R}^d$ and $\sigma,\sigma'\in \mathcal{P}_2(\mathbb{R}^d)$. Therefore, the thesis of this result follows by applying Lemma \ref{L-fibered-functional-I} to $\mathcal{U}$, $\mathcal{V}$ and $\mathcal{W}$. This of course requires showing that the fibered functionals $\mathscr{U}$, $\mathscr{V}$ and $\mathscr{W}$ on the satisfy the appropriate hypothesis therein. This will be a clear consequence of the classical results for the internal, external and interaction energies over the classical Wasserstein space $(\mathcal{P}_2(\mathbb{R}^d),W_2)$ under the above-mentioned hypothesis on $U$, $V$ and $W$ (see \cite[Section 9.3]{AGS-08}).

\medskip

$\diamond$ {\sc Step 1}: Properties of $\mathscr{U}$.\\
We prove that $\mathscr{U}(\omega;\cdot)$ satisfies all the hypothesis $(i)$, $(ii)$, $(iii)$ and $(iv)$ in Lemma \ref{L-fibered-functional-I} by virtue of the classical theory. Let us consider any $\mu=\rho(x,\omega)\,dx\otimes \nu(\omega)\in \mathcal{P}_{2,\nu}(\mathbb{R}^{2d})$ for some $\rho\in L^1_{dx\otimes \nu}(\mathbb{R}^{2d})$ and note first that the hypothesis $\eqref{E-hypothesis-internal-energy}_2$ allows bounding $U(\omega,z)^-$ as follows
\begin{equation}\label{E-coercivity-internal-energy}
U(\omega,z)^-\leq C(1+|\omega|^2)^{1-\alpha}(z+z^\alpha),
\end{equation}
for some constant $C>0$, every $z\in \mathbb{R}_+$ and $\omega\in \mathbb{R}^d$.  Indeed, note first that by $\eqref{E-hypothesis-internal-energy}_2$ there exist constants $C_1>0$ and $\delta>0$ such that we obtain the local estimate
\begin{equation}\label{E-coercivity-internal-energy-1}
U(\omega,z)\geq -C_1 (1+|\omega|^2)^{1-\alpha}z^\alpha,
\end{equation}
for any $z\in [0,\delta]$ and each $\omega\in \mathbb{R}^d$. Second, the convexity of $U(\omega,\cdot)$ and $\eqref{E-hypothesis-internal-energy}_1$ yields
$$U(\omega,z)=\frac{U(\omega,z)}{z}z\geq \frac{U(\omega,\delta)}{\delta}z,$$
for any $z\in [\delta,+\infty)$ and $\omega\in \mathbb{R}^d$. Using \eqref{E-coercivity-internal-energy-1} with $z=\delta$ above implies
\begin{equation}\label{E-coercivity-internal-energy-2}
U(\omega,z)\geq -C_2(1+|\omega|^2)^{1-\alpha} z,
\end{equation}
for any $z\in [\delta,+\infty)$ and $\omega\in \mathbb{R}^d$, where $C_2:=\frac{C_1}{\delta^{1-\alpha}}$. Hence, putting \eqref{E-coercivity-internal-energy-1}-\eqref{E-coercivity-internal-energy-2} together implies the claimed global bound \eqref{E-coercivity-internal-energy} with $C:=\max\{C_1,C_2\}$. Therefore we have
$$\mathscr{U}(\omega;\mu^\omega)\gtrsim -C(1+|\omega|^2)^{1-\alpha}\left(1+\int_{\mathbb{R}^d}\rho(x,\omega)^\alpha\,dx\right),$$
which we can control by H\"{o}lder's inequality as follows
$$\int_{\mathbb{R}^d}\rho(x,\omega)^\alpha\,dx\leq \left(\int_{\mathbb{R}^d}(1+|x|^2)\rho(x,\omega)\,dx\right)^\alpha\left(\int_{\mathbb{R}^d}\frac{dx}{(1+|x|^2)^\frac{2\alpha}{1-\alpha}}\right)^{1-\alpha}.$$
We note that the second factor in the right-hand side is finite thanks to the assumption on $\alpha$. Hence, Young's inequality implies that $\mathscr{U}$ must satisfy the coercivity condition $(i)$. In addition, set $\sigma_*(x)=\rho_*(x)\,dx$ for the density $\rho_*(x)=z_0\chi_{B_{R_0}}(x)$ with $|B_{R_0}|=\frac{1}{z_0}$. Then, by the convexity of $U$ along with the hypothesis $\eqref{E-hypothesis-internal-energy}_1$ and $\eqref{E-hypothesis-internal-energy}_3$ we have
$$U(\omega,\rho_*(x))\leq \max\{0,U(\omega,z_0)\}\lesssim 1+|\omega|^2,$$
for all $x\in \mathbb{R}^d$ and therefore we obtain
$$\mathscr{U}(\omega,\sigma_*)=\int_{B_{R_0}}U(\omega,\rho_*(x))\,dx\lesssim 1+|\omega|^2,$$
thus proving $(ii)$. Finally, the lower semicontinuity $(iii)$ and the convexity along generalized geodesics $(iv)$ follows by the classical theory from the hypothesis on the superlinear growth and the convexity properties of $U(\omega,\cdot)$, see Remark 9.3.8 and Proposition 9.3.9 in \cite{AGS-08}.

\medskip

$\diamond$ {\sc Step 2}: Properties of $\mathscr{V}$.\\
Again, by the classical theory we shall prove that $\mathscr{V}(\omega;\cdot)$ verifies all the hypothesis $(i)$, $(ii)$, $(iii)$ and $(iv)$ in Lemma \ref{L-fibered-functional-I}. Indeed, we have
$$\vert \mathscr{V}(\omega;\sigma)\vert\lesssim\int_{\mathbb{R}^d} (|\omega|^2+|x|^2+1)\,d\sigma(x)=|\omega|^2+M_2(\sigma)+1,$$
thus guaranteeing the coercivity assumption $(i)$ and properness $(ii)$. In addition, since $V(\omega,\cdot)$ is lower semicontinuous and its negative part $V(\omega,\cdot)^-$ has subquadratic growth at infinity, we obtain that $\mathscr{V}(\omega;\cdot)$ is lower semicontinuous in $(\mathcal{P}_2(\mathbb{R}^d),W_2)$ and thus verifies $(iii)$. Finally, $(iv)$ follows from the $\lambda$-convexity of $V(\omega,\cdot)$ by the classical theory, see \cite[Proposition 9.3.2 ]{AGS-08}.

\medskip

$\diamond$ {\sc Step 3}: Properties of $\mathscr{W}$.\\
Let us note that framework $\framework$ cannot be directly inferred for the functional $\mathcal{W}$ through the above $1$-fiber result in Lemma \ref{L-fibered-functional-I}. Instead, we actually need to use a similar version for $2$-fiber functionals like $\mathscr{W}$, which we have not stated for the sake of simplicity. Indeed, following the same arguments as above, $\mathscr{W}$ satisfies the $2$-fiber analogous conditions to those in $(i)$, $(ii)$, $(iii)$ and $(iv)$ in Lemma \ref{L-fibered-functional-I} thanks to the classical theory. In particular, note that
\begin{align*}
\vert \mathscr{W}(\omega,\omega',\sigma,\sigma')\vert&\lesssim \iint_{\mathbb{R}^d\times \mathbb{R}^d}(|\omega|^2+|\omega'|^2+|x-x'|^2+1)\,d\sigma(x)\,d\sigma(x')\\
&\lesssim |\omega|^2+|\omega'|^2+M_2(\sigma)+M_2(\sigma')+1,
\end{align*}
thus yielding the analogous conditions to $(i)$ and $(ii)$. Moreover, $W(\omega,\omega',\cdot)$ is lower semicontinuous and $W(\omega,\omega',\cdot)^-$ has subquadratic growth at infinity, which ensures that $\mathscr{W}(\omega,\omega';\cdot)$ must be lower semicontinuous in $(\mathcal{P}_2(\mathbb{R}^d)\times \mathcal{P}_2(\mathbb{R}^d),W_2\times W_2)$ and thus verifies $(iii)$.  Arguing like in Lemma \ref{L-fibered-functional-I} this yields properties $\mathcal{F}_1$, $\mathcal{F}_2$ and $\mathcal{F}_3$ in framework $\framework$. The only delicate point is the $\min\{\lambda,0\}$-convexity of $\mathcal{W}$ along generalized geodesics, which follows an argument inspired in the classical McCann's convexity criterion \cite{M-97} as we sketch below. 

Without loss of generality we assume $\lambda\leq 0$. Otherwise, we note that since $W$ is also $0$-convex we may repeat the same argument below replacing positive $\lambda$ by $0$. Consider $\mu_*,\,\mu_0,\,\mu_1\in \mathcal{P}_{2,\nu}(\mathbb{R}^{2d})$ as well as any curve $\mu_\theta^{0\rightarrow1}$ as in Definition \ref{D-gen-geodesic-convexity}, which we disintegrate as follows
$$\mu_\theta^{0\rightarrow 1}(x,\omega)=\mu_\theta^{\omega\,0\rightarrow 1}(x)\otimes \nu(\omega),$$
for a generalized geodesic $\mu_\theta^{\omega\,0\rightarrow 1}$ joining $\mu_0^\omega$ to $\mu_1^\omega$ with base $\mu_*^\omega$. Then, by the $\lambda$-convexity of $W$ we obtain the following $2$-fiber version of $(iv)$ in Lemma \ref{L-fibered-functional-I}:
\begin{align*}
\mathscr{W}&(\omega,\omega';\mu_\theta^{\omega\,0\rightarrow 1},\mu_\theta^{\omega'\,0\rightarrow 1})\\
&=\frac{1}{2}\iint_{\mathbb{R}^{3d}\times \mathbb{R}^{3d}}W(\omega,\omega',(1-\theta)(x_0-x_0')+\theta (x_1-x_1'))\,d\gamma^\omega(x_*,x_0,x_1)\,d\gamma^{\omega'}(x_*',x_0',x_1')\\
&\leq (1-\theta)\mathscr{W}(\omega,\omega';\mu_0^\omega,\mu_0^{\omega'})+\theta\mathscr{W}(\omega,\omega';\mu_1^\omega,\mu_1^{\omega'})\\
&\quad-\frac{\lambda}{4}(1-\theta)\theta\iint_{\mathbb{R}^{3d}\times \mathbb{R}^{3d}}|(x_0-x_0')-(x_1-x_1')|^2\,d\gamma^\omega(x_*,x_0,x_1)\,d\gamma^{\omega'}(x_*,x_0',x_1')\\
&=(1-\theta)\mathscr{W}(\omega,\omega';\mu_0^\omega,\mu_0^{\omega'})+\theta\mathscr{W}(\omega,\omega';\mu_1^\omega,\mu_1^{\omega'})-\frac{\lambda}{4}(1-\theta)\theta\left(W_{\gamma^\omega}^2(\mu_0^\omega,\mu_1^\omega)+W_{\gamma^{\omega'}}^2(\mu_0^{\omega'},\mu_1^{\omega'})\right)\\
&\quad +\frac{\lambda}{2}(1-\theta)\theta\left(\int_{\mathbb{R}^d} x\,d\mu_0^\omega(x)-\int_{\mathbb{R}^d} x\,d\mu_1^\omega(x)\right)\cdot\left( \int_{\mathbb{R}^d} x\,d\mu_0^{\omega'}(x)-\int_{\mathbb{R}^d} x\,d\mu_1^{\omega'}(x)\right),
\end{align*}
for all $\theta\in [0,1]$ and $\nu$-a.e. $\omega,\omega'\in \mathbb{R}^d$. Integrating against $\nu\otimes \nu$ implies
\begin{align*}
\mathcal{W}[\mu_\theta^{0\rightarrow 1}]\leq (1-\theta)\mathcal{W}[\mu_0]+\theta\mathcal{W}[\mu_1]&-\frac{\lambda}{2}(1-\theta)\theta\,W_\gamma^2(\mu_0,\mu_1)\\
&+\frac{\lambda}{2}(1-\theta)\theta \left\vert\int_{\mathbb{R}^{2d}}x\,d\mu_0(x,\omega)-\int_{\mathbb{R}^{2d}}x\,d\mu_1(x,\omega)\right\vert^2,
\end{align*}
for all $\theta\in [0,1]$. Since $\lambda\leq 0$, then the last term is non-positive so that we readily obtain $\lambda$-convexity of $\mathcal{W}$ along the generalized geodesic $\theta\in [0,1]\mapsto \mu_\theta$. Note that for $\lambda>0$ the above argument fails because the last term becomes non-negative. In that case, we need to assume that $\mu_0$ and $\mu_1$ have the same center of mass with respect to $x$ in order to guarantee that such a non-negative term actually vanishes and obtain the claimed $\lambda$-convexity.
\end{proof}

\begin{rem}
In Lemma \ref{L-fibered-functional-I}, thus Theorem \ref{T-the-three-energies}, we can relax the assumption that $\nu\in {\mathcal P}_2(\RR^d)$ to only $\nu\in{\mathcal P}(\RR^d)$ at the cost of making all the bounds appearing in the assumptions therein $\omega$-independent.
\end{rem}

\section{Uniform contractivity of Kuramoto-type models}\label{apple}

From here on we shall focus on our third goal presented in Section \ref{maingoal3}, that is the Kuramoto-type equation \eqref{kurakk}, which we recollect bellow
\begin{align}\label{kurakk-again}
\begin{aligned}
&\partial_t\mu + \divop_x(\boldsymbol{u}[\mu]\mu) = 0,\qquad t\geq 0,\quad (x,\omega)\in \RR^{2d},\\
&\boldsymbol{u}[\mu_t](t,x,\omega):=\omega-K\int_{\RR^{2d}}\nabla W(x-x')\,d\mu_t(x',\omega').
\end{aligned}
\end{align}
We recall that its relevance stems from the perspective of second-order alignment dynamics with weakly singular influence function, as discussed in Section \ref{sec:intro}, see also \cite{PP-22-2-arxiv}. As anticipated formally in Remark \ref{R-examples-from-intro} $(iv)$, we expect distributional solutions of \eqref{kurakk-again} are equivalent to fibered gradient flow of the interaction energy functional $\mathcal{E}_W$ in \eqref{W}, namely,
\begin{align}\label{W-again}
\begin{aligned}
&{\mathcal E}_W[\mu]:= - \int_{\RR^{2d}} \omega\cdot x\,d\mu(x,\omega)+K\iint_{\RR^{2d}\times \RR^{2d}} W(x-x')\,d\mu(x,\omega)\, d\mu(x',\omega'),\\
&W(x) = \frac{1}{2-\alpha}\frac{1}{1-\alpha}|x|^{2-\alpha},\quad \nabla W(x) = \frac{1}{1-\alpha}\phi(|x|)x,\quad \phi(|x|)=\frac{1}{|x|^\alpha},
\end{aligned}
\end{align}
with exponent $\alpha \in (0,1)$. To make statements rigorous, we first recall the natural definition of distributional solutions to \eqref{kurakk-again}.

\begin{defi}[Weak measure-valued solutions]\label{D-distributional-solution-kurakk}
We say that $\boldsymbol{\mu}\in AC_{\rm loc}(0,\infty;\mathcal{P}_{2,\nu}(\mathbb{R}^{2d}))$ is a weak measure-valued solution of \eqref{kurakk-again} with $\nu\in \mathcal{P}_2(\mathbb{R}^d)$ when the velocity field $\boldsymbol{u}[\boldsymbol{\mu}]$ in $\eqref{kurakk-again}_2$ belongs to $L^1_{\rm loc}(0,\infty;L^2_{\mu_t}(\mathbb{R}^{2d},\mathbb{R}^d))$ and $\boldsymbol{\mu}$ satisfies \eqref{kurakk-again} in distributional sense, {\it i.e.},
$$\int_0^\infty \int_{\mathbb{R}^{2d}}\big(\partial_t\varphi(t,x,\omega)+\nabla_x\varphi(t,x,\omega)\cdot \boldsymbol{u}[\mu_t](x,\omega)\big)\,d\mu_t(x,\omega)\,dt=-\int_{\mathbb{R}^{2d}}\varphi(0,x,\omega)\,d\mu_0(x,\omega),$$
for all $\varphi\in C^\infty_c([0,\infty)\times \mathbb{R}^{2d})$.
\end{defi}

On the other hand, $\mathcal{E}_W$ in \eqref{W-again} can be written as the combination $\mathcal{E}_W=\mathcal{V}+\mathcal{W}$ of an external and an interaction energy functional like in \eqref{E-the-three-energies} with regular enough potentials
$$V(\omega,x)=-\omega\cdot x,\qquad W(\omega,\omega',x)=K\,W(x)=\frac{K\,|x|^{2-\alpha}}{(2-\alpha)(1-\alpha)},$$
which verify the hypothesis in Theorem \ref{T-the-three-energies} (in particular $V$ and $W$ are $0$-convex, {\it cf.} Appendix \ref{Appendix-convexity-W}). In addition, Lemmas \ref{L-variational-integral-I} and \ref{L-variational-integral-II} can be applied as in Remark \ref{R-examples-from-intro} $(iv)$ and we obtain the following lemma.

\begin{lem}[Fr\'echet subdifferential of $\mathcal{E}_W$]\label{L-subdifferential-slope-W}
Consider any $\nu\in \mathcal{P}_2(\mathbb{R}^d)$. Then, $\mathcal{E}_W$ in \eqref{W-again} satisfies framework $\framework$, it is convex along fibered generalized geodesics and
$$\partial_{W_{2,\nu}}\mathcal{E}_W[\mu]=\{-\boldsymbol{u}[\mu]\},$$
for any $\mu \in\mathcal{P}_{2,\nu}(\mathbb{R}^{2d})$, where $\boldsymbol{u}[\mu]$ is the velocity field in $\eqref{kurakk-again}_2$.
\end{lem}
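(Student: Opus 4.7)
The plan is to reduce Lemma \ref{L-subdifferential-slope-W} to Theorem \ref{T-the-three-energies} plus a direct variational computation. Write $\mathcal{E}_W=\mathcal{V}+\mathcal{W}$ with $V(\omega,x):=-\omega\cdot x$ and $W(\omega,\omega',z):=K\,W(z)=\tfrac{K}{(2-\alpha)(1-\alpha)}|z|^{2-\alpha}$. Both potentials are continuous (hence lower semicontinuous) and satisfy the quadratic growth bounds in Theorem \ref{T-the-three-energies}: $|V(\omega,x)|\leq\tfrac12(|\omega|^2+|x|^2)$, and $|W(z)|\leq C(1+|z|^2)$ uniformly, since $2-\alpha\in(1,2)$ gives $|z|^{2-\alpha}\leq 1+|z|^2$ for every $z\in\mathbb{R}^d$. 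The potential $V$ is $0$-convex in $x$ since it is affine, and the $0$-convexity of $W$ is deferred to Appendix \ref{Appendix-convexity-W}. Because the case $\lambda=0$ does not require the center-of-mass restriction in Theorem \ref{T-the-three-energies}, both $\mathcal{V}$ and $\mathcal{W}$ satisfy framework $\framework$ and are $0$-convex along fibered generalized geodesics. Since properness, coercivity, lower semicontinuity, and $\lambda$-convexity along generalized geodesics are all preserved by finite sums, the same conclusion transfers to $\mathcal{E}_W$.

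For the subdifferential, I would avoid a direct invocation of Lemma \ref{L-variational-integral-II}, as our $W$ is only of class $C^{1,1-\alpha}$ and fails to be $C^2$ at the origin. Instead, since $\mathcal{E}_W$ is $0$-convex along fibered generalized geodesics, I would verify the variational inequality in Proposition \ref{P-Frechet subdifferential-convex} by hand. Fix $\mu,\sigma\in\mathcal{P}_{2,\nu}(\mathbb{R}^{2d})$ and $\gamma\in\Gamma_{o,\nu}(\mu,\sigma)$ of the form \eqref{plan}. The external-energy piece gives equality by linearity,
\begin{align*}
\mathcal{V}[\sigma]-\mathcal{V}[\mu]=\int_{\mathbb{R}^{4d}}(-\omega)\cdot(x'-x)\,d\gamma(x,x',\omega,\omega').
\end{align*}
For $\mathcal{W}$, form the product plan $\Gamma:=\gamma\otimes\gamma$ on $(\mathbb{R}^{4d})^2$ with coordinates $((x_1,y_1,\omega_1,\omega_1'),(x_2,y_2,\omega_2,\omega_2'))$, apply the pointwise inequality $W(y_1-y_2)-W(x_1-x_2)\geq\nabla W(x_1-x_2)\cdot[(y_1-y_2)-(x_1-x_2)]$ from $0$-convexity of $W$, and symmetrize the right-hand side using $\nabla W(-z)=-\nabla W(z)$. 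Marginalizing the second copy of $\gamma$ then yields
\begin{align*}
\mathcal{W}[\sigma]-\mathcal{W}[\mu]\geq\int_{\mathbb{R}^{4d}}\Bigl[K\!\int_{\mathbb{R}^{2d}}\nabla W(x-x')\,d\mu(x',\omega')\Bigr]\cdot(x'-x)\,d\gamma(x,x',\omega,\omega').
\end{align*}
Adding the two inequalities one obtains $\mathcal{E}_W[\sigma]-\mathcal{E}_W[\mu]\geq\int (-\boldsymbol{u}[\mu])(x,\omega)\cdot(x'-x)\,d\gamma$, so Proposition \ref{P-Frechet subdifferential-convex} with $\lambda=0$ gives $-\boldsymbol{u}[\mu]\in\partial_{W_{2,\nu}}\mathcal{E}_W[\mu]$.

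To close the singleton identity, I would note that $-\boldsymbol{u}[\mu](x,\omega)=\nabla_x U_\mu(x,\omega)$, with $U_\mu(x,\omega):=-\omega\cdot x+K\!\int W(x-x')\,d\mu(x',\omega')$, is a genuine $x$-gradient; standard truncation-and-mollification of $U_\mu$ in the $x$ variable, combined with the fact that $\nabla W\in C^{1-\alpha}_{\mathrm{loc}}$ and $\mu$ has finite second $x$-moment, produces an $L^2_\mu$-approximation by $\nabla_x\varphi$ with $\varphi\in C^\infty_c(\mathbb{R}^{2d})$, so $-\boldsymbol{u}[\mu]\in\mathrm{Tan}_\mu(\mathcal{P}_{2,\nu}(\mathbb{R}^{2d}))$. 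By Lemma \ref{L-projection-tangent-space} any element of $\partial_{W_{2,\nu}}\mathcal{E}_W[\mu]$ shares the same tangent projection, and Proposition \ref{P-slope-minimal-subdifferential} then selects a unique tangent representative, yielding $\partial_{W_{2,\nu}}\mathcal{E}_W[\mu]=\{-\boldsymbol{u}[\mu]\}$ in the convention used throughout Lemmas \ref{L-variational-integral-I}-\ref{L-variational-integral-II}. The main technical point in this sketch is the $\Gamma$-symmetrization step for $\mathcal{W}$, which relies essentially on the evenness of $W$ (equivalently, the oddness of $\nabla W$); this is the one place where the fibered setting differs only cosmetically from the classical interaction-energy argument on $(\mathcal{P}_2(\mathbb{R}^d),W_2)$.
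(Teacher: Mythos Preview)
Your verification that $\mathcal{E}_W$ satisfies framework $\framework$ and is $0$-convex along fibered generalized geodesics matches the paper's route via Theorem \ref{T-the-three-energies} exactly. For the subdifferential identity you are actually more careful than the paper: you correctly flag that $W\in C^{1,1-\alpha}\setminus C^2$, so the formal Lemma \ref{L-variational-integral-II} does not literally apply, and you replace it with the variational-inequality characterization (Proposition \ref{P-Frechet subdifferential-convex}) together with a product-plan symmetrization for $\mathcal{W}$. This is the standard rigorous argument for convex interaction energies (compare \cite[Theorem 10.4.11]{AGS-08}) and is cleaner than the paper's heuristic appeal to the $C^2$ lemma.

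Your singleton argument, however, has a small gap. Lemma \ref{L-projection-tangent-space} does \emph{not} assert that all elements of $\partial_{W_{2,\nu}}\mathcal{E}_W[\mu]$ share the same tangent projection; it only gives $\boldsymbol{v}\in\partial\mathcal{E}\Leftrightarrow\Pi_\mu[\boldsymbol{v}]\in\partial\mathcal{E}$. Nor does Proposition \ref{P-slope-minimal-subdifferential} by itself yield uniqueness. The clean fix is a differentiability argument: since $W\in C^1$ with $\nabla W$ of sublinear growth, for every $\boldsymbol{\xi}=\nabla_x\varphi$ with $\varphi\in C^\infty_c(\mathbb{R}^{2d})$ the map $\varepsilon\mapsto\mathcal{E}_W[(I+\varepsilon\boldsymbol{\xi}(\cdot,\omega))_\#\mu^\omega\otimes\nu]$ is differentiable at $0$ with derivative $\int(-\boldsymbol{u}[\mu])\cdot\boldsymbol{\xi}\,d\mu$. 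Testing the subdifferential inequality \eqref{E-fibered-subdifferential} along $\pm\boldsymbol{\xi}$ then forces $\int\boldsymbol{v}\cdot\boldsymbol{\xi}\,d\mu=\int(-\boldsymbol{u}[\mu])\cdot\boldsymbol{\xi}\,d\mu$ for every $\boldsymbol{v}\in\partial_{W_{2,\nu}}\mathcal{E}_W[\mu]$ and every such $\boldsymbol{\xi}$, whence $\Pi_\mu[\boldsymbol{v}]=-\boldsymbol{u}[\mu]$. This also renders your mollification step (to place $-\boldsymbol{u}[\mu]$ in $\mathrm{Tan}_\mu$) unnecessary for the identity as stated, which---in the convention of Lemmas \ref{L-variational-integral-I}--\ref{L-variational-integral-II}---is understood modulo $\mathcal{X}_\mu$.
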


It is now apparent that weak measure-valued solutions and gradient flows agree in our setting. In fact we will prove the following theorem.

\begin{theo}[Theorem B: Kuramoto-type equation]\label{t-B-rig}
Fix $\nu\in {\mathcal P}_2(\RR^d)$ and $\mu_0\in {\mathcal P}_{2,\nu}(\RR^{2d})$. Then equation \eqref{kurakk-again} issued at the initial datum $\mu_0$ has a unique weak measure-valued solution in the sense of Definition \ref{D-distributional-solution-kurakk}, and it is equivalent to a fibered gradient flow of the functional $\mathcal{E}_W$ in \eqref{W-again} in the sense of Definition \ref{D-solutions-1st-order-gradient-flow}. Moreover the stability/contractivity results presented below in Theorems \ref{T-contractivity-W2nu} and \ref{T-uniform-stability-AW2} as well as Corollaries \ref{C-contractivity-W2nu} and \ref{C-uniform-stability-AW2} hold true. 
\end{theo}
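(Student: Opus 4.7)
The argument splits naturally into three blocks, tracking the three assertions of the theorem. First, by Lemma \ref{L-subdifferential-slope-W}, the functional $\mathcal{E}_W=\mathcal{V}+\mathcal{W}$ satisfies framework $\framework$ (the linear term in $\omega$ is $0$-convex, and the interaction kernel $W$ is convex on $\RR^d$, \emph{cf.} Appendix \ref{Appendix-convexity-W}), and its fibered Fr\'echet subdifferential consists of the single tangent vector $-\boldsymbol{u}[\mu]$. Consequently, Theorem \ref{t-A-rig} applied to $\mathcal{E}_W$ yields, for any initial datum $\mu_0\in D(\mathcal{E}_W)={\mathcal P}_{2,\nu}(\RR^{2d})$, a unique locally Lipschitz gradient flow $\boldsymbol{\mu}$ together with the minimal selection principle $\boldsymbol{u}_t=\boldsymbol{u}[\mu_t]$ for a.e. $t>0$ (\emph{cf.} Theorem \ref{T-equivalence-definitions-solutions-1st-order}).

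Second, I would establish the equivalence between gradient flows and weak measure-valued solutions. If $\boldsymbol{\mu}$ is the gradient flow from the previous step, then Proposition \ref{equiv} translates the gradient-flow identity $\boldsymbol{u}_t=\boldsymbol{u}[\mu_t]\in{\rm Tan}_{\mu_t}({\mathcal P}_{2,\nu}(\RR^{2d}))$ into the distributional continuity equation of Definition \ref{D-distributional-solution-kurakk}; the $L^2_{\rm loc}$-integrability of $\boldsymbol{u}[\boldsymbol{\mu}]$ needed to invoke Proposition \ref{equiv} follows from the uniform bound on second moments provided by Theorem \ref{t-A-rig}(ii) combined with the $C^{0,1-\alpha}$ control on $\nabla W$. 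Conversely, given any weak measure-valued solution $\boldsymbol{\mu}$, the same Proposition \ref{equiv} shows $\boldsymbol{\mu}\in AC^2_{\rm loc}(0,\infty;{\mathcal P}_{2,\nu}(\RR^{2d}))$ with tangent field $\boldsymbol{u}[\mu_t]\in -\partial_{W_{2,\nu}}\mathcal{E}_W[\mu_t]$; hence $\boldsymbol{\mu}$ is a gradient flow, and uniqueness of weak measure-valued solutions then follows from the uniqueness part of Theorem \ref{t-A-rig}.

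Third, and most importantly, the contractivity and stability statements. Because $\mathcal{E}_W$ is only $0$-convex along generalized geodesics, the native gradient-flow stability in Theorem \ref{t-A-rig}(iii) degenerates to the trivial non-expansivity $W_{2,\nu}(\mu_t^1,\mu_t^2)\leq W_{2,\nu}(\mu_0^1,\mu_0^2)$, which is insufficient to conclude convergence to equilibrium or to control the mean-field limit globally in time. The plan is to differentiate $\tfrac{1}{2}W_{2,\nu}^2(\mu_t^1,\mu_t^2)$ directly via Proposition \ref{T-dif} along an admissible optimal plan $\gamma_t\in \Gamma_{o,\nu}(\mu_t^1,\mu_t^2)$, separating the linear drift contribution (which generates a cross-fiber mismatch term in $\omega-\omega'$) and the non-local interaction contribution (which has a favourable sign by convexity of $W$ combined with the symmetry of $\nabla W$). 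To upgrade this dissipation mechanism into genuine exponential contractivity, I would introduce an auxiliary $\omega$-weighted fibered distance --- a multidimensional, kinetic variant of the quantities used in \cite{CZ-21,MP-22,P-19-arxiv} --- and close the resulting differential inequality. Convergence to equilibrium should then follow from a LaSalle-type argument based on the dissipation $\tfrac{d}{dt}\mathcal{E}_W[\mu_t]=-\|\boldsymbol{u}[\mu_t]\|^2_{L^2_{\mu_t}}$ and the $\nu$-random narrow compactness of orbits (Theorem \ref{T-random-Prokhorov}), while the global-in-time mean-field limit follows by applying the contractivity estimate between an atomic empirical solution and the continuous one, and letting $N\to\infty$ using Proposition \ref{P-characterization-convergence-W2nu}.

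The main obstacle will be handling the singular monotonicity of $\nabla W(x)=\frac{x}{(1-\alpha)|x|^\alpha}$: uniform-in-fiber estimates cannot simply be imported from the one-dimensional analysis of \cite{CZ-21}, since in the multidimensional case the trick of reducing the Cucker-Smale particle system to a scalar monotone problem breaks down. I therefore expect the bulk of Section \ref{apple} to consist of purely kinetic, fibered estimates, relying on Proposition \ref{P-infinitesimal-vector-from-plans} to propagate infinitesimal optimality of plans, on Proposition \ref{P-stability-optimal-plans} to pass contractivity through $\nu$-random narrow limits, and on a careful cut-off/regularization of the singular kernel to justify the differentiation of $W_{2,\nu}^2$ uniformly in $\alpha\in(0,1)$.
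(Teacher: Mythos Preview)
Your first two blocks are correct and match the paper: well-posedness and the equivalence between gradient flows and weak measure-valued solutions follow directly from Lemma~\ref{L-subdifferential-slope-W} together with Theorem~\ref{t-A-rig} and Proposition~\ref{equiv}, exactly as you outline.

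The third block, however, misses the central mechanism and proposes the wrong tools. The paper does \emph{not} introduce an $\omega$-weighted distance, does not use a LaSalle argument, and does not regularize the singular kernel. The key idea you are missing is purely geometric: for compactly supported initial data, the spatial diameter $\mathcal{D}_x(t)$ stays uniformly bounded for all time by an explicit constant $D_0$ depending only on $\mathcal{D}_x(0)$, $\mathcal{D}_\omega$ and $K$ (Lemma~\ref{L-control-diameter}, proved by differentiating the squared diameter along characteristics). Once the dynamics is confined to a ball of radius $D_0$, the convexity estimate of Appendix~\ref{Appendix-convexity-W} gives $\Lambda_1(x_1-x_2,x_1'-x_2',\alpha)\geq \phi(D_0)>0$ on the support, so the interaction term in the derivative of $\tfrac12 W_{2,\hat\nu}^2$ produces the coercive contribution $-2K\phi(D_0)W_{2,\hat\nu}^2$ plus a cross term that vanishes under the equal-center-of-mass assumption~\eqref{E-center-mass} (Lemma~\ref{L-control-center-mass}). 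Gr\"onwall then gives exponential contractivity directly; no auxiliary weighted metric is needed, and Propositions~\ref{P-infinitesimal-vector-from-plans} and~\ref{P-stability-optimal-plans} play no role here.

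There is a second gap in your treatment of the mean-field limit. Empirical measures $\mu_t^N$ have atomic $\omega$-marginals $\nu^N\neq\nu$, so they do not live in $\mathcal{P}_{2,\nu}(\RR^{2d})$ and $W_{2,\nu}(\mu_t^N,\mu_t)$ is undefined. The paper resolves this by lifting both solutions to the enlarged fibered space $\mathcal{P}_{2,\hat\nu}(\RR^{3d})$ over a coupling $\hat\nu\in\Gamma(\nu^N,\nu)$ (Definitions~\ref{D-fibered-Wasserstein-double}--\ref{D-liftings}), proving contractivity of the resulting pseudometric (Lemma~\ref{L-contractivity}), and then optimizing over $\hat\nu$ to recover the adapted Wasserstein distance $AW_2$ (Definition~\ref{D-adapted-Wasserstein-distance}). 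This is the substantive new construction of Section~\ref{apple}, and your proposal does not anticipate it.
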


Note that by Lemma \ref{L-subdifferential-slope-W}, the first part of Theorem \ref{t-B-rig} follows directly from Theorem \ref{t-A-rig} and it provides well-posedness, regularity and finite-in-time stability of weak measure-valued solutions to \eqref{kurakk-again} in the $W_{2,\nu}$ topology. The remainder of this section is dedicated to prove the second part of Theorem \ref{t-B-rig}, which address the following two fundamental questions: 
\begin{enumerate}[label=(\roman*)]
\item Quantitative mean-field limit towards a weak measure-valued solution $\boldsymbol{\mu}$ of the kinetic equation \eqref{kurakk-again} 
\item Quantitative convergence rates as $t\rightarrow \infty$ of any weak measure-valued solution $\mu_t$ of the kinetic equation \eqref{kurakk-again} towards the equilibrium.
\end{enumerate}

We face two main difficulties as compared to classical arguments. On the one hand, our interaction force $-\nabla W$ is only one-sided Lipschitz. More specifically, $W$ is $0$-convex ({\it cf.} Appendix \ref{Appendix-convexity-W}), which is not enough to produce the uniform contractivity in Theorem \ref{t-A-rig} $(iii)$. On the other hand, the variable $\omega$ introduces an apparent heterogeneity which makes the arguments more subtle. We solve both problems $(i)$ and $(ii)$ simultaneously by showing uniform-in-time contractivity with respect to the so called {\it fibered transport pseudometric} (see Definitions \ref{D-fibered-Wasserstein-double} and \ref{D-liftings} below). This pseudometric admits comparing solutions $\mu_1$ and $\mu_2$ to \eqref{kurakk-again} with different $\omega$-marginals $\nu_1$ and $\nu_2$. In addition, when $\nu_1=\nu_2=\nu$ the pseudometric reduces to the fibered metric $W_{2,\nu}$. Therefore, the uniform-in-time contractivity with respect to the pseudometric enables us to recover both convergence to the equilibrium in $W_{2,\nu}$ metric and the mean-field limit in the so called {\it adapted Wasserstein distance} (also known as {\it nested} or {\it causal} distance) ({\it cf.} \cite{L-18,PP-12,PP-14}). In contrast with \cite{CZ-21}, our method of proof is suitable for higher dimension and the uniform-in-time mean-field limit arises as a direct consequence of the contraction in the kinetic equation, which does not exploit any stability of the associated particle system.

\subsection{A fibered transport pseudometric and the adapted Wasserstein distance}

Before stating our main contractivity result, we shall introduce a pair of transport distances that will be used throughout this section. We advance that the quadratic Wasserstein distance $W_2$ does not provide satisfying uniform-in-time contractivity results and we need to refine it appropriately. On the one hand, we define the spaces $\mathcal{P}_{\hat{\nu}}(\mathbb{R}^{3d})$ and $(\mathcal{P}_{2,\hat{\nu}}(\mathbb{R}^{3d}),W_{2,\hat{\nu}})$ with fiber distribution $\hat{\nu}\in \mathcal{P}(\mathbb{R}^{2d})$ similarly to Definitions \ref{D-fibered-measures} and \ref{D-fibered-Wasserstein}.

\begin{defi}[Fibered quadratic Wasserstein space II]\label{D-fibered-Wasserstein-double}
Let $\hat{\nu}\in \mathcal{P}(\mathbb{R}^{2d})$ be any probability measure. Then, we define the subset $\mathcal{P}_{\hat{\nu}}(\mathbb{R}^{3d})\subseteq \mathcal{P}(\mathbb{R}^{3d})$ of fibered probability measures, and the fibered quadratic Wasserstein space $(\mathcal{P}_{2,\hat{\nu}}(\mathbb{R}^{3d}),W_{2,\hat{\nu}})$ as follows
\begin{align*}
\mathcal{P}_{\hat{\nu}}(\mathbb{R}^{3d})&:=\{\bar \mu\in \mathcal{P}(\mathbb{R}^{3d}):\,\pi_{(\omega,\omega')\#}\bar\mu=\hat{\nu}\},\\
\mathcal{P}_{2,\hat{\nu}}(\mathbb{R}^{3d})&:=\left\{\bar\mu\in \mathcal{P}_{\hat{\nu}}(\mathbb{R}^{3d}):\,\int_{\mathbb{R}^{3d}}\vert x\vert^2\,d\bar\mu(x,\omega,\omega')<\infty\right\},\\
W_{2,\hat{\nu}}(\bar \mu_1,\bar \mu_2)&:=\left(\int_{\mathbb{R}^{2d}}W_2^2(\bar\mu_1^{\omega,\omega'},\bar\mu_2^{\omega,\omega'})\,d\hat{\nu}(\omega,\omega')\right)^{1/2},
\end{align*}
for any $\bar\mu_1,\bar\mu_2\in \mathcal{P}_{2,\hat{\nu}}(\mathbb{R}^{3d})$, where $\{\bar\mu_1^{\omega,\omega'}\}_{(\omega,\omega')\in \mathbb{R}^{2d}}$ and $\{\mu_2^{\omega,\omega'}\}_{(\omega,\omega')\in \mathbb{R}^{2d}}$ in $\mathcal{P}_2(\mathbb{R}^{2d})$ are the families of disintegrations with respect to the variables $(\omega,\omega')$.
\end{defi}

We note that in Definition \ref{D-fibered-Wasserstein-double} the fibers have twice the dimension of the fibers in Definitions \ref{D-fibered-measures} and \ref{D-fibered-Wasserstein}. Nevertheless, we remark that the fact that space $x\in \mathbb{R}^d$ and fibers $\omega\in \mathbb{R}^d$ have the same dimension was irrelevant in Sections \ref{subsec:random-probability-measures} and \ref{sec:fibered-wasserstein}. Indeed, all the results therein have straightforward analogues for the fibered spaces in Definition \ref{D-fibered-Wasserstein-double}. In the next section, we shall exploit some of those results ({\it e.g.} Definition \ref{D-tan} and Propositions \ref{equiv} and \ref{T-dif}). The goal of the enlarged fibered Wasserstein space $(\mathcal{P}_{2,\hat{\nu}}(\mathbb{R}^{3d}),W_{2,\hat{\nu}})$ in Definition \ref{D-fibered-Wasserstein-double} will be to serve as a covering space so that we can compare measures $\mu_1,\mu_2\in \mathcal{P}_2(\mathbb{R}^{2d})$ eventually having different distributions with respect to $\omega$. Specifically, we define the following lifting mappings.

\begin{defi}[Lifting]\label{D-liftings}
Consider any $\nu_1,\nu_2\in \mathcal{P}(\mathbb{R}^d)$ and set any $\hat{\nu}\in \Gamma(\nu_1,\nu_2)$. Then, we define $\mathcal{L}_{\hat{\nu},i}:\mathcal{P}_{\nu_i}(\mathbb{R}^{2d})\longrightarrow \mathcal{P}_{\hat{\nu}}(\mathbb{R}^d\times \mathbb{R}^{2d})$ with $i=1,2$ as follows
\begin{align*}
\mathcal{L}_{\hat{\nu},1}(\mu_1)(x,\omega,\omega')&:=\mu_1^\omega(x)\otimes \hat{\nu}(\omega,\omega'),\\
\mathcal{L}_{\hat{\nu},2}(\mu_2)(x,\omega,\omega')&:=\mu_2^{\omega'}(x)\otimes \hat{\nu}(\omega,\omega'),
\end{align*}
for any $\mu_1\in \mathcal{P}_{\nu_1}(\mathbb{R}^{2d})$ and any $\mu_2\in \mathcal{P}_{\nu_2}(\mathbb{R}^{2d})$, where $\{\mu_1^\omega\}_{\omega\in \mathbb{R}^d}$ and $\{\mu_2^{\omega'}\}_{\omega'\in \mathbb{R}^d}$ are the associated families of disintegrations.
\end{defi}

Then, we may use $W_{2,\hat{\nu}}(\mathcal{L}_{\hat{\nu},1}(\mu_1),\mathcal{L}_{\hat{\nu},2}(\mu_2))$ as a pseudometric between $\mu_1$ and $\mu_2$. Note that it is not a distance in full sense since it is degenerate. Namely, if $W_{2,\hat{\nu}}(\mathcal{L}_{\hat{\nu},1}(\mu_1),\mathcal{L}_{\hat{\nu},2}(\mu_2))=0$ then we cannot necessarily guarantee that $\mu_1$ and $\mu_2$ must agree if the marginals do not agree (that is, $\nu_1\neq \nu_2$). However, we have the following straightforward relationship with the fibered Wasserstein distance in Definition \ref{D-fibered-Wasserstein} when the marginals agree (that is, $\nu_1=\nu_2$).

\begin{rem}[Identical marginals]\label{R-fibered-Wasserstein-double-identical}
Assume that $\nu_1=\nu_2=: \nu$, and set the special transference plan $\hat{\nu}(\omega):=\nu(\omega)\otimes \delta_\omega(\omega')$. Then, we obtain
$$W_{2,\hat{\nu}}(\mathcal{L}_{\hat{\nu},1}(\mu_1),\mathcal{L}_{\hat{\nu},2}(\mu_2))=W_{2,\nu}(\mu_1,\mu_2),$$
for any $\mu_1,\mu_2\in \mathcal{P}_{2,\nu}(\mathbb{R}^{2d})$.
\end{rem}

Note that in order to turn $W_{2,\hat{\nu}}(\mathcal{L}_{\hat{\nu},1}(\mu_1),\mathcal{L}_{\hat{\nu},2}(\mu_2))$ into a full metric over $\mathcal{P}_2(\mathbb{R}^{2d})$, we need to also account for the transportation cost in the variable $\omega$. This can be achieved by defining the following distance over $\mathcal{P}_2(\mathbb{R}^{2d})$, which has recently emerged in the literature of stochastic optimization under various names, {\it e.g.}, {\it adapted}, {\it nested} or {\it causal} Wasserstein distance ({\it cf.} \cite{L-18,PP-12,PP-14} and references therein).

\begin{defi}[Adapted Wasserstein distance]\label{D-adapted-Wasserstein-distance}
For any $\mu_1,\mu_2\in \mathcal{P}_2(\mathbb{R}^{2d})$, we define
\begin{equation}\label{E-adapted-Wassersein-distance}
AW_2(\mu_1,\mu_2):=\left(\inf_{\hat{\nu}\in \Gamma(\nu_1,\nu_2)}\int_{\mathbb{R}^{2d}} \left(W_2^2(\mu_1^\omega,\mu_2^{\omega'})+\vert \omega-\omega'\vert^2\right)\,d\hat{\nu}(\omega,\omega')\right)^{1/2},
\end{equation}
where $\nu_1:=\pi_{\omega\#}\mu_1$ and $\nu_2:=\pi_{\omega\#}\mu_2$, and $\{\mu_1^\omega\}_{\omega\in \mathbb{R}^d}$ and $\{\mu_2^{\omega'}\}_{\omega'\in \mathbb{R}^d}$ are the associated families of disintegrations.
\end{defi}

By standard methods one obtains that $AW_2$ is well defined over $\mathcal{P}_2(\mathbb{R}^{2d})$ and it is indeed a real distance. See \cite{BBP-21-arxiv} for the main properties and the underlying Riemannian structure over its completion, the space of filtered processes. In addition, under the notation in Definitions \ref{D-fibered-Wasserstein-double} and \ref{D-liftings} we can reformulate the adapted Wasserstein distance $AW_2$ in \eqref{E-adapted-Wassersein-distance} in terms of the fibered distances $W_{2,\hat{\nu}}$ as follows:
\begin{equation}\label{E-adapted-Wassersein-distance-restatement}
AW_2^2(\mu_1,\mu_2)=\inf_{\hat{\nu}\in \Gamma(\nu_1,\nu_2)} \left(W_{2,\hat{\nu}}^2(\mathcal{L}_{\hat{\nu},1}(\mu_1),\mathcal{L}_{\hat{\nu},2}(\mu_2))+\int_{\mathbb{R}^{2d}}\vert \omega-\omega'\vert^2\,d\hat{\nu}(\omega,\omega')\right).
\end{equation}

In addition, we obtain the following relation of the adapted Wasserstein distance with the classical Wasserstein distance over $\mathcal{P}_2(\mathbb{R}^{2d})$. We omit the straightforward proof.

\begin{pro}[$AW_2$ is stronger than $W_2$]\label{P-AW2-stronger-than-W2}
The following relation
$$W_2(\mu_1,\mu_2)\leq AW_2(\mu_1,\mu_2),$$
holds true for any $\mu_1,\mu_2\in \mathcal{P}_2(\mathbb{R}^{2d})$.
\end{pro}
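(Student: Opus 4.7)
\medskip

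\textbf{Proof plan for Proposition \ref{P-AW2-stronger-than-W2}.} The plan is to build, out of any admissible competitor for $AW_2$, a transference plan between $\mu_1$ and $\mu_2$ whose quadratic cost matches the $AW_2$-cost. Fix $\mu_1,\mu_2\in \mathcal{P}_2(\mathbb{R}^{2d})$ with marginals $\nu_i=\pi_{\omega\#}\mu_i$ and disintegrations $\mu_i(x,\omega)=\mu_i^\omega(x)\otimes\nu_i(\omega)$. For an arbitrary $\hat{\nu}\in\Gamma(\nu_1,\nu_2)$ I will combine it with fiberwise optimal plans, namely a Borel family $\{\gamma_o^{\omega,\omega'}\}_{(\omega,\omega')\in\mathbb{R}^{2d}}$ with $\gamma_o^{\omega,\omega'}\in\Gamma_o(\mu_1^\omega,\mu_2^{\omega'})$ for $\hat{\nu}$-a.e. $(\omega,\omega')$.

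The existence of such a Borel family is the first technical point; it follows by a standard measurable selection argument in the spirit of Lemma \ref{L-measurable-selection}, since $(\omega,\omega')\mapsto(\mu_1^\omega,\mu_2^{\omega'})$ is Borel-measurable into $(\mathcal{P}_2(\mathbb{R}^d),W_2)^2$ (by Proposition \ref{P-Borel-family-vs-measurability}) and $\Gamma_o$ is a closed-valued multimap with respect to which classical selection theorems \cite{CV-77} apply. With this family at hand, I will set
\begin{equation*}
\gamma(x,x',\omega,\omega'):=\gamma_o^{\omega,\omega'}(x,x')\otimes\hat{\nu}(\omega,\omega'),
\end{equation*}
and check that $\gamma\in\Gamma(\mu_1,\mu_2)$ by computing its projections: the $(x,\omega)$-marginal equals $\mu_1^\omega(x)\otimes\nu_1(\omega)=\mu_1$, and similarly for the $(x',\omega')$-marginal.

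Then, since $\gamma$ is admissible for the classical quadratic Wasserstein problem on $\mathbb{R}^{2d}$, using the fiberwise optimality to evaluate the $|x-x'|^2$ contribution we obtain
\begin{equation*}
W_2^2(\mu_1,\mu_2)\leq\int_{\mathbb{R}^{4d}}\bigl(|x-x'|^2+|\omega-\omega'|^2\bigr)\,d\gamma=\int_{\mathbb{R}^{2d}}\bigl(W_2^2(\mu_1^\omega,\mu_2^{\omega'})+|\omega-\omega'|^2\bigr)\,d\hat{\nu}(\omega,\omega').
\end{equation*}
Taking the infimum over $\hat{\nu}\in\Gamma(\nu_1,\nu_2)$ on the right-hand side yields $W_2^2(\mu_1,\mu_2)\leq AW_2^2(\mu_1,\mu_2)$, which is the desired inequality. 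The only step that is not completely routine is the measurable selection producing $\{\gamma_o^{\omega,\omega'}\}$, but this is covered by the same arguments recalled in Lemma \ref{L-measurable-selection}; everything else is a one-line disintegration computation.
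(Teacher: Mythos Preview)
Your argument is correct and is exactly the natural one: build an admissible plan in $\Gamma(\mu_1,\mu_2)$ by gluing a Borel family of fiberwise optimal plans $\gamma_o^{\omega,\omega'}\in\Gamma_o(\mu_1^\omega,\mu_2^{\omega'})$ onto any $\hat\nu\in\Gamma(\nu_1,\nu_2)$, compute its quadratic cost, and infimize over $\hat\nu$. The paper does not give a proof of this proposition (it writes ``We omit the straightforward proof''), so there is nothing to compare against; your write-up fills that gap cleanly, and the only nontrivial step---the measurable selection of $\gamma_o^{\omega,\omega'}$---is correctly handled by the same argument underlying Lemma~\ref{L-measurable-selection}.
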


The following property will be useful in the next section.

\begin{pro}[$AW_2$ vs translations]\label{P-AW2-translations}
Consider any $\mu\in \mathcal{P}_2(\mathbb{R}^{2d})$, any $\delta_x,\delta_\omega\in \mathbb{R}^d$, and define $\tilde{\mu}:=(x+\delta_x,\omega+\delta_\omega)_{\#}\mu$. Then, we have
$$AW_2^2(\mu,\tilde{\mu})=\vert \delta_x\vert^2+\vert \delta_\omega\vert^2.$$
\end{pro}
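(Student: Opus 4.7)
The plan is to prove the equality $AW_2^2(\mu,\tilde{\mu}) = |\delta_x|^2 + |\delta_\omega|^2$ by establishing matching upper and lower bounds, exploiting the fact that for a pure translation the optimal coupling is already deterministic (hence automatically adapted).

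For the upper bound, I will test the infimum in \eqref{E-adapted-Wassersein-distance} against a specific competitor. Letting $\nu := \pi_{\omega\#}\mu$ and $\tilde{\nu} := \pi_{\omega\#}\tilde{\mu} = (I+\delta_\omega)_\#\nu$, I choose the deterministic coupling $\hat{\nu} := (I, I+\delta_\omega)_\#\nu \in \Gamma(\nu,\tilde{\nu})$. Under this $\hat{\nu}$, one checks directly from Definition \ref{D-fibered-Wasserstein-double} and the definition of $\tilde{\mu}$ that the disintegrations satisfy $\tilde{\mu}^{\omega+\delta_\omega} = (x+\delta_x)_\#\mu^\omega$ for $\nu$-a.e.\ $\omega$. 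The translation map $x\mapsto x+\delta_x$ is then an admissible transport plan witnessing
\begin{equation*}
W_2^2(\mu^\omega, \tilde{\mu}^{\omega+\delta_\omega}) \leq |\delta_x|^2,
\end{equation*}
and integrating against $\hat{\nu}$ together with $|\omega-\omega'|^2 = |\delta_\omega|^2$ on $\supp\hat{\nu}$ yields the bound $AW_2^2(\mu,\tilde{\mu})\leq |\delta_x|^2+|\delta_\omega|^2$.

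For the lower bound, I appeal first to Proposition \ref{P-AW2-stronger-than-W2} to reduce to the classical Wasserstein distance on $\mathcal{P}_2(\mathbb{R}^{2d})$, obtaining $AW_2^2(\mu,\tilde{\mu}) \geq W_2^2(\mu,\tilde{\mu})$. It then suffices to show $W_2^2(\mu,\tilde{\mu}) \geq |\delta_x|^2+|\delta_\omega|^2$, which I will derive from Jensen's inequality applied to the convex function $z\mapsto |z|^2$ on $\mathbb{R}^{2d}$. Indeed, for any $\pi\in\Gamma(\mu,\tilde{\mu})$, the definition of $\tilde{\mu}$ forces
\begin{equation*}
\int_{\mathbb{R}^{4d}}\big((x,\omega)-(x',\omega')\big)\,d\pi = \int_{\mathbb{R}^{2d}}(x,\omega)\,d\mu - \int_{\mathbb{R}^{2d}}(x',\omega')\,d\tilde{\mu} = -(\delta_x,\delta_\omega),
\end{equation*}
so Jensen gives $\int (|x-x'|^2+|\omega-\omega'|^2)\,d\pi \geq |\delta_x|^2+|\delta_\omega|^2$; taking the infimum over $\pi$ finishes the lower bound.

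No step is especially delicate here: the only point requiring a brief verification is the identity $\tilde{\mu}^{\omega+\delta_\omega} = (x+\delta_x)_\#\mu^\omega$, which follows by a direct change-of-variables check against arbitrary test functions using the disintegration Theorem \ref{dis} and the definition $\tilde{\mu} = (x+\delta_x, \omega+\delta_\omega)_\#\mu$. Everything else is a clean application of Jensen's inequality and Proposition \ref{P-AW2-stronger-than-W2}, so the proof should fit in a few lines.
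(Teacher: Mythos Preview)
Your proof is correct and follows essentially the same route as the paper: both use the deterministic coupling $\hat{\nu}=(I,I+\delta_\omega)_\#\nu$ together with the fiberwise translation $x\mapsto x+\delta_x$ for the upper bound, and both invoke Proposition~\ref{P-AW2-stronger-than-W2} for the lower bound. The only difference is that the paper leaves the inequality $W_2^2(\mu,\tilde\mu)\geq |\delta_x|^2+|\delta_\omega|^2$ implicit (as a standard fact about Wasserstein distances under translations), whereas you spell it out via Jensen's inequality on the mean displacement---a harmless elaboration.
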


\begin{proof}
Let us consider $\nu:=\pi_{\omega\#}\mu$ and $\tilde{\nu}:=\pi_{\omega\#}\tilde{\mu}$. Then, we have
$$\tilde{\nu}=(\omega+\delta_\omega)_{\#}\nu,\quad \tilde{\mu}^{\omega+\delta_\omega}=(x+\delta_x)_{\#}\mu^\omega,$$
for $\nu$-a.e. $\omega\in \mathbb{R}^d$, where $\{\mu^\omega\}_{\omega\in \mathbb{R}^d}$ and $\{\tilde{\mu}^\omega\}_{\omega\in \mathbb{R}^d}$ are the families of disintegrations. Set the following transference plans
\begin{align*}
\hat{\nu}(\omega,\omega')&:=\nu(\omega)\otimes \delta_{\omega+\delta_\omega}(\omega')\in \Gamma(\nu,\tilde{\nu}),\\
\gamma^\omega(x,x')&:=\mu^\omega(x)\otimes \delta_{x+\delta_x}(x')\in \Gamma(\mu^\omega,\tilde{\mu}^{\omega+\delta_\omega}),
\end{align*}
for $\nu$-a.e. $\omega\in \mathbb{R}^d$. Then, by Definition \ref{D-adapted-Wasserstein-distance} we obtain
\begin{align*}
AW_2^2(\mu,\tilde{\mu})&\leq \int_{\mathbb{R}^{2d}}\left(\vert \omega-\omega'\vert^2+W_2^2(\mu^\omega,\tilde{\mu}^{\omega'})\right)\,d\hat{\nu}(\omega,\omega')\\
&=\int_{\mathbb{R}^d}\left(\vert \delta_\omega\vert^2+W_2^2(\mu^\omega,\tilde{\mu}^{\omega+\delta_\omega})\right)\,d\nu(\omega)\\
&\leq \vert \delta_\omega\vert^2+\int_{\mathbb{R}^d}\int_{\mathbb{R}^{2d}}\vert x-x'\vert^2\,d\gamma^\omega(x,x')\,d\nu(\omega)\\
&=\vert \delta_\omega\vert^2+\int_{\mathbb{R}^d}\int_{\mathbb{R}^d}\vert \delta_x\vert^2\,d\mu^\omega(x)\,d\nu(\omega)=\vert \delta_x\vert^2+\vert \delta_\omega\vert^2.
\end{align*}
The reverse inequality is clear by Proposition \ref{P-AW2-stronger-than-W2}.
\end{proof}

\subsection{Uniform-in-time contractivity in the fibered pseudometric}

In this section we prove the contractivity of $W_{2,\hat{\nu}}(\mathcal{L}_{\hat{\nu},1}(\boldsymbol{\mu}^1),\mathcal{L}_{\hat{\nu},2}(\boldsymbol{\mu}^2))$ along any couple of weak measure-valued solutions $\boldsymbol{\mu}^1$ and $\boldsymbol{\mu}^2$ of \eqref{kurakk-again} for any fixed transference plan $\hat{\nu}\in \Gamma(\nu_1,\nu_2)$. As a key observation, we prove that for compactly supported initial data the spacial diameter of the solutions to \eqref{kurakk-again} stays uniformly bounded indefinitely. This allows restricting to solutions of \eqref{kurakk-again} which lie in a special region of $\lambda$-convexity of the funcional $\mathcal{E}_W$ (recall again that in general we have $\lambda= 0$ in Theorem \ref{t-A-rig} $(iii)$). Such a property was already obtained in \cite{ZZ-20} for the analogous Kuramoto-type particle system in the one-dimensional setting. Later, it was exploited in \cite{CZ-21} to prove the uniform-in-time stability property of the particle system leading to uniform-in-time mean-field limits. Here, we address our $d$-dimensional problem \eqref{kurakk-again} at the purely kinetic level.

\begin{lem}[Uniform bounds of spacial diameters]\label{L-control-diameter}
Let $\mu_0\in \mathcal{P}(\mathbb{R}^{2d})$ be any compactly supported initial datum and consider the associated weak measure-valued solution $\boldsymbol{\mu}$ to \eqref{kurakk-again}. Define the partial supports in the variables $x$ and $\omega$ as follows
\begin{align}\label{E-partial-supports}
\begin{aligned}
\supp_x\mu_t&:=\{x\in\mathbb{R}^d:\,\exists\,\omega\in \mathbb{R}^d\mbox{ with }(x,\omega)\in \supp \mu_t\},\\
\supp_\omega\mu_0&:=\{\omega\in \mathbb{R}^d:\,\exists\,x\in \mathbb{R}^d\mbox{ with }(x,\omega)\in \supp \mu_0\},
\end{aligned}
\end{align}
and denote their associated diameters by
\begin{equation}\label{E-partial-diameters}
\mathcal{D}_x(t):=\diam\,(\supp_x \mu_t),\quad \mathcal{D}_\omega:=\diam\,(\supp_\omega\mu_0).\end{equation}
Then, the following uniform bound is fulfilled
$$\sup_{t\geq 0} \mathcal{D}_x(t)\leq \max\left\{\mathcal{D}_x(0),\left(\frac{\mathcal{D}_\omega}{K}\right)^{\frac{1}{1-\alpha}}\right\}.$$
\end{lem}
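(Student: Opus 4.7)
The strategy is to work along the Lagrangian flow of \eqref{kurakk} and control the pairwise distance between characteristics. Since the continuity equation only transports mass in the variable $x$, the $\omega$-marginal $\nu=\pi_{\omega\#}\mu_t$ and, in particular, $\supp_\omega\mu_t=\supp_\omega\mu_0$ are preserved for all $t\geq 0$. By the superposition principle (or the Lagrangian representation stemming from the gradient-flow identification in Theorem \ref{t-B-rig}), I would write $\mu_t=(X(t,\cdot,\cdot),\pi_\omega)_{\#}\mu_0$, where the characteristics $X(t,x_0,\omega)$ solve $\dot X=\boldsymbol{u}[\mu_t](X,\omega)$ with $X(0,x_0,\omega)=x_0$ and with $\omega$ constant along them. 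In particular $\supp_x\mu_t=X(t,\supp\mu_0)$.

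For two characteristics $X_i(t):=X(t,x_0^i,\omega_i)$ issued from $(x_0^i,\omega_i)\in\supp\mu_0$, $i=1,2$, I would then compute
$$
\frac{d}{dt}\tfrac{1}{2}|X_1-X_2|^2=(X_1-X_2)\cdot(\omega_1-\omega_2)-K\!\!\int_{\RR^{2d}}\!\!(X_1-X_2)\cdot\bigl(\nabla W(X_1-x')-\nabla W(X_2-x')\bigr)\,d\mu_t(x',\omega').
$$
The heterogeneity term is bounded by $|X_1-X_2|\mathcal{D}_\omega$ since $\omega_i$ are frozen in time. For the interaction term, a direct calculation of $\nabla^2 W$ followed by Cauchy--Schwarz yields the pointwise Hessian lower bound $v^\top\nabla^2 W(x)v\geq |x|^{-\alpha}|v|^2$ for every $x,v\in\RR^d$. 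Since $X_1,X_2,x'\in\supp_x\mu_t$, the segment joining $X_1-x'$ and $X_2-x'$ lies inside the closed ball of radius $R(t):=\mathcal{D}_x(t)$, and a mean-value integration along this segment yields
$$
(X_1-X_2)\cdot\bigl(\nabla W(X_1-x')-\nabla W(X_2-x')\bigr)\geq |X_1-X_2|^2\,R(t)^{-\alpha}
$$
pointwise in $(x',\omega')\in\supp\mu_t$. Integrating against the probability $\mu_t$ and simplifying by $|X_1-X_2|$ produces the differential inequality
$$
\frac{d}{dt}|X_1(t)-X_2(t)|\leq\mathcal{D}_\omega-K\,|X_1(t)-X_2(t)|\,R(t)^{-\alpha}.
$$

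Taking the supremum over all admissible pairs and using the surjectivity $\supp_x\mu_t=X(t,\supp\mu_0)$, a standard Dini derivative argument for suprema of $C^1$ families provides the scalar differential inequality $D^+R(t)\leq \mathcal{D}_\omega-K\,R(t)^{1-\alpha}$. The right-hand side is strictly negative whenever $R(t)>r^*:=(\mathcal{D}_\omega/K)^{1/(1-\alpha)}$, so $R$ cannot cross the threshold $\max\{R(0),r^*\}$ from below; a scalar comparison argument therefore yields the bound $R(t)\leq\max\{R(0),r^*\}$ for every $t\geq 0$.

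The main obstacle is to rigorously justify this Lagrangian analysis for the weakly singular kernel $\phi(r)=r^{-\alpha}$: it renders $\boldsymbol{u}[\mu_t]$ only $(1-\alpha)$-H\"older, so classical uniqueness of characteristics is lost and pointwise computations along them require care. The cleanest route is to regularize $\phi$ into $\phi_\varepsilon(r)=(r^2+\varepsilon^2)^{-\alpha/2}$, establish the diameter bound for the smooth approximations (for which the flow is classically well posed) with $\varepsilon$-independent constants, and pass to the limit $\varepsilon\to 0$ using the stability afforded by Theorem \ref{t-A-rig} (iii); alternatively one may directly invoke an Ambrosio-type superposition principle for \eqref{kurakk}.
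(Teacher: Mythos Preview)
Your approach is essentially the same as the paper's: both work along the characteristic flow, compute the time derivative of the squared distance between a maximizing pair of characteristics, use the Hessian lower bound $v^\top D^2W(x)\,v\geq \phi(|x|)|v|^2$ (the paper's Lemma~\ref{C-convexity}) together with the uniform diameter control on the segment, and arrive at the same scalar differential inequality $\frac{d^+}{dt}\mathcal{D}_x(t)\leq \mathcal{D}_\omega-K\,\mathcal{D}_x(t)^{1-\alpha}$.

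The one substantive difference is your treatment of the ``main obstacle''. You propose either an $\varepsilon$-regularization of $\phi$ or a superposition principle to justify the Lagrangian representation, on the grounds that $\boldsymbol{u}[\mu_t]$ is only $(1-\alpha)$-H\"older. The paper avoids this detour entirely: since $W$ is convex, the velocity field $\boldsymbol{u}[\mu_t]$ is one-sided Lipschitz in $x$ (Corollary~\ref{C-sided-Lipschitz}), which is enough for the forward-in-time characteristic flow $X(t;x,\omega)$ to be globally and uniquely defined. Thus $\mu_t=(X(t;\cdot,\cdot),\omega)_\#\mu_0$ holds directly, without any approximation argument. Your regularization route would certainly work, but it is unnecessary here; recognizing the one-sided Lipschitz structure is the cleaner justification and worth internalizing for this class of weakly singular aggregation models.
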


\begin{proof}
Consider the characteristic flow $X=X(t;x,\omega)$ associated with $\mu_t$:
\begin{equation}\label{E-characteristic-system}
\frac{d}{dt}X(t;x,\omega)=\boldsymbol{u}[\mu_t](X(t;x,\omega),\omega),\quad X(0;x,\omega)=x.
\end{equation}
Note that by Corollary \ref{C-sided-Lipschitz} in Appendix \ref{Appendix-convexity-W} the characteristic flow is indeed globally and uniquely defined forwards in time. In addition, by \cite[Theorem 8.2.1]{AGS-08} and the well posedness of \eqref{E-characteristic-system} the weak measure-valued solution $\boldsymbol{\mu}$ must propagate along the flow, namely
$$\mu_t=(X(t;x,\omega),\omega)_{\#}\mu_0,$$
for any $t\in \mathbb{R}_+$. Define the set of critical points where the spacial diameter is attained, {\it i.e.},
$$C(t):=\{(z_1,z_2)\in \supp\mu_0\times \supp\mu_0:\, \vert X(t;z_1)-X(t;z_2)\vert=\mathcal{D}_x(t)\},$$
where we denote $z_i=(x_i,\omega_i)$ for simplicity. Then, we obtain
$$\frac{1}{2}\mathcal{D}_x(t)^2=\max_{(z_1,z_2)\in C(t)} \frac{1}{2}\vert X(t;z_1)-X(t;z_2)\vert^2.$$
Since the flow $(t,z)\mapsto X(t;z)$ is continuous, differentiable with respect to $t$ with continuous derivative, we obtain from the usual argument in \cite[Corollary 3.3, Chapter 1]{DR-95}
\begin{equation}\label{E-diameter-Dx-derivative-0}
\frac{d^+}{dt}\frac{1}{2}\mathcal{D}_x(t)^2=\max_{(z_1,z_2)\in C(t)} (X(t;z_1)-X(t;z_2))\cdot \left(\frac{\partial X}{\partial t}(t;z_1)-\frac{\partial X}{\partial t}(t;z_2)\right),
\end{equation}
where $d^+/dt$ denotes the right derivative. Given any $(z_1,z_2)\in C(t)$ we obtain
\begin{align}\label{E-diameter-Dx-derivative-1}
\begin{aligned}
(X&(t;z_1)-X(t;z_2))\cdot \left(\frac{\partial X}{\partial t}(t;z_1)-\frac{\partial X}{\partial t}(t;z_2)\right)=(X(t;z_1)-X(t;z_2))\cdot (\omega_1-\omega_2)\\
&-K\int_{\mathbb{R}^{2d}}\left(\nabla W(X(t;z_1)-X(t;z))-\nabla W(X(t;z_2)-X(t;z))\right)\cdot (X(t;z_1)-X(t;z_2))\,d\mu_0(z).
\end{aligned}
\end{align}
In \eqref{E-diameter-Dx-derivative-1} we can bound the first term in the right hand side by the Cauchy--Schwarz inequality, and we can estimate the second term by using the convexity properties in Lemma \ref{C-convexity}. Therefore, \eqref{E-diameter-Dx-derivative-0} can be bounded as follows
\begin{equation}\label{E-diameter-Dx-derivative}
\frac{d^+}{dt}\frac{1}{2}\mathcal{D}_x(t)^2\leq \mathcal{D}_x(t)\mathcal{D}_\omega -K\phi(\mathcal{D}_x(t))\mathcal{D}_x(t)^2,
\end{equation}
for any $t\geq 0$. We then conclude from \eqref{E-diameter-Dx-derivative} by a clear continuity argument.
\end{proof}

In addition, we obtain the following information about the center of mass, which will be useful in our contractivity result.

\begin{lem}[Control of the center of mass]\label{L-control-center-mass}
Let $\mu_0\in \mathcal{P}(\mathbb{R}^{2d})$ be any initial datum and consider the associated weak measure-valued solution $\boldsymbol{\mu}$ to \eqref{kurakk-again}. 
\begin{enumerate}[label=(\roman*)]
\item {\it (Galilean invariance)} For any $\omega_0\in \mathbb{R}^d$ let us consider the push-forward
$$\tilde{\mu}_t:=(x+\omega_0\,t,\omega+\omega_0)_{\#}\mu_t,\quad t\geq 0,$$
Then, $\tilde{\boldsymbol{\mu}}$ also solves \eqref{kurakk-again} with new initial datum $\tilde{\mu}_0=(x,\omega+\omega_0)_{\#}\mu_0$.
\item {\it (Speed of the center of mass)} The following relation is fulfilled
$$\frac{d}{dt}\int_{\mathbb{R}^{2d}}x\,d\mu_t(x,\omega)=\int_{\mathbb{R}^{2d}}\omega\,d\mu_0(x,\omega),$$
for any $t\geq 0$.
\end{enumerate}
\end{lem}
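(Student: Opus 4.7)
\textbf{Proof plan for Lemma \ref{L-control-center-mass}.} For part $(i)$ my strategy is to check directly that $\tilde{\boldsymbol{\mu}}$ satisfies the distributional formulation of \eqref{kurakk} from Definition \ref{D-distributional-solution-kurakk}. The main observation is that under the translation $T_t(x,\omega) := (x+\omega_0 t,\,\omega+\omega_0)$, the evenness of $W$ (hence the translation-invariance of $\nabla W$) produces the velocity transformation
\begin{equation*}
\boldsymbol{u}[\tilde{\mu}_t](y,\eta) = \eta - K\int_{\mathbb{R}^{2d}}\nabla W(y-y')\,d\tilde{\mu}_t(y',\eta') = \boldsymbol{u}[\mu_t](y-\omega_0 t,\eta-\omega_0) + \omega_0,
\end{equation*}
since the $\omega_0 t$ shifts cancel inside $\nabla W$. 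Then for any $\varphi\in C^\infty_c([0,\infty)\times\mathbb{R}^{2d})$, I would set $\psi(t,x,\omega):=\varphi(t,x+\omega_0 t,\omega+\omega_0)$, test the weak formulation for $\boldsymbol{\mu}$ against $\psi$, change variables via $T_t$, and verify that the extra $\omega_0 \cdot \nabla_x\varphi$ term produced by $\partial_t\psi$ matches exactly the $+\omega_0$ correction to the velocity field above. This gives the weak formulation for $\tilde{\boldsymbol{\mu}}$ with initial datum $(T_0)_\#\mu_0=(x,\omega+\omega_0)_\#\mu_0$.

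For part $(ii)$ I would test the continuity equation \eqref{kurakk} against a truncated version of $\varphi(x,\omega)=x_j$ for each coordinate. The truncation is needed because $x_j\notin C^\infty_c$, but since $\mu_t\in\mathcal{P}_{2,\nu}(\mathbb{R}^{2d})$ and $\boldsymbol{u}[\mu_t]\in L^2_{\mu_t}$ by Lemma \ref{L-subdifferential-slope-W}, a standard cut-off argument (multiplying by $\xi_R(x)=\xi(|x|/R)$ and sending $R\to\infty$ with dominated convergence, exploiting the finite $2$-moment) removes this obstacle. This yields
\begin{equation*}
\frac{d}{dt}\int_{\mathbb{R}^{2d}} x\,d\mu_t(x,\omega) = \int_{\mathbb{R}^{2d}} \boldsymbol{u}[\mu_t](x,\omega)\,d\mu_t(x,\omega).
\end{equation*}
The interaction contribution $-K\iint \nabla W(x-x')\,d\mu_t(x,\omega)\,d\mu_t(x',\omega')$ vanishes by antisymmetry of $\nabla W$ (since $W$ is even), leaving only $\int \omega\,d\mu_t(x,\omega)$, which equals $\int \omega\,d\nu(\omega)=\int \omega\,d\mu_0(x,\omega)$ since the $\omega$-marginal $\nu$ is conserved along the flow (a direct consequence of the divergence being taken only with respect to $x$ in \eqref{kurakk}).

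The main obstacle I anticipate is keeping the weak/distributional computations rigorous given that $\nabla W$ is only $(1-\alpha)$-H\"older continuous with $\alpha\in(0,1)$ (so $\nabla W(x-x')$ is not a classical test function against $\mu_t\otimes\mu_t$), and that we must work with $x_j$ which is unbounded. Both issues dissolve once we exploit the uniform-in-time spacial confinement from Lemma \ref{L-control-diameter} (for compactly supported data) or, in full generality, the $L^2_{\mu_t}$-regularity of $\boldsymbol{u}[\mu_t]$ provided by the fibered gradient-flow framework of Theorem \ref{t-B-rig}, which legitimizes the cut-off procedure and the antisymmetrization of the double integral.
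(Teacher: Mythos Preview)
The paper states this lemma without proof, treating both items as routine consequences of the structure of \eqref{kurakk}. Your proposal fills in exactly the details one would expect: for $(i)$ the change of test function $\psi(t,x,\omega)=\varphi(t,x+\omega_0 t,\omega+\omega_0)$ together with the velocity identity $\boldsymbol{u}[\tilde\mu_t]\circ T_t=\boldsymbol{u}[\mu_t]+\omega_0$ is the standard Galilean-invariance computation, and for $(ii)$ the cut-off of $x_j$ combined with the antisymmetry of $\nabla W$ and the conservation of the $\omega$-marginal is precisely the expected argument. Your approach is correct and matches what the authors had in mind.
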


\begin{proof}
Consider any $\varphi\in C^\infty_c([0,\infty)\times \mathbb{R}^d)$ and define $\bar\varphi(t,x,\omega)=\varphi(t,x+\omega_0 t,\omega+\omega_0)$. We can use $\bar\varphi$ as a new test function in the weak formulation for $\boldsymbol{\mu}$ according to Definition \ref{D-distributional-solution-kurakk}. Specifically, note that the integrand in the left hand side reads
$$\partial_t\bar\varphi(t,x,\omega)+\nabla_x\bar\varphi(t,x,\omega)\cdot \boldsymbol{u}[\mu_t](x,\omega)=\left(\partial_t\varphi+\nabla_x\varphi\cdot \boldsymbol{u}[\tilde{\mu}_t]\right)(x+\omega_0t,\omega+\omega_0).$$
Hence, the weak formulation for $\boldsymbol{\mu}$ takes the form
$$\int_0^\infty\int_{\mathbb{R}^{2d}}\left(\partial_t\varphi+\nabla_x\varphi\cdot \boldsymbol{u}[\tilde{\mu}_t]\right)(x+\omega_0t,\omega+\omega_0)\,d\mu_t(x,\omega)\,dt=-\int_{\mathbb{R}^{2d}}\varphi(0,x,\omega+\omega_0)\,d\mu_0(x,\omega).$$
By definition of $\tilde{\boldsymbol{\mu}}$ as pushfoward of $\boldsymbol{\mu}$ and by arbitrariness of $\varphi$, we infer that $\tilde{\boldsymbol{\mu}}$ is again a weak solution of \eqref{kurakk-again}. The proof of $(ii)$ follows a similar argument taking test functions $\varphi(t,x,\omega)=\eta(t)\,x$ with $\eta\in C^\infty_c([0,\infty)])$ (a rigorous argument requires a standard cut-off for $x$). In this case, the integrand in the left hand side of the weak formulation of $\boldsymbol{\mu}$ takes the form
$$\partial_t\varphi(t,x,\omega)+\nabla_x\varphi(t,x,\omega)\cdot \boldsymbol{u}[\mu_t](x,\omega)=\dot{\eta}(t)\,x+\eta(t)\,\boldsymbol{u}[\mu_t](x,\omega).$$
Hence the weak formulation of $\boldsymbol{\mu}$ reads
$$\int_0^\infty\dot{\eta}(t)\int_{\mathbb{R}^{2d}}x\,d\mu_t(x,\omega)\,dt+\int_0^\infty\eta(t)\int_{\mathbb{R}^{2d}}\boldsymbol{u}[\mu_t](x,\omega)\,d\mu_t(x,\omega)\,dt=-\eta(0)\int_{\mathbb{R}^{2d}}x\,d\mu_0(x,\omega).$$
Therefore, we obtain $(ii)$ in weak form thanks to the antisymmetry of $\nabla W$, which implies
$$\int_{\mathbb{R}^{2d}}\boldsymbol{u}[\mu_t](x,\omega)\,d\mu_t(x,\omega)=\int_{\mathbb{R}^{2d}}\omega\,d\mu_t(x,\omega)=\int_{\mathbb{R}^{2d}}\omega\,d\mu_0(x,\omega).$$
\end{proof}

By items $(i)$ and $(ii)$ above, without loss of generality we can always assume (and we shall often do) that the distribution with respect to $\omega$ is centered at the origin for simplicity, {\it i.e.},
\begin{equation}\label{E-centered-nu}
\omega_c:=\int_{\mathbb{R}^d}\omega\,d\nu(\omega)=\int_{\mathbb{R}^{2d}}\omega\,d\mu_0(x,\omega)=0.
\end{equation}
Otherwise, we can always perform an appropriate Galilean transformation according to $(i)$ with speed given by $\omega_c$ to guarantee such a condition. We are now ready to prove the uniform-in-time contractivity of $W_{2,\hat{\nu}}$ along solutions with the same center of mass.

\begin{lem}[Uniform contractivity in $W_{2,\hat{\nu}}$]\label{L-contractivity}
Consider any compactly supported initial data $\mu^1_0,\mu^2_0\in \mathcal{P}(\mathbb{R}^{2d})$ with the same center of mass, {\it i.e.},
\begin{equation}\label{E-center-mass}
\int_{\mathbb{R}^{2d}}(x,\omega)\,d\mu^1_0(x,\omega)=\int_{\mathbb{R}^{2d}}(x,\omega)\,d\mu^2_0(x,\omega).
\end{equation}
Define the marginals $\nu^1:=\pi_{\omega\#}\mu^1_0$ and $\nu^2:=\pi_{\omega\#}\mu^2_0$, set any $\hat{\nu}\in \Gamma_o(\nu_1,\nu_2)$ and let $\boldsymbol{\mu}^1$ and $\boldsymbol{\mu}^2$ be the weak measure-valued solutions to \eqref{kurakk-again} issued at $\mu^1_0$ and $\mu^2_0$. Then, we have
\begin{multline*}
W_{2,\hat{\nu}}(\mathcal{L}_{\hat{\nu},1}(\mu_t^1),\mathcal{L}_{\hat{\nu},2}(\mu_t^2))\\
\leq e^{-2K\phi(D_0)t} \,W_{2,\hat{\nu}}(\mathcal{L}_{\hat{\nu},1}(\mu_0^1),\mathcal{L}_{\hat{\nu},2}(\mu_0^2))+\frac{1}{2K\phi(D_0)}\int_{\mathbb{R}^{2d}}\vert \omega-\omega'\vert^2\,d\hat{\nu}(\omega,\omega'),
\end{multline*}
for each $t\geq 0$, where $D_0\in \mathbb{R}_+$ is given by
\begin{equation}\label{E-diameter-Dx-uniform-bound}
D_0:=\max\left\{\mathcal{D}_x^1(0),\mathcal{D}_x^2(0),\left(\frac{\mathcal{D}_\omega^1}{K}\right)^{\frac{1}{1-\alpha}},\left(\frac{\mathcal{D}_\omega^2}{K}\right)^{\frac{1}{1-\alpha}}\right\},
\end{equation}
and $\mathcal{D}_x^i(t)$ and $\mathcal{D}_\omega^i$ with $i=1,2$ are the diameters of the support of $\mu^i$ with respect to the variable $x$ and $\omega$ according to \eqref{E-partial-diameters}.
\end{lem}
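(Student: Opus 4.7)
The plan is to compute the time derivative of $W_{2,\hat{\nu}}^2$ along the lifted curves by using the abstract differentiation formulae of Section \ref{sec:fibered-wasserstein}, and then to exploit the uniform diameter bound of Lemma \ref{L-control-diameter} together with the strong convexity of $W$ on balls. Concretely, I would first work in the enlarged fibered Wasserstein space $(\mathcal{P}_{2,\hat{\nu}}(\mathbb{R}^{3d}),W_{2,\hat{\nu}})$ of Definition \ref{D-fibered-Wasserstein-double}, whose theory carries over from Section \ref{sec:fibered-wasserstein} verbatim (the equal-dimension assumption between fibers and state space plays no role there). The lifted curves $\bar\mu^i_t := \mathcal{L}_{\hat{\nu},i}(\mu_t^i)$ are absolutely continuous in $W_{2,\hat{\nu}}$ and each satisfies a continuity equation (with divergence only in $x$) driven by $\boldsymbol{u}[\mu^1_t](x,\omega)$ or $\boldsymbol{u}[\mu^2_t](x,\omega')$ respectively. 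The two-curve analog of Proposition \ref{T-dif}, obtained by combining two applications of the one-curve formula, then gives
\begin{equation*}
\frac{d}{dt}\frac{1}{2}W_{2,\hat\nu}^2(\bar\mu^1_t,\bar\mu^2_t)
=\iint_{\mathbb{R}^{4d}} (x-x')\cdot\bigl(\boldsymbol{u}[\mu^1_t](x,\omega) - \boldsymbol{u}[\mu^2_t](x',\omega')\bigr)\,d\gamma_t^{\omega,\omega'}(x,x')\,d\hat\nu(\omega,\omega'),
\end{equation*}
where $\gamma_t^{\omega,\omega'}\in\Gamma_o(\mu_t^{1,\omega},\mu_t^{2,\omega'})$ is a Borel family of fiber-wise optimal plans ({\it cf.} Lemma \ref{L-measurable-selection}).

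Next I decompose the velocity difference into its heterogeneity part $\omega-\omega'$ and its interaction part. Using that $\nu^i$ is the $i$-th marginal of $\hat\nu$, the convolutions defining $\boldsymbol{u}[\mu^i_t]$ rewrite as four-fold integrals against the joint measure $d\bar\gamma_t:=d\gamma_t^{\omega,\omega'}\,d\hat\nu(\omega,\omega')$. The classical symmetrization trick---exchanging the outer and inner integration quadruples and using oddness of $\nabla W$---turns the interaction contribution into
\begin{equation*}
I_2 := -\frac{K}{2}\iiiint_{\mathbb{R}^{8d}}\bigl[(x-y)-(x'-y')\bigr]\cdot\bigl[\nabla W(x-y)-\nabla W(x'-y')\bigr]\,d\bar\gamma_t\,d\bar\gamma_t.
\end{equation*}

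The crucial quantitative input now comes from Lemma \ref{L-control-diameter}: the $x$-supports of $\mu^i_t$ have diameter at most $D_0$ uniformly in $t$, so $|x-y|\leq D_0$ and $|x'-y'|\leq D_0$ throughout. On the closed ball of radius $D_0$ the potential $W$ is uniformly $\phi(D_0)$-strongly convex (the smallest eigenvalue of $\nabla^2 W(z)$ equals $\phi(|z|)\geq \phi(D_0)$ for $|z|\leq D_0$, {\it cf.} Appendix \ref{Appendix-convexity-W}), which yields the pointwise bound
\begin{equation*}
\bigl[(x-y)-(x'-y')\bigr]\cdot\bigl[\nabla W(x-y)-\nabla W(x'-y')\bigr]\geq \phi(D_0)\,|(x-y)-(x'-y')|^2.
\end{equation*}
Meanwhile, the hypothesis \eqref{E-center-mass} combined with Lemma \ref{L-control-center-mass}(ii) propagates equality of the $x$-centers of mass of $\mu^1_t$ and $\mu^2_t$ for all $t\geq 0$, hence $\int(x-x')\,d\bar\gamma_t=0$; expanding the square then shows that $\iiiint|(x-x')-(y-y')|^2\,d\bar\gamma_t\,d\bar\gamma_t = 2W_{2,\hat\nu}^2(\bar\mu^1_t,\bar\mu^2_t)$, so $I_2\leq -K\phi(D_0)\,W_{2,\hat\nu}^2(\bar\mu^1_t,\bar\mu^2_t)$. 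A Cauchy--Schwarz inequality together with a weighted Young inequality on the heterogeneity term absorb part of the dissipation and leave a source term involving $\int|\omega-\omega'|^2\,d\hat\nu$; Gronwall's lemma applied to the resulting linear differential inequality yields the announced estimate.

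The main obstacle is the rigorous set-up of this computation: one must verify the two-curve version of Proposition \ref{T-dif} in the enlarged fibered space, guarantee Borel measurability of the fiber-wise selections $\gamma_t^{\omega,\omega'}$ to legitimize the Fubini-type exchanges during the symmetrization, and match the fiber marginals of $\bar\gamma_t$ with those of the auxiliary joint measure built out of $\gamma^{\tilde\omega,\tilde\omega'}_t\otimes\hat\nu$. All these technical issues are handled by combining Lemma \ref{L-measurable-selection} with the measurability characterization of Proposition \ref{P-Borel-family-vs-measurability}.
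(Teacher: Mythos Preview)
Your proposal is correct and follows essentially the same route as the paper: lift both curves to the enlarged fibered space $(\mathcal{P}_{2,\hat\nu}(\mathbb{R}^{3d}),W_{2,\hat\nu})$, differentiate $\tfrac12 W_{2,\hat\nu}^2$ via the analogue of Proposition \ref{T-dif}, split into the heterogeneity term $I_1$ and the symmetrized interaction term $I_2$, bound $I_2$ through Lemma \ref{C-convexity} combined with the uniform diameter control of Lemma \ref{L-control-diameter}, kill the cross term using the preserved common center of mass (Lemma \ref{L-control-center-mass}), and conclude by Gr\"onwall. The only cosmetic difference is that the paper handles $I_1$ by a straight Cauchy--Schwarz bound $I_1\leq \bigl(\int|\omega-\omega'|^2\,d\hat\nu\bigr)^{1/2}W_{2,\hat\nu}$ and then integrates the resulting linear ODE for $W_{2,\hat\nu}$ itself, rather than using a weighted Young inequality that absorbs part of the dissipation.
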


\begin{proof}
For simplicity of notation we shall denote the liftings
$$\bar\mu_t^1:=\mathcal{L}_{\hat{\nu},1}(\mu_t^1),\quad \bar\mu_t^2:=\mathcal{L}_{\hat{\nu},2}(\mu_t^2),$$
according to Definition \ref{D-liftings}. It is then clear that $\bar{\boldsymbol{\mu}}^1$ and $\bar{\boldsymbol{\mu}}^2$ are weak-measure-valued solutions to the following fibered continuity equations
$$
\partial_t \bar{\mu}_t^1+\divop_x(\bar{\boldsymbol{u}}_t^1\,\bar{\mu}_t^1)=0,\quad 
\partial_t \bar{\mu}_t^2+\divop_x(\bar{\boldsymbol{u}}_t^2\,\bar{\mu}_t^2)=0,
$$
in distributional sense, where $\bar{\boldsymbol{u}}_t^i\in L^2_{\bar{\mu}_t^i}(\mathbb{R}^{3d},\mathbb{R}^d)$ are given by
\begin{align*}
&\bar{\boldsymbol{u}}_t^1(x,\omega,\omega'):=\omega-K\int_{\mathbb{R}^{2d}}\nabla W(x-x')\,d\mu_t^1(x',\omega')=:\omega-K\boldsymbol{\xi}_t^1(x),\\
&\bar{\boldsymbol{u}}_t^2(x,\omega,\omega'):=\omega'-K\int_{\mathbb{R}^{2d}}\nabla W(x-x')\,d\mu_t^2(x',\omega')=:\omega'-K\boldsymbol{\xi}_t^2(x).
\end{align*}
Using the analogous version of Theorem \ref{equiv} on the fibered space $(\mathcal{P}_{2,\hat{\nu}}(\mathbb{R}^{3d}),W_{2,\hat{\nu}})$ in Definition \ref{D-fibered-Wasserstein-double} we obtain $\bar{\boldsymbol{\mu}}^1,\bar{\boldsymbol{\mu}}^2\in AC^2_{loc}(0,+\infty;\mathcal{P}_{2,\hat{\nu}}(\mathbb{R}^{3d}))$. Our next goal is then to apply the differentiability Theorem \ref{T-dif}. To such an end, for almost every $t\geq 0$, consider $\gamma_t\in \Gamma_{o,\hat{\nu}}(\bar{\mu}^1_t,\bar{\mu}^2_t)$ an optimal $\hat{\nu}$-admissible transference plan, that is, 
$$\gamma_t(x,x',\omega,\omega')=\gamma_t^{\omega,\omega'}(x,x')\otimes \hat{\nu} (\omega,\omega'),$$
where $\gamma_t^{\omega,\omega'}\in \Gamma_o(\mu^{1,\omega}_t,\mu^{2,\omega'}_t)$ are optimal transference plan for $\hat{\nu}$-a.e. $(\omega,\omega')\in \mathbb{R}^{2d}$. Then,
\begin{equation}\label{E1}
\frac{d}{dt}\frac{1}{2}W_{2,\hat{\nu}}^2(\bar{\mu}^1_t,\bar{\mu}^2_t)=\int_{\mathbb{R}^{4d}}(x-x')\cdot(\bar{\boldsymbol{u}}^1_t(x,\omega,\omega')-\bar{\boldsymbol{u}}^2_t(x',\omega,\omega'))\,d\gamma_t(x,x',\omega,\omega')= I_1(t)+I_2(t),
\end{equation}
for {\it a.e.} $t\geq 0$, where each term reads
\begin{align*}
I_1(t)&:=\int_{\mathbb{R}^{4d}}(x-x')\cdot(\omega-\omega')\,d\gamma_t(x,x',\omega,\omega'),\\
I_2(t)&:=-K\iint_{\mathbb{R}^{2d}\times\mathbb{R}^{2d}}(x-x')\cdot (\boldsymbol{\xi}_t^1(x)-\boldsymbol{\xi}_t^2(x'))\,d\gamma_t^{\omega,\omega'}(x,x')\,d\bar\nu(\omega,\omega').
\end{align*}
On the one hand, Cauchy--Schwarz inequality leads to
\begin{equation}\label{E2}
I_1\leq \left(\int_{\mathbb{R}^{2d}}\vert \omega-\omega'\vert^2\,d\hat{\nu}(\omega,\omega')\right)^{1/2}W_{2,\hat{\nu}}(\bar{\mu}_t^1,\bar{\mu}_t^2),
\end{equation}
On the other hand, notice that we can restate
\begin{align*}
\boldsymbol{\xi}_t^1(x_1)&=\int_{\mathbb{R}^{4d}}\nabla W(x_1-x_2)\,d\gamma_t^{\omega_2,\omega_2'}(x_2,x_2')\,d\hat{\nu}(\omega_2,\omega_2'),\\
\boldsymbol{\xi}_t^2(x_1')&=\int_{\mathbb{R}^{4d}}\nabla W(x_1'-x_2')\,d\gamma_t^{\omega_2,\omega_2'}(x_2,x_2')\,d\hat{\nu}(\omega_2,\omega_2').
\end{align*}
Consequently, using variables $(x_1,x_1',\omega_1,\omega_1')$ instead of $(x,x',\omega,\omega')$ in the above integral for $I_2(t)$ and plugging the above expression for $\boldsymbol{\xi}^1_t(x_1)-\boldsymbol{\xi}^2_t(x_1')$ we obtain
\begin{align*}
I_2&=-K\iint_{\mathbb{R}^{4d}\times\mathbb{R}^{4d}}(x_1-x_1')\cdot(\nabla W(x_1-x_2)-\nabla W(x_1'-x_2'))\\
&\hspace{8.5cm}\times d\gamma_t(x_1,x_1',\omega_1,\omega_1')\,d\gamma_t(x_2,x_2',\omega_2,\omega_2'),\\
&=-\frac{K}{2}\iint_{\mathbb{R}^{4d}\times\mathbb{R}^{4d}}((x_1-x_1')-(x_2-x_2'))\cdot(\nabla W(x_1-x_2)-\nabla W(x_1'-x_2'))\\
&\hspace{8.5cm}\times d\gamma_t(x_1,x_1',\omega_1,\omega_1')\,d\gamma_t(x_2,x_2',\omega_2,\omega_2'),
\end{align*}
where in the last part we have used a standard symmetrization of the integral which uses that $\nabla W$ is an odd function. Then, the convexity properties in Lemma \ref{C-convexity} imply that
$$I_2\leq -K\iint_{\mathbb{R}^{4d}\times\mathbb{R}^{4d}}\Lambda_1(x_1-x_2,x_1'-x_2',\alpha)\vert (x_1-x_2)-(x_1'-x_2')\vert^2\,d\gamma_t(x_1,x_1',\omega_1,\omega_1')\,d\gamma_t(x_2,x_2',\omega_2,\omega_2').$$
Consider any $(x_1,x_1',\omega_1,\omega_1')$ and $(x_2,x_2',\omega_2,\omega_2')$ in $\supp \gamma_t$. Then, by Lemma \ref{L-control-diameter} we obtain
\begin{align*}
\Lambda_1(x_1-x_2,x_1'-x_2',\alpha)&=\int_0^1 \phi(\vert (1-\tau)(x_1-x_2)+\tau (x_1'-x_2')\vert)\,d\tau\\
&\geq \int_0^1\phi((1-\tau) \mathcal{D}_x^1(t)+\tau \mathcal{D}_x^2(t))\,d\tau\geq  \phi(D_0),
\end{align*}
with $D_0$ given by \eqref{E-diameter-Dx-uniform-bound}. Then, we achieve the inequality
\begin{align}\label{E3}
\begin{aligned}
I_2&\leq -K\phi(D_0)\iint_{\mathbb{R}^{4d}\times\mathbb{R}^{4d}}\vert (x_1-x_2)-(x_1'-x_2')\vert^2\,d\gamma_t(x_1,x_1',\omega_1,\omega_1')\,d\gamma_t(x_2,x_2',\omega_2,\omega_2')\\
&=-2K\phi(D_0)W_{2,\hat{\nu}}^2(\bar{\mu}^1_t,\bar{\mu}^2_t)+ 2K\phi(D_0)\left\vert\int_{\mathbb{R}^{4d}}(x-x')\,d\gamma_t(x,x',\omega,\omega')\right\vert^2.
\end{aligned}
\end{align}
By \eqref{E-center-mass} and Lemma \ref{L-control-center-mass}, the second term in the right-hand side of \eqref{E3} vanishes because we have
$$
\int_{\mathbb{R}^{4d}}x\,d\gamma_t(x,x',\omega,\omega')=\int_{\mathbb{R}^{2d}}x\,d\mu_t^1(x,\omega)=\int_{\mathbb{R}^{2d}}x\,d\mu_t^2(x,\omega)=\int_{\mathbb{R}^{4d}}x'\,d\gamma_t(x,x',\omega,\omega'),
$$
for all $t\geq 0$. Altogether, the inequalities \eqref{E1}, \eqref{E2} and \eqref{E3} imply that
\begin{equation}\label{E4}
\frac{d}{dt}\frac{1}{2}W_{2,\hat{\nu}}^2(\bar{\mu}^1_t,\bar{\mu}^2_t)\leq -2K\phi(D_0)W_{2,\hat{\nu}}^2(\bar{\mu}^1_t,\bar{\mu}^2_t)+\left(\int_{\mathbb{R}^{2d}}\vert \omega-\omega'\vert^2\,d\hat{\nu}(\omega,\omega')\right)^{1/2}W_{2,\hat{\nu}}(\bar{\mu}_t^1,\bar{\mu_t}^2),
\end{equation}
for almost every $t\geq 0$. Then, we conclude by Gr\"{o}nwall's lemma.
\end{proof}

The above result can be regarded as the natural counterpart of \cite[Theorem 3.1]{CZ-21} to the kinetic equation \eqref{kurakk-again} in dimension $d\in \mathbb{N}$. We remark that the hypothesis \eqref{E-center-mass} has been crucially used in the above proof to kill the crossed term in \eqref{E3}. Otherwise, the distance between the centers of mass would grow linearly with time, thus leading to a linearly-in-time growing remainder that breaks the above uniform-in-time contractivity estimate.

\subsection{Convergence to equilibrium and uniform-in-time mean-field limit}

Using the uniform-in-time contractivity in Lemma \ref{L-contractivity} along with Remark \ref{R-fibered-Wasserstein-double-identical} (which relates $W_{2,\hat{\nu}}$ with $W_{2,\nu}$) we obtain the following results.

\begin{theo}[Theorem B: Uniform contractivity in $W_{2,\nu}$]\label{T-contractivity-W2nu}
Consider any $\nu\in \mathcal{P}(\mathbb{R}^d)$ and any compactly supported initial data $\mu_0^1,\mu_0^2\in \mathcal{P}_{2,\nu}(\mathbb{R}^{2d})$ with the same center of mass, {\it i.e.},
\begin{equation}\label{E-T-contractivity-W2nu-same-center-mass}
\int_{\mathbb{R}^{2d}}x\,d\mu_0^1(x,\omega)=\int_{\mathbb{R}^{2d}} x\,d\mu_0^2(x,\omega).
\end{equation}
Let $\boldsymbol{\mu}^1$ and $\boldsymbol{\mu}^2$ be the weak measure-valued solutions to \eqref{kurakk-again} issued at $\mu_0^1$ and $\mu_0^2$. Then,
$$W_{2,\nu}(\mu_t^1,\mu_t^2)\leq e^{-2K\phi(D_0)t} \,W_{2,\nu}(\mu_0^1,\mu_0^2),$$
for each $t\geq 0$, where $D_0\in \mathbb{R}_+$ is given by \eqref{E-diameter-Dx-uniform-bound}.
\end{theo}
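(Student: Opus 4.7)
The plan is to reduce Theorem \ref{T-contractivity-W2nu} directly to Lemma \ref{L-contractivity} by choosing the right transference plan between the marginals. Since $\mu_0^1$ and $\mu_0^2$ share the common $\omega$-marginal $\nu$, I would take the diagonal plan $\hat{\nu}(\omega,\omega'):=\nu(\omega)\otimes \delta_{\omega}(\omega')\in \Gamma(\nu,\nu)$, which is trivially an element of $\Gamma_o(\nu,\nu)$ since its quadratic transport cost vanishes.

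Next I would check that the full center-of-mass hypothesis \eqref{E-center-mass} of Lemma \ref{L-contractivity} is met. The $x$-components agree by assumption \eqref{E-T-contractivity-W2nu-same-center-mass}. The $\omega$-components also coincide automatically, because $\pi_{\omega\#}\mu_0^i=\nu$ forces
$$\int_{\mathbb{R}^{2d}}\omega\,d\mu_0^i(x,\omega)=\int_{\mathbb{R}^d}\omega\,d\nu(\omega),$$
for both $i=1,2$. With the hypotheses satisfied, I invoke Lemma \ref{L-contractivity} with the above choice of $\hat{\nu}$. The correction term $\int_{\mathbb{R}^{2d}}\vert\omega-\omega'\vert^2\,d\hat{\nu}(\omega,\omega')$ vanishes identically, since $\hat{\nu}$ is supported on the diagonal $\{\omega=\omega'\}$, so the resulting estimate collapses to a clean exponential contraction of $W_{2,\hat{\nu}}(\mathcal{L}_{\hat{\nu},1}(\mu_t^1),\mathcal{L}_{\hat{\nu},2}(\mu_t^2))$.

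Finally, by Remark \ref{R-fibered-Wasserstein-double-identical} applied to this particular diagonal plan, we have the exact identification
$$W_{2,\hat{\nu}}(\mathcal{L}_{\hat{\nu},1}(\mu_t^i),\mathcal{L}_{\hat{\nu},2}(\mu_t^j))=W_{2,\nu}(\mu_t^i,\mu_t^j),$$
both at $t=0$ and at arbitrary $t\geq 0$. Substituting this identification on both sides of the contraction inequality from Lemma \ref{L-contractivity} yields the stated result, with the constant $D_0$ given exactly by \eqref{E-diameter-Dx-uniform-bound}.

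There is essentially no hard step left here, as the genuine difficulties have already been resolved upstream: the uniform-in-time spatial diameter control from Lemma \ref{L-control-diameter} that delivers the positive modulus $\phi(D_0)$, the Galilean/center-of-mass machinery of Lemma \ref{L-control-center-mass} that eliminates the obstructive crossed term in \eqref{E3}, and the differentiability of $W_{2,\hat{\nu}}$ along absolutely continuous curves used to close the Gr\"onwall argument. The present theorem is then a matter of choosing the correct lifting compatible with the identical-marginal setting and reading off the consequence.
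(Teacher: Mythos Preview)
Your proposal is correct and follows exactly the approach the paper uses: the paper states that Theorem \ref{T-contractivity-W2nu} follows from Lemma \ref{L-contractivity} together with Remark \ref{R-fibered-Wasserstein-double-identical}, which is precisely the specialization to the diagonal plan $\hat{\nu}(\omega,\omega')=\nu(\omega)\otimes\delta_\omega(\omega')$ that you describe. Your verification that the full center-of-mass condition \eqref{E-center-mass} holds (the $\omega$-component being automatic from the shared marginal) and that the correction term vanishes on the diagonal is exactly the content of this reduction.
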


\begin{cor}[Theorem B: Convergence to equilibrium]\label{C-contractivity-W2nu}
Consider any $\nu\in \mathcal{P}(\mathbb{R}^d)$ verifying \eqref{E-centered-nu}, any compactly supported initial datum $\mu_0\in \mathcal{P}_{2,\nu}(\mathbb{R}^{2d})$ and let $\boldsymbol{\mu}$ be the weak measure-valued solution of \eqref{kurakk-again} issued at $\mu_0$. Then, there exists a unique compactly supported equilibrium $\mu_\infty\in \mathcal{P}_{2,\nu}(\mathbb{R}^{2d})$ of \eqref{kurakk-again} such that
\begin{equation}\label{E-C-contractivity-W2nu-same-center-mass}
\int_{\mathbb{R}^{2d}}x\,d\mu_0(x,\omega)=\int_{\mathbb{R}^{2d}}x\,d\mu_\infty(x,\omega).
\end{equation}
In addition, we obtain
$$W_{2,\nu}(\mu_t,\mu_\infty)\leq e^{-2K\phi(D_0)t} \,W_{2,\nu}(\mu_0,\mu_\infty),$$
for any $t\geq 0$, where $D_0$ is the uniform bound of the spacial diameter in Lemma \ref{L-control-diameter}.
\end{cor}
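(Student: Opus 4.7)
The plan is to construct $\mu_\infty$ as the $W_{2,\nu}$-limit of the orbit $\{\mu_t\}_{t\geq 0}$ via a Cauchy-sequence argument powered by the contractivity estimate of Theorem \ref{T-contractivity-W2nu}, and then to extract stationarity and the claimed exponential rate from that same estimate.

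First, I would verify that Theorem \ref{T-contractivity-W2nu} applies to the pair $(\mu_0,\mu_s)$ for every $s\geq 0$ with a diameter constant independent of $s$. Assumption \eqref{E-centered-nu} combined with Lemma \ref{L-control-center-mass}(ii) ensures that the spatial center of mass is conserved along the flow, so $\int x\,d\mu_s(x,\omega)=\int x\,d\mu_0(x,\omega)$ for every $s\geq 0$. Simultaneously, Lemma \ref{L-control-diameter} bounds $\diam(\supp_x \mu_s)$ uniformly in $s$, so the constant $D_0$ of \eqref{E-diameter-Dx-uniform-bound} can be chosen independently of $s$ (note that the $\omega$-marginal is the fixed compactly-supported $\nu$). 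Exploiting the semigroup identity $\Phi_t(\mu_s)=\mu_{t+s}$ implicit in the well-posedness asserted by Theorem \ref{t-B-rig}, Theorem \ref{T-contractivity-W2nu} then yields
\begin{equation*}
W_{2,\nu}(\mu_{t+s},\mu_t)\leq e^{-2K\phi(D_0)\,t}\,W_{2,\nu}(\mu_s,\mu_0),\qquad s,t\geq 0.
\end{equation*}
Since all $\mu_s$ are supported in a common compact set in the $x$-variable and share the marginal $\nu$, the factor $W_{2,\nu}(\mu_s,\mu_0)$ is bounded uniformly in $s$; hence $\{\mu_t\}_{t\geq 0}$ is Cauchy in the Polish space $(\mathcal{P}_{2,\nu}(\mathbb{R}^{2d}),W_{2,\nu})$ of Proposition \ref{P-dnu-polish}, and there exists $\mu_\infty\in \mathcal{P}_{2,\nu}(\mathbb{R}^{2d})$ with $\mu_t\to\mu_\infty$ in $W_{2,\nu}$.

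It remains to verify that $\mu_\infty$ has the stated properties. Compact support of $\mu_\infty$ is transferred from the uniform support control via the random-narrow convergence embedded in $W_{2,\nu}$-convergence (Proposition \ref{P-characterization-convergence-W2nu}), while convergence of first moments yields \eqref{E-C-contractivity-W2nu-same-center-mass}. For stationarity, I would fix $s\geq 0$ and pass to the limit $t\to\infty$ in $\mu_{t+s}=\Phi_s(\mu_t)$: continuous dependence on initial data (Theorem \ref{t-A-rig}(iii) or directly a further application of Theorem \ref{T-contractivity-W2nu}) gives $\Phi_s(\mu_t)\to \Phi_s(\mu_\infty)$, whereas the left-hand side converges to $\mu_\infty$, so $\Phi_s(\mu_\infty)=\mu_\infty$ for every $s\geq 0$. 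Uniqueness of the compactly supported equilibrium with the prescribed center of mass follows at once by applying Theorem \ref{T-contractivity-W2nu} to two such stationary states, and the announced rate is obtained by applying that very theorem to the pair $(\mu_0,\mu_\infty)$.

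The most delicate point I foresee is confirming that $D_0$ may be chosen independent of the time shift $s$ when comparing $\mu_0$ with $\mu_s$; this is exactly what is encoded in Lemma \ref{L-control-diameter} via the $\max$ appearing in \eqref{E-diameter-Dx-uniform-bound}, so no further estimate is required.
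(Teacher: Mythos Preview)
Your proposal is correct and follows essentially the same route as the paper: a Cauchy argument in $(\mathcal{P}_{2,\nu}(\mathbb{R}^{2d}),W_{2,\nu})$ driven by Theorem \ref{T-contractivity-W2nu} applied to time-shifted copies of the trajectory (with center of mass conserved via Lemma \ref{L-control-center-mass} and \eqref{E-centered-nu}, and spatial diameter controlled via Lemma \ref{L-control-diameter}), followed by identification of the limit as a compactly supported equilibrium and a final application of Theorem \ref{T-contractivity-W2nu} to the pair $(\boldsymbol{\mu},\mu_\infty)$. The only cosmetic differences are that the paper writes out explicitly the uniform bound $\sup_{t\geq 0}W_{2,\nu}(\mu_0,\mu_t)\leq 2D_0$ via the triangle inequality through $\delta_{x_c}\otimes\nu$, and simply asserts stationarity of $\mu_\infty$, whereas you justify it through continuous dependence; both are fine.
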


\begin{proof}
Note that the uniqueness of such an equilibrium $\mu_\infty$ readily follows from Theorem \ref{T-contractivity-W2nu}. We then focus on proving its existence and the convergence of $\mu_t$ to $\mu_\infty$ as $t\rightarrow \infty$ with quantitative rates. To such an end, we shall exploit Theorem \ref{T-contractivity-W2nu} above again. First, let us admit that the following claim holds true 
\begin{equation}\label{E-C-contractivity-W2nu-claim}
\sup_{t\geq 0}W_{2,\nu}(\mu_0,\mu_t)\leq C<\infty,
\end{equation}
for some $C>0$. We will prove the claim \eqref{E-C-contractivity-W2nu-claim} below. Fix any arbitrary $T>0$ and $t_1,t_2\geq T$, and consider the couple of solutions $\boldsymbol{\mu}^1$ and $\boldsymbol{\mu}^2$ of \eqref{kurakk-again} given by
$$\mu_t^1:=\mu_{t+t_1-T},\quad \mu_t^2:=\mu_{t+t_2-T},\quad t\geq 0.$$
Recall that by our assumption \eqref{E-centered-nu} and Lemma \ref{L-control-center-mass} $\mu_0^1$ and $\mu_0^2$ verify the condition \eqref{E-T-contractivity-W2nu-same-center-mass}. Then, Theorem \ref{T-contractivity-W2nu} can be applied to $\boldsymbol{\mu}^1$ and $\boldsymbol{\mu}^2$ and we have
$$W_{2,\nu}(\mu_{t_1},\mu_{t_2})= W_{2,\nu}(\mu_T^1,\mu_T^2)\leq e^{-2K\phi(D_0)T} \,W_{2,\nu}(\mu_0^1,\mu_0^2)\leq 2C e^{-2K\phi(D_0)T} .$$
In the last step, we have used the triangle inequality and our claim \eqref{E-C-contractivity-W2nu-claim}. Taking $T$ large enough, the above implies that $(\mu_t)_{t\geq 0}$ is a Cauchy net over the metric space $(\mathcal{P}_{2,\nu}(\mathbb{R}^{2d}),W_{2,\nu})$. By completeness ({\it cf.}, Proposition \ref{P-dnu-polish}), there exists $\mu_\infty \in\mathcal{P}_{2,\nu}(\mathbb{R}^{2d})$ such that
$$\lim_{t\rightarrow \infty}W_{2,\nu}(\mu_t,\mu_\infty)=0.$$
On the one hand, note that $\mu_\infty$ must be an equilibrium of \eqref{kurakk-again}, and it is easy to infer that \eqref{E-C-contractivity-W2nu-same-center-mass} is verified. Finally, since $\mu_t\rightarrow \mu$ narrowly in $\mathcal{P}(\mathbb{R}^{2d})$ (see Remark \ref{R-characterization-convergence-W2nu}) and any point of the support of $\mu_\infty$ can be approximated as a limit of points in the supports of $\mu_t$ ({\it cf.} \cite[Proposition 5.1.8]{AGS-08}), then $\mu_\infty$ must also be compactly supported and it has spacial diameter bounded by $D_0$. Thus, applying again Theorem \ref{T-contractivity-W2nu} to $\boldsymbol{\mu}$ and the stationary solution $\mu_\infty$ we obtain the exponential relation.

To conclude our proof, we just need to prove our claim \eqref{E-C-contractivity-W2nu-claim}. Denote the center of mass by
$$x_c:=\int_{\mathbb{R}^{2d}}x\,d\mu_0(x,\omega)=\int_{\mathbb{R}^{2d}}x\,d\mu_t(x,\omega),$$
for each $t\geq 0$. Then, we arrive at
\begin{align*}
W_{2,\nu}(\mu_0,\mu_t)&\leq W_{2,\nu}(\mu_0,\delta_{x_c}\otimes \nu)+W_{2,\nu}(\mu_t,\delta_{x_c}\otimes \nu)\\
&= \left(\int_{\mathbb{R}^{2d}}\vert x-x_c\vert^2\,d\mu_0(x,\omega)\right)^{1/2}+\left(\int_{\mathbb{R}^{2d}}\vert x-x_c\vert^2\,d\mu_t(x,\omega)\right)^{1/2}\\
&\leq  \int_{\mathbb{R}^{2d}}\left(\int_{\mathbb{R}^{2d}}\vert x-x'\vert^2\,d\mu_0(x',\omega')\right)^{1/2}\,d\mu_0(x,\omega)\\
&\hspace{4cm}+\int_{\mathbb{R}^{2d}}\left(\int_{\mathbb{R}^{2d}}\vert x-x'\vert^2\,d\mu_t(x',\omega')\right)^{1/2}\,d\mu_t(x,\omega),
\end{align*}
for any $t\geq 0$, where in the first line we have used the triangle inequality and the last line we have applied Minkowski's integral inequality. Using Lemma \ref{L-control-diameter} ends the claim with $C=2D_0$.
\end{proof}

Similarly, using the uniform-in-time contractivity in Lemma \ref{L-contractivity} and taking infimum over $\hat{\nu}$ to recover the adapted Wasserstein distance in Definition \ref{D-adapted-Wasserstein-distance} imply the following theorem.

\begin{theo}[Theorem B: Uniform stability in $AW_2$]\label{T-uniform-stability-AW2}
Consider any compactly supported initial data $\mu_0^1,\mu_0^2\in \mathcal{P}_{2,\nu}(\mathbb{R}^{2d})$ with the same center of mass ({\it i.e.}, satisfying \eqref{E-center-mass}), and let $\boldsymbol{\mu}^1$ and $\boldsymbol{\mu}^2$ be the weak measure-valued solutions to \eqref{kurakk-again} issued at $\mu_0^1$ and $\mu_0^2$. Then,
$$AW_2(\mu_t^1,\mu_t^2)\leq \left(1+\frac{1}{2K\phi(D_0)}\right)\,AW_2(\mu_0^1,\mu_0^2),$$
for any $t\geq 0$, where $D_0\in \mathbb{R}_+$ is given by \eqref{E-diameter-Dx-uniform-bound}.
\end{theo}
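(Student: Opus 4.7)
The plan is to combine the fiberwise contractivity of Lemma \ref{L-contractivity} with the variational description \eqref{E-adapted-Wassersein-distance-restatement} of $AW_2$, and close the argument with a short algebraic inequality. Throughout, denote $\nu_i:=\pi_{\omega\#}\mu_0^i$, which is preserved along the flow \eqref{kurakk}, so that for any $\hat{\nu}\in\Gamma(\nu_1,\nu_2)$ the liftings $\mathcal{L}_{\hat{\nu},i}(\mu_t^i)$ of Definition \ref{D-liftings} remain well-defined for all $t\geq 0$ and lie in $\mathcal{P}_{2,\hat{\nu}}(\mathbb{R}^{3d})$.

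First I would fix an arbitrary $\hat{\nu}\in\Gamma(\nu_1,\nu_2)$ and apply Lemma \ref{L-contractivity} to it. A careful look at the proof of that lemma reveals that no property of $\hat{\nu}$ beyond being a coupling of $\nu_1$ and $\nu_2$ is used (in particular no optimality is ever invoked), so the estimate
\begin{equation*}
W_{2,\hat{\nu}}\bigl(\mathcal{L}_{\hat{\nu},1}(\mu_t^1),\mathcal{L}_{\hat{\nu},2}(\mu_t^2)\bigr)\leq e^{-\lambda t}\,a+\frac{b}{\lambda}
\end{equation*}
is available for any such $\hat{\nu}$, where $\lambda:=2K\phi(D_0)$, $a:=W_{2,\hat{\nu}}(\mathcal{L}_{\hat{\nu},1}(\mu_0^1),\mathcal{L}_{\hat{\nu},2}(\mu_0^2))$ and $b:=\bigl(\int_{\mathbb{R}^{2d}}|\omega-\omega'|^2\,d\hat{\nu}\bigr)^{1/2}$.

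Next I would use the same $\hat{\nu}$ as a competitor in \eqref{E-adapted-Wassersein-distance-restatement} evaluated at time $t$, giving
\begin{equation*}
AW_2^2(\mu_t^1,\mu_t^2)\leq W_{2,\hat{\nu}}^2\bigl(\mathcal{L}_{\hat{\nu},1}(\mu_t^1),\mathcal{L}_{\hat{\nu},2}(\mu_t^2)\bigr)+b^2\leq (a+b/\lambda)^2+b^2,
\end{equation*}
where the last step simply discards the harmless factor $e^{-\lambda t}\leq 1$. The key computation is then the elementary inequality
\begin{equation*}
(a+b/\lambda)^2+b^2\leq (1+1/\lambda)^2(a^2+b^2)\qquad \forall\,a,b\geq 0,\ \lambda>0,
\end{equation*}
which after expansion reduces to $a^2/\lambda^2+2(a^2-ab+b^2)/\lambda\geq 0$ and is immediate from $a^2-ab+b^2\geq 0$. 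Combining yields $AW_2^2(\mu_t^1,\mu_t^2)\leq (1+1/\lambda)^2(a^2+b^2)$ for every $\hat{\nu}\in\Gamma(\nu_1,\nu_2)$; taking the infimum over $\hat{\nu}$ and recalling \eqref{E-adapted-Wassersein-distance-restatement} at $t=0$ produces $AW_2^2(\mu_t^1,\mu_t^2)\leq (1+1/\lambda)^2 AW_2^2(\mu_0^1,\mu_0^2)$, which after taking square roots is exactly the announced bound with constant $1+\tfrac{1}{2K\phi(D_0)}$.

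There is essentially no analytic obstacle left once Lemma \ref{L-contractivity} is in place: the only point worth emphasizing is that the lemma is being invoked along a coupling $\hat{\nu}$ which need not be optimal for any transport cost between $\nu_1$ and $\nu_2$, and that this is legitimate because conservation of the $\omega$-marginals along \eqref{kurakk} guarantees $\pi_{(\omega,\omega')\#}\mathcal{L}_{\hat{\nu},i}(\mu_t^i)=\hat{\nu}$ for all $t\geq 0$. The rest is the algebraic trick above, which is precisely what produces the exact constant $1+1/\lambda$ rather than a cruder $2+1/\lambda$ that one would get from a naive $\sqrt{x^2+y^2}\leq x+y$ bound.
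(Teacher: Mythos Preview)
Your proposal is correct and follows exactly the route the paper sketches (``using the uniform-in-time contractivity in Lemma~\ref{L-contractivity} and taking infimum over $\hat{\nu}$''); you simply supply the details the paper omits. Two remarks worth recording: first, you rightly observe that the proof of Lemma~\ref{L-contractivity} nowhere uses optimality of $\hat{\nu}$, so the estimate is valid for every $\hat{\nu}\in\Gamma(\nu_1,\nu_2)$, which is what makes the final infimum legitimate; second, you use the bound $W_{2,\hat{\nu}}(t)\leq e^{-\lambda t}a+b/\lambda$ with $b=\bigl(\int|\omega-\omega'|^2\,d\hat{\nu}\bigr)^{1/2}$, which is indeed what Gr\"onwall applied to \eqref{E4} yields (the statement of the lemma has $b^2/\lambda$ in that spot, evidently a typo, and with $b^2/\lambda$ the algebraic step would fail for large $b$). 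Your elementary inequality $(a+b/\lambda)^2+b^2\leq(1+1/\lambda)^2(a^2+b^2)$ is the clean way to close the argument and recover the exact constant $1+\tfrac{1}{2K\phi(D_0)}$.
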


\begin{cor}[Theorem B: Uniform mean-field limit]\label{C-uniform-stability-AW2}
Consider any $\nu\in \mathcal{P}(\mathbb{R}^d)$ verifying \eqref{E-centered-nu}, any compactly supported initial datum $\mu_0\in \mathcal{P}_{2,\nu}(\mathbb{R}^{2d})$ and let $\boldsymbol{\mu}$ be the weak measure-valued solution of \eqref{kurakk-again} issued at $\mu_0$. Take any sequence of empirical measures $\boldsymbol{\mu}^N$, {\it i.e.},
\begin{equation}\label{E-empirical-measures}
\mu_t^N:=\frac{1}{N}\sum_{i=1}^N\delta_{x_i^N(t)}(x)\otimes \delta_{\omega_i^N}(\omega),
\end{equation}
with configurations $(x_1^N(t),\omega_1^N),\ldots,(x_N^N(t),\omega_N^N)$ solving the associated particle system
\begin{equation}\label{kurap}
\dot{x}_i^N=\omega_i^N-K\nabla W(x_i^N-x_j^N),\qquad i=1,\ldots,N,
\end{equation}
and assume that the following conditions hold
\begin{align}
&\lim_{N\rightarrow\infty}AW_2(\mu_0^N,\mu_0)=0,\label{E-convergence-initial-empirical}\\
&\frac{1}{N}\sum_{i=1}^N (x_{i,0}^N,\omega_i^N)=\int_{\mathbb{R}^{2d}}(x,\omega)\,d\mu_0(x,\omega).\label{E-center-mass-empirical}
\end{align}
Then, we obtain the uniform-in-time mean-field limit
$$\lim_{N\rightarrow\infty} \sup_{t\geq 0} AW_2(\mu_t^N,\mu_t)=0.$$
\end{cor}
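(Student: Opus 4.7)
The plan is to exploit Theorem \ref{T-uniform-stability-AW2} directly by taking $\boldsymbol{\mu}^1 = \boldsymbol{\mu}^N$ and $\boldsymbol{\mu}^2 = \boldsymbol{\mu}$. The first step is to verify that the empirical measure $\boldsymbol{\mu}^N$ is itself a weak measure-valued solution of \eqref{kurakk} in the sense of Definition \ref{D-distributional-solution-kurakk}, with initial datum $\mu_0^N$. This is standard: inserting \eqref{E-empirical-measures} into the weak formulation reduces the distributional equation to the ODE system \eqref{kurap}, because the $\omega$-marginal of $\mu^N$ is the atomic measure $\nu^N := \frac{1}{N}\sum_{i=1}^N\delta_{\omega_i^N}$ and the $\nu^N$-fibered characteristics $X^N(t; x_i^N, \omega_i^N)$ are exactly the trajectories $x_i^N(t)$ of the particle system. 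The center of mass condition \eqref{E-center-mass-empirical} is precisely the equality $\int (x,\omega)\,d\mu_0^N = \int (x,\omega)\,d\mu_0$, which matches the hypothesis \eqref{E-center-mass} of Theorem \ref{T-uniform-stability-AW2} for the pair $(\mu_0^N, \mu_0)$.

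Once this identification is in place, I would invoke Theorem \ref{T-uniform-stability-AW2} to obtain
\begin{equation*}
AW_2(\mu_t^N, \mu_t) \leq \Bigl(1 + \frac{1}{2K\phi(D_0^N)}\Bigr) AW_2(\mu_0^N, \mu_0), \quad t \geq 0,
\end{equation*}
where $D_0^N$ is the diameter constant \eqref{E-diameter-Dx-uniform-bound} associated with the pair $(\mu^N, \mu)$, depending on $N$ only through the initial spatial and frequency diameters $\mathcal{D}_x^N(0)$ and $\mathcal{D}_\omega^N$ of $\mu_0^N$. The conclusion then reduces to checking that $D_0^N$ remains uniformly bounded as $N \to \infty$: once $D_0^N \leq D_*$ for some finite $D_*$ independent of $N$, monotonicity of $\phi$ gives $\phi(D_0^N) \geq \phi(D_*)>0$, and therefore
\begin{equation*}
\sup_{t\geq 0} AW_2(\mu_t^N, \mu_t) \leq \Bigl(1 + \frac{1}{2K\phi(D_*)}\Bigr) AW_2(\mu_0^N, \mu_0) \xrightarrow[N\to\infty]{} 0
\end{equation*}
by assumption \eqref{E-convergence-initial-empirical}, which is the desired uniform mean-field limit.

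The main obstacle is precisely the uniform control of $D_0^N$. By Proposition \ref{P-AW2-stronger-than-W2}, assumption \eqref{E-convergence-initial-empirical} implies $W_2(\mu_0^N, \mu_0)\to 0$, and inspecting the definition of $AW_2$ one further obtains $W_2$-convergence of the $\omega$-marginals $\nu^N\to\nu$. Since $\mu_0$ is compactly supported, a fixed radius $R>0$ exists with $\supp\mu_0\subseteq B_R\times B_R$, and $W_2$-convergence together with the tail estimate $\int_{|x|\geq \rho}(1+|x|^2)\,d\mu_0^N(x,\omega)\to 0$ uniformly on balls forces the bulk of the mass of $\mu_0^N$ to concentrate near $\supp\mu_0$. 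To upgrade this bulk statement into a genuine uniform support bound, one can use a truncation--reindexing trick: replace $\mu_0^N$ by $\tilde\mu_0^N$, supported in $B_{2R}\times B_{2R}$, obtained by reassigning to the closest point of $\supp\mu_0$ those atoms whose position leaves $B_{2R}\times B_{2R}$; the reindexed sequence still satisfies $AW_2(\tilde\mu_0^N,\mu_0)\to 0$ and \eqref{E-center-mass-empirical} up to an $o(1)$ correction, while $\mathcal{D}_x^N(0), \mathcal{D}_\omega^N\leq 4R$. Applying Lemma \ref{L-control-diameter} on the reindexed sequence yields the required uniform $D_*$, and the original $\boldsymbol{\mu}^N$ is then recovered by stability of $AW_2$ under vanishing perturbations of the initial data.
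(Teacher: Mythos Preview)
Your core approach matches the paper's exactly: observe that the empirical measures $\boldsymbol{\mu}^N$ are themselves weak measure-valued solutions of \eqref{kurakk}, verify the center-of-mass condition \eqref{E-center-mass}, and apply Theorem \ref{T-uniform-stability-AW2} to the pair $(\boldsymbol{\mu}^N,\boldsymbol{\mu})$. The paper's proof is the single sentence doing just this.

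You go further than the paper by flagging the dependence of the constant on $D_0^N$, which requires the spatial and frequency diameters of $\mu_0^N$ to stay bounded uniformly in $N$. This is a legitimate concern: the hypothesis $AW_2(\mu_0^N,\mu_0)\to 0$ alone does not prevent a vanishing fraction of atoms from drifting to infinity. The paper does not address this point explicitly; it is tacitly covered by the constructions in Remark \ref{R-uniform-stability-AW2}, where the approximating atoms are confined to $\supp_x\mu_0\times\supp_\omega\mu_0$ by design.

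However, your proposed fix is circular. After truncating to $\tilde\mu_0^N$ and running the argument, you write that ``the original $\boldsymbol{\mu}^N$ is then recovered by stability of $AW_2$ under vanishing perturbations of the initial data.'' But the only uniform-in-time stability estimate available is Theorem \ref{T-uniform-stability-AW2} itself, and applying it to the pair $(\boldsymbol{\mu}^N,\tilde{\boldsymbol{\mu}}^N)$ reintroduces the uncontrolled diameter of $\mu_0^N$ into the constant. The ``up to $o(1)$'' center-of-mass is also not admissible as stated, since \eqref{E-center-mass} must hold exactly (though this part could be repaired via the translation trick of Remark \ref{R-uniform-stability-AW2}$(ii)$). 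The honest resolution is either to add the uniform support bound as a hypothesis, or to note that it is automatically satisfied by the empirical approximations one actually constructs.
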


Since $(x_1^N(t),\omega_1^N),\ldots,(x_N^N(t),\omega_N^N)$ solve \eqref{kurap}, then $\boldsymbol{\mu}^N$ solves \eqref{kurakk-again} in the sense of distributions. Then, the result follows from the stability Theorem \ref{T-uniform-stability-AW2} applied to the pair of solutions $\mu$ and $\boldsymbol{\mu}^N$. Corollary \ref{C-uniform-stability-AW2} quantifies a uniform-in-time mean-field limit in the adapted Wasserstein distance $AW_2$. By Proposition \ref{P-AW2-stronger-than-W2} we also have
$$\lim_{N\rightarrow\infty} \sup_{t\geq 0} W_2(\mu_t^N,\mu_t)=0.$$

\begin{rem}\label{R-uniform-stability-AW2}
The following comments are in order:
\begin{enumerate}[label=(\roman*)]
\item {\bf (Assumption \eqref{E-convergence-initial-empirical})} Note that \eqref{E-convergence-initial-empirical} is stronger than the usual approximation of $\mu_0$ by empirical measures $\mu_0^N$ with respect to the quadratic Wasserstein distance $W_2$. A typical method for the latter relies on the law of large numbers in \cite{V-58}. It states that for any sequence of i.i.d. random variables $\{(X_i,\Omega_i)\}_{i\in \mathbb{N}}$ with law $\mu_0$ we have
$$\frac{1}{N}\sum_{i=1}^N\delta_{X_i}(x)\otimes\delta_{\Omega_i}(\omega)\rightarrow \mu_0\quad \mbox{in}\quad W_2,$$
almost surely as $N\rightarrow\infty$. However, in \cite{PP-16} it was proven that the $W_2$-convergence cannot generally be improved into $AW_2$-convergence in such generality. Recently, in \cite{BBBW-20-arxiv} (see also \cite[Theorem 4.8]{BL-18-arxiv}) it was  proven that for compactly supported $\mu_0$ (as in the setting of our Corollary \ref{C-uniform-stability-AW2}) we have
$$\frac{1}{N}\sum_{i=1}^N\delta_{P_x^N(X_i)}\otimes\delta_{P_\omega^N(\Omega_i)}(\omega)\rightarrow \mu_0\quad \mbox{in}\quad AW_2,$$
almost surely as $N\rightarrow\infty$, for some finite-rank mappings $P_x^N:\supp_x\mu_0\rightarrow \supp_x\mu_0$ and $P_\omega^N:\supp_\omega\mu_0\rightarrow\supp_\omega\mu_0$. This guarantees the existence of $\mu_0^N$ verifying \eqref{E-convergence-initial-empirical}.
\item {\bf (Assumption \eqref{E-center-mass-empirical})} Consider $\mu_0^N$ supported over $(x_{1,0}^N,\omega_1^N),\ldots,(x_{N,0}^N,\omega_N^N)$ as above so that it satisfies the assumption \eqref{E-convergence-initial-empirical}. Let us set the change of variables
$$\tilde{x}_{i,0}^N:=x_{i,0}^N-\frac{1}{N}\sum_{j=1}^N x_{j,0}^N+\int_{\mathbb{R}^{2d}}x\,d\mu_0(x,\omega),\quad \tilde{\omega}_i^N:=\omega_i^N-\frac{1}{N}\sum_{j=1}^N\omega_j^N,$$
for $i=1,\ldots,N$. Then, the modified empirical measures $\tilde{\mu}_0^N:=\frac{1}{N}\sum_{i=1}^N\delta_{\tilde{x}_{i,0}^N}(x)\otimes \delta_{\tilde{\omega}_{i,0}^N}(\omega)$ verify \eqref{E-center-mass-empirical} by construction. In addition, we have
$$AW_2(\tilde{\mu}_0^N,\mu_0)\leq AW_2(\tilde{\mu}_0^N,\mu_0^N)+AW_2(\mu_0^N,\mu_0).$$
By Propositions \ref{P-AW2-translations} and \ref{P-AW2-stronger-than-W2} we obtain
\begin{align*}
AW_2^2(\tilde{\mu}_0^N,\mu_0^N)&= \left\vert \frac{1}{N}\sum_{j=1}^N x_{j,0}^N-\int_{\mathbb{R}^{2d}}x\,d\mu_0(x,\omega)\right\vert^2+\left\vert\frac{1}{N}\sum_{j=1}^N \omega_j^N\right\vert^2\\
&\leq W_2^2(\mu_0^N,\mu_0)\leq AW_2(\mu_0^N,\mu_0).
\end{align*}
Altogether implies $\lim_{N\rightarrow\infty}AW_2(\tilde{\mu}_0^N,\mu_0)=0$, so that $\tilde{\mu}_0^N$ verify both \eqref{E-convergence-initial-empirical} and \eqref{E-center-mass-empirical}.
\end{enumerate}
\end{rem}


\appendix

\section{The metric-valued Lebesgue $L^2$ space}\label{Appendix-metric-valued-L2}

In this appendix, we recall the definition of the Lebesgue spaces with values in a metric. To the best of our knowledge, these spaces have not been often treated in the literature and were first introduced in \cite{KS-93}. See also \cite{GT-21} and references for more recent approaches. The metric-valued Lebesgue spaces become the metric-valued analogues of the so called Lebesgue-Bochner spaces of vector-valued functions, see \cite{DU-77}. Since its construction relies on subtle measurability properties and we shall use them along this paper, we briefly recall them here along with some of their main properties.

\begin{defi}\label{D-metric-valued-L2}
Let $(\Omega,d_\Omega,\nu)$ be a metric measure space where $\nu\in\mathcal{P}(\Omega)$ and consider $(\mathbb{X},d_\mathbb{X})$ any metric space. We define
\begin{equation}\label{E-metric-valued-L2-space}
L^2_\nu(\Omega,(\mathbb{X},d_\mathbb{X})):=\left\{f:\Omega\longrightarrow \mathbb{X}:\begin{array}{c} f\mbox{ is Borel measurable, essentially separably-valued},\\
\displaystyle\mbox{and }\int_{\Omega}d_\mathbb{X}^2(f(\omega),x_0)\,d\nu(\omega)<\infty\end{array}\right\}.
\end{equation}
for some $x_0\in \mathbb{X}$. Moreover, we also define the map $d_{L^2_\nu(\Omega,(\mathbb{X},d_\mathbb{X}))}$ by
\begin{equation}\label{E-metric-valued-L2-distance}
d_{L^2_\nu(\Omega,(\mathbb{X},d_\mathbb{X}))}(f,\widetilde{f}):=\left(\int_{\Omega}d_\mathbb{X}^2(f(\omega),\widetilde{f}(\omega))\,d\nu(\omega)\right)^{1/2},
\end{equation}
for every $f,\widetilde{f}\in L^2_\nu(\Omega,(\mathbb{X},d_\mathbb{X}))$.
\end{defi}

For simplicity of notation, we will denote $L^2(\Omega,\mathbb{X})$ and $d_{L^2(\Omega,\mathbb{X})}$ when both the probability measure $\nu$ and the distance $d_\mathbb{X}$ are clear.

\begin{rem}\label{R-metric-valued-L2}
The following comments are in order:
\begin{enumerate}[label=(\roman*)]
\item For Borel measurable maps $f,\widetilde{f}:\Omega\longrightarrow \mathbb{X}$, the function $\omega\in \Omega\longmapsto d_\mathbb{X}^2(f(\omega),\widetilde{f}(\omega))$ is Borel measurable too. Consequently, the integrals in \eqref{E-metric-valued-L2-space} and \eqref{E-metric-valued-L2-distance} are well defined.
\item The definition \eqref{E-metric-valued-L2-space} is independent on the reference point $x_0\in \mathbb{X}$. Indeed, by triangle inequality, once it holds for some $x_0$, it holds for any other $x_0$.
\item By $f:\Omega\longrightarrow \mathbb{X}$ essentially separably-valued we mean a map such that there exists a $\nu$-negligible set $\mathcal{N}\subseteq \Omega$ with $f(\Omega\setminus \mathcal{N})$ separable in $(\mathbb{X},d_\mathbb{X})$. 
\item Like for the classical Lebesgue spaces, we will identify elements in $L^2(\Omega,\mathbb{X})$ that agree $\nu$-a.e.
\end{enumerate}
\end{rem}

\begin{pro}\label{P-metric-valued-L2-properties}
Let $(L^2(\Omega,\mathbb{X}),d_{L^2(\Omega,\mathbb{X})})$ be the space in Definition \ref{D-metric-valued-L2}. Then, the following properties hold true:
\begin{enumerate}[label=(\roman*)]
\item $(L^2(\Omega,\mathbb{X}),d_{L^2(\Omega,\mathbb{X})})$ is a metric space.
\item If $(\mathbb{X},d_\mathbb{X})$ is complete then so is $(L^2(\Omega,\mathbb{X}),d_{L^2(\Omega,\mathbb{X})})$.
\item The family of simple functions is dense on $(L^2(\Omega,\mathbb{X}),d_{L^2(\Omega,\mathbb{X})})$.
\item If $(\Omega,d_\Omega)=(\mathbb{R}^d,\vert \cdot\vert)$ and $(\mathbb{X},d_\mathbb{X})$ is separable, then so is $(L^2(\mathbb{R}^d,\mathbb{X}),d_{L^2(\mathbb{R}^d,\mathbb{X})})$.
\end{enumerate}
\end{pro}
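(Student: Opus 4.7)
The plan is to mimic the classical arguments for Lebesgue--Bochner spaces, with the added care that there is no vector space structure on $\mathbb{X}$ and the notion of measurability must be handled through the essentially-separably-valued condition in Remark \ref{R-metric-valued-L2} $(iii)$.

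First, for item $(i)$, I would check that $d_{L^2(\Omega,\mathbb{X})}$ verifies the axioms of a distance after the $\nu$-a.e. identification of Remark \ref{R-metric-valued-L2} $(iv)$. Non-negativity, symmetry and the vanishing condition are immediate; the only non-trivial point is the triangle inequality, which follows by combining the pointwise triangle inequality in $(\mathbb{X},d_{\mathbb{X}})$ with Minkowski's inequality in the classical $L^2_\nu(\Omega)$. Finiteness of $d_{L^2(\Omega,\mathbb{X})}(f,\widetilde f)$ for $f,\widetilde f\in L^2(\Omega,\mathbb{X})$ follows from the same triangle trick using any reference point $x_0\in \mathbb{X}$, together with Remark \ref{R-metric-valued-L2} $(ii)$.

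For item $(ii)$, I would adapt the standard extraction argument. Given a Cauchy sequence $\{f_n\}_{n\in \mathbb{N}}$ in $L^2(\Omega,\mathbb{X})$, extract a subsequence $\{f_{n_k}\}_{k\in \mathbb{N}}$ with $d_{L^2(\Omega,\mathbb{X})}(f_{n_k},f_{n_{k+1}})\leq 2^{-k}$ and define $g(\omega):=\sum_{k=1}^\infty d_{\mathbb{X}}(f_{n_k}(\omega),f_{n_{k+1}}(\omega))$. Minkowski's inequality gives $g\in L^2_\nu(\Omega)$, so in particular $g(\omega)<\infty$ for $\nu$-a.e.\ $\omega$. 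By completeness of $(\mathbb{X},d_{\mathbb{X}})$, this means $\{f_{n_k}(\omega)\}_{k\in \mathbb{N}}$ is Cauchy hence convergent to some $f(\omega)\in \mathbb{X}$ for $\nu$-a.e.\ $\omega$. The map $f$ is Borel-measurable as a pointwise limit (outside a $\nu$-null set) of Borel-measurable maps into a metric space, and is essentially separably-valued since, after discarding the null sets provided by each $f_{n_k}$, the essential range of $f$ is contained in the closure of a countable union of separable sets, hence separable. Dominated convergence with dominating function controlled by $g$ and $f_{n_1}$ yields $f\in L^2(\Omega,\mathbb{X})$ and $d_{L^2(\Omega,\mathbb{X})}(f_{n_k},f)\to 0$. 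Convergence of the full sequence follows from the Cauchy property. The main obstacle here will be the measurability/essentially-separably-valued check for $f$, which I expect to handle using \cite[Theorem 4.2.2]{DU-77}-type arguments.

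For item $(iii)$, I would fix $f\in L^2(\Omega,\mathbb{X})$ and let $S\subseteq \mathbb{X}$ be a separable subset with $f(\omega)\in S$ for $\nu$-a.e.\ $\omega$. Choose a countable dense set $\{x_n\}_{n\in \mathbb{N}}\subseteq S$ and, for each $n\in \mathbb{N}$, partition $S$ into Borel pieces $A_1^n,\ldots,A_n^n$ of diameter at most, say, $2/n$ by nearest-point assignment to $x_1,\ldots,x_n$; then define the simple map $f_n(\omega):=x_{k}$ whenever $f(\omega)\in A_k^n$. Then $d_{\mathbb{X}}(f_n(\omega),f(\omega))\leq 2/n$ pointwise on the essential range, and also $d_{\mathbb{X}}(f_n(\omega),x_0)\leq d_{\mathbb{X}}(f(\omega),x_0)+2/n$, so dominated convergence yields $d_{L^2(\Omega,\mathbb{X})}(f_n,f)\to 0$.

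Finally, item $(iv)$ combines the above density of simple functions with a standard countable generating family. Assuming $\mathbb{X}$ is separable with dense $\{x_n\}_{n\in \mathbb{N}}$ and that $(\Omega,d_\Omega,\nu)=(\mathbb{R}^d,|\cdot|,\nu)$, I would consider the countable family of simple functions of the form $\sum_{j=1}^N x_{n_j}\chi_{B_j}$, where the sets $B_j$ range over finite disjoint unions of half-open dyadic cubes with rational vertices (which generate $\mathcal{B}(\mathbb{R}^d)$). Given any simple function $g=\sum_{k=1}^M y_k\chi_{E_k}$ with $E_k$ Borel and $y_k\in \mathbb{X}$, I approximate each $y_k$ by some $x_{n_k}$ in $d_{\mathbb{X}}$ and each $E_k$ by a dyadic-rational Borel set in measure, controlling the $L^2$ distance by the bounded convergence theorem applied to $\chi_{E_k}$. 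Combined with item $(iii)$, this produces a countable dense subset of $L^2(\mathbb{R}^d,\mathbb{X})$.
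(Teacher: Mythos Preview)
The paper does not actually supply a proof of this proposition: it is stated in Appendix~\ref{Appendix-metric-valued-L2} with references to \cite{KS-93} and \cite{GT-21}, and no argument is given. So there is no ``paper's own proof'' to compare against, and your proposal stands on its own. Your arguments for items $(i)$, $(ii)$ and $(iv)$ are the standard Lebesgue--Bochner ones correctly adapted to the metric-valued setting, and they go through.

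There is, however, a genuine gap in your argument for item $(iii)$. You claim that the Voronoi partition of $S$ into cells $A_1^n,\ldots,A_n^n$ by nearest-point assignment to $x_1,\ldots,x_n$ produces pieces of diameter at most $2/n$. This is false in general: a separable metric space need not be totally bounded, so for fixed $n$ the finite set $\{x_1,\ldots,x_n\}$ is typically not a $1/n$-net of $S$, and the Voronoi cells can have arbitrarily large diameter (think of $\mathbb{X}=\mathbb{R}$ with $x_k=k$). Consequently the pointwise bound $d_{\mathbb{X}}(f_n(\omega),f(\omega))\leq 2/n$ does not hold, and your dominated-convergence step collapses.

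The fix is simple: keep the nearest-point definition $f_n(\omega):=x_k$ with $k=\arg\min_{j\leq n} d_{\mathbb{X}}(f(\omega),x_j)$, but replace the false uniform bound by a domination argument. Since $x_1$ is always a competitor, one has $d_{\mathbb{X}}(f_n(\omega),f(\omega))\leq d_{\mathbb{X}}(x_1,f(\omega))\leq d_{\mathbb{X}}(x_1,x_0)+d_{\mathbb{X}}(f(\omega),x_0)\in L^2_\nu(\Omega)$, and density of $\{x_k\}$ in $S$ gives $d_{\mathbb{X}}(f_n(\omega),f(\omega))\to 0$ pointwise on the essential range. Dominated convergence then yields $d_{L^2(\Omega,\mathbb{X})}(f_n,f)\to 0$ as desired.
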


\begin{proof}

$\diamond$ {\sc Step 1}: Metric space.\\
Symmetry and triangle inequality are clear by the corresponding properties of the distance $d_\mathbb{X}$ along with the norm of the scalar Lebesgue space $L^2(\Omega,\mathbb{R})$. In addition, non-degeneracy also follows by the last item in Remark \ref{R-metric-valued-L2}.

\medskip

$\diamond$ {\sc Step 2}: Completeness.\\
Let us assume that $(\mathbb{X},d_\mathbb{X})$ is complete and take any Cauchy sequence $\{f_n\}_{n\in \mathbb{N}}\subseteq L^2(\Omega,\mathbb{X})$. Then, up to a subsequence, 
\begin{equation}\label{E-10}
\left(\int_{\mathbb{R}^d}d_\mathbb{X}^2(f_{\sigma(n+1)}(\omega),f_{\sigma(n)}(\omega))\,d\nu(\omega)\right)^{1/2}\leq \frac{1}{2^n},
\end{equation}
for every $n\in \mathbb{N}$ and some strictly increasing function $\sigma:\mathbb{N}\longrightarrow \mathbb{N}$. Let us define the sequence of non-decreasing functions
$$g_k(\omega):=\sum_{n=1}^k d_\mathbb{X}(f_{\sigma(n+1)}(\omega),f_{\sigma(n)}(\omega)),\quad \omega\in \Omega,$$
for each $k\in \mathbb{N}$ and consider the pointwise limit $g$. Notice that $\Vert g_k\Vert_{L^2(\Omega,\mathbb{R})}\leq 1$ for every $k\in \mathbb{N}$ by virtue of \eqref{E-10}. Consequently, the monotone convergence theorem guarantees that $g\in L^2(\Omega,\mathbb{R})$. By construction, if $k,m\in\mathbb{N}$ with $m\geq k\geq 2$ then
\begin{equation}\label{E-11}
d_\mathbb{X}(f_{\sigma(k)}(\omega),f_{\sigma(m)}(\omega))\leq g(\omega)-g_{k-1}(\omega),
\end{equation}
for all $\omega\in\Omega$. This implies that $\{f_{\sigma(n)}(\omega)\}_{n\in \mathbb{N}}\subseteq \mathbb{X}$ is Cauchy for $\nu$-a.e. $\omega\in \Omega$. By completeness of $(\mathbb{X},d_\mathbb{X})$ there is a limit $f(\omega)\in \mathbb{X}$. Our final goal is to show that $f\in L^2(\Omega,\mathbb{X})$ and $\{f_{\sigma(n)}\}_{n\in \mathbb{N}}$ converges to $f$ in the $d_{L^2(\Omega,\mathbb{X})}$ distance. On the one hand, it is clear that $f$ is Borel measurable and essentially separably-valued as a pointwise limit of a sequence of Borel measurable and essentially separably-valued functions. Taking limits as $m\rightarrow\infty$ in \eqref{E-11} yields
\begin{equation}\label{E-12}
d_\mathbb{X}(f_{\sigma(k)}(\omega),f(\omega))\leq g(\omega)-g_{k-1}(\omega),
\end{equation}
for any $\omega\in \Omega$. Since each of the $f_{\sigma(k)}$ belongs to $L^2(\Omega,\mathbb{X})$, the triangle inequality and \eqref{E-12} show that $f\in L^2(\Omega,\mathbb{X})$ too. Indeed, integrating \eqref{E-12} with respect to $\nu(\omega)$, we also have
$$d_{L^2(\Omega,\mathbb{X})}(f_{\sigma(k)},f)\leq \Vert g-g_{k-1}\Vert_{L^2(\Omega,\mathbb{R})},$$
for every $k\geq 2$ . By dominated convergence theorem we conclude that $\{f_{\sigma(n)}\}_{n\in \mathbb{N}}$ converges to $f$ in $d_{L^2(\Omega,\mathbb{X})}$. Since the initial sequence $\{f_n\}_{n\in \mathbb{N}}$ is Cauchy with respect to such a distance, we indeed infer that the full sequence converges towards $f$.

\medskip

$\diamond$ {\sc Step 3}: Density of simple functions.\\
Let us consider the family of simple functions
\begin{equation}\label{E-simple-functions}
\mathcal{S}:=\left\{\sum_{i=1}^k\chi_{E_i}(\omega)x_i:\, k\in \mathbb{N},\, E_1,\ldots,E_k\subseteq \Omega\mbox{ are exhaustive Borelians and }x_1,\ldots,x_k\in \mathbb{X} \right\}.
\end{equation}
Here exhaustive means that $\Omega$ can be recovered as disjoint union of all the $E_i$. It is clear that $\mathcal{S}\subseteq L^2(\Omega,\mathbb{X})$. Our goal is to show that $\mathcal{S}$ is dense in $L^2(\Omega,\mathbb{X})$ under the $d_{L^2(\Omega,\mathbb{X})}$ distance. The result will follow by adapting to this metric setting the classical Pettis' measurability theorem characterizing strong measurability for Banach-valued maps, see \cite[Theorem 1.2.]{DU-77}. Specifically, let us take any $f\in L^2(\Omega,\mathbb{X})$, that is essentially separably-bounded, and consider a $\nu$-negligible set  $\mathcal{N}\subseteq \Omega$ so that $f(\Omega\setminus \mathcal{N})$ is separable in $\mathbb{X}$. Hence, there is a dense countable subset $\{x_n\}_{n\in \mathbb{N}}\subseteq f(\Omega\setminus \mathcal{N})$ and for each $k\in \mathbb{N}$ we can define the Borelian sets
$$B_{n,k}:=\left\{\omega\in \Omega\setminus \mathcal{N}:\,d_\mathbb{X}(f(\omega),x_n)< \frac{1}{k}\right\},$$
for any $n\in \mathbb{N}$. We define the pairwise disjoint Borelians $E_{n,k}:=B_{n,k}\setminus \cup_{m<n}B_{m,k}$, along with the associated sequence of countably-valued functions
\begin{equation}\label{E-13}
f_k(\omega):=\sum_{n=1}^\infty \chi_{E_{n,k}}(\omega)x_n,\quad \omega\in \Omega\setminus \mathcal{N},
\end{equation}
for each $k\in \mathbb{N}$. By density of $\{x_n\}_{n\in \mathbb{N}}$ in $f(\Omega\setminus \mathcal{N})$ we have
$$d_\mathbb{X}(f(\omega),f_k(\omega))<\frac{1}{k},$$
for every $\omega\in \Omega\setminus \mathcal{N}$ and any $k\in \mathbb{N}$. Let us now truncate the countable range of the functions $f_k$ in the above sequence. To such an end, we define new Borelians $\widetilde{E}_{1,k}^m,\ldots,\widetilde{E}_{m,k}^m$ by the formula
$$\widetilde{E}_{n,k}^m:=\left\{\begin{array}{ll}
E_{n,k}, & \mbox{if }n<m,\\
\cup_{r\geq m} E_{r,k}, & \mbox{if }n=m,
\end{array}\right.$$
which are exhaustive, and the truncated simple functions
$$g_k^m(\omega):=\sum_{n=1}^{m-1} \chi_{\widetilde{E}_{n,k}^m}(\omega)x_k+\chi_{\widetilde{E}_{m,k}^m}(\omega) x_0,$$
for $\omega\in \Omega\setminus\mathcal{N}$ and $k,m\in \mathbb{N}$. Then, by triangle inequality
\begin{equation}\label{E-14}
d_{L^2(\Omega,\mathbb{X})}(g_k^m,f)\leq \left(\frac{1}{k^2}+\int_{\widetilde{E}_{m,k}^m} d_\mathbb{X}^2(f(\omega),x_0)\,d\nu(\omega)\right)^{1/2},
\end{equation}
for any $k\in \mathbb{N}$. We end the proof by recalling that $d_\mathbb{X}(f,x_0)\in L^2(\Omega,\mathbb{R})$ and noticing that, by construction $\lim_{m\rightarrow \infty}\nu(\widetilde{E}_{m,k}^m)\rightarrow 0$ for every $k\in \mathbb{N}$.

\medskip

$\diamond$ {\sc Step 4}: Separability.\\
Now, let us assume that $(\mathbb{X},d_\mathbb{X})$ is separable and take $(\Omega,d_\Omega)=(\mathbb{R}^d,\vert \cdot\vert)$. Set any dense countable subset $\mathcal{D}_\mathbb{X}\subseteq \mathbb{X}$ and define the subfamily $\mathcal{S}_*$ of $\mathcal{S}$ where points $x_1,\ldots,x_n$ in \eqref{E-simple-functions} are restricted to $\mathcal{D}_\mathbb{X}$. The same reasoning as above clearly shows that 
$$\overline{\mathcal{S}_*}^{L^2(\Omega,\mathbb{X})}=L^2(\Omega,\mathbb{X}).$$
To end the proof, let us take a simple function $f\in \mathcal{S}_*$ that is,
$$f(\omega)=\sum_{i=1}^k\chi_{E_i}(\omega)x_i,$$
for $\omega\in \mathbb{R}^d$, where $E_i$ are pairwise disjoint Borelian sets with $\cup_{i=1}^k E_i=\mathbb{R}^d$ and $x_1,\ldots,x_k\in \mathcal{D}_\mathbb{X}$. Our final goal is to approximate by a simple function in some universal countable family. To such an end, notice that $\nu$ is a finite Radon measure and, in particular it is outer regular. Consequently, for any fixed $\varepsilon>0$ there are open sets $E_i\subseteq O_i\subseteq \mathbb{R}^d$ so that 
\begin{equation}\label{E-15}
\nu(O_i\setminus E_i)<\frac{1}{\max_{1\leq i,j\leq k}d_\mathbb{X}^2(x_i,x_j)}\frac{\varepsilon^2}{k},
\end{equation}
for all $i=1,\ldots,k$. Define the following (countable) family of sets
$$\mathcal{Q}:=\left\{\prod_{i=1}^d(a_i,b_i):\,a_i,b_i\in\mathbb{Q}\mbox{ and }a_i<b_i\right\}.$$
Since $(\mathbb{R}^d,\vert \cdot\vert)$ is a separable metric space (thus a Lindel\"{o}f space), a classical argument allows taking countable families of cubes $\{Q_{i,n}\}_{n\in\mathbb{N}}\subseteq \mathcal{Q}$ so that $O_i=\cup_{n=1}^\infty Q_{i,n}$ for every $i=1,\ldots,k$. Then, we can define the following countable family of Borelian sets
$$\begin{array}{ll}
B_{1,1}=Q_{1,1}, & B_{1,n}=Q_{1,n}\setminus \cup_{m<n} Q_{1,m},\\
B_{2,1}=Q_{2,1}\setminus O_1, & B_{2,n}=(Q_{2,n}\setminus \cup_{m<n}Q_{2,m})\setminus O_1,\\
\hspace{0.9cm}\vdots & \hspace{0.9cm}\vdots\\
B_{k,1}=Q_{k,1}\setminus \cup_{i<k}O_i, & B_{k,n}=(Q_{k,n}\setminus \cup_{m<n}Q_{k,m})\setminus \cup_{i<k}O_i,
\end{array}$$
that can be constructed from those in $\mathcal{Q}$ and their complementary sets by taking countably-many finite intersections and countable unions. Then, we can define the simple function
$$g(\omega):=\sum_{i=1}^k\sum_{n=1}^\infty \chi_{B_{i,n}}(\omega)x_i,\quad \omega\in \mathbb{R}^d.$$
Consequently, we obtain
\begin{align*}
d_{L^2(\mathbb{R}^d,\mathbb{X})}^2(f,g)&= \sum_{i=1}^k\sum_{n=1}^\infty\int_{B_{i,n}} d_\mathbb{X}^2(f(\omega),x_i)\,d\nu(\omega)\leq \sum_{i=1}^k \int_{O_i}d_\mathbb{X}^2(f(\omega),g(\omega))\,d\nu(\omega)\\
&=\sum_{i=1}^k\int_{E_i} d_\mathbb{X}^2(f(\omega),x_i)\,d\nu(\omega)+\sum_{i=1}^k\int_{O_i\setminus E_i} d_\mathbb{X}^2(f(\omega),x_i)\,d\nu(\omega).
\end{align*}
In the last line, the first term obviously vanishes since $\left.f\right\vert_{E_i}\equiv x_i$. Then, using \eqref{E-15} in the second term leads to $d_{L^2(\mathbb{R}^d,\mathbb{X})}(f,g)\leq \varepsilon$ and this ends the proof.
\end{proof}

\section{Extended fibered subdifferential and local metric slope}\label{Appendix-slope-subdifferential}

The main goal of this section is to prove the relation \eqref{E-slope-minimal-subdifferential} in Proposition  \ref{P-slope-minimal-subdifferential} linking the local slope with the Fr\'echet subdifferential of functionals over the fibered Wasserstein space $(\mathcal{P}_{2,\nu}(\mathbb{R}^{2d}),W_{2,\nu})$. As mentioned in Section \ref{sec:fibered-wasserstein}, fibered subdifferentials $\partial_{W_{2,\nu}}\mathcal{E}[\mu]$, consisting of vectors $\boldsymbol{u}\in L^2_\mu(\mathbb{R}^{2d},\mathbb{R}^d)$, are often not a good representation when the measure $\mu\in \mathcal{P}_{2,\nu}(\mathbb{R}^{2d})$ is not regular. Then, an appropriate extension to transference plan is needed, which leads to the concept of {\it extended Fr\'echet subdifferential}. Below we adapt this construction to our novel fibered space inspired by the treatment in classical Wasserstein space.

\begin{defi}[Extended fibered Fr\'echet subdifferential]\label{D-Frechet-subdifferential-extended}
Consider $\nu\in \mathcal{P}(\mathbb{R}^d)$ and define the sets of probability measures $\boldsymbol{L}^2_{\mu}(\mathbb{R}^{2d})$ and the norm $\Vert \cdot\Vert_{\boldsymbol{L}^2_{\mu}(\mathbb{R}^{2d})}$ as follows
\begin{align}
\boldsymbol{L}^2_{\mu}(\mathbb{R}^{2d})&:=\left\{\gamma\in \Gamma_\nu(\mu,\mu'):\,\mu'\in \mathcal{P}_{2,\nu}(\mathbb{R}^{2d})\right\},\label{E-fibered-subdifferential-extended-plans}\\
\Vert \gamma\Vert_{\boldsymbol{L}^2_{\mu}(\mathbb{R}^{2d})}^2&:=\int_{\mathbb{R}^{4d}}\vert x'\vert^2\,d\gamma(x,x',\omega,\omega'),\label{E-fibered-subdifferential-extended-plans-norm}
\end{align}
for each $\mu\in \mathcal{P}_{2,\nu}(\mathbb{R}^{2d})$ and $\gamma\in \boldsymbol{L}^2_{\mu}(\mathbb{R}^{2d})$. Set any functional $\mathcal{E}:\mathcal{P}_{2,\nu}(\mathbb{R}^{2d})\longrightarrow (-\infty,+\infty]$, any $\mu\in D(\mathcal{E})$ and any $\gamma\in\boldsymbol{L}^2_{\mu}(\mathbb{R}^{2d})$. We say that $\gamma$ belongs to the extended (fibered) Fr\'echet subdifferential of $\mathcal{E}$ at $\mu$, and we write $\gamma\in\boldsymbol{\partial}_{W_{2,\nu}}\mathcal{E}[\mu]$, when the following inequality holds
\begin{equation}\label{E-fibered-subdifferential-extended}
\mathcal{E}[\sigma]-\mathcal{E}[\mu]\geq \inf_{\eta\in \Gamma_{o,\nu}(\gamma,\sigma)}\int_{\mathbb{R}^{4d}}x'\cdot(x''-x)\,d\eta(x,x',x'',\omega,\omega',\omega'')+o(W_{2,\nu}(\mu,\sigma)),
\end{equation}
for every $\sigma\in D(\mathcal{E})$. Above we denote
\begin{equation}\label{E-fibered-subdifferential-extended-optimal-plans}
\Gamma_{o,\nu}(\gamma,\sigma):=\left\{\eta\in \Gamma_\nu(\mu,\mu',\sigma):\,\mu'\in \mathcal{P}_{2,\nu}(\mathbb{R}^{2d}),\, \pi_{(x,x',\omega,\omega')\#}\eta=\gamma,\,\pi_{(x,x'',\omega,\omega'')\#}\eta\in \Gamma_{o,\nu}(\mu,\sigma)\right\},
\end{equation}
and $\Gamma_\nu(\mu,\mu',\sigma)$ is the set of $\nu$-admissible $3$-plans in Definition \ref{D-admissible-plans-fibered}. By $\boldsymbol{\partial}^\circ_{W_{2,\nu}}\mathcal{E}[\mu]$ we denote the subset of $\boldsymbol{\partial}_{W_{2,\nu}}\mathcal{E}[\mu]$ with minimal $\boldsymbol{L}^2_\mu(\mathbb{R}^{2d})$-norm and we refer to it as the minimal extended (fibered) Fr\'echet subdifferential of $\mathcal{E}$ at $\mu$.
\end{defi}

As in the classical Wasserstein space, there is a natural relation between vectors in $L^2_\mu(\mathbb{R}^{2d},\mathbb{R}^d)$ and transference plans in $\boldsymbol{L}^2_\mu(\mathbb{R}^{2d})$, which we recall bellow.

\begin{pro}[Vectors vs transference plans]\label{P-vectors-vs-plans}
Consider any $\nu\in \mathcal{P}(\mathbb{R}^d)$ and $\mu\in \mathcal{P}_{2,\nu}(\mathbb{R}^{2d})$ and define $\mathfrak{i}_\mu:L^2_\mu(\mathbb{R}^{2d},\mathbb{R}^d)\longrightarrow \boldsymbol{L}^2_\mu(\mathbb{R}^{2d})$ and $\mathfrak{b}_\mu:\boldsymbol{L}^2_\mu(\mathbb{R}^{2d})\longrightarrow L^2_\mu(\mathbb{R}^{2d},\mathbb{R}^d)$ by
\begin{align}\label{E-vectors-vs-plans}
\begin{aligned}
&\mathfrak{i}_\mu[\boldsymbol{u}](x,x',\omega,\omega'):=((Id,\boldsymbol{u}(\cdot,\omega))_{\#}\mu^\omega)(x,x')\otimes \nu(\omega)\otimes \delta_\omega(\omega'),\\
&\mathfrak{b}_\mu[\gamma](x,\omega):=\int_{\mathbb{R}^d}x'\,d\gamma^{x,\omega}(x'), 
\end{aligned}
\end{align}
for any $\boldsymbol{u}\in L^2_\mu(\mathbb{R}^{2d},\mathbb{R}^d)$ and $\gamma\in \boldsymbol{L}^2_\mu(\mathbb{R}^{2d})$. Here, the Borel families $\{\mu^\omega\}_{\omega\in\mathbb{R}^d}\subseteq \mathcal{P}_2(\mathbb{R}^d)$ and $\{\gamma^{x,\omega}\}_{(x,\omega)\in\mathbb{R}^{2d}}\subseteq \mathcal{P}_2(\mathbb{R}^d)$ denote the disintegrations of $\mu$ and $\gamma$ according to
\begin{equation}\label{E-barycentric-disintegration}
\mu(x,\omega)=\mu^\omega(x)\otimes \nu(\omega),\quad \gamma(x,x',\omega,\omega')=\gamma^{x,\omega}(x')\otimes \mu^\omega(x)\otimes \nu(\omega)\otimes \delta_\omega(\omega').
\end{equation}
Then, the following properties hold true:
\begin{enumerate}[label=(\roman*)]
\item $\mathfrak{i}_\mu$ is an isometric embedding:
$$\Vert \mathfrak{i}_\mu[\boldsymbol{u}]\Vert_{\boldsymbol{L}^2_\mu(\mathbb{R}^{2d})}=\Vert \boldsymbol{u}\Vert_{L^2_\mu(\mathbb{R}^{2d},\mathbb{R}^d)},\quad \boldsymbol{u}\in L^2_\mu(\mathbb{R}^{2d},\mathbb{R}^d).$$
\item $\mathfrak{b}_\mu$ is non-expansive:
$$\Vert \mathfrak{b}_\mu[\gamma]\Vert_{L^2_\mu(\mathbb{R}^{2d},\mathbb{R}^d)}\leq \Vert \gamma\Vert_{\boldsymbol{L}^2_\mu(\mathbb{R}^{2d})},\quad \gamma\in \boldsymbol{L}^2_\mu(\mathbb{R}^{2d}).$$
\item $\mathfrak{b}_\mu$ is a left inverse of $\mathfrak{i}_\mu$:
$$\mathfrak{b}_\mu[\mathfrak{i}_\mu[\boldsymbol{u}]]=\boldsymbol{u},\quad \boldsymbol{u}\in L^2_\mu(\mathbb{R}^{2d},\mathbb{R}^d).$$
\end{enumerate}
\end{pro}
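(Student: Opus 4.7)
The plan is to verify all three assertions through direct computation: items $(i)$ and $(iii)$ follow immediately from the change-of-variable formula for push-forwards, while item $(ii)$ is a consequence of Jensen's inequality applied fiberwise to the disintegration kernel $\{\gamma^{x,\omega}\}_{(x,\omega)\in\mathbb{R}^{2d}}$ introduced in \eqref{E-barycentric-disintegration}. The key structural observation is that $\mathfrak{i}_\mu[\boldsymbol{u}]$ is, by design, supported on the graph of $\boldsymbol{u}(\cdot,\omega)$ along each fiber, so its disintegration with respect to the first marginal $\mu$ is a Dirac mass centered at $\boldsymbol{u}(x,\omega)$; this makes the Jensen inequality of item $(ii)$ sharp precisely on the image of $\mathfrak{i}_\mu$.

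\textbf{Proof of $(i)$ and $(iii)$.} Given $\boldsymbol{u}\in L^2_\mu(\mathbb{R}^{2d},\mathbb{R}^d)$, the plan $\mathfrak{i}_\mu[\boldsymbol{u}]$ belongs to $\boldsymbol{L}^2_\mu(\mathbb{R}^{2d})$ because it has first marginal $\mu$ and is concentrated on $\{\omega=\omega'\}$, hence it is $\nu$-admissible in the sense of Definition \ref{D-admissible-plans-fibered}. To prove the isometry, I would compute
\begin{align*}
\|\mathfrak{i}_\mu[\boldsymbol{u}]\|_{\boldsymbol{L}^2_\mu(\mathbb{R}^{2d})}^2 &= \int_{\mathbb{R}^d}\!\int_{\mathbb{R}^{2d}}|x'|^2\,d\bigl((Id,\boldsymbol{u}(\cdot,\omega))_\#\mu^\omega\bigr)(x,x')\,d\nu(\omega) \\
&= \int_{\mathbb{R}^d}\!\int_{\mathbb{R}^d}|\boldsymbol{u}(x,\omega)|^2\,d\mu^\omega(x)\,d\nu(\omega) = \|\boldsymbol{u}\|_{L^2_\mu(\mathbb{R}^{2d},\mathbb{R}^d)}^2,
\end{align*}
where the second equality invokes the change-of-variable formula for push-forwards and the third is the disintegration formula from Theorem \ref{dis}. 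For item $(iii)$, observe that the disintegration of $\mathfrak{i}_\mu[\boldsymbol{u}]$ with respect to its first marginal $\mu$ is given by $\gamma^{x,\omega}=\delta_{\boldsymbol{u}(x,\omega)}$ for $\mu$-a.e. $(x,\omega)\in\mathbb{R}^{2d}$ (by construction of the push-forward along $(Id,\boldsymbol{u}(\cdot,\omega))$). Hence
$$\mathfrak{b}_\mu[\mathfrak{i}_\mu[\boldsymbol{u}]](x,\omega)=\int_{\mathbb{R}^d}x'\,d\delta_{\boldsymbol{u}(x,\omega)}(x')=\boldsymbol{u}(x,\omega),$$
for $\mu$-a.e. $(x,\omega)\in\mathbb{R}^{2d}$, which yields $\mathfrak{b}_\mu\circ\mathfrak{i}_\mu=\mathrm{Id}$.

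\textbf{Proof of $(ii)$.} For a generic $\gamma\in\boldsymbol{L}^2_\mu(\mathbb{R}^{2d})$ with disintegration \eqref{E-barycentric-disintegration}, I would apply Jensen's inequality to the vector-valued function $x'\mapsto x'$ against the probability measure $\gamma^{x,\omega}$, obtaining
$$|\mathfrak{b}_\mu[\gamma](x,\omega)|^2=\left|\int_{\mathbb{R}^d}x'\,d\gamma^{x,\omega}(x')\right|^2\leq \int_{\mathbb{R}^d}|x'|^2\,d\gamma^{x,\omega}(x'),$$
for $\mu$-a.e. $(x,\omega)\in\mathbb{R}^{2d}$. Integrating this pointwise bound against $\mu$ and collapsing the iterated integral via the disintegration formula gives
$$\|\mathfrak{b}_\mu[\gamma]\|_{L^2_\mu(\mathbb{R}^{2d},\mathbb{R}^d)}^2\leq \int_{\mathbb{R}^{2d}}\!\int_{\mathbb{R}^d}|x'|^2\,d\gamma^{x,\omega}(x')\,d\mu(x,\omega)=\int_{\mathbb{R}^{4d}}|x'|^2\,d\gamma=\|\gamma\|_{\boldsymbol{L}^2_\mu(\mathbb{R}^{2d})}^2,$$
as desired.

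\textbf{Main technical obstacle.} The computations themselves are elementary; the only delicate point is ensuring that $\mathfrak{b}_\mu[\gamma]$ is a genuine Borel-measurable element of $L^2_\mu(\mathbb{R}^{2d},\mathbb{R}^d)$. This rests on showing that the barycentric map $(x,\omega)\mapsto\int x'\,d\gamma^{x,\omega}(x')$ is Borel-measurable, which follows by applying Proposition \ref{P-Borel-family-vs-measurability} to the Borel family $\{\gamma^{x,\omega}\}_{(x,\omega)\in\mathbb{R}^{2d}}$ produced by the disintegration Theorem \ref{dis}, followed by a standard truncation-and-monotone-convergence argument to pass from bounded test functions to the unbounded function $x'\mapsto x'$, whose $L^2_{\gamma^{x,\omega}}$-integrability for $\mu$-a.e. $(x,\omega)$ is precisely the content of $\gamma\in\boldsymbol{L}^2_\mu(\mathbb{R}^{2d})$.
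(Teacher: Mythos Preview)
Your proof is correct and proceeds exactly along the lines the paper has in mind: the paper in fact omits the proof entirely, stating only that it is ``straightforward'', and your computations (push-forward change of variables for $(i)$ and $(iii)$, Jensen's inequality for $(ii)$) are precisely the natural ones that justify this omission.
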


The proof is straightforward, so we omit it. The operator $\mathfrak{b}_\mu$ above then acts as a projection and is called the {\it barycentric projection} of plans into vectors. One of the main kindness is that it preserve elements in the (extended) Fr\'echet subdifferential as stated in the following result.

\begin{pro}[Barycentric projection of extended subdifferential]\label{P-fibered-Frechet-subdifferential-barycentric}
Consider $\nu\in \mathcal{P}(\mathbb{R}^d)$, $\mu\in \mathcal{P}_{2,\nu}(\mathbb{R}^{2d})$ and $\mathcal{E}:\mathcal{P}_{2,\nu}(\mathbb{R}^{2d})\longrightarrow (-\infty,+\infty]$. Then, the following property holds true:
$$\gamma\in \boldsymbol{\partial}_{W_{2,\nu}}\mathcal{E}[\mu]\quad \Longrightarrow\quad \mathfrak{b}_\mu[\gamma]\in \partial_{W_{2,\nu}}\mathcal{E}[\mu].$$
\end{pro}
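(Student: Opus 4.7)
Set $\boldsymbol{u}:=\mathfrak{b}_\mu[\gamma]\in L^2_\mu(\mathbb{R}^{2d},\mathbb{R}^d)$. The strategy is to convert each competitor $\gamma'\in\Gamma_{o,\nu}(\mu,\sigma)$ appearing in the Fr\'echet subdifferential inequality for $\boldsymbol{u}$ into a specific competitor $\eta\in\Gamma_{o,\nu}(\gamma,\sigma)$ for which the extended subdifferential inequality for $\gamma$ is already known. The natural choice is the gluing of $\gamma$ and $\gamma'$ along their common $(x,\omega)$-marginal via conditional independence of $x'$ and $x''$ given $(x,\omega)$, which is a fibered analogue of the classical construction in \cite[Proposition 10.3.18]{AGS-08}.

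Fix $\sigma\in D(\mathcal{E})$ and $\gamma'\in\Gamma_{o,\nu}(\mu,\sigma)$. Using \eqref{E-barycentric-disintegration}, disintegrate
\begin{align*}
\gamma(x,x',\omega,\omega')&=\gamma^{x,\omega}(x')\otimes\mu^\omega(x)\otimes\nu(\omega)\otimes\delta_\omega(\omega'),\\
\gamma'(x,x'',\omega,\omega'')&=\gamma'^{x,\omega}(x'')\otimes\mu^\omega(x)\otimes\nu(\omega)\otimes\delta_\omega(\omega''),
\end{align*}
and define the $3$-plan
\begin{equation*}
\eta:=\gamma^{x,\omega}(x')\otimes\gamma'^{x,\omega}(x'')\otimes\mu^\omega(x)\otimes\nu(\omega)\otimes\delta_\omega(\omega')\otimes\delta_\omega(\omega'').
\end{equation*}
Integrating out $(x'',\omega'')$ or $(x',\omega')$ immediately yields $\pi_{(x,x',\omega,\omega')\#}\eta=\gamma$ and $\pi_{(x,x'',\omega,\omega'')\#}\eta=\gamma'\in\Gamma_{o,\nu}(\mu,\sigma)$, so $\eta\in\Gamma_{o,\nu}(\gamma,\sigma)$ in the sense of \eqref{E-fibered-subdifferential-extended-optimal-plans}.

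The second key step is an integral identification. By Fubini and the definition of the barycentric projection $\mathfrak{b}_\mu$ in \eqref{E-vectors-vs-plans}, the conditional independence of $x'$ and $x''$ given $(x,\omega)$ allows us to factor the integral:
\begin{align*}
\int_{\mathbb{R}^{6d}}x'\cdot(x''-x)\,d\eta
&=\int_{\mathbb{R}^{2d}}\!\!\left(\int_{\mathbb{R}^d}x'\,d\gamma^{x,\omega}(x')\right)\!\!\cdot\!\!\left(\int_{\mathbb{R}^d}(x''-x)\,d\gamma'^{x,\omega}(x'')\right)d\mu(x,\omega)\\
&=\int_{\mathbb{R}^{4d}}\boldsymbol{u}(x,\omega)\cdot(x''-x)\,d\gamma'(x,x'',\omega,\omega'').
\end{align*}
Invoking the extended subdifferential inequality \eqref{E-fibered-subdifferential-extended} with this specific $\eta$, and using that $W_{2,\nu}(\mu,\sigma)$ is the same $o$-denominator on both sides, yields
\begin{equation*}
\mathcal{E}[\sigma]-\mathcal{E}[\mu]\geq \int_{\mathbb{R}^{4d}}\boldsymbol{u}(x,\omega)\cdot(x''-x)\,d\gamma'+o(W_{2,\nu}(\mu,\sigma)).
\end{equation*}
Since $\gamma'\in\Gamma_{o,\nu}(\mu,\sigma)$ was arbitrary, taking the infimum over $\gamma'$ gives the Fr\'echet subdifferential inequality \eqref{E-fibered-subdifferential} for $\boldsymbol{u}$, {\it i.e.}, $\boldsymbol{u}\in\partial_{W_{2,\nu}}\mathcal{E}[\mu]$.

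The only technical point that is not fully routine is the measurability bookkeeping required so that the tensor-product definition of $\eta$ is a {\it bona fide} probability measure in $\Gamma_\nu(\mu,\mu',\sigma)$: one must check that the map $(x,\omega)\mapsto\gamma^{x,\omega}\otimes\gamma'^{x,\omega}$ constitutes a Borel family of probabilities on $\mathbb{R}^{2d}$, in the sense of item $(i)$ of Theorem~\ref{dis}. This follows by combining the disintegration Theorem~\ref{dis} with the equivalences in Proposition~\ref{P-Borel-family-vs-measurability}, applied separately to the kernels $\gamma^{x,\omega}$ and $\gamma'^{x,\omega}$, and then noting that the product of two Borel families is again a Borel family (tested against indicators of product Borel sets and extended by a monotone class argument). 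Modulo this measurable-selection issue, the argument is essentially a fibered adaptation of the Euclidean case and requires no further input beyond the disintegration formula and Fubini.
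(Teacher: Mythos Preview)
Your proof is essentially the same as the paper's: both construct the conditionally-independent gluing $\eta=\gamma^{x,\omega}(x')\otimes\bar\gamma_o^{\omega}(x,x'')\otimes\nu(\omega)\otimes\delta_\omega(\omega')\otimes\delta_\omega(\omega'')$ for a given optimal $\bar\gamma_o\in\Gamma_{o,\nu}(\mu,\sigma)$, observe that $\eta\in\Gamma_{o,\nu}(\gamma,\sigma)$, and then identify $\int x'\cdot(x''-x)\,d\eta$ with $\int\mathfrak{b}_\mu[\gamma]\cdot(x''-x)\,d\bar\gamma_o$ via the barycentric projection to recover \eqref{E-fibered-subdifferential}. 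Your write-up is in fact more explicit than the paper's (you spell out the Fubini step and the measurability of the product kernel), but the route is identical.
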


\begin{proof}
Assume that $\gamma\in \boldsymbol{\partial}_{W_{2,\nu}}\mathcal{E}[\mu]$ and let us disintegrate like in \eqref{E-barycentric-disintegration}. Also, take any $\bar\gamma_o=\bar\gamma_o(x,x'',\omega,\omega'')\in \Gamma_{o,\nu}(\mu,\sigma)$ and define $\eta\in \mathcal{P}(\mathbb{R}^{6d})$ by disintegration as follows
$$\eta(x,x',x'',\omega,\omega',\omega''):=\gamma^{x,\omega}(x')\otimes \bar{\gamma}_o^\omega(x,x'')\otimes \nu(\omega)\otimes \delta_{\omega}(\omega')\otimes \delta_{\omega}(\omega'').$$
Note that $\eta\in \Gamma_{o,\nu}(\gamma,\sigma)$ because $\pi_{(x,x',\omega,\omega')\#}\eta =\gamma$ and $\pi_{(x,x'',\omega,\omega'')\#}\eta =\bar{\gamma}_o$. Since we have $\gamma\in \boldsymbol{\partial}_{W_{2,\nu}}\mathcal{E}[\mu]$, then using \eqref{E-fibered-subdifferential-extended} for the above $\eta$, along with the definition of the barycentric projection \eqref{E-vectors-vs-plans} imply that $\mathfrak{b}_\mu[\gamma]$ satisfies \eqref{E-fibered-subdifferential} in Definition \ref{D-Frechet-subdifferential}. Therefore, $\mathfrak{b}_\mu[\gamma]\in \partial_{W_{2,\nu}}\mathcal{E}[\mu]$.
\end{proof}

The core of this part will be to prove first the following analogue of Proposition \ref{P-slope-minimal-subdifferential} using transference plans $\gamma\in \boldsymbol{L}^2_\mu(\mathbb{R}^{2d})$, which extends the approach in \cite{AGS-08} to the fibered setting.

\begin{pro}[Metric slope vs minimal extended subdifferential]\label{P-slope-minimal-subdifferential-plans}
Consider $\nu\in \mathcal{P}(\mathbb{R}^d)$ and any functional $\mathcal{E}$ satisfying framework $\framework$. If $\mu \in D(\vert \partial \mathcal{E}\vert_{W_{2,\nu}})$, then $\boldsymbol{\partial}_{W_{2,\nu}} \mathcal{E}[\mu]\neq \emptyset$ and
\begin{equation}\label{E-slope-minimal-subdifferential-extended}
\vert \partial\mathcal{E}\vert_{W_{2,\nu}}[\mu]=\min\left\{\Vert \boldsymbol{\gamma}\Vert_{\boldsymbol{L}_\mu(\mathbb{R}^{2d})}:\,\gamma \in \boldsymbol{\partial}_{W_{2,\nu}}\mathcal{E}[\mu]\right\}.
\end{equation}
\end{pro}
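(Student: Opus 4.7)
The proof adapts the classical argument of \cite[Theorem 10.3.10]{AGS-08} from $({\mathcal P}_2(\RR^d),W_2)$ to the fibered setting, and consists of two inequalities. The easy direction, $\vert \partial \mathcal{E}\vert_{W_{2,\nu}}[\mu]\leq \inf\{\Vert \gamma\Vert_{\boldsymbol{L}^2_\mu(\RR^{2d})}:\gamma\in \boldsymbol{\partial}_{W_{2,\nu}}\mathcal{E}[\mu]\}$, is a direct consequence of Cauchy--Schwarz applied to \eqref{E-fibered-subdifferential-extended}: for any such $\gamma$, any $\sigma\in D(\mathcal{E})$, and any competitor $\eta\in\Gamma_{o,\nu}(\gamma,\sigma)$, the marginal $\pi_{(x,x'',\omega,\omega'')\#}\eta\in\Gamma_{o,\nu}(\mu,\sigma)$ gives $\int_{\RR^{6d}}x'\cdot(x''-x)\,d\eta\geq -\Vert\gamma\Vert_{\boldsymbol{L}^2_\mu(\RR^{2d})}W_{2,\nu}(\mu,\sigma)$, so that $(\mathcal{E}[\mu]-\mathcal{E}[\sigma])^+/W_{2,\nu}(\mu,\sigma)\leq \Vert\gamma\Vert_{\boldsymbol{L}^2_\mu(\RR^{2d})}+o(1)$ and the bound follows upon taking $\limsup$ as $\sigma\to\mu$.

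For the reverse inequality I plan to implement a JKO-type construction. Fix $\tau\in(0,1/\lambda^-)$ and consider the penalized energy $\Phi(\tau,\mu;\cdot)$ from \eqref{E-penalized-energy}. Framework $\framework$ together with Lemma \ref{L-convexity-penalized-energy} ensures that $\Phi(\tau,\mu;\cdot)$ is proper, coercive, lower semicontinuous, and strictly convex along generalized geodesics, so by the direct method---using Theorem \ref{T-random-Prokhorov} for compactness and Proposition \ref{P-stability-optimal-plans}$(i)$ for lower semicontinuity of the penalty---it admits a unique minimizer $\mu_\tau\in D(\mathcal{E})$. Fix any $\gamma_\tau\in\Gamma_{o,\nu}(\mu,\mu_\tau)$ and define
\[
\tilde{\gamma}_\tau:=\bigl(\pi_x,\tfrac{1}{\tau}(\pi_x-\pi_{x'}),\pi_\omega,\pi_{\omega'}\bigr)_{\#}\gamma_\tau\in\boldsymbol{L}^2_\mu(\RR^{2d}),\qquad \Vert\tilde{\gamma}_\tau\Vert_{\boldsymbol{L}^2_\mu(\RR^{2d})}=\tfrac{1}{\tau}W_{2,\nu}(\mu,\mu_\tau).
\]
The bound $\Phi(\tau,\mu;\mu_\tau)\leq\Phi(\tau,\mu;\mu)=\mathcal{E}[\mu]$ yields the key energy estimate $\tfrac{1}{2\tau}W_{2,\nu}^2(\mu,\mu_\tau)\leq\mathcal{E}[\mu]-\mathcal{E}[\mu_\tau]$, from which $\mu_\tau\to\mu$ in $W_{2,\nu}$ and $\Vert\tilde\gamma_\tau\Vert_{\boldsymbol{L}^2_\mu(\RR^{2d})}\leq 2(\mathcal{E}[\mu]-\mathcal{E}[\mu_\tau])^+/W_{2,\nu}(\mu,\mu_\tau)$, whose $\limsup$ as $\tau\to 0$ is controlled by the metric slope; the extraneous factor $2$ can then be absorbed through the sharper iteration scheme of \cite[Theorem 3.1.6]{AGS-08}. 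In particular $\{\tilde\gamma_\tau\}$ is $\nu$-random uniformly tight and, by Theorem \ref{T-random-Prokhorov}, a subsequence $\tilde\gamma_{\tau_n}$ converges $\nu$-random narrowly to some $\gamma\in\boldsymbol{L}^2_\mu(\RR^{2d})$ satisfying $\Vert\gamma\Vert_{\boldsymbol{L}^2_\mu(\RR^{2d})}\leq\vert\partial\mathcal{E}\vert_{W_{2,\nu}}[\mu]$ by lower semicontinuity of the second moment through the random Portmanteau Theorem \ref{T-random-Portmanteau}.

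The main obstacle will be to verify that the limit plan $\gamma$ belongs to $\boldsymbol{\partial}_{W_{2,\nu}}\mathcal{E}[\mu]$. For each test measure $\sigma\in D(\mathcal{E})$, exploiting the $\lambda$-convexity of $\Phi(\tau,\mu;\cdot)$ along the generalized geodesic from $\mu_\tau$ to $\sigma$ with base $\mu$ produces an Euler--Lagrange-type inequality of the form
\[
\mathcal{E}[\sigma]-\mathcal{E}[\mu_\tau]\geq \int_{\RR^{6d}}x_v\cdot(x''-x_\tau)\,d\eta_\tau+\tfrac{\lambda}{2}W_{2,\nu}^2(\mu_\tau,\sigma),
\]
where $\eta_\tau\in\Gamma_\nu(\mu,\mu_\tau,\sigma)$ is obtained by gluing $\gamma_\tau$ with an optimal plan $\gamma_\tau'\in\Gamma_{o,\nu}(\mu_\tau,\sigma)$, and $x_v=(x-x_\tau)/\tau$ corresponds to the rescaled velocity coordinate encoded in $\tilde\gamma_\tau$. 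Passing to the limit $\tau_n\to 0$ then requires: $(a)$ the $\nu$-random narrow convergence $\tilde\gamma_{\tau_n}\to\gamma$, boosted by the uniform second-moment bound to convergence of the scalar product; $(b)$ the preservation of optimality of the $\gamma_\tau'$-factor in the $\nu$-random narrow limit, which is supplied by the fibered stability Proposition \ref{P-stability-optimal-plans}---this is precisely where the classical narrow-topology stability of \cite[Proposition 7.1.3]{AGS-08} breaks down in the presence of the infinite fibered cost ({\it cf.} Remark \ref{optvsmon}); and $(c)$ the convergence $W_{2,\nu}^2(\mu_\tau,\sigma)\to W_{2,\nu}^2(\mu,\sigma)$, so that the quadratic term $\tfrac{\lambda}{2}W_{2,\nu}^2(\mu_\tau,\sigma)$ produces, modulo an $o(W_{2,\nu}(\mu,\sigma))$ remainder, the defining inequality \eqref{E-fibered-subdifferential-extended} for $\boldsymbol{\partial}_{W_{2,\nu}}\mathcal{E}[\mu]$. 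Once this passage is achieved, the minimum in \eqref{E-slope-minimal-subdifferential-extended} is attained at $\gamma$, and the vector version in Proposition \ref{P-slope-minimal-subdifferential} follows by projecting $\gamma$ through the barycentric operator $\mathfrak{b}_\mu$, combining Propositions \ref{P-vectors-vs-plans} and \ref{P-fibered-Frechet-subdifferential-barycentric}.
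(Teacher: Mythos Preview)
Your approach is essentially the same as the paper's: Moreau--Yosida approximation, an Euler--Lagrange inequality refined via $\lambda$-convexity, compactness of the rescaled plans in the $\nu$-random narrow topology, and stability of optimality through Proposition \ref{P-stability-optimal-plans}. The only noticeable differences are that the paper bases the rescaled plans at $\mu_\tau$ rather than at $\mu$, and obtains the sharp norm identity $\Vert\gamma_{\tau_n}\Vert_{\boldsymbol{L}^2_{\mu_{\tau_n}}(\mathbb{R}^{2d})}\to\vert\partial\mathcal{E}\vert_{W_{2,\nu}}[\mu]$ directly from the Moreau--Yosida slope formula of \cite[Lemma 3.1.5]{AGS-08} (Lemma \ref{L-Moreau-Yosida-slope} in the paper), which sidesteps your factor-$2$ estimate and the appeal to \cite[Theorem 3.1.6]{AGS-08}.
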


When the functional $\mathcal{E}$ is defined over a Banach space, the usual proof exploits a reformulation of the local slope when $\mathcal{E}$ is $\lambda$-convex as a full supremum, along with a clever use of a geometric version of the Hahn-Banach, see \cite[Proposition 1.4.4]{AGS-08}. For $(\mathcal{P}_{2,\nu}(\mathbb{R}^{2d}),W_{2,\nu})$, the Hahn-Banach theorem is not applicable due to clear reasons. Inspired by \cite[Theorem 10.3.10]{AGS-08} for the case of functionals over the classical Wasserstein space, our method will exploit a different reformulation of the local slope $\vert\partial \mathcal{E}\vert_{W_{2,\nu}}$ in terms of the the Moreau-Yosida approximations of the functional $\mathcal{E}$. This approach is suitable when dealing with general metric spaces and gives a response beyond the linear structure that can be applied to a variety of Banach manifolds.

\begin{lem}[Moreau-Yosida approximation]\label{L-Moreau-Yosida}
Consider $\nu\in \mathcal{P}(\mathbb{R}^d)$, $\mathcal{E}:\mathcal{P}_{2,\nu}(\mathbb{R}^{2d})\longrightarrow (-\infty,+\infty]$ and consider the associated penalized energy functional $\Phi$ according to \eqref{E-penalized-energy} in Section \ref{sec:fibered-wasserstein}. Assume that $\mathcal{E}$ satisfies the hypothesis in framework $\framework$ ({\it cf.} Definition \ref{D-framework-F}), set $\mu_*\in \overline{D(\mathcal{E})}$ and define $\tau_*:=\frac{1}{\lambda^-}$. Then, we have the following properties:
\begin{enumerate}[label=(\roman*)]
\item (Existence and uniqueness of minimizers) For any $0<\tau<\tau_*$ there exists a unique minimizer $\mu_\tau\in \mathcal{P}_{2,\nu}(\mathbb{R}^{2d})$ of the functional $\Phi(\tau,\mu_*;\cdot)$.
\item (Convergence of minimizers) Take the minimizer $\mu_\tau\in \mathcal{P}_{2,\nu}(\mathbb{R}^{2d})$ above for any $0<\tau<\tau_*$. Then, we obtain the convergence $W_{2,\nu}(\mu_\tau,\mu_*)\rightarrow 0$ as $\tau\rightarrow 0$.
\end{enumerate}
\end{lem}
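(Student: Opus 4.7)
The cornerstone of the argument will be Lemma \ref{L-convexity-penalized-energy}: for $\tau\in (0,\tau_*)$ the penalized functional $\Phi(\tau,\mu_*;\cdot)$ is strongly convex along fibered generalized geodesics, with strictly positive modulus $\frac{1+\lambda\tau}{\tau}$ (since $1+\lambda\tau>0$). This uniform convexity will drive both the existence/uniqueness claim $(i)$ and the Moreau--Yosida decay $(ii)$.

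For $(i)$, the plan is to run the direct method of the calculus of variations in the Polish space $(\mathcal{P}_{2,\nu}(\mathbb{R}^{2d}),W_{2,\nu})$ ({\it cf.} Proposition \ref{P-dnu-polish}). First, I would check that $\inf\Phi(\tau,\mu_*;\cdot)\in \mathbb{R}$: properness $(\mathcal{F}_1)$ together with $\mu_*\in\overline{D(\mathcal{E})}$ provides a competitor in $D(\mathcal{E})$ showing the infimum is finite from above, while the local coercivity $(\mathcal{F}_2)$, combined with $(\mathcal{F}_4)$, must be upgraded to a global quadratic lower bound of the form $\mathcal{E}[\mu]\geq -A-B\,W_{2,\nu}^2(\mu,\sigma)$; after absorbing this against the penalty $\frac{1}{2\tau}W_{2,\nu}^2(\mu,\mu_*)$ via the triangle inequality one concludes that $\Phi(\tau,\mu_*;\cdot)$ is bounded below. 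Given a minimizing sequence $\{\mu_n\}$, I would then apply Lemma \ref{L-convexity-penalized-energy} at $\theta=\frac{1}{2}$ along a generalized geodesic joining $\mu_n$ and $\mu_m$ with base $\mu_*$ to obtain
\begin{equation*}
\frac{1+\lambda\tau}{8\tau}\,W_{2,\nu}^2(\mu_n,\mu_m) \leq \frac{1}{2}\Phi(\tau,\mu_*;\mu_n) + \frac{1}{2}\Phi(\tau,\mu_*;\mu_m) - \inf\Phi(\tau,\mu_*;\cdot).
\end{equation*}
Thus $\{\mu_n\}$ is Cauchy and, by completeness of $W_{2,\nu}$, converges to some $\mu_\tau$; the lower semicontinuity $(\mathcal{F}_3)$ and the continuity of $W_{2,\nu}^2(\cdot,\mu_*)$ identify the limit as a minimizer. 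Uniqueness follows from the very same inequality applied to two putative minimizers, whose right-hand side then vanishes.

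For $(ii)$, I would first treat the case $\mu_*\in D(\mathcal{E})$: using $\mu_*$ itself as a competitor in the minimizing property,
\begin{equation*}
\mathcal{E}[\mu_\tau]+\frac{1}{2\tau}W_{2,\nu}^2(\mu_\tau,\mu_*) = \Phi(\tau,\mu_*;\mu_\tau) \leq \Phi(\tau,\mu_*;\mu_*) = \mathcal{E}[\mu_*].
\end{equation*}
Inserting the quadratic lower bound on $\mathcal{E}$ and absorbing the $W_{2,\nu}^2(\mu_\tau,\mu_*)$ term yields $W_{2,\nu}^2(\mu_\tau,\mu_*)\lesssim \tau$ for $\tau$ small enough, so $W_{2,\nu}(\mu_\tau,\mu_*)\to 0$ as $\tau\to 0^+$. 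For a general $\mu_*\in \overline{D(\mathcal{E})}$, I would approximate: pick $\sigma_n\in D(\mathcal{E})$ with $W_{2,\nu}(\sigma_n,\mu_*)\to 0$, compare $\mu_\tau$ with the minimizer $\mu_\tau^{(n)}$ of $\Phi(\tau,\sigma_n;\cdot)$ obtained in $(i)$, and use the strong convexity together with the triangle inequality to pass to the joint limit in $\tau\to 0$ and $n\to\infty$.

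The principal technical hurdle will be the upgrade of the local coercivity $(\mathcal{F}_2)$ to a global quadratic lower bound on $\mathcal{E}$. This must be extracted from the $\lambda$-convexity along generalized geodesics in $(\mathcal{F}_4)$, but that property only asserts the convexity inequality along a \emph{specific} generalized geodesic and measures distances with the plan-adapted $W_\gamma\geq W_{2,\nu}$, so some care is required to produce a clean $W_{2,\nu}^2$ control. Once this bound is in hand, the remainder amounts to a transcription of the classical Moreau--Yosida scheme of \cite[Chapter 4]{AGS-08} to the fibered Polish space $(\mathcal{P}_{2,\nu}(\mathbb{R}^{2d}),W_{2,\nu})$.
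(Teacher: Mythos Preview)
Your proposal is correct and follows essentially the same route as the paper, which simply defers to \cite[Lemmas 4.1.1 and 3.1.2]{AGS-08} applied in the Polish space $(\mathcal{P}_{2,\nu}(\mathbb{R}^{2d}),W_{2,\nu})$ via Lemma \ref{L-convexity-penalized-energy}. Your sketch just unpacks what those abstract metric-space lemmas contain: the upgrade of local coercivity to a global quadratic lower bound under $\lambda$-convexity, the Cauchy estimate from strong convexity of $\Phi(\tau,\mu_*;\cdot)$ at $\theta=\tfrac12$, and the competitor argument for the decay of $W_{2,\nu}(\mu_\tau,\mu_*)$.
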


The proof is standard, it works over general metric spaces and it follows from Lemmas 4.1.1 and 3.1.2 in \cite{AGS-08}. More specifically, those results only require that $\mathcal{E}$ is proper, coercive, lower semicontinuous and the penalized energy functional $\Phi$ verifies the convexity property \eqref{E-convexity-penalized-energy} in Lemma \ref{L-convexity-penalized-energy}. Of course they all hold under the framework $\framework$. Then, we omit the proof.

\begin{lem}[Reformulation of the local slope]\label{L-Moreau-Yosida-slope}
Under the assumptions of Lemma \ref{L-Moreau-Yosida}, set any $\mu\in \overline{D(\mathcal{E})}$ and the minimizer $\mu_\tau\in \mathcal{P}_{2,\nu}(\mathbb{R}^{2d})$ of the penalized energy $\Phi(\tau,\mu;\cdot)$ for every $0<\tau<\tau_*$. Then, there exists a sequence $\{\tau_n\}_{\in\mathbb{N}}\subseteq (0,\tau_*)$ with $\tau_n\rightarrow 0$ such that
\begin{equation}\label{E-Moreau-Yosida-slope}
\vert \partial\mathcal{E}\vert_{W_{2,\nu}}^2[\mu]=\lim_{n\rightarrow\infty}\frac{W_{2,\nu}^2(\mu_{\tau_n},\mu)}{\tau_n^2}.
\end{equation}
\end{lem}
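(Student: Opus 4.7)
The proof adapts \cite[Theorem 3.1.6]{AGS-08} to the fibered quadratic Wasserstein setting, with Lemma \ref{L-convexity-penalized-energy} providing the essential convexity of the penalized energy $\Phi$. My first step derives the Euler-Lagrange variational inequality for the minimizer $\mu_\tau$: fix any competitor $\sigma\in D(\mathcal{E})$ and a fibered generalized geodesic $\theta\mapsto\mu_\theta$ from $\mu_\tau$ to $\sigma$ with base $\mu$ along which the convexity estimate \eqref{E-convexity-penalized-energy} holds. Testing the minimality $\Phi(\tau,\mu;\mu_\theta)\geq \Phi(\tau,\mu;\mu_\tau)$ against this convexity, dividing by $\theta$ and passing $\theta\to 0^+$, yields
\begin{equation*}
\mathcal{E}[\sigma]-\mathcal{E}[\mu_\tau]\geq \frac{W_{2,\nu}^2(\mu_\tau,\mu)-W_{2,\nu}^2(\sigma,\mu)}{2\tau}+\frac{1+\lambda\tau}{2\tau}W_{2,\nu}^2(\mu_\tau,\sigma),
\end{equation*}
valid for every $\sigma\in D(\mathcal{E})$ and $0<\tau<\tau_*$.

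Second, I will prove $\vert \partial \mathcal{E}\vert_{W_{2,\nu}}[\mu]\geq \limsup_{\tau\to 0^+}W_{2,\nu}(\mu_\tau,\mu)/\tau$. The key trick is to plug $\sigma=\mu\in D(\mathcal{E})$ into the Euler-Lagrange inequality (permitted since $\mu\in D(\vert\partial\mathcal{E}\vert_{W_{2,\nu}})\subseteq D(\mathcal{E})$), giving the sharpened estimate
\begin{equation*}
\mathcal{E}[\mu]-\mathcal{E}[\mu_\tau]\geq \frac{2+\lambda\tau}{2\tau}W_{2,\nu}^2(\mu_\tau,\mu).
\end{equation*}
Dividing by $W_{2,\nu}(\mu_\tau,\mu)$, using $\mu_\tau\to\mu$ in $W_{2,\nu}$ from Lemma \ref{L-Moreau-Yosida} and the definition of the metric slope (Definition \ref{D-metric-slope}), I conclude the claim.

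Third, I will prove the reverse inequality $\vert \partial \mathcal{E}\vert_{W_{2,\nu}}[\mu]\leq \liminf_{\tau\to 0^+}W_{2,\nu}(\mu_\tau,\mu)/\tau$ by deriving a monotonicity property for $\tau\mapsto W_{2,\nu}(\mu_\tau,\mu)/\tau$: applying the Euler-Lagrange inequality at two scales $\tau<\tau'$ (with test $\sigma=\mu_{\tau'}$ and then $\sigma=\mu_\tau$) and summing produces a non-negative coupling term in $W_{2,\nu}^2(\mu_\tau,\mu_{\tau'})$ that forces such a monotonicity. Combined with the Moreau-Yosida envelope identity
\begin{equation*}
\mathcal{E}[\mu]-\mathcal{E}_\tau[\mu]=\int_0^\tau \frac{W_{2,\nu}^2(\mu_s,\mu)}{2s^2}\,ds,
\end{equation*}
where $\mathcal{E}_\tau[\mu]:=\Phi(\tau,\mu;\mu_\tau)$, the slope definition, and the lower semicontinuity of $\mathcal{E}$, one obtains the desired bound. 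Combining with the previous step shows that the full limit $\lim_{\tau\to 0^+}W_{2,\nu}(\mu_\tau,\mu)/\tau$ exists and equals $\vert \partial \mathcal{E}\vert_{W_{2,\nu}}[\mu]$; squaring then yields \eqref{E-Moreau-Yosida-slope} along every sequence $\tau_n\to 0^+$.

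The main obstacle is the rigorous justification of the first step in the fibered setting: the generalized geodesic of Definition \ref{D-gen-geodesic-convexity} is only guaranteed to exist for some specific $\nu$-admissible three-plan $\gamma\in \Gamma_\nu(\mu,\mu_\tau,\sigma)$, not for every admissible plan, which necessitates careful bookkeeping of the admissibility structure when expanding \eqref{E-convexity-penalized-energy} and passing $\theta\to 0^+$. Once the Euler-Lagrange inequality is secured, the remainder is the standard Moreau-Yosida calculus transplanted to the fibered Wasserstein geometry.
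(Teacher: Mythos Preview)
The paper does not give a proof; it simply cites \cite[Lemma~3.1.5]{AGS-08} and stresses that the statement is a purely metric fact requiring only that $\mathcal{E}$ be proper, coercive, lower semicontinuous and that minimizers of $\Phi(\tau,\mu;\cdot)$ exist on some interval $(0,\tau_*)$ --- in particular, \emph{no convexity is invoked}. Your proposal instead uses the $\lambda$-convexity along generalized geodesics (through Lemma~\ref{L-convexity-penalized-energy}) to derive the Euler--Lagrange variational inequality, and aims at the stronger assertion that the \emph{full} limit $\lim_{\tau\to 0}W_{2,\nu}(\mu_\tau,\mu)/\tau$ exists and equals the slope; this is closer in spirit to \cite[Theorem~3.1.6]{AGS-08} than to Lemma~3.1.5. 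The paper's route is more economical: the subsequential version actually stated in \eqref{E-Moreau-Yosida-slope} follows from the metric duality formula $\tfrac12|\partial\mathcal{E}|^2[\mu]=\limsup_{\tau\to 0}(\mathcal{E}[\mu]-\mathcal{E}_\tau[\mu])/\tau$ together with the elementary inequality $\mathcal{E}[\mu]-\mathcal{E}_\tau[\mu]\geq W_{2,\nu}^2(\mu_\tau,\mu)/(2\tau)$ and the De~Giorgi integral identity, none of which require convexity. Your route buys a stronger conclusion and is self-contained within the fibered framework. One caveat on your Step~3: summing the Euler--Lagrange inequalities at two scales $\tau<\tau'$ only yields monotonicity of $\tau\mapsto W_{2,\nu}(\mu_\tau,\mu)$, not of the ratio $W_{2,\nu}(\mu_\tau,\mu)/\tau$ as you claim. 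A cleaner way to obtain the reverse bound is to read off $|\partial\mathcal{E}|_{W_{2,\nu}}[\mu_\tau]\leq W_{2,\nu}(\mu_\tau,\mu)/\tau$ directly from the Euler--Lagrange inequality (via the global slope formula under $\lambda$-convexity) and then use lower semicontinuity of the slope as $\mu_\tau\to\mu$.
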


Again, the proof is standard over metric spaces and follows from \cite[Lemma 3.1.5]{AGS-08}. More specifically, the result only requires that $\mathcal{E}$ is proper, coercive, lower semicontinuous and the penalized energy $\Phi$ admits minimizers for a positive range $\tau\in (0,\tau_*)$. Of course they all hold under the framework $\framework$. Then, we omit the proof.

\medskip

\begin{proof}[Proof of Proposition \ref{P-slope-minimal-subdifferential-plans}]

~
\medskip

$\diamond$ {\sc Step 1}: Direct inequality in \eqref{E-slope-minimal-subdifferential-extended}.\\
Consider any $\gamma\in \boldsymbol{\partial}_{W_{2,\nu}}\mathcal{E}[\mu]$ and use Definition \ref{D-Frechet-subdifferential-extended} to find the inequality
\begin{align*}
\mathcal{E}[\mu]-\mathcal{E}[\sigma]&\leq \sup_{\eta\in \Gamma_{o,\nu}(\gamma,\sigma)}\int_{\mathbb{R}^{6d}}x'\cdot (x-x'')\,d\eta+o (W_{2,\nu}(\mu,\sigma))\\
&\leq \Vert \gamma\Vert_{\boldsymbol{L}^2_\mu(\mathbb{R}^{2d})} W_{2,\nu}(\mu,\sigma)+o(W_{2,\nu}(\mu,\sigma)),
\end{align*}
for any $\sigma\in \mathcal{P}_{2,\nu}(\mathbb{R}^{2d})$, where we have used the Cauchy--Schwarz inequality, the definition \eqref{E-fibered-subdifferential-extended-plans-norm} of the $\boldsymbol{L}^2_\mu(\mathbb{R}^{2d})$-norm, and the fact that $\pi_{(x,x'',\omega,\omega'')\#}\eta\in \Gamma_{o,\nu}(\mu,\sigma)$ by definition \eqref{E-fibered-subdifferential-extended-optimal-plans} of $\Gamma_{o,\nu}(\gamma,\sigma)$. Dividing by $W_{2,\nu}(\mu,\sigma)$, taking positive part and limits as $\sigma\rightarrow \mu$ in $W_{2,\nu}$ we have
$$\vert \partial \mathcal{E}\vert_{W_{2,\nu}}[\mu]\leq \Vert \gamma\Vert_{\boldsymbol{L}^2_\mu(\mathbb{R}^{2d})},$$
which ends this step. The next steps aim at deriving the converse inequality.

\medskip

$\diamond$ {\sc Step 2}: A minimizing family of plans $\gamma_\tau\in \boldsymbol{\partial}_{W_{2,\nu}}\mathcal{E}[\mu_\tau]$.\\
Let us consider the unique minimizer $\mu_\tau\in \mathcal{P}_{2,\nu}(\mathbb{R}^{2d})$ of the penalized energy functional $\Phi(\tau,\mu;\cdot)$ for every $\tau\in (0,\tau_*)$. By the optimality condition of $\mu_\tau$ we have
\begin{equation}\label{E-Moreau-Yosida-optimality}
\mathcal{E}[\sigma]-\mathcal{E}[\mu_{\tau}]\geq -\frac{1}{\tau}\int_{\mathbb{R}^d}\left(\frac{1}{2}W_2^2(\sigma^\omega,\mu^\omega)-\frac{1}{2}W_2^2(\mu_\tau^\omega,\mu^\omega)\right)\,d\nu(\omega),
\end{equation}
for each $\sigma\in \mathcal{P}_{2,\nu}(\mathbb{R}^{2d})$ and $\tau\in (0,\tau_*)$. Associated to each $\mu_\tau$, let us set a plan $\widehat{\gamma}_\tau\in \Gamma_{o,\nu}(\mu_\tau,\mu)$, and the following associated rescaled plan $\gamma_\tau(x,x',\omega,\omega'):=\gamma_\tau^\omega(x,x')\otimes \nu(\omega)\otimes \delta_\omega(\omega')$, 
where
$$\gamma_\tau^\omega:=(\pi_x,\frac{1}{\tau}(\pi_{x'}-\pi_x))_{\#}\widehat{\gamma}_\tau^\omega,$$
for $\nu$-a.e. $\omega\in \mathbb{R}^d$. Using the super-differentiability in \cite[Theorem 10.2.2]{AGS-08} applied at $\nu$-a.e. value $\omega\in \mathbb{R}^d$ of the integral in the right hand side of \eqref{E-Moreau-Yosida-optimality} for the quadratic Wasserstein distance $W_2$, we obtain the following inequality
\begin{equation}\label{E-Moreau-Yosida-optimality-2}
\mathcal{E}[\sigma]-\mathcal{E}[\mu_\tau]\geq \int_{\mathbb{R}^{6d}}x'\cdot (x''-x)\,d\eta_\tau(x,x',x'',\omega,\omega',\omega'') -\frac{1}{\tau}W_{2,\nu}^2(\mu_\tau,\sigma),
\end{equation}
for each $\sigma\in \mathcal{P}_{2,\nu}(\mathbb{R}^{2d})$ and $\tau\in (0,\tau_*)$, where $\eta_\tau$ is any probability measure in $\Gamma_{o,\nu}(\gamma_\tau,\sigma)$. In particular, this implies that $\gamma_\tau\in \boldsymbol{\partial}_{W_{2,\nu}}\mathcal{E}[\mu_\tau]$ for each $\tau\in (0,\tau_*)$. 

\medskip

$\diamond$ {\sc Step 3}: Reformulation of $\vert \partial \mathcal{E}\vert_{W_{2,\nu}}[\mu]$ in terms of $\{\gamma_\tau\}_{\tau\in (0,\tau_*)}$.\\
By definition of $\gamma_\tau\in \boldsymbol{L}^2_{\mu_\tau}(\mathbb{R}^{2d})$ note that we have the following chain of identities
$$\Vert \gamma_\tau\Vert_{\boldsymbol{L}^2_{\mu_\tau}(\mathbb{R}^{2d})}^2=\int_{\mathbb{R}^{4d}}\vert x'\vert^2\,d\gamma_\tau(x,x',\omega,\omega')=\int_{\mathbb{R}^{4d}}\frac{\vert x-x'\vert^2}{\tau^2}\,d\widehat{\gamma}_\tau(x,x',\omega,\omega')=\frac{W_{2,\nu}^2(\mu_\tau,\mu)}{\tau^2},$$
for each $\tau\in (0,\tau_*)$. Therefore, using Lemma \ref{L-Moreau-Yosida-slope} we know that there must exist a sequence $\{\tau_n\}_{n\in \mathbb{N}}\subseteq (0,\tau_*)$ with $\tau_n\rightarrow 0$ such that
\begin{equation}\label{E-approximate-fibered-minimal-subdifferential}
\vert \partial \mathcal{E}\vert_{W_{2,\nu}}[\mu]=\lim_{n\rightarrow\infty}\Vert \gamma_{\tau_n}\Vert_{\boldsymbol{L}^2_{\mu_{\tau_n}}(\mathbb{R}^{2d})}.
\end{equation}

\medskip

$\diamond$ {\sc Step 4}: Compactness of $\{\gamma_{\tau_n}\}_{n\in \mathbb{N}}$ and $\{\eta_{\tau_n}\}_{n\in \mathbb{N}}$.\\
First, let us note that the following convergence of moments take place
\begin{align*}
&\lim_{n\rightarrow \infty}\int_{\mathbb{R}^{4d}}\vert x\vert^2\,d\gamma_{\tau_n}(x,x',\omega,\omega')=\lim_{n \rightarrow \infty}W_{2,\nu}^2(\mu_{\tau_n},\delta_0(x)\otimes \nu(\omega))=W_{2,\nu}^2(\mu,\delta_0(x)\otimes \nu(\omega))<\infty,\\
&\lim_{n\rightarrow \infty}\int_{\mathbb{R}^{4d}}\vert x'\vert^2\,d\gamma_{\tau_n}(x,x',\omega,\omega')=\lim_{n\rightarrow \infty}\Vert \gamma_{\tau_n}\Vert_{\boldsymbol{L}^2_{\mu_{\tau_n}}}^2=\vert \partial\mathcal{E}\vert_{W_{2,\nu}}^2[\mu]<\infty.
\end{align*}
The first part of the claim follows from item $(ii)$ of Lemma \ref{L-Moreau-Yosida}, which implies that $\mu_{\tau_n}\rightarrow \mu$ in $W_{2,\nu}$, whilst the second part holds by \eqref{E-approximate-fibered-minimal-subdifferential} and the assumption $\mu \in D(\vert \partial\mathcal{E}\vert_{W_{2,\nu}})$. In particular, when regarded in $\mathcal{P}_{\hat{\nu}}(\mathbb{R}^{4d})$ with $\hat{\nu}(\omega,\omega'):=\nu(\omega)\otimes \delta_\omega(\omega')$ we obtain that the sequence $\{\gamma_{\tau_n}\}_{n\in \mathbb{N}}$ is $\tilde{\nu}$ uniformly tight. Hence, by the Prokhorov Theorem \ref{T-Prokhorov} there exists a subsequence (still denoted $\{\gamma_{\tau_n}\}_{n\in \mathbb{N}}$ for simplicity) and $\gamma\in \mathcal{P}_{\hat{\nu}}(\mathbb{R}^{4d})$ such that 
\begin{equation}\label{E-compactness-gamma}
\gamma_{\tau_n}\rightarrow \gamma\quad \mbox{narrowly in }\mathcal{P}_{\hat{\nu}}(\mathbb{R}^{4d}).
\end{equation}
Indeed, it is clear that $\gamma\in \boldsymbol{L}^2_\mu(\mathbb{R}^{2d})$ by the above uniform bounds of the quadratic moments with respect to $x$ and $x'$.

Second, for $\{\eta_{\tau_n}\}_{n\in \mathbb{N}}$ we shall argue as above. Specifically, using the above uniform bounds of the quadratic moments with respect to $x$ and $x'$ and
$$\lim_{n\rightarrow\infty}\int_{\mathbb{R}^{6d}}\vert x''\vert^2\,d\eta_{\tau_n}(x,x',x'',\omega,\omega',\omega'')=W_{2,\nu}(\sigma,\delta_0(x)\otimes \nu(\omega))<\infty,$$
implies that, when regarded in $\mathcal{P}_{2,\tilde{\nu}}(\mathbb{R}^{6d})$ with $\tilde{\nu}(\omega,\omega',\omega''):=\nu(\omega)\otimes \delta_\omega(\omega')\otimes \delta_\omega(\omega'')$, the sequence $\{\eta_{\tau_n}\}_{n\in \mathbb{N}}$ is uniformly tight. Then, there exists $\eta\in \mathcal{P}_{\tilde\nu}(\mathbb{R}^{6d})$ such that 
\begin{equation}\label{E-compactness-eta}
\eta_{\tau_n}\rightarrow \eta\quad \mbox{narrowly in }\mathcal{P}_{\tilde\nu}(\mathbb{R}^{6d}),
\end{equation}
up to a subsequence. Using all the above bounds of moments with respect to $x$,  $x'$ and $x''$, together with the fact that $\mu_{\tau_n}\to\mu$ in $W_{2,\nu}$ as well as $\gamma_{\tau_n}\to\gamma$ and $\eta_{\tau_n}\to\eta$ narrowly, we argue by the 
 stability of optimality in Proposition \ref{P-stability-optimal-plans} and conclude that $\eta\in \Gamma_{o,\nu}(\gamma,\sigma)$.

\medskip

$\diamond$ {\sc Step 5}: Variational inequality of \eqref{E-Moreau-Yosida-optimality-2} under $\lambda$-convexity.\\
Notice that \eqref{E-Moreau-Yosida-optimality-2} is not suitable to take the limit as $\tau\rightarrow 0$ because the second term in the right hand sides blows up. To circumvent this issue, we shall extend the variational reformulation in Proposition \ref{P-Frechet subdifferential-convex} for the Fr\'echet subdifferential to the extended Fr\'echet subdifferential consisting of plans thanks to the $\lambda$-convexity assumption on $\mathcal{E}$. Since this argument becomes tricky when tackling transference plans, we provide a sketch of the proof for the reader's convenience. 

Let us set $\eta_\tau\in \Gamma_{o,\nu}(\gamma_\tau,\sigma)$ for every $\tau\in (0,\tau_*)$ such that $\mathcal{E}$ is $\lambda$-convex along the geodesic $\theta\in [0,1]\mapsto \mu_{\tau,\theta}\in \mathcal{P}_{2,\nu}(\mathbb{R}^{2d})$ joining $\mu_\tau$ to $\sigma$ associated to the optimal $\nu$-admissible plan $\bar\gamma_\tau:=\pi_{(x,x'',\omega,\omega'')\#}\eta_\tau\in \Gamma_{o,\nu}(\mu_\tau,\sigma)$. With symbols, this means that
$$\mu_{\tau,\theta}^\omega=((1-\theta)\pi_{x}+\theta\pi_{x''})_\# \bar{\gamma}_\tau^\omega,\quad \theta\in [0,1],$$
for $\nu$-a.e. $\omega\in \mathbb{R}^d$. Of course, $\eta_\tau$ exists and can be built as the composition of $\bar \gamma_\tau$ (which exists by the $\lambda$-convexity of $\mathcal{E}$) and $\gamma_\tau$ (which was built in the previous step) thanks to the compatibility condition $\pi_{(x,\omega)\#}\bar\gamma_\tau=\mu_\tau=\pi_{(x,\omega)\#}\gamma_\tau$. We now build the following plans $\eta_{\tau,\theta}\in \Gamma_{o,\nu}(\gamma_\tau,\mu_{\tau,\theta})$
$$\eta_{\tau,\theta}:=(\pi_x,\pi_{x'},(1-\theta)\pi_x+\theta\pi_{x''},\pi_\omega,\pi_{\omega'},\pi_{\omega''})_{\#}\eta_\tau,$$
for any $\theta\in [0,1]$. On the one hand, by applying \eqref{E-Moreau-Yosida-optimality-2} with $\mu_{\tau,\theta}$ playing the role of $\sigma$ yields
\begin{align*}
\mathcal{E}[\mu_{\tau,\theta}]-\mathcal{E}[\mu_\tau]&\geq \int_{\mathbb{R}^{6d}}x'\cdot (x''-x)\,d\eta_{\tau,\theta}(x,x',x'',\omega,\omega',\omega'') -\frac{1}{\tau}W_{2,\nu}^2(\mu_\tau,\mu_{\tau,\theta})\\
&=\theta\int_{\mathbb{R}^{6d}}x'\cdot(x''-x)\,d\eta_\tau(x,x',x'',\omega,\omega',\omega'')-\frac{\theta^2}{\tau}W_{2,\nu}^2(\mu_\tau,\sigma),
\end{align*}
for any $\theta\in [0,1]$ and $\tau\in (0,\tau_*)$. On the other hand, using the $\lambda$-convexity property of $\mathcal{E}$ implies
$$\mathcal{E}[\mu_{\tau,\theta}]-\mathcal{E}[\mu_\tau]\leq \theta (\mathcal{E}[\sigma]-\mathcal{E}[\mu])-\frac{\lambda}{2}\theta(1-\theta)W_{2,\nu}^2(\mu_\tau,\sigma).$$
Joining the above two inequalities, dividing by $\theta$ and passing to the limit as $\theta\rightarrow 0$ imply
\begin{equation}\label{E-Moreau-Yosida-optimality-3}
\mathcal{E}[\sigma]-\mathcal{E}[\mu_\tau]\geq \int_{\mathbb{R}^{6d}}x'\cdot (x''-x)\,d\eta_\tau(x,x',x'',\omega,\omega',\omega'')+\frac{\lambda}{2}W_{2,\nu}^2(\mu_\tau,\sigma),
\end{equation}
for any $\sigma\in \mathcal{P}_{2,\nu}(\mathbb{R}^{2d})$ and $\tau\in (0,\tau_*)$.

\medskip

$\diamond$ {\sc Step 6}: Conclusion of \eqref{E-slope-minimal-subdifferential-extended}.\\
First, we prove that $\gamma\in \boldsymbol{\partial}_{W_{2,\nu}}\mathcal{E}[\mu]$. We take any sequence $\tau_n\to 0^+$ and pass to the limit as $n\rightarrow \infty$ in the variational reformulation \eqref{E-Moreau-Yosida-optimality-3}. Note that since $\mu_{\tau_n}\rightarrow \mu$ in $W_{2,\nu}$ and $\mathcal{E}$ is lower semicontinuous, then it is clear that we can pass to the limit in the left hand side of \eqref{E-Moreau-Yosida-optimality-3} and also in the second term of the right hand side. The delicate point is precisely the first term in the right hand side, which also involves the limit of the $\eta_{\tau_n}$. Let us define the function $g:\mathbb{R}^{6d}\longrightarrow\mathbb{R}$ given by 
$$g(x,x',x'',\omega,\omega',\omega''):=x'\cdot (x''-x).$$
We claim that $g$ is uniformly integrable with respect to $\{\eta_{\tau_n}\}_{n\in \mathbb{N}}$. Note that under the claim, Lemma 5.1.7 in \cite{AGS-08} directly allows passing to the limit as $n\rightarrow \infty$ because $\eta_{\tau_n}\rightarrow \eta$ narrowly in $\mathcal{P}(\mathbb{R}^{6d})$ by virtue of \eqref{E-compactness-eta}. This of course shows that $\gamma\in \boldsymbol{\partial}_{W_{2,\nu}}\mathcal{E}[\mu]$ because $\eta\in \Gamma_{o,\nu}(\gamma,\sigma)$ and $\sigma\in \mathcal{P}_{2,\nu}(\mathbb{R}^{2d})$ is an arbitrary measure. 

To prove the claim, we note that $g$ can be controlled as $\vert g\vert\leq g_1+g_2$ by the functions
$$g_1(x,x',x'',\omega,\omega',\omega'')=\vert x'\vert \vert x\vert,\quad g_2(x,x',x'',\omega,\omega',\omega'')=\vert x'\vert \vert x''\vert.$$
Then, we will show that both $g_1$ and $g_2$ are uniformly integrable with respect to $\{\eta_{\tau_n}\}_{n\in \mathbb{N}}$. For any $R,\tilde R>0$ we have
\begin{align*}
\int_{g_1\geq R}g_1\,d\eta_{\tau_n}&\leq \frac{\tilde R^2}{R}\Vert\gamma_{\tau_n} \Vert^2_{\boldsymbol{L}^2_{\mu_{\tau_n}}(\mathbb{R}^{2d})}+\Vert\gamma_{\tau_n} \Vert_{\boldsymbol{L}^2_{\mu_{\tau_n}}(\mathbb{R}^{2d})}\left(\int_{\vert x\vert\geq \tilde R}\vert x\vert^2\,d\mu_{\tau_n}(x,\omega)\right)^{1/2},\\
\int_{g_2\geq R}g_2\,d\eta_{\tau_n}&\leq \frac{\tilde R^2}{R}\Vert\gamma_{\tau_n} \Vert^2_{\boldsymbol{L}^2_{\mu_{\tau_n}}(\mathbb{R}^{2d})}+\Vert\gamma_{\tau_n} \Vert_{\boldsymbol{L}^2_{\mu_{\tau_n}}(\mathbb{R}^{2d})}\left(\int_{\vert x''\vert\geq \tilde R}\vert x''\vert^2\,d\sigma(x'',\omega'')\right)^{1/2}.
\end{align*}
Since $\sup_{n\in \mathbb{N}}\Vert\gamma_{\tau_n} \Vert^2_{\boldsymbol{L}^2_{\mu_{\tau_n}}(\mathbb{R}^{2d})}<\infty$ by \eqref{E-approximate-fibered-minimal-subdifferential}, then taking $\sup$ with respect to $n$ and $\limsup$ when $R\rightarrow\infty$ in the inequalities above we have
\begin{align*}
\limsup_{R\rightarrow \infty}\sup_{n\in \mathbb{N}}\int_{g_1\geq R}g_1\,d\eta_{\tau_n}&\lesssim \sup_{n\in \mathbb{N}}\left(\int_{\vert x\vert\geq \tilde R}\vert x\vert^2\,d\mu_{\tau_n}(x,\omega)\right)^{1/2},\\
\limsup_{R\rightarrow \infty}\sup_{n\in \mathbb{N}}\int_{g_2\geq R}g_2\,d\eta_{\tau_n}&\lesssim \left(\int_{\vert x\vert\geq \tilde R}\vert x''\vert^2\,d\sigma(x'',\omega'')\right)^{1/2},
\end{align*}
for any $\tilde{R}>0$. Recalling that $W_{2,\nu}(\mu_{\tau_n},\mu)\rightarrow 0$, using Remark \ref{R-characterization-convergence-W2nu} along with Lemma 5.1.7 in \cite{AGS-08} again, we conclude that the right hand side above must vanish by taking $\limsup$ when $\tilde{R}\rightarrow\infty$ and this ends the proof of the claim.

Finally, let us show that $\gamma\in \boldsymbol{\partial}_{W_{2,\nu}}\mathcal{E}[\mu]$ is indeed a minimizer in \eqref{E-slope-minimal-subdifferential-extended}. To this end, note that $(x,x',\omega,\omega')\in \mathbb{R}^{4d}\mapsto \vert x'\vert^2$ is lower semicontinuous (indeed continuous). Since $\gamma_{\tau_n}\rightarrow \gamma$ narrowly in $\mathcal{P}(\mathbb{R}^{4d})$ by \eqref{E-compactness-gamma}, then the representation \eqref{E-approximate-fibered-minimal-subdifferential} of the slope implies
$$\Vert \gamma\Vert_{\boldsymbol{L}^2_{\mu}(\mathbb{R}^{2d})}\leq \liminf_{n\rightarrow \infty}\Vert \gamma_{\tau_n}\Vert_{\boldsymbol{L}^2_{\mu_{\tau_n}}(\mathbb{R}^{2d})}=\lim_{n\rightarrow \infty}\Vert \gamma_{\tau_n}\Vert_{\boldsymbol{L}^2_{\mu_{\tau_n}}(\mathbb{R}^{2d})}=\vert \partial \mathcal{E}\vert_{W_{2,\nu}}[\mu],$$
and this concludes the proof.
\end{proof}

Then combining Proposition \ref{E-slope-minimal-subdifferential-extended} with the stability of Fr\'echet subdifferentials under barycentric projections in Proposition \ref{P-fibered-Frechet-subdifferential-barycentric} allow proving Proposition \ref{E-slope-minimal-subdifferential}.

\begin{proof}[Proof of Proposition \ref{E-slope-minimal-subdifferential}]
Take any $\gamma\in \boldsymbol{\partial}_{W_{2,\nu}}\mathcal{E}[\mu]$ minimizing \eqref{E-slope-minimal-subdifferential-extended} as in Proposition \ref{P-slope-minimal-subdifferential-plans} and compute its barycentric projection $\boldsymbol{u}:=\mathfrak{b}_\mu[\gamma]\in L^2_\mu(\mathbb{R}^{2d},\mathbb{R}^d)$ through Proposition \ref{P-vectors-vs-plans}. By Proposition \ref{P-fibered-Frechet-subdifferential-barycentric} we infer that $\boldsymbol{u}\in \partial_{W_{2,\nu}}\mathcal{E}[\mu]$. In addition, item $(ii)$ in Proposition \ref{P-vectors-vs-plans} and the optimality condition of $\gamma$ imply
$$\Vert \boldsymbol{u}\Vert_{L^2_\mu(\mathbb{R}^{2d},\mathbb{R}^d)}\leq \Vert \gamma\Vert_{\boldsymbol{L}^2_\mu(\mathbb{R}^{2d})}= \vert \partial\mathcal{E}\vert_{W_{2,\nu}}[\mu].$$
Hence, we end the proof as the converse inequality follows from the same argument as in {\sc Step 1} of the proof of Proposition \ref{P-slope-minimal-subdifferential-plans}.
\end{proof}

\section{Convexity properties of potential $W$}\label{Appendix-convexity-W}

In this section, we summarize the main convexity properties of the potential function $W$ in \eqref{W} appearing in the Kuramoto-type equation \eqref{kurakk} of Section \ref{maingoal3}.

\begin{lem}\label{C-convexity}
The following equivalent statements hold true
\begin{enumerate}[label=(\roman*)]
    \item {\it (First-order condition)}
    $$(\nabla W(x)-\nabla W(y))\cdot (x-y)\geq \Lambda_1(x,y,\alpha)\vert x-y\vert^2,$$
    for every $x,y\in \mathbb{R}^d$.
    \item {\it (Second-order condition)}
    $$v^\top\cdot D^2W(x)\cdot v\geq \Lambda_2(x,\alpha)\vert v\vert^2,$$
    for any $x\in \mathbb{R}^d$ and $v\in \mathbb{R}^d$.
\end{enumerate}
Here, the functions $\Lambda_1$ and $\Lambda_2$ take the form
$$
\Lambda_1(x,y,\alpha):=\int_0^1 \phi(\vert (1-\tau)x+\tau y\vert)\,d\tau,\quad \Lambda_2(x,\alpha):=\phi(\vert x\vert).
$$
Since $\Lambda_1,\Lambda_2\geq 0$, the potential $W$ is in particular convex.
\end{lem}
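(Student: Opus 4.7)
The plan is to first verify the pointwise Hessian bound (ii) by explicit computation, then derive the first-order inequality (i) by integrating along segments, and finally comment on the standard equivalence between the two conditions. I would begin by computing $D^2W$ directly from the explicit formula $\nabla W(x) = \frac{1}{1-\alpha}\phi(|x|)\,x = \frac{1}{1-\alpha}|x|^{-\alpha}x$. Differentiating coordinate-wise gives, for $x\neq 0$,
\[
D^2W(x) \;=\; \frac{|x|^{-\alpha}}{1-\alpha}\left( I_d - \alpha\,\frac{x\otimes x}{|x|^2}\right).
\]
The matrix in brackets is a symmetric projection-type matrix with eigenvalue $1-\alpha$ on the radial direction $x/|x|$ and eigenvalue $1$ on its orthogonal complement. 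Hence the smallest eigenvalue of $D^2W(x)$ is exactly $|x|^{-\alpha} = \phi(|x|) = \Lambda_2(x,\alpha)$, which proves (ii).

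Next, to obtain (i), I would apply the fundamental theorem of calculus to the $C^1$ vector field $\nabla W$ along the segment joining $x$ and $y$. For $x\neq y$ such that the open segment avoids the origin, one has
\[
\nabla W(y)-\nabla W(x) = \int_0^1 D^2W\bigl((1-\tau)x+\tau y\bigr)(y-x)\,d\tau,
\]
so taking the inner product with $y-x$ and using the pointwise bound (ii) yields
\[
(\nabla W(y)-\nabla W(x))\cdot(y-x) \;\geq\; \left(\int_0^1 \phi\bigl(|(1-\tau)x+\tau y|\bigr)\,d\tau\right)|x-y|^2,
\]
which is exactly (i). The main (and essentially only) subtlety will be when the open segment from $x$ to $y$ passes through the origin, because there $D^2 W$ is singular. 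However, along such a segment the scalar function $\tau\mapsto \phi(|(1-\tau)x+\tau y|)$ behaves like $|\tau-\tau_0|^{-\alpha}$ near the crossing point $\tau_0$, and this is integrable since $\alpha\in(0,1)$. I would therefore regularize by working on $[0,1]\setminus(\tau_0-\varepsilon,\tau_0+\varepsilon)$, where $W$ is smooth and the above integration is valid, and pass to the limit $\varepsilon\to 0^+$ using the dominated convergence theorem together with the continuity of $\nabla W\in C^{1,1-\alpha}(\mathbb{R}^d)$ (indeed, $\nabla W$ extends continuously through $0$ since $\alpha<1$). Alternatively, one can mollify $W$ on scale $\delta$, apply the smooth argument, and send $\delta\to 0^+$.

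Finally, the equivalence of (i) and (ii) is a standard fact for $C^2$ functions. The implication (ii)$\Rightarrow$(i) is exactly the integration carried out above. For the reverse implication, setting $y=x+\varepsilon v$ with $x\neq 0$ in (i), dividing by $\varepsilon^2$, letting $\varepsilon\to 0$, and noting that $\Lambda_1(x,x+\varepsilon v,\alpha)\to \phi(|x|) = \Lambda_2(x,\alpha)$ by continuity of $\phi$ away from the origin yields the pointwise Hessian bound at $x\neq 0$; the case $x=0$ is recovered by taking the liminf as $x\to 0$ (both sides blow up in the expected way, or one simply notes that (ii) is only asserted where $D^2W$ is defined). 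The nonnegativity $\Lambda_1,\Lambda_2\geq 0$ is obvious from $\phi\geq 0$, and convexity of $W$ follows immediately from either inequality. I expect the only nontrivial point in the whole argument to be the integrability/approximation step handling segments through the origin, which as explained is benign thanks to $\alpha<1$.
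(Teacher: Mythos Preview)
Your proposal is correct and follows essentially the same approach as the paper: compute the Hessian explicitly, obtain the pointwise lower bound (ii) from the eigenvalue structure (the paper phrases this as ``Cauchy--Schwarz''), and then derive (i) from (ii) via a first-order Taylor expansion along the segment. Your treatment is in fact more thorough than the paper's, which does not address the singularity of $D^2W$ at the origin when the segment passes through it, nor the reverse implication (i)$\Rightarrow$(ii).
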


\begin{proof}
Taking derivatives in \eqref{W} we obtain
\begin{equation}\label{E-kernel-1order-jacobian}
D^2 W(x)=\frac{1}{1-\alpha}\phi(\vert x\vert)\left(I-\alpha\frac{x}{\vert x\vert}\otimes \frac{x}{\vert x\vert}\right), \quad x\in \mathbb{R}^d\setminus\{0\}.
\end{equation}
Then, item $(ii)$ follows from the Cauchy--Schwartz inequality. Item $(i)$ follows from item $(ii)$ and a first order Taylor expansion.
\end{proof}

Consequently, using the $0$-convexity of $W$ and the Cauchy-Swartz inequality we obtain one-sided Lipschitz continuity of the velocity field associated with equation \eqref{kurakk}. 

\begin{cor}\label{C-sided-Lipschitz}
Consider any $\nu\in \mathcal{P}(\mathbb{R}^d)$, $\mu\in \mathcal{P}_{2,\nu}(\mathbb{R}^{2d})$, and let $\boldsymbol{v}[\mu]=(\boldsymbol{u}[\mu],\boldsymbol{0})$ be the velocity field of the Kuramoto-type equation \eqref{kurakk}, that is,
$$\boldsymbol{u}[\mu](x,\omega):=\omega-K\int_{\RR^{2d}}\nabla W(x-x')\,d\mu(x',\omega').$$
Then, $\boldsymbol{v}[\mu]$ verifies the following conditions:
\begin{enumerate}[label=(\roman*)]
\item (Continuous) $\boldsymbol{v}[\mu]\in C(\mathbb{R}^{2d},\mathbb{R}^d)$.
\item (One-sided Lipschitz) We have
$$(\boldsymbol{v}[\mu](x,\omega)-\boldsymbol{v}[\mu](x',\omega'))\cdot((x,\omega)-(x',\omega'))\leq \frac{1}{2}\vert (x,\omega)-(x',\omega')\vert^2,$$
for any $x,x',\omega,\omega'\in \mathbb{R}^d$.
\item (Sublinear growth) We have
$$\vert \boldsymbol{v}[\mu](x,\omega)\vert\leq \frac{K}{1-\alpha}M_2(\mu)^{\frac{1-\alpha}{2}}+\frac{K}{1-\alpha}\vert x\vert^{1-\alpha}+\vert \omega\vert,$$
for any $x,\omega\in \mathbb{R}^d$, with $M_2(\mu):=\int_{\mathbb{R}^{2d}}|x|^2\,d\mu(x,\omega).$
\end{enumerate}
\end{cor}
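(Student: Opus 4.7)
\textbf{Proof plan for Corollary \ref{C-sided-Lipschitz}.} The approach is to exploit the explicit form $\nabla W(x) = \frac{x}{(1-\alpha)|x|^\alpha}$, together with the convexity properties of $W$ established in Lemma \ref{C-convexity}, and handle each item separately by direct computation.

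For item $(i)$, I would first note that $|\nabla W(z)| = \frac{|z|^{1-\alpha}}{1-\alpha}$ extends continuously to $z=0$ since $\alpha \in (0,1)$, so the integrand $(x,\omega,x',\omega')\mapsto \nabla W(x-x')$ is continuous jointly in $(x,x')$. Combining this with the $\mu$-integrable majorant $|\nabla W(x-x')|\leq C(1+|x|^{1-\alpha}+|x'|^{1-\alpha})$ valid on bounded neighborhoods of $x$, the Lebesgue dominated convergence theorem yields continuity of $\boldsymbol{u}[\mu]$ in $x$ (continuity in $\omega$ is immediate), hence of $\boldsymbol{v}[\mu]$.

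For item $(ii)$, the $\omega$-component of $\boldsymbol{v}[\mu]$ is zero, so the inner product reduces to $(\boldsymbol{u}[\mu](x,\omega)-\boldsymbol{u}[\mu](x',\omega'))\cdot(x-x')$. Expanding using the definition, this splits as
\begin{equation*}
(\omega-\omega')\cdot(x-x') - K\int_{\mathbb{R}^{2d}}\bigl(\nabla W(x-x'')-\nabla W(x'-x'')\bigr)\cdot(x-x')\,d\mu(x'',\omega'').
\end{equation*}
The first term is bounded by $\frac{1}{2}|x-x'|^2+\frac{1}{2}|\omega-\omega'|^2=\frac{1}{2}|(x,\omega)-(x',\omega')|^2$ by Young's inequality. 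For the second term, the key observation is that $(x-x'')-(x'-x'')=x-x'$, so by item $(i)$ of Lemma \ref{C-convexity} and the non-negativity of $\Lambda_1$ we get
\begin{equation*}
\bigl(\nabla W(x-x'')-\nabla W(x'-x'')\bigr)\cdot(x-x') \geq \Lambda_1(x-x'',x'-x'',\alpha)|x-x'|^2 \geq 0,
\end{equation*}
so the integral term carries a favorable sign and can simply be dropped.

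For item $(iii)$, I would estimate directly: $|\boldsymbol{v}[\mu](x,\omega)|\leq |\omega|+\frac{K}{1-\alpha}\int_{\mathbb{R}^{2d}}|x-x'|^{1-\alpha}\,d\mu(x',\omega')$. Since $t\mapsto t^{1-\alpha}$ is subadditive on $\mathbb{R}_+$ (as $1-\alpha\in(0,1)$), we have $|x-x'|^{1-\alpha}\leq |x|^{1-\alpha}+|x'|^{1-\alpha}$. Applying Jensen's inequality for the concave function $t\mapsto t^{(1-\alpha)/2}$ to $\int |x'|^2\,d\mu$ (or equivalently Hölder's inequality with exponents $\frac{2}{1-\alpha}$ and $\frac{2}{1+\alpha}$) yields $\int |x'|^{1-\alpha}\,d\mu(x',\omega')\leq M_2(\mu)^{(1-\alpha)/2}$, which combines to give the claimed bound. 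None of the three items presents a genuine obstacle: the main subtlety is simply recognizing that, although $W$ is only $0$-convex (so we cannot hope for full Lipschitz continuity with a useful constant), the convexity is sharp enough to kill the nonlocal contribution in the one-sided estimate, leaving only the linear $\omega$-perturbation that yields the $\frac{1}{2}$ constant via Young's inequality.
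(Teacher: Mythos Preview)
Your proof is correct and follows essentially the same approach as the paper, which simply says the result follows from the $0$-convexity of $W$ (Lemma \ref{C-convexity}) and the Cauchy--Schwarz inequality. Your use of Young's inequality for the $(\omega-\omega')\cdot(x-x')$ term is what the paper's ``Cauchy--Schwarz'' reference amounts to, and your handling of the nonlocal term via Lemma \ref{C-convexity}$(i)$ and of the growth bound via subadditivity plus Jensen are exactly the intended expansions of the paper's one-line proof.
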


We remark that this is the key regularity for the well-posedness of a unique global flow of $\boldsymbol{v}[\mu]$, and allows solving \eqref{kurakk} via the method of characteristics, see \cite{PP-22-2-arxiv,P-19-arxiv}.

\medskip

\subsection*{Acknowledgments}
This work has been supported by the Polish National Science Centre grant No 2018/31/D/ST1/02313 (SONATA) (JP), by the European Union's Horizon Europe research and innovation program under the Marie Sk\l odowska-Curie grant agreement No 101064402 (DP), and partially by the European Research Council under the European Union's Horizon 2020 research and innovation program grant agreement No 865711, the State Research Agency (SRA) of the Spanish Ministry of Science and Innovation and European Regional Development Fund (ERDF), project PID2022-137228OB-I00, and by Modeling Nature Research Unit, project QUAL21-011. We are also grateful to D. Lacker, J. Morales and F. Santambrogio for the fruitful discussions.

\subsection*{Data Availability} 
Data sharing not applicable to this article as no datasets were generated or analysed during the current study

\bibliographystyle{amsplain} 
\bibliography{PP-23}

\end{document}